\newcommand{\drpullback}{\arrow[phantom]{dr}[very near start,description]{\lrcorner}}
\newcommand{\dlpullback}{\arrow[phantom]{dl}[very near start,description]{\llcorner}}
\newcommand{\drpullbackS}{\arrow[phantom, start anchor=south]{dr}[very near start,description]{\lrcorner}}
\newcommand{\fib}[2]{{#1}_{#2}}
\newcommand{\freemonad}{\overline}
\newcommand{\act}{\mathrm{act}}
\newcommand{\bottomnode}{\mathrm{bot}}
\newcommand{\leaves}{\mathrm{leaves}}
\newcommand{\el}{\operatorname{el}}
\newcommand{\corol}{\operatorname{cor}}
\newcommand{\nTr}{n\textrm{-}\mathrm{tr}}
\newcommand{\xint}{\mathrm{int}}
\newtheorem{thm}{Theorem}[subsection]
\newtheorem{lemma}[thm]{Lemma}
\newtheorem{propn}[thm]{Proposition}
\newtheorem{cor}[thm]{Corollary}
\newtheorem*{thm*}{Theorem}
\newtheorem*{conjecture*}{Conjecture}
\newtheorem*{goal*}{Goal}
\newtheorem*{question*}{Question}
\newtheorem*{prethm*}{Pretheorem}
\theoremstyle{definition}
\newtheorem{defn}[thm]{Definition}
\newtheorem{notation}[thm]{Notation}
\newtheorem{warning}[thm]{Warning}
\newtheorem{remark}[thm]{Remark}
\newtheorem{construction}[thm]{Construction}
\theoremstyle{remark}
\theoremstyle{definition}
\newcommand{\isoto}{\xrightarrow{\sim}}
\newcommand{\isofrom}{\xleftarrow{\sim}}
\newcommand{\IFF}{if and only if}
\newcommand{\catname}[1]{\ensuremath{\text{\textup{#1}}}}
\newcommand{\txt}[1]{\ensuremath{\text{\textup{#1}}}}
\newcommand{\blank}{\txt{\textendash}}
\newcommand{\Set}{\catname{Set}}
\newcommand{\Cat}{\catname{Cat}}
\newcommand{\CatI}{\catname{Cat}_\infty}
\newcommand{\LCatI}{\widehat{\catname{Cat}}_\infty}
\newcommand{\Fun}{\txt{Fun}}
\newcommand{\Map}{\txt{Map}}
\newcommand{\Aut}{\txt{Aut}}
\newcommand{\op}{\txt{op}}
\newcommand{\icat}{$\infty$-category}
\newcommand{\icats}{$\infty$-categories}
\newcommand{\icatl}{$\infty$-categorical}
\newcommand{\itcat}{$(\infty,2)$-category}
\newcommand{\itcats}{$(\infty,2)$-categories}
\newcommand{\xto}[1]{\xrightarrow{#1}}
\newcommand{\from}{\leftarrow}
\newcommand{\xfrom}[1]{\xleftarrow{#1}}
\newcommand{\tr}{\txt{tr}}
\newcommand{\csquare}[8]{ %
\[ %
\begin{tikzpicture} %
\matrix (m) [matrix of math nodes,row sep=3em,column sep=2.5em,text height=1.5ex,text depth=0.25ex] %
{ #1 \pgfmatrixnextcell #2 \\ %
  #3 \pgfmatrixnextcell #4 \\ }; %
\path[->,font=\footnotesize] %
(m-1-1) edge node[auto] {$#5$} (m-1-2)%
(m-1-1) edge node[left] {$#6$} (m-2-1)%
(m-1-2) edge node[auto] {$#7$} (m-2-2)%
(m-2-1) edge node[below] {$#8$} (m-2-2);%
\end{tikzpicture}%
\]%
}
\newcommand{\nolabelcsquare}[4]{\csquare{#1}{#2}{#3}{#4}{}{}{}{}}
\newcommand{\opctriangle}[6]{ %
\[ %
\begin{tikzpicture} %
\matrix (m) [matrix of math nodes,row sep=3em,column sep=1.2em,text height=1.5ex,text depth=0.25ex] %
{  #1 \pgfmatrixnextcell \pgfmatrixnextcell #2 \\ %
  \pgfmatrixnextcell #3 \pgfmatrixnextcell \\ %
}; %
\path[->,font=\footnotesize] %
(m-1-1) edge node[above] {$#4$} (m-1-3)%
(m-1-1) edge node[below left] {$#5$} (m-2-2)%
(m-1-3) edge node[below right] {$#6$} (m-2-2);%
\end{tikzpicture}%
\]%
}
\newcommand{\id}{\txt{id}}
\DeclareMathOperator{\colimP}{colim}
\newcommand{\colim}{\mathop{\colimP}}
\newcommand{\CATI}{\txt{CAT}_{\infty}}
\newcommand{\bbDelta}{\boldsymbol{\Delta}}
\newcommand{\bbTheta}{\boldsymbol{\Theta}}
\newcommand{\simp}{\bbDelta}
\newcommand{\Mnd}{\txt{Mnd}}
\newcommand{\Alg}{\catname{Alg}}
\newcommand{\FUN}{\txt{FUN}}
\newcommand{\iopd}{$\infty$-operad}
\newcommand{\iopds}{$\infty$-operads}
\newcommand{\Sq}{\txt{Sq}}
\def\@tocline#1#2#3#4#5#6#7{\relax
  \ifnum #1>\c@tocdepth 
  \else
    \par \addpenalty\@secpenalty\addvspace{#2}%
    \begingroup \hyphenpenalty\@M
    \@ifempty{#4}{%
      \@tempdima\csname r@tocindent\number#1\endcsname\relax
    }{%
      \@tempdima#4\relax
    }%
    \parindent\z@ \leftskip#3\relax \advance\leftskip\@tempdima\relax
    \rightskip\@pnumwidth plus4em \parfillskip-\@pnumwidth
    #5\leavevmode\hskip-\@tempdima
      \ifcase #1
       \or \hskip -1em \or \hskip 1em \or \hskip 3em \else \hskip 5em \fi%
      #6\nobreak\relax
    \hfill\hbox to\@pnumwidth{\@tocpagenum{#7}}
      \par
    \nobreak
    \endgroup
  \fi}
\newcommand{\Dop}{\simp^{\op}}
\newcommand{\Fin}{\txt{Fin}}
\newcommand{\bbO}{\boldsymbol{\Omega}}
\newcommand{\PSeg}{\mathcal{P}_{\txt{Seg}}}
\newcommand{\bbOint}{\bbO_{\txt{int}}}
\newcommand{\bbOel}{\bbO_{\txt{el}}}
\newcommand{\bbOelT}{\bbO_{\txt{el}/T}}
\newcommand{\alg}{\txt{alg}}
\newcommand{\coalg}{\operatorname{coalg}}
\newcommand{\Tw}{\operatorname{Tw}}
\newcommand{\End}{\txt{End}}
\newcommand{\END}{\txt{END}}
\newcommand{\LEnd}{\widehat{\End}}
\newcommand{\LMnd}{\widehat{\Mnd}}
\newcommand{\AnEnd}{\txt{AnEnd}}
\newcommand{\AnMnd}{\txt{AnMnd}}
\newcommand{\Poly}{\txt{Poly}}
\newcommand{\PolyFun}{\txt{PolyFun}}
\newcommand{\PolyEnd}{\txt{PolyEnd}}
\newcommand{\AnFun}{\txt{AnFun}}
\newcommand{\Act}{\txt{Act}}
\newcommand{\SqL}{\txt{Sq}^{\txt{lax}}}
\newcommand{\SqCL}{\txt{Sq}^{\txt{colax}}}
\newcommand{\lax}{\txt{lax}}
\newcommand{\colax}{\txt{colax}}
\newcommand{\ladj}{\txt{ladj}}
\newcommand{\radj}{\txt{radj}}
\newcommand{\mndradj}{\txt{mndradj}}
\newcommand{\LMod}{\txt{LMod}}
\newcommand{\LCATI}{\widehat{\txt{CAT}}_{\infty}}
\newcommand{\fmnd}{\mathfrak{mnd}}
\newcommand{\LMndLSR}{\LMnd_{\lax}^{\sigma,\radj}}
\newcommand{\LEndLSR}{\LEnd_{\lax}^{\sigma,\radj}}
\newcommand{\LCatISR}{\LCatI^{\sigma,\radj}}
\newcommand{\LMndLSRop}{\LMnd_{\lax}^{\sigma,\radj,\op}}
\newcommand{\LEndLSRop}{\LEnd_{\lax}^{\sigma,\radj,\op}}
\newcommand{\LCatISRop}{\LCatI^{\sigma,\radj,\op}}
\newcommand{\LMndLop}{\LMnd_{\lax}^{\op}}
\newcommand{\Sym}{\mathrm{Sym}}
\renewcommand{\opctriangle}[6]{
\[%
\begin{tikzcd}%
  #1 \arrow[swap]{dr}{#5} \arrow{rr}{#4} \pgfmatrixnextcell \pgfmatrixnextcell #2 \arrow{dl}{#6} \\%
  {} \pgfmatrixnextcell #3 %
\end{tikzcd}%
\] %
}
\newcommand{\ie}{i.e.\@}
\newcommand{\CatIT}{\Cat_{(\infty,2)}}
\newcommand{\pcolax}{\txt{(co)lax}}
\newcommand{\MND}{\txt{MND}}
\newcommand{\fend}{\mathfrak{end}}
\newcommand{\bTt}{\bbTheta_{2}}
\renewcommand{\eprint}[1]{\IfBeginWith{#1}{arXiv}{\href{https://arxiv.org/abs/#1}{#1}}{\href{#1}{#1}}}
\author{David Gepner}
\address{University of Illinois, Chicago, USA}
\email{gepner@uic.edu}
\urladdr{http://sites.google.com/view/dgepner/home}
\author{Rune Haugseng}
\address{Norwegian University of Science and Technology (NTNU), Trondheim, Norway}
\email{rune.haugseng@ntnu.no}
\urladdr{http://folk.ntnu.no/runegha/}
\author{Joachim Kock}
\address{Universitat Aut\`{o}noma de Barcelona \& Centre de Recerca 
Matem\`atica, Bellaterra, Spain}
\email{kock@mat.uab.cat}
\urladdr{http://mat.uab.cat/~kock/}
\date{\today}
\title{$\infty$-Operads as Analytic Monads}
\begin{document}

\begin{abstract}
  We develop an $\infty$-categorical version of the classical theory
  of polynomial and analytic functors, initial algebras, and free
  monads. Using this machinery, we provide a new model for \iopds{},
  namely $\infty$-operads as analytic monads.  We justify this
  definition by proving that the $\infty$-category of analytic monads
  is equivalent to that of dendroidal Segal spaces, known to
  be equivalent to the other existing models for \iopds{}.
\end{abstract}
\maketitle

\enlargethispage{1em}
\tableofcontents

\section{Introduction}
Operads are a powerful formalism for encoding algebraic
operations.  They were first introduced in the early seventies
for the purpose of describing up-to-homotopy algebraic
structures on topological spaces~\cite{MayGeomIter,
BoardmanVogt}, and have since become a standard tool also in
algebra, geometry, combinatorics, and mathematical physics.
Operads are closely related to monads, which were introduced
some 10 years earlier, implicitly with Godement's ``standard
construction'' of flasque resolutions~\cite{Godement},%
\footnote{Godement used (co)monads to construct (co)simplicial
resolutions, an essential tool for computations in algebra, geometry and topology.}
and explicitly by Huber~\cite{Huber}.
The notion soon spread from algebraic geometry
and homological algebra to universal algebra, logic, and computer science.
The relationship between operads and monads was
exploited from the very beginning of operad theory~\cite{MayGeomIter}, and is a major
theme of the present contribution.

Classically, an operad $\mathcal{O}$ consists of a sequence
$\mathcal{O}(n)$ of topological spaces, where $\mathcal{O}(n)$ is
equipped with an action of the symmetric group $\Sigma_{n}$ (this data
is called a \emph{symmetric sequence}), together with a unital and
associative composition law. The object $\mathcal{O}(n)$ describes the
$n$-ary operations of the operad.  Every symmetric sequence
$\mathcal{O}$ gives rise to an endofunctor
\[
F(X) = \coprod_{n} (\mathcal{O}(n) \times X^{\times n})_{\Sigma_{n}};
\]
endofunctors of this form are sometimes called {\em analytic functors} due
to their resemblance to power series.\footnote{This should not be
confused with the notion of analytic functor used in the setting of
Goodwillie calculus.}
When $\mathcal{O}$ is an operad, this
endofunctor acquires the structure of a monad, and the algebras for
the operad are canonically identified with the algebras for this
monad.

From a homotopical viewpoint, these topological operads (and their
associated algebras) have certain shortcomings, analogous to those
afflicting topological categories when viewed as a model for
``categories weakly enriched in spaces'' (or
$\infty$-categories). Just as these issues can be avoided by using a
better-behaved model for \icats{}, it is often convenient to work with
less rigid notions of ``operads weakly enriched in spaces'' or
\emph{$\infty$-operads}.
Indeed, even for well-known topological operads there are advantages
to viewing them as $\infty$-operads. For example, if $\mathbb{E}_{n}$ denotes the $\infty$-operad
corresponding to the classical operad of little $n$-discs (introduced
by May and Boardman--Vogt to study the algebraic structure of $n$-fold
loop spaces), then Lurie has proved a homotopically meaningful version of
Dunn's additivity theorem,
$\mathbb{E}_{n} \otimes \mathbb{E}_{m} \simeq \mathbb{E}_{n+m}$,
where $\otimes$ is 
the Boardman--Vogt tensor product of
  $\infty$-operads (which is well behaved, in contrast to the
  classical Boardman--Vogt tensor product of topological operads,
  which is not homotopy-invariant).
There are various
models for $\infty$-operads, the approach of Lurie~\cite{HA} being the
most well-developed at the moment.

In this work we introduce a new model for $\infty$-operads, in terms
of monads, and show that it is equivalent to the existing models.
As a consequence, we shall see that an $\infty$-operad can be
recovered from its free algebra monad, and obtain a characterization
of the monads that arise in this way.  One such characterization is
expressed by the following slogan:
\begin{center}
  \emph{$\infty$-operads are monads cartesian over the symmetric
  monad.}
\end{center}
Here the symmetric monad means the monad $\Sym$ associated to the
terminal \iopd{}; its underlying 
endofunctor on the 
$\infty$-category of spaces $\mathcal{S}$
is given by
$\Sym(X)\simeq \coprod_{n} X^{n}_{h\Sigma_{n}}$. If $T$ is a monad
over $\Sym$, then evaluating the natural transformation at the point
we get a space $T(*)$ over $\Sym(*) \simeq \coprod_n
B\Sigma_n$. 
This is precisely the same data as a symmetric sequence: the fibre of
$T(*)$ at the point of $B\Sigma_{n}$ gives the space of $n$-ary
operations with its $\Sigma_{n}$-action. The operad structure on this
symmetric sequence is encoded by the monad structure on the
endofunctor.

Being cartesian over $\Sym$ means we have a map of monads whose
underlying natural transformation is cartesian, i.e.~its naturality
squares are pullbacks. It turns out that such a natural transformation
to $\Sym$ is unique if it exists, so that being cartesian over $\Sym$
is a \emph{property} of a monad. We will see that this property has an
inherent characterization as the monad being \emph{analytic}, by which
we mean that it is cartesian (i.e.~its multiplication and unit
transformations are cartesian) and the underlying endofunctor
preserves sifted colimits and wide pullbacks (or equivalently all
weakly contractible limits). We can thus reformulate our slogan still more succinctly:
\begin{center}
  \emph{$\infty$-operads are analytic monads.}
\end{center}
This generalizes a classical description of operads in sets: by a
result of Joyal~\cite{JoyalAnalytique}, these are also equivalent to
analytic monads.

So far we have only discussed one-object operads, but it is quite often
useful to work with the more general notion of operads with many
objects (commonly called \emph{coloured operads} or \emph{symmetric
  multicategories}), which generalizes categories by allowing arrows
(operations) with multiple inputs instead of just one input. The term
$\infty$-operad usually denotes the higher-categorical version of this
more general notion of operad, and our slogan remains true with this
interpretation, provided we consider analytic monads on slices of
$\mathcal{S}$:
\begin{center}
\emph{\iopds{} with
space of objects $I$ are analytic monads on $\mathcal{S}_{/I}$.}
\end{center}
More precisely, in this paper we set up an \icat{} of analytic monads
(on all slices of $\mathcal{S}$ simultaneously) and prove that this is
equivalent to an existing model of \iopds{}, namely the dendroidal
Segal spaces of Cisinski and
Moerdijk~\cite{CisinskiMoerdijkDendSeg}. This model is known
  to be equivalent to other models of \iopds{}, including those of
  Lurie~\cite{HA} and Barwick~\cite{BarwickOpCat}, as well as to
  simplicial operads, thanks to results of
  Cisinski--Moerdijk~\cite{CisinskiMoerdijkDendSeg,
    CisinskiMoerdijkSimplOpd},
  Heuts--Hinich--Moerdijk~\cite{HeutsHinichMoerdijkDendrComp},
  Barwick~\cite{BarwickOpCat}, and
  Chu--Haugseng--Heuts~\cite{iopdcomp}.

  In order to study analytic monads, we first develop a theory
    of analytic functors between slices of $\mathcal{S}$, which can be
    viewed as a categorification of power series in many variables.
    In fact, these analytic functors turn out to be a special case of
    a more general notion of \emph{polynomial} functors, and our
	first task
	is to set up an \icatl{} framework for polynomial
    functors.
    This is
    in contrast to the situation in ordinary categories, where 
    analytic functors are {\em not} in general polynomial.

To make sense of this, let us explain what we actually
  mean by a polynomial functor.
 To any map of
spaces $f \colon I \to J$ there is associated a string of three
adjoint functors
$f_! \dashv f^* \dashv f_*$,
\[
\begin{tikzcd}
\mathcal{S}_{/I} \arrow[bend left=40]{r}{f_{!}} 
\arrow[bend right=40]{r}{f_{*}} & \arrow[swap]{l}{f^{*}} \mathcal{S}_{/J},
\end{tikzcd}
\]
where $f_{!}$ is given by composition with $f$ and $f^{*}$ by pullback
along $f$.\footnote{The fundamental nature of these three operations is
  witnessed by the fact that they correspond precisely to
  {\em substitution}, {\em dependent sums}, and {\em dependent  
  products}, the most basic building blocks of type   
  theory~\cite{HoTT-book}.}
Alternatively, we may identify
$\mathcal{S}_{/I}$ with $\Fun(I, \mathcal{S})$; then $f^{*}$ is given
by precomposition with $f$, and $f_{!}$ and $f_{*}$ are respectively
the left and right Kan extension functors along $f$. A
\emph{polynomial functor}\footnote{The notion of ``polynomial functor'' we consider
  here should not be confused with the notion of ``polynomial
  functor'' introduced by Eilenberg and Mac Lane and subsequently used
  in the study of functor homology, nor with the notion occurring in
  Goodwillie's calculus of functors.} is 
  a functor that is built as a composite of functors of these three
  kinds.
  The terminology ``polynomial functor'' has some drawbacks, such as the
   odd fact that  ``analytic'' is a special case of ``polynomial'', but we
   keep this terminology due to its long history in logic, computer
   science and category theory (see \S\ref{subsec:related} for some
   pointers).

The description of polynomial functors in terms of these fundamental
adjoints can be formulated as a representability property: a
polynomial functor $P \colon \mathcal{S}_{/I} \to \mathcal{S}_{/J}$
has a unique description as $t_{!}p_{*}s^{*}$ for a diagram of spaces
\[I \xfrom{s} E \xto{p} B \xto{t} J;\]
moreover, $P$ is analytic precisely when the homotopy fibres of the
map $p$ are
finite discrete spaces. Many questions about polynomial functors can
be handled by manipulating these representing diagrams, and our
description of \iopds{} as analytic monads allows us to leverage this
calculus of polynomial functors in the setting of \iopds{}. This
combinatorial interpretation of \iopds{} does
not have a direct analogue in the 1-categorical setting. In sets, the
endofunctors corresponding to most operads are not polynomial --- this
is only true for the so-called \emph{$\Sigma$-free} operads,
i.e.~those for which the actions of the symmetric groups are all
free.

In this paper, for the sake of emphasizing the key ideas, we consider
polynomial and analytic monads over (slices of) $\mathcal{S}$ only,
but it is an attractive feature of the polynomial formalism that it is
readily adaptable to more general contexts.  In particular, it
would seem to be a natural setting for notions of operads with non-discrete
arities, as required in certain situations beyond spaces.

\subsection{Overview of Results}

\subsubsection*{Polynomial Functors}
To carry out our programme, we first develop the higher-categorical version of
the basic theory of polynomial functors, roughly corresponding to the results of
Gambino--Kock~\cite{GambinoKock} in the case of ordinary categories.  In
view of the broad spectrum of applications of ordinary polynomial
functors, we expect that this theory will be of independent interest,
and hope that it can serve as a starting point for further
developments.

Our first main result is the following classification of polynomial
functors:
\begin{thm*}
  The following are equivalent for a functor
  $F \colon \mathcal{S}_{/I} \to \mathcal{S}_{/J}$:
  \begin{enumerate}[(i)]
  \item $F$ is a polynomial functor.
  \item $F$ is of the form $t_{!}p_{*}s^{*}$ for a diagram of spaces
    \[ I \xfrom{s} E \xto{p} B \xto{s} J.\]
  \item $F$ is accessible and preserves weakly contractible limits.
  \item $F$ is a local right adjoint.
  \end{enumerate}
\end{thm*}
\noindent{} Here a weakly contractible limit means a limit of a
diagram indexed by an \icat{} whose classifying space is contractible,
and a local right adjoint functor is a functor
$F \colon \mathcal{C} \to \mathcal{D}$ such that for every object
$x \in \mathcal{C}$ the induced functor
$\mathcal{C}_{/x} \to \mathcal{D}_{/Fx}$ is a right adjoint.

This characterization is the higher-categorical version of classical theorems
due to Lamarche, Taylor, Johnstone--Carboni, and Weber (as 
synthesized in ~\cite{GambinoKock}).
Its proof takes up \S\ref{subsec:polyfun}--\S\ref{subsec:lra}.

For our purposes, the relevant morphisms between polynomial functors
$\mathcal{S}_{/I} \to \mathcal{S}_{/J}$ are the cartesian natural
transformations. We show in \S\ref{subsec:morpolyfun} that these are
represented by diagrams of the form
\[
  \begin{tikzcd}
    {} & E' \drpullback
    \arrow{dd} \arrow{dl} \arrow{r} & B' 
    \arrow{dd} \arrow{dr} \\
    I  & & {} & J. \\
    & E \arrow{ul} \arrow[swap]{r} & B \arrow[swap]{ur} 
  \end{tikzcd}
\]

The interplay between the polynomial functors and the diagrams that
represent them (called polynomial diagrams) is a key aspect of the
theory: some features are most easily handled in terms of functors and
some more easily in terms of representing diagrams. To exploit this
fully we need to describe polynomial functors with varying source and
target in terms of diagrams.  To define such a general \icat{} of
polynomial functors we start by constructing a double \icat{} of
``colax squares'' in which the vertical arrows are right adjoints. We
have delegated its definition, in terms of lax natural transformations
as studied in \cite{adjmnd}, to Appendix~\ref{AppA}, where we also
discuss the naturality of the procedure of taking ``mates''; the aim
is to ensure coherence of all the Beck-Chevalley transformations once
and for all in a uniform way.  With this in place, we can define an
\icat{} $\PolyFun$ of polynomial functors and cartesian
transformations, and a (much simpler) \icat{} $\Poly$ of polynomial
diagrams (a subcategory of the \icat{} of diagrams in $\mathcal{S}$ of
shape $\bullet \from \bullet \to \bullet \to \bullet$). The main
result of \S\ref{subsec:icatpoly} is then that there is an equivalence
of \icats{}
   \[ \Poly \isoto \PolyFun\]
   over the source and target projections to $\mathcal{S} \times \mathcal{S}$.

In \S\ref{sec:colimpoly} we exploit this equivalence to show that
the colimit of a diagram of polynomial functors and cartesian
transformations is a polynomial functor, corresponding to the
(pointwise) colimit of the corresponding diagrams 
(Proposition~\ref{propn:colimpolyispoly}). We end the section
in \S\ref{subsec:slicepoly} by studying slices $\PolyFun_{/P}$ for $P$
a polynomial functor; we prove that these \icats{} are all
$\infty$-topoi (Theorem~\ref{thm:PolyFun/P=topos}).
Note that $\PolyFun$ itself is not even accessible (see
Remark~\ref{rmk:PolyFunnotacc}).

\subsubsection*{Analytic Functors}
In \S\ref{sec:anal} we study the special case of \emph{analytic}
functors, which we characterize by the equivalent conditions of the following theorem:
\begin{thm*}
  Let $\mathbf{E} \colon \mathcal{S} \to \mathcal{S}$ denote the
  polynomial functor $X \mapsto \coprod_{n = 0}^{\infty} X^{\times
    n}_{h\Sigma_{n}}$, represented by the diagram
  \[ * \from \coprod_{n} n_{h\Sigma_{n}} \to \coprod_{n} B\Sigma_{n}
  \to *.\] The following are equivalent for a functor $F \colon
  \mathcal{S}_{/I} \to \mathcal{S}_{/J}$:
  \begin{enumerate}[(i)]
  \item $F$ is a polynomial functor with a morphism to $\mathbf{E}$ (which
    is unique if it exists).
  \item $F$ is a polynomial functor, represented by a diagram 
    \[ I \xfrom{s} E \xto{p} B \xto{s} J\]
    where the map $p$ has finite discrete fibres.
  \item $F$ preserves sifted colimits and weakly contractible limits.
  \end{enumerate}
\end{thm*}
We are mostly interested in endofunctors.
For a functor $F \colon \mathcal{S}
\to \mathcal{S}$, condition (ii) implies that $F$ is of the form 
\[ X \mapsto \coprod_{n} (B_{n} \times X^{\times n})_{h\Sigma_{n}},\]
so our notion of analytic functors does indeed generalize the standard
definition for endofunctors of $\Set$. 
We observe that analytic endofunctors of $\mathcal{S}$ are
equivalent to symmetric sequences, and that they can also
be characterized as the left Kan extensions of homotopical species, 
meaning functors $\iota\Fin\to\mathcal{S}$ where $\iota \Fin$ denotes
the groupoid of finite sets and bijections --- this is an \icatl{}
version of a theorem of Joyal.
More generally, analytic
endofunctors of $\mathcal{S}_{/I}$ are equivalent to $I$-coloured
symmetric sequences (or symmetric $I$-collections), defined as
functors $\mathbf{E}(I)\times I \to \mathcal{S}$.

The combinatorics of trees enter all approaches to operads, explicitly
or otherwise. In the polynomial formalism, the interplay between trees
and operads is particularly intimate, since following \cite{KockTree}
we can define trees as certain polynomial endofunctors
\[
A\leftarrow N' \to N \to A,
\]
where $A$ is the set of edges, $N$ is the set of nodes, and $N'$ is
the set of nodes with a marked incoming edge. We thus have a full
subcategory $\bbOint$ of trees inside the \icat{} $\AnEnd$ of analytic
endofunctors. In \S\ref{subsec:treesanalend} we use this to show that
analytic endofunctors can be described in terms of trees --- more
precisely, we prove that
the restricted Yoneda embeddings give equivalences of \icats{}
  \[
  \AnEnd  \simeq \mathcal{P}(\bbOel) \simeq \PSeg(\bbOint).
  \]
Here $\bbOel$ is the full subcategory of \emph{elementary} trees,
which are the corollas and the trivial tree (the edge without nodes),
and $\PSeg(\bbOint)$ is the full subcategory of presheaves on
$\bbOint$ that satisfy a Segal condition, which can be interpreted as
a sheaf condition for the covers of trees by elementary subtrees.

\subsubsection*{Initial Algebras and Free Monads}
Our comparison result relies on understanding the free monad on an
analytic endofunctor. As a first step, we need to know that these
free monads actually exist, which is the main result of
\S\ref{sec:IAFM}. 
We follow the classical
approach using initial Lambek algebras, which goes back to
Ad\'amek~\cite{AdamekFreeAlgebras}; a standard reference for the
classical case is Kelly~\cite{Kellyunified}. For a finitary endofunctor $P$,
i.e.\ an endofunctor that preserves filtered colimits, we show in
\S\ref{subsec:lambek} that the \icat{} of Lambek algebras has an
initial object, constructed inductively; we present the construction
in terms of a bar-cobar adjunction for Lambek algebras, which appears
to be new. 

In \S\ref{subsec:freemndexist} we use the initial
algebra construction to exhibit a left adjoint to the forgetful functor
$\alg_P(\mathcal{C}) \to \mathcal{C}$, where $\alg_P(\mathcal{C})$ is the
\icat{} of Lambek algebras for $P$, and show that the resulting adjunction is
monadic (Proposition~\ref{propn:algPmonadic}).  The monad induced by the adjunction 
is the free monad on $P$, i.e.~characterized by a universal property (see 
Proposition~\ref{propn:freemnd}).
The forgetful functor from finitary monads on $\mathcal{C}$ to
finitary endofunctors thus has a left adjoint, taking an endofunctor
to its free monad.  We observe that, at least if we restrict to
endofunctors that preserve sifted colimits, this adjunction is itself
monadic (Corollary~\ref{cor:freesiftedmonad}).

Then, in \S\ref{subsec:freecolim} we give a more explicit description
of the underlying endofunctor of the free monad as the colimit of a
sequence of functors, which we will later exploit to understand
the free monad on an analytic endofunctor in terms of trees.

In \S\ref{subsec:freemndfamily} we extend our results to obtain a
monadic left adjoint to the forgetful functor from monads that
preserve sifted colimits to endofunctors of varying \icats{}. This
requires an \icat{} of monads over varying \icats{}, which is studied
in \cite{adjmnd}; we recall the results we need from there in
\S\ref{subsec:monad}.

\subsubsection*{Analytic Monads}
In 
\S\ref{sec:analytic-trees} we apply our results on free
monads in the special case of analytic monads. In
\S\ref{subsec:anmnd} we show that the free monad on an analytic
endofunctor exists and is again analytic, and the natural
transformations of the monad structure are cartesian. In
\S\ref{subsec:freeanalytictrees} we then show that the free monad on an
analytic endofunctor has an explicit description in terms of trees, giving:
\begin{thm*}
  The forgetful functor $\AnMnd \to \AnEnd$ from analytic monads to
  analytic endo\-functors has a left adjoint, taking an analytic
  endofunctor to its free monad, and the resulting adjunction is
  monadic. If $P$ is an analytic endofunctor given by the diagram
  \[ I \leftarrow E \to B \to I \]
  then the underlying endofunctor of the free monad on $P$ is
  represented by
  $$
  I \leftarrow \tr'(P) \to \tr(P) \to I   ,
  $$
  where $\tr(P)$ is the $\infty$-groupoid of $P$-trees, i.e.~trees with
  a morphism to $P$ in $\AnEnd$, and $\tr'(P)$ is the $\infty$-groupoid
  of $P$-trees with a marked leaf.
\end{thm*}
This is an \icatl{} version of a result from \cite{KockTree}.

\subsubsection*{Comparison with $\infty$-Operads}
We are now ready to establish the main result of the paper, namely the
equivalence between analytic monads and \iopds{}.

Let $\bbO$ be the full subcategory of $\AnMnd$ on the free monads on
trees; this is the polynomial description (cf.~\cite{KockTree})
of the
dendroidal category of Moerdijk and Weiss~\cite{MoerdijkWeiss}.
\begin{thm*}
  The restricted Yoneda functor $\AnMnd \to \mathcal{P}(\bbO)$ is fully
  faithful, and its essential image is $\PSeg(\bbO)$.  We thus have
  an equivalence of $\infty$-categories
  \[ \AnMnd \simeq \PSeg(\bbO).\]
\end{thm*}
\noindent
Here $\PSeg(\bbO)$ is the \icat{} of presheaves whose restriction to
$\bbOint$ lies in $\PSeg(\bbOint)$; these are precisely the dendroidal
Segal spaces.

The proof is inspired by the Nerve Theorem of Weber~\cite{WeberFamilial}.
The main ingredients are the  monadicity of the free monad adjunction,
the interpretation of analytic endofunctors as presheaves on
$\bbOel$, and the explicit description of the free monad in terms of trees.

\subsection{Related Work}
\label{subsec:related}

\subsubsection*{Operads}
A number of categorical descriptions exist for operads in
$\Set$.  While it is well known that {\em non-symmetric} operads
are equivalent to monads cartesian over the free-monoid monad
(and are hence automatically polynomial), it is not true that a
non-symmetric operad can be recovered from is monad
alone~\cite{LeinsterHigherOpds} --- the cartesian natural
transformation is a structure, not a property.  Leinster took
this as the starting point for a theory of generalized operads,
defined as monads cartesian over a fixed cartesian monad.  
Symmetric operads are not an instance of this notion,
though: they ought to be cartesian over the
free-commutative-monoid monad $\Sym$, but while the free-algebra
monad of a symmetric operad does admit a canonical monad map to
$\Sym$, neither the monads nor the map are cartesian in general.

The symmetric case can be handled with the notion of weakly
cartesian natural transformation, introduced by
Joyal~\cite{JoyalAnalytique}; see Weber~\cite{WeberGeneric} for
a systematic treatment.  Joyal proved that an endofunctor is
analytic if and only if it admits a weakly cartesian natural
transformation to $\Sym$, and showed that the category of
analytic functors and weakly cartesian natural transformations
is equivalent to that of symmetric sequences (or species).  This
equivalence is monoidal: composition of analytic functors
corresponds to the composition product of symmetric sequences,
which goes back to Kelly~\cite{KellyOnOperads}.  Kelly had
observed that operads are monoids in symmetric sequences, so it
follows that operads are analytic monads.  The characterization
of operads as weakly cartesian over $\Sym$ also follows.

An alternative way of overcoming the subtleties consists in
observing that while $\Sym$ is not cartesian on $\Set$, it {\em
is} cartesian as a $2$-monad on $\Cat$, as is important in
Kelly's theory of clubs~\cite{KellyClub}.
This was exploited by Weber~\cite{WeberOpdsPoly} to give a
characterization of symmetric $\Set$-operads as polynomial
monads cartesian over $\Sym$ in a certain $2$-categorical sense.
Weber's work was an important starting point for us.

It is a pleasant feature of the $\infty$-categorical setting
that these various approaches are unified in clean statements,
as expressed in the slogans of the introduction. These hold true
already over $1$-groupoids, but to get
a good description of analytic functors in $1$-groupoids one is
forced to pass to $2$-groupoids, etc., giving the usual infinite
ladder --- only for $\infty$-groupoids do we get a nice
self-contained theory.

\subsubsection*{$\infty$-Operads}
As we mentioned above, our description of \iopds{} as analytic monads
can be interpreted as an \icatl{} version of the classical description
of ($I$-coloured) operads as associative algebras in ($I$-coloured)
symmetric sequences. Another version of such a description of
\iopds{}, which also works for enriched \iopds{}, was recently
obtained by the second author \cite{HaugsengDayConv} by describing the
composition product using an extension of Day convolution to double
\icats{}. Alternatively, the composition product can be constructed
using free symmetric monoidal \icats{} (extending to \icats{} the
construction of \cite{TrimbleLie}); this approach is implemented in
the thesis of Brantner~\cite{BrantnerThesis}, though it has not yet
been compared to other models for \iopds{}.

\subsubsection*{Polynomial Functors}
The theory of polynomial functors has 
roots in
topology, representation theory, combinatorics,
logic and computer science.
For instance, the $1$-categorical version of our
Theorem~\ref{thm:pnchar} grew out of work on Girard's linear logic and
domain theory, and some of the basic results on polynomial functors
were first established in connection with semantics for generic data
types and polymorphic functions~\cite{AbbottAltenkirchGhani} (see
\cite{GambinoKock} for further background and references, and
\cite{KockData} for analytic functors in that context).  Moerdijk and
Palmgren~\cite{MoerdijkPalmgren} showed that initial algebras for
polynomial functors are semantics for W-types in (extensional)
Martin-L\"of type theory, the fundamental example being the natural
numbers as initial algebra for $X \mapsto 1+X$,
cf.~\cite{LawvereETCS,LambekFixpoint}.  With
the homotopy interpretation of type theory~\cite{HoTT-book}, a
full-blown intentional interpretation has recently been given by
Awodey--Gambino--Sojakova~\cite{AwodeyGambinoSojakovaJACM} as {\em
  homotopy} initial algebras.  (Generalized) $\infty$-operads are
expected to serve as semantics for the so-called higher inductive
types (see \cite{LumsdaineShulman}).  The
polynomial approach to $\infty$-operads might play some role in
fleshing out the semantics side of those ideas.

For their role in encoding both substitution and induction/recursion,
polynomial functors have also become an important tool for handling
the intricate combinatorial structures that arise in higher category
theory.  For example, polynomial monads were used to give a purely
combinatorial description of opetopes~\cite{KockJoyalBataninMascari},
and Batanin and Berger~\cite{BataninBergerPolynomial} have exploited
polynomial monads to give unified constructions of Quillen model
structures on categories of algebras.  Their paper has many references
to related developments.

\subsection{Acknowledgments}

We wish to thank
first of all Mark Weber, but also 
Pierre-Louis Curien, Nicola Gambino, Andr\'e Joyal,
Thomas Nikolaus, and Dimitri Zaganidis for helpful discussions.  
D.G.~was partially supported by NSF grants DMS-1406529 and DMS-1714273.
J.K.~was partially supported by grants MTM2016-80439-P (AEI/FEDER, UE) 
of Spain and 2017-SGR-1725 of Catalonia.

\section{Polynomial Functors}\label{sec:poly}

\subsection{Polynomial Functors}\label{subsec:polyfun}
We write $\mathcal{S}$ for the $\infty$-category of spaces 
(also known as $\infty$-groupoids or homotopy types); 
in the
model of \icats{} as quasicategories this can be explicitly defined as
the coherent nerve of the simplicial category of Kan complexes.

If $f \colon I \to J$ is a map of spaces, then $f$ induces three adjoint
functors between the slice \icats{} $\mathcal{S}_{/I}$ and
$\mathcal{S}_{/J}$: Composition with $f$ gives a functor 
\[f_{!} \colon \mathcal{S}_{/I} \to \mathcal{S}_{/J}\]
which is left adjoint to the functor 
\[ f^{*} \colon \mathcal{S}_{/J} \to \mathcal{S}_{/I}\]
given by pullback along $f$. The functor $f^{*}$ also has a right
adjoint
\[ f_{*} \colon \mathcal{S}_{/I} \to \mathcal{S}_{/J}\]
since $\mathcal{S}$ is locally cartesian closed. If we interpret the
slice \icats{} $\mathcal{S}_{/I}$ as functor \icats{} $\Fun(I, \mathcal{S})$ using the straightening
equivalence, then the functor $f^{*} \colon \Fun(J, \mathcal{S}) \to
\Fun(I, \mathcal{S})$ is given by precomposition with $f$, and $f_{!}$
and $f_{*}$ are given by left and right Kan extension along $f$.

\begin{defn} 
  A \emph{polynomial functor} is a functor $\mathcal{S}_{/I}\to
  \mathcal{S}_{/J}$ of the form $t_{!}p_{*}s^{*}$ corresponding to a
  diagram of spaces
  \[ I\xfrom{s} E \xto{p} B \xto{t} J.\]
\end{defn}

\begin{remark}
  In this paper we only consider polynomial functors in the context of
  the \icat{} of spaces, since this is the appropriate setting for
  \iopds{}. It is possible to consider polynomial functors in the more
  general setting of an arbitrary $\infty$-topos
  (or a locally cartesian closed \icat{},
  as treated in \cite{GepnerKock}), and we expect that most of our
  results can be generalized to this context. However, this would
  require working in the setting of internal \icats{}, which has not
  yet been sufficiently developed. For example, instead of natural transformations
  between polynomial functors we must use the analogue of so-called
  \emph{strong} natural transformations (cf.~\cite{GambinoKock}), or
  equivalently fibred natural transformations
  (cf.~\cite{KockKock}). In ordinary category theory, polynomial
  functors have also been considered~\cite{WeberPn} in general
  categories with pullbacks, at the price of having to impose
  an exponentiability condition separately on the middle maps in the
  diagrams.
\end{remark}

A basic fact about polynomial functors is that they compose 
(cf.~Theorem~\ref{thm:comp} below).  This 
result amounts to being able to rewrite any composite of upper-star,
lower-star and lower-shriek functors in the normal form of the definition.
This is achieved through Beck-Chevalley transformations and 
distributivity, which we proceed to discuss.
This works essentially as in the 1-categorical
case~\cite{GambinoKock}.  Our treatment follows \cite{WeberPn}*{\S
2.2}.

\begin{defn}
  A natural transformation $\phi \colon F \to G$ of functors $F,G
  \colon \mathcal{C} \to \mathcal{D}$ is \emph{cartesian} if for every
  morphism $f \colon C \to C'$ in $\mathcal{C}$ the commutative square
  \csquare{FC}{FC'}{GC}{GC'}{Ff}{\phi_{C}}{\phi_{C'}}{Gf}
  is cartesian.
\end{defn}

\begin{remark}\label{rmk:cart*}
  If $\mathcal{C}$ has a terminal object $*$, then by the 2-of-3
  property of pullback squares a natural
  transformation $\phi$ as above is cartesian \IFF{} for every object
  $c \in \mathcal{C}$ the naturality square
  \csquare{Fc}{F*}{Gc}{G*}{}{\phi_{c}}{\phi_{*}}{}
  is cartesian.
\end{remark}

\begin{lemma}\label{lem:!Cart}
  Suppose $\mathcal{C}$ is an \icat{} with pullbacks. For any morphism
  $f \colon S \to T$ in $\mathcal{C}$ we have a functor $f_{!}\colon
  \mathcal{C}_{/S} \to \mathcal{C}_{/T}$ given by composition with
  $f$, with right adjoint $f^{*}$ given by pullback along $f$.
  The counit and unit transformations $f_{!}f^{*} \to
  \id$ and $\id \to f^{*}f_{!}$ for the adjunction $f_{!} \dashv
  f^{*}$ are cartesian.
\end{lemma}
\begin{proof}
  It suffices to check that the naturality squares for the map to the
  terminal object is cartesian in both cases. For the counit
  transformation at $q \colon Y \to
  T$ this is obvious, since the naturality square is 
  \csquare{S \times_{T} Y}{Y}{S}{T.}{}{}{q}{f}
 For $p \colon X \to S$ in $\mathcal{C}_{/S}$, consider the diagram
  \[
  \begin{tikzcd}
    X \arrow[swap]{d}{p} \arrow{r} \arrow[bend left=20,equal]{rr} & S
    \times_{T} X \arrow{r} \arrow{d} & X \arrow{d}{p} \\
    S \arrow{r} \arrow[equal]{dr} & S \times_{T}S \arrow{r} \arrow{d}
    & S \arrow{d}{f} \\
    {} & S \arrow[swap]{r}{f} & T.
  \end{tikzcd}
\]
  Here the top left square is the naturality square for the unit at
  $p$.
  In the right column the bottom square and the
  composite square are cartesian, hence so is the top right
  square. The composite in the top row is also cartesian, whence the
  top left square is cartesian, as required.
\end{proof}

\begin{lemma}\label{lem:BCpbk}
  For a commutative diagram of spaces
  \csquare{A}{B}{C}{D}{u}{g}{f}{v}
  the following are equivalent:
  \begin{enumerate}[(i)]
	  \item The square is cartesian.
	  
	  \item The Beck-Chevalley transformation
  \[ u_{!}g^{*} \to u_{!}g^{*}v^{*}v_{!} \simeq
  u_{!}u^{*}f^{*}v_{!}\to f^{*}v_{!}\]
  is an equivalence.

	  \item
  The Beck-Chevalley transformation
  \[ v^{*}f_{*} \to g_{*}g^{*}v^{*}f_{*} \simeq g_{*}u^{*}f^{*}f_{*}
  \to g_{*}u^{*}\]
  is an equivalence.  
  \end{enumerate}
\end{lemma}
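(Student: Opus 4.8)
The plan is to prove the two equivalences (i) $\Leftrightarrow$ (ii) and (ii) $\Leftrightarrow$ (iii) separately: the first by unwinding the Beck--Chevalley map into an explicit comparison of pullbacks, and the second by a purely formal adjunction-transpose argument identifying the two Beck--Chevalley maps on mapping spaces. Write $\beta \colon u_{!}g^{*} \to f^{*}v_{!}$ and $\gamma \colon v^{*}f_{*} \to g_{*}u^{*}$ for the two transformations. Both are built from the commutativity $f u \simeq v g$, which gives the canonical equivalence $g^{*}v^{*} \simeq u^{*}f^{*}$ appearing in the middle of each zig-zag; I would keep this equivalence fixed throughout.

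For (i) $\Rightarrow$ (ii) I would assume $A \simeq B \times_{D} C$ and evaluate $\beta$ on an object $p_{X} \colon X \to C$ of $\mathcal{S}_{/C}$. By pasting of pullback squares, $g^{*}X = A \times_{C} X \simeq (B \times_{D} C) \times_{C} X \simeq B \times_{D} X$ as objects over $B$, and tracing the unit--counit definition of $\beta$ shows that $\beta_{X}$ is exactly this canonical equivalence. For the converse (ii) $\Rightarrow$ (i) I would evaluate $\beta$ at the terminal object $\mathrm{id}_{C}$: there $g^{*}(\mathrm{id}_{C}) \simeq \mathrm{id}_{A}$, so $u_{!}g^{*}(\mathrm{id}_{C}) = (A \xto{u} B)$, whereas $f^{*}v_{!}(\mathrm{id}_{C}) = (B \times_{D} C \to B)$, and $\beta_{\mathrm{id}_{C}}$ is precisely the gap map $A \to B \times_{D} C$ over $B$; this is an equivalence exactly when the square is cartesian. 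Notice that this half uses nothing but pullback pasting, so it is valid in any \icat{} with pullbacks.

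For (ii) $\Leftrightarrow$ (iii) I would show that $\beta$ is an equivalence iff $\gamma$ is, by comparing the maps they induce on mapping spaces. Fix $X \in \mathcal{S}_{/C}$ and $W \in \mathcal{S}_{/B}$. The adjunctions $u_{!} \dashv u^{*}$ and $g^{*} \dashv g_{*}$ give $\mathrm{Map}(u_{!}g^{*}X, W) \simeq \mathrm{Map}(X, g_{*}u^{*}W)$, while $f^{*} \dashv f_{*}$ and $v_{!} \dashv v^{*}$ give $\mathrm{Map}(f^{*}v_{!}X, W) \simeq \mathrm{Map}(X, v^{*}f_{*}W)$. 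Under these identifications, precomposition with $\beta_{X}$ corresponds to postcomposition with $\gamma_{W}$, i.e.\ the map $\mathrm{Map}(X, v^{*}f_{*}W) \to \mathrm{Map}(X, g_{*}u^{*}W)$ induced by $\gamma_{W}$ — this is the statement that $\gamma$ is the mate of $\beta$ through the ambient adjunctions. Granting this, $\beta_{X}$ is an equivalence for all $X$ iff $\beta_{X}^{*}$ is for all $X, W$, iff $\mathrm{Map}(X, \gamma_{W})$ is for all $W, X$ (the same condition with quantifiers reordered), iff $\gamma_{W}$ is an equivalence for all $W$ by Yoneda; that is, iff $\gamma$ is an equivalence.

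I expect the only real obstacle to be the middle step of this last argument: verifying $\infty$-categorically that transposing $\beta_{X}$ across all four adjunctions yields exactly $\gamma_{W}$ and not some other natural map. In the $1$-categorical case this is a routine check of the triangle identities, but here one must know that the units, counits, and the equivalence $g^{*}v^{*} \simeq u^{*}f^{*}$ assemble coherently. I would discharge it either by appealing to the mate calculus of Riehl--Verity or, more cheaply, by simply invoking that $\beta$ and $\gamma$ are mates under these adjunctions, since that coherence is precisely what the mapping-space identification transports. Everything else is formal, and the proof uses no special feature of $\mathcal{S}$ beyond the three adjoints $f_{!} \dashv f^{*} \dashv f_{*}$ and the existence of pullbacks.
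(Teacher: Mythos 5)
Your proof is correct, and for (ii)$\Leftrightarrow$(iii) it is the same argument as the paper's (the two transformations are mates across the composite adjunctions $u_{!}g^{*}\dashv g_{*}u^{*}$ and $f^{*}v_{!}\dashv v^{*}f_{*}$; the paper states this in one line, you unpack it via the mapping-space identification and Yoneda, which is a reasonable way to discharge the coherence worry you flag). The real difference is in (i)$\Leftrightarrow$(ii). The paper first observes, using its Lemma~\ref{lem:!Cart}, that the Beck--Chevalley transformation is a composite of cartesian natural transformations and hence cartesian; by Remark~\ref{rmk:cart*} it is then an equivalence \emph{iff} its single component at the terminal object $\id_{C}$ is one, and that component is identified with the gap map $A\to B\times_{D}C$ --- so both implications drop out of one short computation. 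You instead prove (ii)$\Rightarrow$(i) by the same terminal-object evaluation, but prove (i)$\Rightarrow$(ii) by computing $\beta_{X}$ at an arbitrary $X\to C$ and asserting that ``tracing the unit--counit definition'' identifies it with the canonical equivalence $A\times_{C}X\simeq B\times_{D}X$. That identification is true, but in the $\infty$-categorical setting it is precisely the kind of object-by-object diagram chase the paper's cartesianness trick is designed to avoid: once you know $\beta$ is cartesian you only ever need the identification at $\id_{C}$, where it is immediate. So your argument buys independence from Lemma~\ref{lem:!Cart} at the cost of a heavier (though routine) verification at general $X$; if you want the cheaper route, note that the unit and counit of $(-)_{!}\dashv(-)^{*}$ are cartesian and reduce to the terminal component as the paper does.
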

\begin{proof}
  (i)$\Leftrightarrow$(ii): To see that the Beck-Chevalley
  transformation in (ii) is cartesian we use Lemma~\ref{lem:!Cart}:
  this implies that the unit $\id \to v^{*}v_{!}$ and counit
  $u_{!}u^{*} \to \id$ are cartesian transformations, and the functors
  $u_{!}$, $g^{*}$, $f^{*}$,$v_{!}$ all preserve pullbacks (for the
  left adjoints this follows for instance from
  Lemma~\ref{lem:slicecreates}). By Remark~\ref{rmk:cart*} it is
  therefore a natural equivalence \IFF{} the map
  $u_{!}g^{*}(\id_{C}) \to f^{*}v_{!}(\id_{C})$ in $\mathcal{S}_{/B}$
  is an equivalence.  Here
  $u_{!}g^{*}(\id_{C}) \simeq u_{!}(\id_{A}) \simeq u$ and
  $f^{*}v_{!}(\id_{C}) \simeq f^{*}(v)$, and the map $u \to f^{*}v$ is
  given by the natural map from $A$ to the pullback of $v$ along
  $f$. Since the forgetful functor $\mathcal{S}_{/B} \to \mathcal{S}$
  is conservative, we see that the square is indeed cartesian \IFF{}
  this map is an equivalence.

  (ii)$\Leftrightarrow$(iii) follows since the two transformations are
  mates: (iii) is obtained from (ii) by taking right adjoints, and
  (ii) from (iii) by taking left adjoints.
\end{proof}

\begin{propn}
  Given maps of spaces  $f
  \colon X \to Y$  and $g \colon E \to
  X$, put $r := f_{*}g \colon E' \to Y$ and $h := f^{*}r \colon E'' \to X$,
  to get a commutative diagram
  \[
  \begin{tikzcd}
   {} & E'' \drpullback
   \arrow[swap]{dl}{\epsilon} \arrow{r}{q} \arrow{dd}{h} & E' \arrow{dd}{r} \\
   E \arrow[swap]{dr}{g} & & {} \\
    & X \arrow[swap]{r}{f} & Y,
  \end{tikzcd}
  \]
  where $\epsilon$ is the counit for the adjunction $f^{*} \dashv
  f_{*}$ and $q$ is the pullback of $f$ along $r$.
  Then the natural transformation $\delta \colon r_{!}q_{*}\epsilon^{*} \to
  f_{*}g_{!}$, defined as the composite
  \[ r_{!}q_{*}\epsilon^{*} \to r_{!}q_{*}\epsilon^{*}g^{*}g_{!} \simeq
  r_{!}q_{*}h^{*}g_{!} \isofrom r_{!}r^{*}f_{*}g_{!} \to f_{*}g_{!}\]
  is an equivalence.
\end{propn}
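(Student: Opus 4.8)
The plan is to show that the comparison map $\delta$ is a \emph{cartesian} natural transformation and then to verify that it is an equivalence on the terminal object $\id_E$ of $\mathcal{S}_{/E}$. By Remark~\ref{rmk:cart*}, cartesianness means that for every $A \in \mathcal{S}_{/E}$ the naturality square of $\delta$ over the terminal object is a pullback, so $\delta_A$ is a pullback of $\delta_{\id_E}$; hence once $\delta_{\id_E}$ is known to be an equivalence, so is every $\delta_A$, and therefore $\delta$ itself.

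To prove that $\delta$ is cartesian, I would break it into the four maps of its defining composite and check each is cartesian, together with stability of cartesianness under the relevant whiskerings. The first map is $r_!q_*\epsilon^*$ applied to the unit $\id \to g^*g_!$, which is cartesian by Lemma~\ref{lem:!Cart}; and $r_!q_*\epsilon^*$ preserves pullbacks, since $\epsilon^*$ and $q_*$ are right adjoints and $r_!$ preserves connected limits (the forgetful functors out of slices create them), so postcomposing a cartesian transformation with it remains cartesian. The two middle maps are equivalences --- the identification $\epsilon^*g^* \simeq h^*$ coming from $g\epsilon = h$, and the Beck--Chevalley equivalence $r^*f_* \simeq q_*h^*$ obtained from Lemma~\ref{lem:BCpbk} applied to the cartesian square exhibiting $E''$ as the pullback of $f$ along $r$ --- and equivalences are trivially cartesian. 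The final map is the counit $r_!r^* \to \id$ (cartesian by Lemma~\ref{lem:!Cart}) whiskered by $f_*g_!$ on the right, and precomposition with any functor preserves cartesianness. A vertical composite of cartesian transformations is cartesian by the pasting law for pullbacks, so $\delta$ is cartesian.

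It then remains to evaluate $\delta$ at $\id_E$. Since $\epsilon^*$ and $q_*$ preserve terminal objects, the source becomes $r_!\,\id_{E'} = r$, while the target is $f_*g_!\,\id_E = f_*g = r$ by the definition of $r$. I would then unwind $\delta_{\id_E}$: the unit contributes the diagonal $E \to E\times_X E$, which, after transport across the Beck--Chevalley equivalence, is cancelled by the counit of $r_! \dashv r^*$ via the triangle identities, collapsing the composite to the identity of $r$. Combined with cartesianness and Remark~\ref{rmk:cart*}, this finishes the proof.

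The hard part will be this last step: turning the ``collapse to the identity'' at $\id_E$ into a coherent argument rather than a fibrewise hand computation, i.e.\ checking that the chosen coherence data for the Beck--Chevalley equivalence inserted between the unit and the counit are compatible with the triangle identities. An alternative is to reduce to $Y = \pt$ by base change along points $y \colon \pt \to Y$, using that all the functors and $\delta$ commute with such base change, and then to read off the equivalence from the explicit formula $f_*g_!A \simeq \coprod_{\sigma}\lim_x A_{\sigma(x)} \simeq r_!q_*\epsilon^*A$ with $\sigma$ ranging over sections of $g$; but verifying that base-change compatibility is essentially the same coherence problem.
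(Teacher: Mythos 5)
Your proposal is correct and follows essentially the same route as the paper: show $\delta$ is cartesian because it is a composite of cartesian transformations (the unit and counit being cartesian by Lemma~\ref{lem:!Cart}, the middle maps being equivalences), then reduce via Remark~\ref{rmk:cart*} to the component at $\id_E$, where both sides are identified with $r$ and the map collapses to the identity as the diagonal followed by a projection. The only difference is that the paper states the final ``collapse'' step with the same brevity you flag as the hard part, simply asserting that tracing through the maps yields the diagonal followed by a projection.
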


\begin{proof}
  The natural transformation $\delta$ is cartesian, since by
  Lemma~\ref{lem:!Cart} it is a composite of cartesian
  transformations. It therefore suffices to show that the component of
  $\delta$ at $\id_{E}$ is an equivalence. We have
  $r_{!}q_{*}\epsilon^{*}(\id_{E}) \simeq r_{!}(\id_{E'}) \simeq r$ (since
  $q_{*}\epsilon^{*}$ preserves the terminal object) and $f_{*}g_{!}(\id_{E})
  \simeq f_{*}(g)$ which by definition is also $r$.  Tracing through 
  the maps constituting the transformation reveals that the actual 
  map from $r$ to $r$ is the diagonal followed by a projection, 
  which is the identity.
\end{proof}

We can now give an explicit description of the composite of
polynomial functors:
\begin{thm}\label{thm:comp} 
  Suppose $P \colon \mathcal{S}_{/I} \to \mathcal{S}_{/J}$ and $Q
  \colon \mathcal{S}_{/J} \to \mathcal{S}_{/K}$ are polynomial
  functors, represented by diagrams of spaces
  \[ I \xfrom{s} E \xto{p} B \xto{t} J,\]
  \[ J \xfrom{u} F \xto{q} C \xto{v} K,\]
  respectively. Consider the commutative diagram of spaces
  \[
  \begin{tikzcd}
    {} &   &   & G \arrow[bend left=32]{rrr}{w} 
	\arrow[bend right=20]{dddlll}[above]{r} \arrow{rr}{p''} \arrow{dl}{\epsilon'} 
	\drpullback &                
	& X \drpullback
	\arrow{r}{q'} 
	\arrow{dl}{\epsilon}
    \arrow{dd}{q^{*}q_{*}\pi}& D \arrow{dd}{q_{*}\pi} \arrow[bend left=15]{dddr}{x} \\
    {} &   & Y \drpullback \arrow{rr}{p'} \arrow{dl}{u''} &   &  B \times_{J} F 
	\arrow[phantom]{dd}[very near start]{\rotatebox{-45}{$\lrcorner$}} 
	\arrow{dl}{u'} \arrow{dr}{\pi} && {} \\
    {} & E\arrow[swap]{rr}{p} \arrow{dl}{s} &   & B \arrow[swap]{dr}{t} &     { }           &
    F \arrow{dl}{u}  \arrow[swap]{r}{q} & C \arrow[swap]{dr}{v} \\
    I  &   &   &   & J              &   &   & K
  \end{tikzcd}
  \]
  where $\epsilon$ is the counit map $q^{*}q_{*}\pi \to \pi$ for the
  adjunction $q^{*} \dashv q_{*}$, and the squares are all pullbacks.
  Then the composite $Q \circ P \colon \mathcal{S}_{/I}
  \to \mathcal{S}_{/K}$ is the polynomial functor represented by the
  diagram 
  \[ I \xfrom{r} G \xto{w} D \xto{x} K.\]
\end{thm}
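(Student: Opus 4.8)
The plan is to rewrite the composite $Q\circ P = v_! q_* u^* t_! p_* s^*$ into the normal form $x_! w_* r^*$ by successively applying the Beck--Chevalley equivalences of Lemma~\ref{lem:BCpbk} and the distributivity law established in the preceding proposition, reading each rewrite off the displayed diagram. The two outer functors $v_!$ (on the left) and $s^*$ (on the right) are absorbed into $x_!$ and $r^*$ only at the end, so the real work is to transform the middle block $q_* u^* t_! p_*$.

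First I would attack the subword $u^* t_!$. The square with corners $B\times_J F, F, B, J$ (with maps $\pi, u', t, u$) is a pullback, so Lemma~\ref{lem:BCpbk}(ii) gives $u^* t_! \simeq \pi_! u'^*$, turning the composite into $v_! q_* \pi_! u'^* p_* s^*$. The crux is now the subword $q_* \pi_!$: a lower-star followed by a lower-shriek that do \emph{not} sit in a Beck--Chevalley square. This is exactly the situation resolved by the distributivity proposition, applied with $f = q \colon F \to C$ and $g = \pi \colon B\times_J F \to F$; it produces the data $q_*\pi \colon D \to C$, its pullback $q' \colon X \to D$, and the counit $\epsilon \colon X \to B\times_J F$ appearing in the diagram, and yields $q_* \pi_! \simeq (q_*\pi)_! \, q'_* \, \epsilon^*$. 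After this substitution the two leftmost shrieks combine, $v_! (q_*\pi)_! \simeq x_!$ since $x = v\circ q_*\pi$, leaving $x_! q'_* \epsilon^* u'^* p_* s^*$.

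It remains to identify $q'_* \epsilon^* u'^* p_* s^*$ with $w_* r^*$, and here every remaining step is Beck--Chevalley. The pullback square $Y = E\times_B(B\times_J F)$ gives $u'^* p_* \simeq p'_* u''^*$ by Lemma~\ref{lem:BCpbk}(iii), and the pullback square defining $G = X\times_{B\times_J F} Y$ gives $\epsilon^* p'_* \simeq p''_* \epsilon'^*$, again by part (iii). Substituting these and then collapsing composites of adjacent functors of the same type --- $q'_* p''_* \simeq (q'p'')_* = w_*$ and $\epsilon'^* u''^* s^* \simeq (s\, u''\, \epsilon')^* = r^*$, using $w = q'\circ p''$ and $r = s\circ u''\circ \epsilon'$ --- yields precisely $x_! w_* r^*$.

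I expect the genuine mathematical content to be concentrated in the single distributivity step $q_*\pi_! \simeq (q_*\pi)_! q'_* \epsilon^*$; everything else is an application of Beck--Chevalley together with the (pseudo)functoriality of $(-)_!$, $(-)^*$, and $(-)_*$ under composition. The main thing to be careful about is matching the diagram's named maps to the correct instances of the two lemmas and keeping track of the directions of the equivalences. In the $\infty$-categorical setting one must also trust that these identifications are coherent, but since the statement only asserts an equivalence of functors --- rather than organizing all composites into a single $\infty$-category, which is deferred to the appendix --- no further coherence bookkeeping is required here.
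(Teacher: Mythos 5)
Your proposal is correct and follows essentially the same route as the paper's proof: Beck--Chevalley for $u^*t_!$, the distributivity equivalence for $q_*\pi_!$, Beck--Chevalley again to push $\epsilon^*u'^*$ past $p_*$ (which you split into two pasted squares where the paper does it in one step), and functoriality of $(-)_!$, $(-)_*$, $(-)^*$ to collapse the result to $x_!w_*r^*$. All instances of the lemmas are matched to the correct squares of the diagram.
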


\begin{proof}
  We have natural equivalences
  \[
  \begin{split}
    v_{!}q_{*}u^{*}t_{!}p_{*}s^{*} & \simeq
    v_{!}q_{*}\pi_{!}u'^{*}p_{*}s^{*} \qquad \txt{(using the
      Beck-Chevalley equivalence $u^{*}t_{!} \simeq
      \pi_{!}(u')^{*}$)} \\ 
    & \simeq v_{!}(q_{*}\pi)_{!}
    q'_{*}\epsilon^{*}u'^{*}p_{*}s^{*} \qquad \txt{(using the
      distributivity equivalence $q_{*}\pi_{!} \simeq
      (q_{*}\pi)_{!}q'_{*}\epsilon^{*}$)} \\
    & \simeq x_{!}q'_{*}p''_{*}\epsilon'^{*}u''^{*}s^{*} \qquad \txt{(using the
      Beck-Chevalley equivalence $\epsilon^{*}u'^{*}p_{*} \simeq
      p''_{*}\epsilon'^{*}u''^{*}$)} \\
     & \simeq x_{!}w_{*}r^{*}. \qedhere
  \end{split}
\]
\end{proof}

\begin{cor}\label{cor:compose}
  The composite of two polynomial functors is again a polynomial
  functor. \qed
\end{cor}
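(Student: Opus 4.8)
The plan is to read this off directly from Theorem~\ref{thm:comp}. Given two polynomial functors $P \colon \mathcal{S}_{/I} \to \mathcal{S}_{/J}$ and $Q \colon \mathcal{S}_{/J} \to \mathcal{S}_{/K}$, by definition each is of the form $t_{!}p_{*}s^{*}$ for a diagram of spaces; write $I \xfrom{s} E \xto{p} B \xto{t} J$ for a representing diagram of $P$ and $J \xfrom{u} F \xto{q} C \xto{v} K$ for one of $Q$. Since the target slice of $P$ coincides with the source slice of $Q$, the composite $Q \circ P \colon \mathcal{S}_{/I} \to \mathcal{S}_{/K}$ makes sense.

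I would then apply Theorem~\ref{thm:comp} verbatim: it produces an explicit diagram of spaces $I \xfrom{r} G \xto{w} D \xto{x} K$ and identifies $Q \circ P$ with the functor $x_{!}w_{*}r^{*}$ attached to it. By the very definition of a polynomial functor, being representable by such a four-term diagram of spaces is exactly what it means to be polynomial, so $Q \circ P$ is polynomial, as claimed.

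There is no real obstacle at the level of the corollary, since all the substantive work has already been discharged in the proof of Theorem~\ref{thm:comp}, where the composite $v_{!}q_{*}u^{*}t_{!}p_{*}s^{*}$ is rewritten into the normal form $x_{!}w_{*}r^{*}$ by two applications of Beck--Chevalley (Lemma~\ref{lem:BCpbk}) together with one use of the distributivity equivalence established in the preceding proposition. The only hypothesis to verify is the compatibility of the intermediate slices, so that $P$ and $Q$ are composable in the first place; this holds by construction, and everything else is immediate.
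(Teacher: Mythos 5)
Your proposal is correct and matches the paper exactly: the corollary is stated with an immediate \qed precisely because Theorem~\ref{thm:comp} already exhibits $Q\circ P$ as $x_{!}w_{*}r^{*}$ for an explicit diagram $I \from G \to D \to K$, which is the definition of a polynomial functor. Nothing further is needed.
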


\begin{remark}
  This corollary also follows from the characterization of polynomial
  functors we will prove below in Theorem~\ref{thm:pnchar}, but we
  will also need the explicit formula for the composition given by Theorem~\ref{thm:comp}.
\end{remark}

\subsection{Local Right Adjoints}\label{subsec:lra}
In this subsection we will prove an alternative characterization of
polynomial functors. To state this we must first introduce some
terminology:
\begin{defn}
  A functor $F \colon \mathcal{C} \to \mathcal{D}$ between
  $\infty$-categories is a \emph{local right adjoint} if for every $x
  \in \mathcal{C}$ the induced functor $\mathcal{C}_{/x} \to
  \mathcal{D}_{/Fx}$ is a right adjoint.
\end{defn}

\begin{defn}
  The inclusion $\mathcal{S} \hookrightarrow \CatI$ of spaces into the $\infty$-category of small $\infty$-categories has a left
  adjoint, which takes an \icat{} $\mathcal{C}$ to the space obtained by inverting
  all morphisms in $\mathcal{C}$, which we denote
  $\|\mathcal{C}\|$. We say that $\mathcal{C}$ is \emph{weakly
    contractible} if $\|\mathcal{C}\|$ is a contractible space.
\end{defn}

\begin{thm}\label{thm:pnchar}
  The following are equivalent for a functor $F \colon
  \mathcal{S}_{/I} \to \mathcal{S}_{/J}$:
  \begin{enumerate}[(i)]
  \item $F$ is a polynomial functor.
  \item $F$ is accessible and preserves weakly contractible limits.
  \item $F$ is a local right adjoint.
  \end{enumerate}
\end{thm}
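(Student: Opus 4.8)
The plan is to prove the cycle (i)$\Rightarrow$(ii)$\Rightarrow$(iii)$\Rightarrow$(i). Throughout I will use that the slices $\mathcal{S}_{/I}$ are presentable and identify with $\Fun(I,\mathcal{S})$, that a functor between presentable \icats{} is a right adjoint \IFF{} it preserves small limits and is accessible, and two facts about slicing that I isolate at the outset. First, for any object $x$ of an \icat{} $\mathcal{C}$ the forgetful functor $\mathcal{C}_{/x}\to\mathcal{C}$ \emph{creates weakly contractible limits}: the structure map of the limit cone-point is recovered because $\lim_{K}\mathrm{const}_{x}\simeq x^{\|K\|}\simeq x$ precisely when $\|K\|$ is contractible (this is why the relevant class is weakly contractible rather than merely connected). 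Second, in $\mathcal{C}_{/x}$ small products are computed as \emph{wide pullbacks} over $x$, whose indexing category has a terminal object and is therefore weakly contractible; likewise the cospan category indexing a pullback is weakly contractible.

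(i)$\Rightarrow$(ii): Write $F\simeq t_{!}p_{*}s^{*}$. The functor $s^{*}$ is simultaneously a left and a right adjoint, so it preserves all limits and all colimits; $p_{*}$ is a right adjoint and hence preserves all limits, and it is accessible because $(p_{*}X)_{b}\simeq\lim_{E_{b}}X$ and limits over the spaces $E_{b}$, being $\kappa$-compact for $\kappa$ large, commute with $\kappa$-filtered colimits; and $t_{!}$ is a left adjoint, so it preserves colimits (hence is accessible) and preserves weakly contractible limits since it commutes with the forgetful functors to $\mathcal{S}$, which create them. The composite is therefore accessible and preserves weakly contractible limits.

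(ii)$\Rightarrow$(iii): Fix $x\in\mathcal{S}_{/I}$ and consider the induced functor $F_{x}\colon(\mathcal{S}_{/I})_{/x}\to(\mathcal{S}_{/J})_{/Fx}$. Both slices are presentable, and $F_{x}$ is accessible because $F$ is, so it suffices to show $F_{x}$ preserves all small limits. Now $F_{x}$ preserves the terminal object by construction; and it preserves pullbacks and small products because, by the two slicing facts, each of these limits in $(\mathcal{S}_{/I})_{/x}$ is computed by a weakly contractible limit in $\mathcal{S}_{/I}$, which $F$ preserves, while the corresponding limit in $(\mathcal{S}_{/J})_{/Fx}$ is computed in the same way. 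Since a functor that preserves the terminal object, pullbacks, and small products preserves all small limits, $F_{x}$ is a right adjoint. This step is the technical heart of the argument, and the main obstacle: the whole point is that slicing converts the restrictive class of weakly contractible limits preserved by $F$ into \emph{all} small limits of $F_{x}$, the non-connected ones being recovered from wide pullbacks over $x$.

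(iii)$\Rightarrow$(i): Apply (iii) to the terminal object $1=\id_{I}$. Set $(B\xto{t}J):=F(1)$ and use $(\mathcal{S}_{/J})_{/F1}\simeq\mathcal{S}_{/B}$, under which the domain projection becomes $t_{!}$; then the induced functor $\bar F:=F_{1}\colon\mathcal{S}_{/I}\to\mathcal{S}_{/B}$ satisfies $F\simeq t_{!}\bar F$, and by hypothesis it is a right adjoint. Its left adjoint $L$ preserves colimits, so by the universal property of $\mathcal{S}_{/B}\simeq\mathcal{P}(B)$ as a free cocompletion, $L$ is classified by a functor $B\to\mathcal{S}_{/I}$, equivalently by a span $I\xfrom{s}E\xto{p}B$ (straightening $E\to B\times I$), and is the pull-push $L\simeq s_{!}p^{*}$. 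Taking right adjoints gives $\bar F\simeq p_{*}s^{*}$, whence $F\simeq t_{!}p_{*}s^{*}$ is the polynomial functor represented by $I\xfrom{s}E\xto{p}B\xto{t}J$.
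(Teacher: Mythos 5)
Your proof is correct and follows essentially the same route as the paper's: (i)$\Rightarrow$(ii) and (iii)$\Rightarrow$(i) coincide with the paper's arguments (the latter being the factorization of $F$ through $\mathcal{S}_{/F(\id_I)}$ followed by the classification of colimit-preserving functors between slices by spans), while your (ii)$\Rightarrow$(iii) differs only in reducing slice limits to the generating cases of terminal object, products (wide pullbacks) and pullbacks, where the paper instead observes that every limit in a slice of shape $\mathcal{I}$ is computed by a conical limit of shape $\mathcal{I}^{\triangleright}$ in $\mathcal{S}_{/I}$, which is automatically weakly contractible. Both observations are standard and the two arguments are interchangeable.
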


\begin{remark}
  Theorem~\ref{thm:pnchar} is specific to the $\infty$-category
  $\mathcal{S}$ (and its truncations, such as the category of sets).
  Even over a presheaf topos it is not true in general that a local
  right adjoint is always polynomial, as exemplified by the
  free-category monad on directed graphs \cite{WeberFamilial}.  The
  corresponding theorem in ordinary category theory has a long
  history, see \cite{GambinoKock}.  It can be extended to general
  locally cartesian closed categories with a terminal object by
  considering local \emph{fibred} right adjoints instead of just
  local right adjoints, cf.~\cite{KockKock}. Presumably the fibred
  viewpoint can be upgraded to the $\infty$-categorical setting to get
  a version of Theorem~\ref{thm:pnchar} for presentable locally
  cartesian closed $\infty$-categories, but we will not pursue this
  here.
\end{remark}

Before proving Theorem~\ref{thm:pnchar},
we need some observations on weakly contractible limits.

\begin{defn}
  A \emph{conical limit} is a limit indexed by an \icat{}
  of the form
  $\mathcal{I}^{\triangleright}$ for some \icat{} $\mathcal{I}$.
\end{defn}

\begin{lemma}\label{lem:conical=wc}
  Suppose $\mathcal{C}$ has a terminal object. Then a functor $F
  \colon \mathcal{C} \to \mathcal{D}$ preserves conical limits \IFF{}
  it preserves all weakly contractible limits.
\end{lemma}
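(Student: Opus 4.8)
The plan is to reduce both directions to a single cofinality statement about the inclusion of an \icat{} into its right cone. One direction is immediate: any \icat{} $\mathcal{J}$ with a terminal object is weakly contractible, since the terminal object determines a natural transformation from $\mathrm{id}_{\mathcal{J}}$ to a constant functor, hence a contraction of $\|\mathcal{J}\|$. Thus conical limits are a special case of weakly contractible limits, and any $F$ preserving all weakly contractible limits automatically preserves conical ones. The substance is in the converse.

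So suppose $F$ preserves conical limits, and let $p \colon \mathcal{I} \to \mathcal{C}$ be a diagram with $\mathcal{I}$ weakly contractible. Since $\mathcal{C}$ has a terminal object $\pt$, I can extend $p$ to a diagram $\bar p \colon \mathcal{I}^{\triangleright} \to \mathcal{C}$ by sending the cone point to $\pt$, with the unique maps $p(i) \to \pt$; this canonical extension is precisely where the hypothesis on $\mathcal{C}$ enters. Note that the cone point is \emph{terminal} in $\mathcal{I}^{\triangleright}$, so limits indexed by $\mathcal{I}^{\triangleright}$ are conical and are preserved by $F$.

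The key step, and the main obstacle, is to show that the inclusion $\iota \colon \mathcal{I} \hookrightarrow \mathcal{I}^{\triangleright}$ is initial, i.e.\ cofinal for the purpose of computing limits. By the $\infty$-categorical cofinality criterion (the analogue of Quillen's Theorem~A), this amounts to checking that $\mathcal{I} \times_{\mathcal{I}^{\triangleright}} (\mathcal{I}^{\triangleright})_{/x}$ is weakly contractible for every object $x \in \mathcal{I}^{\triangleright}$. For $x \in \mathcal{I}$, the cone point admits no morphism to $x$, so this slice reduces to $\mathcal{I}_{/x}$, which has the terminal object $\mathrm{id}_x$ and is therefore weakly contractible. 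For $x$ equal to the cone point, the slice $(\mathcal{I}^{\triangleright})_{/x}$ is equivalent to all of $\mathcal{I}^{\triangleright}$ (a slice over a terminal object), and pulling back along $\iota$ returns $\mathcal{I}$ itself, which is weakly contractible by hypothesis. This last case is the heart of the matter: weak contractibility of $\mathcal{I}$ is \emph{exactly} the condition that makes $\iota$ initial.

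With initiality in hand the rest is formal. Since $\iota$ is initial and the computation above involves only the index \icats{}, it applies regardless of the target, giving $\lim_{\mathcal{I}} p \simeq \lim_{\mathcal{I}^{\triangleright}} \bar p$ as well as $\lim_{\mathcal{I}} Fp \simeq \lim_{\mathcal{I}^{\triangleright}} F\bar p$ (here $F\bar p$ is a valid extension of $Fp$, as $F\bar p|_{\mathcal{I}} = Fp$). I then chain these as
\[ F(\lim_{\mathcal{I}} p) \simeq F(\lim_{\mathcal{I}^{\triangleright}} \bar p) \simeq \lim_{\mathcal{I}^{\triangleright}} F\bar p \simeq \lim_{\mathcal{I}} Fp, \]
where the middle equivalence is exactly the hypothesis that $F$ preserves the conical limit indexed by $\mathcal{I}^{\triangleright}$. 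This shows $F$ preserves the weakly contractible limit $\lim_{\mathcal{I}} p$, completing the argument.
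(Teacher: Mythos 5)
Your proof is correct and follows essentially the same route as the paper's: both extend the diagram over the cone point via the terminal object of $\mathcal{C}$ (your hand-built extension $\bar p$ is exactly the paper's right Kan extension $\phi'$, since the relevant comma category at the cone point is empty) and both reduce the comparison of limits to the coinitiality of $\mathcal{I} \hookrightarrow \mathcal{I}^{\triangleright}$ for weakly contractible $\mathcal{I}$. The only difference is that you verify that coinitiality explicitly via the Quillen Theorem~A criterion, which the paper simply asserts.
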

\begin{proof}
  Conical limits are in particular indexed by weakly contractible
  \icats{}, so suppose $F$ preserves conical limits and let $\phi \colon \mathcal{I} \to \mathcal{C}$ be a
  diagram with $\mathcal{I}$ weakly contractible. Since $\mathcal{C}$
  has a terminal object, the right Kan extension $\phi'$ of $\phi$
  along the inclusion $i \colon \mathcal{I} \hookrightarrow
  \mathcal{I}^{\triangleright}$ exists, and $\phi \simeq
  \phi'|_{\mathcal{I}}$. Moreover, if $\phi$ has a limit then so does
  $\phi'$ and the limit of $\phi$ is equivalent
  to that of $\phi'$. Since $\mathcal{I}^{\triangleright}$ is
  conical, $F$ preserves the limit of $\phi'$. But as $\mathcal{I}$ is
  weakly contractible, the inclusion $i$ is coinitial, hence the limit
  of $F \circ \phi$ exists and is equivalent to the limit of $F \circ
  \phi'$. In other words, $F$ preserves the limit of $\phi$, as required.
\end{proof}

\begin{lemma}\label{lem:slicecreates}
  For any object $x$ in an \icat{} $\mathcal{C}$, the forgetful
  functor $P\colon \mathcal{C}_{/x} \to \mathcal{C}$ preserves and reflects weakly contractible limits.
\end{lemma}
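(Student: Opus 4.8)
The plan is to compute the space of cones over a diagram in $\mathcal{C}_{/x}$ directly in terms of the space of cones over the corresponding diagram downstairs, and to observe that weak contractibility of the indexing \icat{} is precisely what makes the two agree fibrewise over a mapping space into $x$. Write $P\colon \mathcal{C}_{/x}\to\mathcal{C}$ for the projection. Recall that for objects $\alpha = (a\to x)$ and $\beta = (b\xrightarrow{g} x)$ of $\mathcal{C}_{/x}$ the mapping space is the fibre
\[
\mathrm{Map}_{\mathcal{C}_{/x}}(\alpha,\beta)\simeq \mathrm{Map}_{\mathcal{C}}(a,b)\times_{\mathrm{Map}_{\mathcal{C}}(a,x)}\{\alpha\},
\]
the pullback being formed along $g_{*}\colon \mathrm{Map}_{\mathcal{C}}(a,b)\to \mathrm{Map}_{\mathcal{C}}(a,x)$ and the point $\alpha$. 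Given a diagram $G\colon K\to\mathcal{C}_{/x}$ with $K$ weakly contractible, I would express the space of cones with apex $\alpha$ as the limit $\lim_{k\in K}\mathrm{Map}_{\mathcal{C}_{/x}}(\alpha,Gk)$, and then use that limits of spaces commute with the pullback above — the terms $\mathrm{Map}_{\mathcal{C}}(a,x)$ and $\{\alpha\}$ being constant in $k$ — to obtain
\[
\mathrm{Cone}_{\mathcal{C}_{/x}}(\alpha,G)\simeq \Bigl(\lim_{k}\mathrm{Map}_{\mathcal{C}}(a,PGk)\Bigr)\times_{\lim_k \mathrm{Map}_{\mathcal{C}}(a,x)}\lim_k\{\alpha\}.
\]

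This is where weak contractibility enters: since the limit over $K$ of a constant diagram with value a space $Z$ is $\mathrm{Map}(\|K\|,Z)$ and $\|K\|\simeq \pt$, both constant factors collapse, giving $\lim_k\mathrm{Map}_{\mathcal{C}}(a,x)\simeq \mathrm{Map}_{\mathcal{C}}(a,x)$ and $\lim_k\{\alpha\}\simeq \pt$. Hence
\[
\mathrm{Cone}_{\mathcal{C}_{/x}}(\alpha,G)\simeq \mathrm{Cone}_{\mathcal{C}}(a,PG)\times_{\mathrm{Map}_{\mathcal{C}}(a,x)}\{\alpha\},
\]
where $\mathrm{Cone}_{\mathcal{C}}(a,PG)=\lim_k\mathrm{Map}_{\mathcal{C}}(a,PGk)$ and the map to $\mathrm{Map}_{\mathcal{C}}(a,x)$ sends a cone to the common composite of its legs with the structure maps $PGk\to x$, which is well defined exactly because $K$ is weakly contractible. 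In short, for fixed $a$ the slice cone space is the fibre over $\alpha$ of a morphism of spaces lying over $\mathrm{Map}_{\mathcal{C}}(a,x)$, and by the first displayed formula the slice mapping space is the fibre of the analogous morphism $g_{*}$; moreover a cone $\mu$ over $G$ with apex $\beta$ induces a map between these morphisms covering $\mathrm{Map}_{\mathcal{C}}(a,x)$, whose restriction to the fibre over $\alpha$ is the comparison $\mathrm{Map}_{\mathcal{C}_{/x}}(\alpha,\beta)\to \mathrm{Cone}_{\mathcal{C}_{/x}}(\alpha,G)$ and whose total map is the comparison $\mathrm{Map}_{\mathcal{C}}(a,b)\to \mathrm{Cone}_{\mathcal{C}}(a,PG)$ induced by $P\mu$.

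Both assertions then follow by comparing fibres. For reflection, if $P\mu$ is a limit cone over $PG$ then the total map $\mathrm{Map}_{\mathcal{C}}(a,b)\to \mathrm{Cone}_{\mathcal{C}}(a,PG)$ is an equivalence for every $a$, so its restriction to the fibre over any $\alpha$ is an equivalence, whence $\mathrm{Map}_{\mathcal{C}_{/x}}(\alpha,\beta)\to \mathrm{Cone}_{\mathcal{C}_{/x}}(\alpha,G)$ is an equivalence for all $\alpha$ and $\mu$ is a limit cone. For preservation, if $\mu$ is a limit cone over $G$ then $\mathrm{Map}_{\mathcal{C}_{/x}}(\alpha,\beta)\to \mathrm{Cone}_{\mathcal{C}_{/x}}(\alpha,G)$ is an equivalence for every $\alpha$, i.e. the map $\mathrm{Map}_{\mathcal{C}}(a,b)\to \mathrm{Cone}_{\mathcal{C}}(a,PG)$ over $\mathrm{Map}_{\mathcal{C}}(a,x)$ is a fibrewise equivalence for each $a$; since a map of spaces over a base that is an equivalence on all fibres is an equivalence, $P\mu$ is a limit cone over $PG$. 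I expect the main technical point to be the bookkeeping in the first step — justifying that forming $\lim_{k\in K}$ commutes with the defining pullback of the slice mapping spaces and that the base and fibre are correctly identified using $\|K\|\simeq\pt$; once this fibrewise picture over $\mathrm{Map}_{\mathcal{C}}(a,x)$ is established, preservation and reflection are immediate.
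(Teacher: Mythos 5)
Your argument is correct, but it takes a genuinely different route from the one in the paper. The paper's proof is a two-line cofinality argument: a $K$-indexed diagram in $\mathcal{C}_{/x}$ corresponds to a $K^{\triangleright}$-indexed diagram in $\mathcal{C}$ whose limit computes the limit in the slice, and when $K$ is weakly contractible the inclusion $K \hookrightarrow K^{\triangleright}$ is coinitial, so that limit agrees with the limit of the restriction to $K$, i.e.\ of the underlying diagram. You instead argue representably with cone spaces: the slice mapping spaces are fibres of postcomposition maps over $\Map_{\mathcal{C}}(a,x)$, limits commute with these pullbacks, and weak contractibility enters as the collapse $\lim_{K}\mathrm{const}_{Z}\simeq \Map(\|K\|,Z)\simeq Z$; preservation and reflection then both reduce to the fact that a map of spaces over a fixed base is an equivalence if and only if it is one on every fibre. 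The two uses of weak contractibility are the same fact in two guises (coinitiality of $K\to K^{\triangleright}$ versus triviality of constant limits over $K$), so neither argument is more general than the other. The paper's version buys brevity by delegating all coherence to the standard theory of joins and slices; yours makes the fibrewise picture over $\Map_{\mathcal{C}}(a,x)$ explicit, at the cost of the bookkeeping you rightly flag --- one must know that the pullback description of $\Map_{\mathcal{C}_{/x}}(\alpha,Gk)$ is functorial in $k$, which holds because it is the straightening of the right fibration $\mathcal{C}_{/x}\to\mathcal{C}$, but this does need to be said.
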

\begin{proof}
  The limit of a diagram $f \colon \mathcal{I} \to \mathcal{C}_{/x}$ is the
  limit of the corresponding diagram $f' \colon \mathcal{I}^{\triangleright} \to
  \mathcal{C}$. If $\mathcal{I}$ is weakly contractible, then the
  inclusion $\mathcal{I} \hookrightarrow \mathcal{I}^{\triangleright}$
  is coinitial, so the limit of $f'$ is the same as the limit of
  $f'|_{\mathcal{I}}$, which is the image of $f$ under the forgetful functor.
\end{proof}

\begin{lemma}\label{lem:igpdcolimwclim}
  In the \icat{} $\mathcal{S}$, weakly contractible limits commute
  with colimits indexed by $\infty$-groupoids.
\end{lemma}
\begin{proof}
  For $X \in \mathcal{S}$ we have the straightening equivalence
  $\Fun(X, \mathcal{S}) \simeq \mathcal{S}_{/X}$, under which the
  constant diagram functor $\mathcal{S} \to \Fun(X, \mathcal{S})$
  corresponds to taking products with $X$. Passing to left adjoints,
  this means that taking $X$-indexed colimits corresponds under the
  equivalence to the forgetful functor $\mathcal{S}_{/X}\to
  \mathcal{S}$, which preserves weakly contractible limits by
  Lemma~\ref{lem:slicecreates}.
\end{proof}

\begin{propn}\label{propn:lrachar}
  Suppose $F \colon \mathcal{C} \to \mathcal{D}$ is an accessible
  functor between presentable \icats{}. Then $F$ is a local right
  adjoint \IFF{} it preserves weakly contractible limits.
\end{propn}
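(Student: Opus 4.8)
The plan is to establish the two implications separately, reducing everything to the two preceding lemmas together with formal manipulations in slice \icats{}. For each object $x \in \mathcal{C}$ I write $F_x \colon \mathcal{C}_{/x} \to \mathcal{D}_{/Fx}$ for the induced functor, and let $U \colon \mathcal{C}_{/x} \to \mathcal{C}$ and $V \colon \mathcal{D}_{/Fx} \to \mathcal{D}$ denote the forgetful functors, so that $V \circ F_x \simeq F \circ U$; this square is the bookkeeping device that lets me transfer statements between $F$ and the $F_x$.

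For the implication that a local right adjoint preserves weakly contractible limits, I would use Lemma~\ref{lem:conical=wc} to reduce to showing that $F$ preserves conical limits. Given a conical diagram $\psi \colon \mathcal{I}^{\triangleright} \to \mathcal{C}$ with value $x$ at the cone point, the structure maps $\psi(i) \to x$ exhibit $\psi$ as arising from a diagram $\tilde\psi \colon \mathcal{I} \to \mathcal{C}_{/x}$, and the conical limit of $\psi$ is computed as the underlying object $U(\lim_{\mathcal{I}} \tilde\psi)$; likewise $F \circ \psi$ arises from $\widetilde{F \circ \psi} = F_x \circ \tilde\psi$. Since $F_x$ is by hypothesis a right adjoint, it preserves $\lim_{\mathcal{I}} \tilde\psi$, and chasing the equivalence $V \circ F_x \simeq F \circ U$ then yields $F(\lim \psi) \simeq \lim (F \circ \psi)$, as desired.

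For the converse, I fix $x$ and must show that $F_x$ is a right adjoint. Both $\mathcal{C}_{/x}$ and $\mathcal{D}_{/Fx}$ are presentable, so by the adjoint functor theorem for presentable \icats{} it suffices to check that $F_x$ is accessible and preserves all small limits. Accessibility is routine: the forgetful functors create filtered colimits and $F$ is accessible, so $F_x$ preserves sufficiently filtered colimits. To see that $F_x$ preserves weakly contractible limits, I would combine the equivalence $V \circ F_x \simeq F \circ U$ with Lemma~\ref{lem:slicecreates} (which says that $U$ and $V$ both preserve and reflect weakly contractible limits) and the hypothesis that $F$ preserves weakly contractible limits; concretely, $U$ carries such a limit to one in $\mathcal{C}$, $F$ preserves it, and then $V$ reflects the result back to $\mathcal{D}_{/Fx}$. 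In addition $F_x$ preserves the terminal object $\mathrm{id}_x$.

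The one genuinely non-formal step, and the part I would treat most carefully, is upgrading \emph{preserves the terminal object and weakly contractible limits} to \emph{preserves all small limits}. Here I would use that pullbacks and cofiltered limits are weakly contractible: finite products are then preserved since they are pullbacks over the terminal object, arbitrary products are preserved since they are cofiltered limits of finite products, and every small limit is built from products and pullbacks. This shows that $F_x$ preserves all small limits and hence is a right adjoint, completing the proof. The main obstacle is thus not a single hard estimate but ensuring this limit-generation argument is applied correctly and that the accessibility bookkeeping for $F_x$ goes through uniformly in $x$.
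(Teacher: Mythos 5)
Your proof is correct, and your forward direction coincides with the paper's. For the converse, however, you take a genuinely different route. The paper's proof rests on a single observation: for \emph{any} indexing $\infty$-category $\mathcal{I}$, the limit of a diagram $\mathcal{I} \to \mathcal{C}_{/x}$ is computed as the limit in $\mathcal{C}$ of the associated conical diagram $\mathcal{I}^{\triangleright} \to \mathcal{C}$, and $\mathcal{I}^{\triangleright}$ is \emph{always} weakly contractible (it has a terminal object); hence the hypothesis on $F$ immediately gives that $F_{/x}$ preserves all small limits, and the adjoint functor theorem applies. You instead transfer only the weakly contractible limits down to the slice via Lemma~\ref{lem:slicecreates} (whose proof indeed covers only weakly contractible $\mathcal{I}$, since it uses coinitiality of $\mathcal{I} \hookrightarrow \mathcal{I}^{\triangleright}$), and then rebuild all small limits from the terminal object, pullbacks, and cofiltered limits of finite products. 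That reconstruction is valid --- the cospan and cofiltered shapes are weakly contractible, finite products are pullbacks over the preserved terminal object, infinite products are cofiltered limits of finite ones, and a functor between complete $\infty$-categories preserving products and pullbacks preserves all limits by \cite{HTT}*{Proposition 4.4.2.7} --- but it is extra work that the conical-limit observation renders unnecessary: had you applied the same ``limit in the slice equals limit of the cone diagram in the base'' identity for arbitrary $\mathcal{I}$ rather than only for weakly contractible ones, the upgrade step would disappear. Both arguments establish the proposition; the paper's is shorter and makes clearer why weakly contractible limits are exactly the right class here.
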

\begin{proof}
  For every $x \in \mathcal{C}$, the induced functor
  $F_{/x} \colon \mathcal{C}_{/x} \to \mathcal{D}_{/F(x)}$ is
  accessible, so by the adjoint functor theorem it is a right adjoint
  \IFF{} it preserves limits. A limit of a diagram
  $\mathcal{I} \to \mathcal{C}_{/x}$ is the limit in $\mathcal{C}$ of
  the associated diagram
  $\mathcal{I}^{\triangleright} \to \mathcal{C}$, so the functors
  $F_{/x}$ preserve limits for all $x$ \IFF{} $F$ preserves all
  conical limits. By Lemma~\ref{lem:conical=wc} this is equivalent to
  $F$ preserving weakly contractible limits, since $\mathcal{C}$ has a
  terminal object.
\end{proof}

\begin{lemma}\label{lem:shriekconlim}
  For any map $f \colon S \to T$ in an \icat{} $\mathcal{C}$, the functor $f_{!}
  \colon \mathcal{C}_{/S}\to \mathcal{C}_{/T}$ preserves and reflects weakly
  contractible limits.
\end{lemma}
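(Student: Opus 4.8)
The plan is to recognize $f_{!}$ as the forgetful functor out of a slice, so that the lemma becomes an immediate instance of Lemma~\ref{lem:slicecreates}. Concretely, I would regard $f \colon S \to T$ as an object of the slice $\mathcal{C}_{/T}$ and consider the iterated slice $(\mathcal{C}_{/T})_{/f}$. The first step is to establish the standard equivalence
\[ (\mathcal{C}_{/T})_{/f} \simeq \mathcal{C}_{/S}, \]
under which an object $(k \colon W \to S)$ of $\mathcal{C}_{/S}$ corresponds to the composite $W \xrightarrow{k} S \xrightarrow{f} T$, viewed as an object of $\mathcal{C}_{/T}$ lying over $f$.

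Under this identification the forgetful functor $(\mathcal{C}_{/T})_{/f} \to \mathcal{C}_{/T}$ sends $(k \colon W \to S)$ to the structure map $(fk \colon W \to T)$, which is exactly $f_{!}(k)$. Hence $f_{!}$ is equivalent, over $\mathcal{C}_{/T}$, to the forgetful functor of the slice $(\mathcal{C}_{/T})_{/f}$. Applying Lemma~\ref{lem:slicecreates} to the \icat{} $\mathcal{C}_{/T}$ and the object $f$ then shows that this forgetful functor --- and therefore $f_{!}$ --- preserves and reflects weakly contractible limits, which is precisely the assertion.

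The only genuine work lies in verifying the slice identification together with its compatibility with the forgetful functors; this is a standard fact about slices of \icats{} (the iterated-slice equivalence), but it should be stated carefully since everything takes place up to coherent homotopy, and I expect this to be the main point requiring attention. Should one prefer to avoid invoking the iterated-slice equivalence directly, I would instead argue via the commuting triangle of forgetful functors $U_{T} \circ f_{!} \simeq U_{S}$, where $U_{S} \colon \mathcal{C}_{/S} \to \mathcal{C}$ and $U_{T} \colon \mathcal{C}_{/T} \to \mathcal{C}$ both preserve and reflect weakly contractible limits by Lemma~\ref{lem:slicecreates}: given a cone $\kappa$ over a weakly contractible diagram in $\mathcal{C}_{/S}$, preservation follows because $U_{S}(\kappa) \simeq U_{T}(f_{!}(\kappa))$ is a limit cone (as $U_{S}$ preserves) together with the fact that $U_{T}$ reflects, while reflection follows symmetrically by interchanging the roles of ``preserves'' and ``reflects.'' Either way the content reduces entirely to Lemma~\ref{lem:slicecreates}, so no obstacle beyond bookkeeping is anticipated.
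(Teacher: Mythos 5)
Your proposal is correct, and your fallback argument --- the commuting triangle $U_{T}\circ f_{!}\simeq U_{S}$ over $\mathcal{C}$ with both forgetful functors preserving and reflecting weakly contractible limits by Lemma~\ref{lem:slicecreates} --- is verbatim the paper's own proof. Your primary route via the iterated-slice equivalence $(\mathcal{C}_{/T})_{/f}\simeq\mathcal{C}_{/S}$ is just a repackaging of the same idea (a single application of Lemma~\ref{lem:slicecreates} in $\mathcal{C}_{/T}$ in place of two applications over $\mathcal{C}$), and both reduce entirely to that lemma, so there is nothing further to fix.
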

\begin{proof}
  In the commutative triangle
  \opctriangle{\mathcal{C}_{/S}}{\mathcal{C}_{/T}}{\mathcal{C},}{f_{!}}{}{}
  both forgetful functors to $\mathcal{C}$ preserve and reflect weakly contractible
  limits. Therefore so does $f_{!}$.
\end{proof}

\begin{lemma}\label{lem:leftadjspan}
  Suppose $F \colon \mathcal{S}_{/I} \to \mathcal{S}_{/J}$ is a
  functor that preserves colimits (equivalently, by the adjoint
  functor theorem, it is a left
  adjoint). Then $F$ is of the form $s_{!}p^{*}$ for some span
  \[ I \xfrom{p} U \xto{s} J.\]
\end{lemma}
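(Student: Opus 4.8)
The plan is to exploit the universal property of the slice $\mathcal{S}_{/I}$ as a presheaf $\infty$-category. First I would invoke the straightening equivalence $\mathcal{S}_{/I} \simeq \Fun(I, \mathcal{S}) = \mathcal{P}(I)$, where $I$ is regarded as an $\infty$-groupoid so that $I^{\op} \simeq I$. This exhibits $\mathcal{S}_{/I}$ as the free cocompletion of $I$, with Yoneda embedding $y \colon I \hookrightarrow \mathcal{S}_{/I}$ given on points by $i \mapsto (* \xto{i} I)$; indeed, the fibre of $* \xto{i} I$ over a point $i'$ is the mapping space $\mathrm{Map}_I(i', i)$, which is exactly the value at $i'$ of the representable presheaf at $i$. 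Since $F$ preserves colimits and $\mathcal{S}_{/J}$ is cocomplete, the free-cocompletion property shows that $F$ is the left Kan extension along $y$ of its restriction $f := F \circ y \colon I \to \mathcal{S}_{/J}$, and moreover that any colimit-preserving functor out of $\mathcal{S}_{/I}$ is determined, up to equivalence, by such a restriction.

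Next I would extract the span from the restriction $f$. Using the straightening equivalence $\mathcal{S}_{/J} \simeq \Fun(J, \mathcal{S})$ and currying, the functor $f \colon I \to \Fun(J, \mathcal{S})$ corresponds to a functor $I \times J \to \mathcal{S}$. Unstraightening this over the $\infty$-groupoid $I \times J$ (where functors to $\mathcal{S}$ are the same as objects of $\mathcal{S}_{/(I\times J)}$) yields a space $U$ equipped with a map $U \to I \times J$, whose two components furnish the desired span $I \xfrom{p} U \xto{s} J$.

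Finally I would verify that $F \simeq s_! p^*$ for this span. The functor $s_! p^*$ preserves colimits, being a composite of the left adjoints $p^* \dashv p_*$ and $s_! \dashv s^*$, so by the first paragraph it suffices to check that $F$ and $s_! p^*$ agree after restriction along $y$, i.e.\ on the representables $(* \xto{i} I)$. Here $p^*(* \xto{i} I)$ is the fibre $U_i$ of $p$ over $i$, viewed in $\mathcal{S}_{/U}$, and $s_!$ then records its structure map to $J$, so $s_! p^*$ sends the representable at $i$ to $(U_i \to J) \in \mathcal{S}_{/J}$. On the other hand, tracing the straightening and unstraightening equivalences, $f(i)$ is the object of $\mathcal{S}_{/J}$ whose fibre over $j$ is $U_{(i,j)}$, which is again precisely $(U_i \to J)$. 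Thus the two functors agree on representables and therefore coincide.

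The only genuine difficulty is bookkeeping: the argument hinges on correctly identifying the Yoneda embedding with the inclusion of points and on tracking the straightening, currying, and unstraightening equivalences carefully enough to match $f(i)$ with $(U_i \to J)$. Once these identifications are pinned down, the comparison on representables is immediate, and the colimit-preservation of both sides upgrades it to a genuine equivalence $F \simeq s_! p^*$.
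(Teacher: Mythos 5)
Your proof is correct and follows essentially the same route as the paper: both identify colimit-preserving functors $\mathcal{P}(I) \to \mathcal{P}(J)$ with functors $I \times J \to \mathcal{S}$, hence with spans, via the free-cocompletion property and straightening. The only difference is that you explicitly verify on representables that the span corresponds to $s_!p^*$, a point the paper simply asserts, and your verification is sound.
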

\begin{proof}
  Using equivalences of the form $\mathcal{S}_{/I} \simeq \Fun(I,
  \mathcal{S})$ we get an equivalence
  \[ \Fun^{\mathrm{L}}(\mathcal{S}_{/I}, \mathcal{S}_{/J}) \simeq
  \Fun^{\mathrm{L}}(\mathcal{P}(I), \mathcal{P}(J)) \simeq \Fun(I,
  \Fun(J, \mathcal{S})) \simeq \Fun(I \times J, \mathcal{S}) \simeq
  \mathcal{S}_{/I \times J}.\]
  Thus every colimit-preserving functor corresponds to a span, and
  under this equivalence the span  \[ I \xfrom{p} U \xto{s} J\] is
  sent to $s_{!}p^{*}$. 
\end{proof}

\begin{proof}[Proof of Theorem~\ref{thm:pnchar}]
  The equivalence of (ii) and (iii) is a special case of
  Proposition~\ref{propn:lrachar}. To see that (i) implies (ii),
  suppose $F \simeq t_{!}p_{*}s^{*}$. The functors $t_{!}$, $p_{*}$
  and $s^{*}$ are all accessible, and $p_{*}$ and $s^{*}$ preserve all
  limits, being right adjoints; by Lemma~\ref{lem:shriekconlim} the
  functor $t_{!}$ also preserves weakly contractible limits, which gives (ii). 
  Finally we show that (iii) implies (i). Observe that $F$ factors as
  \[ \mathcal{S}_{/I} \xto{F_{/I}} \mathcal{S}_{/F(I)} \xto{t_{!}}
  \mathcal{S}_{/J} \]
  where $t$ is the map $F(I) \to J$. By assumption $F_{/I}$ is a right
  adjoint, so it follows from Lemma~\ref{lem:leftadjspan} that it is
  of the form $p_{*}s^{*}$ for some span
  \[ I \xfrom{s} U \xto{p} F(I).\qedhere\]
\end{proof}

\subsection{Morphisms of Polynomial Functors}\label{subsec:morpolyfun}
For our purposes the appropriate type of morphism between
polynomial functors is a cartesian natural
transformation, so we make the following definition:
\begin{defn}
  The \icat{} $\PolyFun(I,J)$ of polynomial functors is the
  subcategory of $\Fun(\mathcal{S}_{/I}, \mathcal{S}_{/J})$ with objects
  the polynomial functors and morphisms the cartesian natural transformations
  between them.
\end{defn}

We now wish to identify the cartesian natural transformations with
certain diagrams.
\begin{defn}\label{defn:carttransdiag}
  Suppose given a commutative diagram of spaces
  \[
    \begin{tikzcd}
      {} & E' \drpullback 
      \arrow{dd}{\epsilon} \arrow[swap]{dl}{s'} \arrow{r}{p'} & B' \arrow{dd}{\beta} \arrow{dr}{t'} \\
      I  & & {} & J \\
      & E \arrow{ul}{s} \arrow[swap]{r}{p} & B, \arrow[swap]{ur}{t} 
    \end{tikzcd}
  \]
  where the middle square is cartesian.
  Let $F' := t'_{!}p'_{*}s'^{*}$ and $F := t_{!}p_{*}s^{*}$ denote the
  corresponding polynomial functors. Then we define a cartesian
  natural transformation $\phi \colon F' \to F$ as the composite
  \[ t'_{!}p'_{*}s'^{*} \simeq t'_{!}p'_{*}\epsilon^{*}s^{*} \simeq
  t'_{!}\beta^{*}p_{*}s^{*} \simeq t_{!}\beta_{!}\beta^{*}p_{*}s^{*}
  \to t_{!}p_{*}s^{*}\]
  using the Beck-Chevalley transformation for the cartesian square and
  the counit for $\beta_{!} \dashv \beta^{*}$.  Observe that the
  component of $\phi$ on the terminal object $\id_I$ is essentially
  $\beta$ itself (as follows since right adjoints preserve terminal
  objects): $\phi_{\id_I} \colon F'(\id_I) \to F(\id_I)$
  is canonically identified with $\beta\colon B' \to B$ as maps over
  $J$.
\end{defn}

Our goal in this subsection is to show that in the $\infty$-category of 
spaces, this construction gives an
equivalence between the space of such diagrams and the mapping space
$\Map_{\txt{PolyFun}(I,J)}(F', F)$. (Later, we will show that this
extends to an equivalence of \icats{}.) We first prove that every
cartesian natural transformation is of this form, 
which is a consequence of the following observation: 
\begin{lemma}\label{lem:carttrcartfib}
  Suppose $\mathcal{C}$ is an \icat{} with a terminal object $*$ and
  $\mathcal{D}$ is an \icat{} with pullbacks. Then the functor
  \[ \txt{ev}_{*} \colon \Fun(\mathcal{C},\mathcal{D}) \to
  \mathcal{D} \]
  is a cartesian fibration, and the cartesian morphisms are precisely
  the cartesian natural transformations.
\end{lemma}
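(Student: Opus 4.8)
The plan is to exhibit, for each functor below and each morphism beneath it in $\mathcal{D}$, an explicit cartesian lift built by pointwise pullback, and then to match these lifts with the cartesian natural transformations of Remark~\ref{rmk:cart*}.

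First I would construct the candidate lift. Given $G \colon \mathcal{C} \to \mathcal{D}$ and a morphism $\psi \colon d \to G(*)$ in $\mathcal{D} = \Fun(\{*\},\mathcal{D})$, write $\underline{d}$ and $\underline{G(*)}$ for the constant functors and let $G \to \underline{G(*)}$ be the natural transformation whose component at $c$ is the map $G(c) \to G(*)$ induced by the unique arrow $c \to *$. Since $\mathcal{D}$ has pullbacks and limits in $\Fun(\mathcal{C},\mathcal{D})$ are computed pointwise, the pullback $F := G \times_{\underline{G(*)}} \underline{d}$ exists, with $F(c) \simeq G(c) \times_{G(*)} d$; evaluating at the terminal object gives $F(*) \simeq d$, and the projection $\phi \colon F \to G$ restricts to $\psi$ at $*$. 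The naturality squares of $\phi$ of the form $F(c) \to F(*)$, $G(c) \to G(*)$ are pullbacks by construction, so by Remark~\ref{rmk:cart*} the transformation $\phi$ is a cartesian natural transformation.

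The crux is to show that $\phi$ is $\txt{ev}_{*}$-cartesian, that is, that for every $H \in \Fun(\mathcal{C},\mathcal{D})$ the square
\[
\begin{tikzcd}
\Map(H, F) \arrow{r} \arrow{d} & \Map(H, G) \arrow{d} \\
\Map_{\mathcal{D}}(H(*), F(*)) \arrow{r} & \Map_{\mathcal{D}}(H(*), G(*))
\end{tikzcd}
\]
is a pullback of spaces, the vertical maps being evaluation at $*$. I would check this fibrewise over a point $\eta \in \Map(H,G)$. Since $\Map(H,-)$ preserves the pullback $F = G \times_{\underline{G(*)}} \underline{d}$, the fibre over $\eta$ of the left-hand vertical's horizontal comparison is identified with the fibre over $\eta$ of $\Map(H,\underline{d}) \to \Map(H,\underline{G(*)})$. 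Here comes the key input: because $*$ is terminal in $\mathcal{C}$, the inclusion $\{*\} \hookrightarrow \mathcal{C}$ is cofinal, so $\mathrm{colim}_{\mathcal{C}} H \simeq H(*)$; the universal property of the colimit then yields $\Map(H,\underline{d}) \simeq \Map_{\mathcal{D}}(H(*), d)$ and $\Map(H,\underline{G(*)}) \simeq \Map_{\mathcal{D}}(H(*), G(*))$, under which the comparison becomes postcomposition with $\psi$ and the basepoint becomes $\eta_{*}$. Hence this fibre agrees with the fibre over $\eta_{*}$ of $\Map_{\mathcal{D}}(H(*), F(*)) \to \Map_{\mathcal{D}}(H(*), G(*))$, which is exactly the fibre appearing in the displayed square. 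This matching of fibres shows the square is a pullback, so $\phi$ is $\txt{ev}_{*}$-cartesian. I expect this colimit/cofinality computation to be the main obstacle, since it is the one place the hypothesis that $*$ is terminal enters essentially.

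Having produced a cartesian lift of every $\psi \colon d \to \txt{ev}_{*}(G)$, and noting that $\txt{ev}_{*}$ is an inner fibration (being evaluation at an object, induced by the inclusion $\{*\} \hookrightarrow \mathcal{C}$), I conclude that $\txt{ev}_{*}$ is a cartesian fibration. Finally I would identify the two classes of morphism. Any cartesian natural transformation $\phi' \colon F' \to G$ has, for each $c$, a pullback square $F'(c) \to F'(*)$, $G(c) \to G(*)$, exhibiting $F' \simeq G \times_{\underline{G(*)}} \underline{F'(*)}$; thus $\phi'$ is precisely the pullback lift over $\phi'_{*}$ and so is $\txt{ev}_{*}$-cartesian. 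Conversely, any $\txt{ev}_{*}$-cartesian $\phi'$ is equivalent over $G$ to the constructed lift $\phi$ by uniqueness of cartesian lifts, and since equivalences in $\Fun(\mathcal{C},\mathcal{D})$ carry pullback squares to pullback squares, $\phi'$ is a cartesian natural transformation. The two classes therefore coincide, as claimed.
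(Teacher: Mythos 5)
Your proposal is correct, and it builds exactly the same cartesian lift as the paper: the pointwise pullback $G \times_{\underline{G(*)}} \underline{d}$ of $G$ along the constant-functor image of $\psi$, followed by the identification of cartesian edges with cartesian transformations via Remark~\ref{rmk:cart*}. The difference is in how cartesianness of that lift is certified. The paper observes that $\txt{ev}_{*}$ has the constant-functor right adjoint $r$ and then simply cites the general criteria of \cite{nmorita}*{Corollary 4.52, Proposition 4.51}, which say that for a functor admitting a right adjoint it suffices to check that the pullback along the unit evaluates correctly at $*$, and which characterize the cartesian edges as those whose naturality square against the unit is cartesian. You instead verify the defining mapping-space condition directly: using that $\Map(H,-)$ preserves the pullback defining $F$ and that the inclusion of the terminal object is cofinal (equivalently, the adjunction $\txt{ev}_{*} \dashv r$, which is what your cofinality computation reestablishes), you match the fibres of $\Map(H,F)\to\Map(H,G)$ with those of $\Map(H(*),F(*))\to\Map(H(*),G(*))$ over each $\eta$. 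This buys a self-contained argument at the cost of a slightly delicate fibrewise check -- one should be explicit that the identification of fibres is induced by the canonical comparison map (evaluation at $*$) and is performed over every point of $\Map(H,G)$, which is what legitimates concluding that the square is a pullback -- whereas the paper's route outsources precisely this bookkeeping to the cited results. Both proofs then conclude identically, using the $2$-of-$3$ property implicit in Remark~\ref{rmk:cart*} and uniqueness of cartesian lifts to identify the two classes of morphisms.
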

\begin{proof}
  Observe that $\txt{ev}_{*}$ has a right adjoint $r$, taking
  $d \in \mathcal{D}$ to the constant functor $r(d) \colon \mathcal{C}
  \to \mathcal{D}$ with value $d$.  Let $\eta$ denote the unit and 
  $\epsilon$ the counit.  
  We can apply the criterion of (the dual
  of) \cite{nmorita}*{Corollary 4.52}: $\txt{ev}_{*}$ is cartesian
  \IFF{} for every functor $F \in \Fun(\mathcal{C}, \mathcal{D})$
  and every morphism $d \to F(*)$ in $\mathcal{D}$, in the pullback square
  \[
  \begin{tikzcd}
	  F' \drpullback \arrow{d} \arrow{r} & F \arrow{d}{\eta_F} \\
	  r(d) \arrow{r} & r(F(*)) ,
  \end{tikzcd}
  \]
  the composite
  \[ F'(*) \longrightarrow r(d)(*) 
  \stackrel{\epsilon_d}\longrightarrow d \]
  is an equivalence.  But this is obvious since pullbacks in
  $\Fun(\mathcal{C}, \mathcal{D})$ are computed objectwise, so we have a
  pullback square
  \[
  \begin{tikzcd}
	  F'(*) \drpullback \arrow{d} \arrow{r} & F(*) \arrow{d} \\
	  d \arrow{r} & F(*).
  \end{tikzcd}
  \]
  Moreover, by \cite{nmorita}*{Proposition 4.51} a morphism
  $\phi \colon F \to G$ in $\Fun(\mathcal{C}, \mathcal{D})$ is
  $\txt{ev}_{*}$-cartesian \IFF{} the commutative square
  \csquare{F}{G}{rF(*)}{rG(*)}{\phi}{\eta_F}{\eta_G}{r(\phi_{*})} is
  cartesian, i.e.~\IFF{} for every $x \in \mathcal{C}$ the square
  \csquare{F(x)}{G(x)}{F(*)}{G(*)}{\phi_{x}}{}{}{\phi_{*}} is
  cartesian, which is equivalent to $\phi$ being cartesian by
  Remark~\ref{rmk:cart*}.
\end{proof}

\begin{defn}
  Let $\Fun(\mathcal{C}, \mathcal{D})^{\txt{cart}}$ denote the
  subcategory of $\Fun(\mathcal{C}, \mathcal{D})$ containing only the
  cartesian natural transformations. 
\end{defn}

\begin{remark}
  By Lemma~\ref{lem:carttrcartfib} the \icat{} $\Fun(\mathcal{C},
  \mathcal{D})^{\txt{cart}}$ is precisely the subcategory of
  $\Fun(\mathcal{C}, \mathcal{D})$ containing only the cartesian
  morphisms for the cartesian fibration $\txt{ev}_{*}$. The
  restriction of this to functor $\txt{ev}_{*} \colon
  \Fun(\mathcal{C}, \mathcal{D})^{\txt{cart}} \to \mathcal{D}$ is hence a
  right fibration.
\end{remark}

\begin{lemma}\label{lem:polypoly-ess}
  Suppose $F \colon \mathcal{S}_{/I} \to \mathcal{S}_{/J}$ is a
  polynomial functor, represented by a diagram
  \[ I \xfrom{s} E \xto{p} B \xto{t} J.\]
  If $\phi \colon F' \to F$ is a cartesian natural transformation,
  then $F'$ is also a polynomial functor, and $\phi$ is equivalent to
  the natural transformation associated to a diagram
  \[
  \begin{tikzcd}
    {} & E' \drpullback
	\arrow{dd}{\epsilon} \arrow[swap]{dl}{s'} \arrow{r}{p'} & B' 
	\arrow{dd}{\beta} \arrow{dr}{t'} \\
    I  & & {} & J   . \\
     & E \arrow{ul}{s} \arrow[swap]{r}{p} & B \arrow[swap]{ur}{t} 
  \end{tikzcd}
  \]
\end{lemma}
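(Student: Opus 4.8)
The plan is to realise both $\phi$ and a second, explicitly constructed cartesian transformation as lifts of the same morphism along the right fibration
\[ \txt{ev}_{\id_I} \colon \Fun(\mathcal{S}_{/I}, \mathcal{S}_{/J})^{\txt{cart}} \to \mathcal{S}_{/J} \]
of the definition following Lemma~\ref{lem:carttrcartfib} (which applies since $\mathcal{S}_{/I}$ has terminal object $\id_I$ and $\mathcal{S}_{/J}$ has pullbacks), and then to conclude by the essential uniqueness of such lifts. First I would evaluate $F$ at the terminal object. Since $s^{*}$ and $p_{*}$ are right adjoints they preserve terminal objects, so $s^{*}(\id_I)\simeq \id_E$ and $p_{*}(\id_E)\simeq \id_B$, whence $F(\id_I)=t_{!}p_{*}s^{*}(\id_I)\simeq t_{!}(\id_B)$ is the object $(t\colon B\to J)$ of $\mathcal{S}_{/J}$. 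Writing $B' := F'(\id_I)$ and $\beta := \phi_{\id_I}\colon B'\to B$ as a map over $J$, the morphism $\beta$ is exactly the image of $\phi$ under $\txt{ev}_{\id_I}$.

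Next I would manufacture a competing lift from an explicit diagram. Forming the pullback $E' := E\times_{B}B'$ with projections $\epsilon\colon E'\to E$ and $p'\colon E'\to B'$, and setting $s' := s\epsilon\colon E'\to I$ and $t' := t\beta\colon B'\to J$, one obtains a commutative diagram of precisely the shape in the statement, with the square cartesian by construction. By Definition~\ref{defn:carttransdiag} this yields a polynomial functor $F'' := t'_{!}p'_{*}s'^{*}$ together with an associated cartesian natural transformation $\psi\colon F''\to F$. Moreover $F''(\id_I)\simeq (t'\colon B'\to J)$ by the same terminal-object computation, and the final observation of Definition~\ref{defn:carttransdiag} identifies the component $\psi_{\id_I}$ with $\beta$. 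Thus $\psi$ is also a morphism in $\Fun(\mathcal{S}_{/I},\mathcal{S}_{/J})^{\txt{cart}}$ with target $F$ whose image under $\txt{ev}_{\id_I}$ is $\beta$.

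Finally I would invoke uniqueness of lifts. Both $\phi\colon F'\to F$ and $\psi\colon F''\to F$ are morphisms of the right fibration $\txt{ev}_{\id_I}$ with the same target $F$ and the same image $\beta$; since the space of lifts of a fixed morphism with fixed target along a right fibration is contractible, $\phi$ and $\psi$ are equivalent. In particular their sources agree, so $F'\simeq F''$ is polynomial and $\phi$ is equivalent to the natural transformation associated to the displayed diagram, as required. The one point demanding care---and the main obstacle---is the bookkeeping guaranteeing that $\phi$ and $\psi$ really are lifts of the \emph{same} morphism $\beta$ (and not merely of equivalent objects over $J$); this rests precisely on the identification $\psi_{\id_I}\simeq\beta$ supplied by Definition~\ref{defn:carttransdiag} together with the definition $\beta=\phi_{\id_I}$. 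Everything else is a direct application of the fibration formalism already set up.
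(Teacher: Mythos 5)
Your argument is correct and follows essentially the same route as the paper's own proof: define $\beta:=\phi_{\id_I}$, build the competing transformation from the pullback diagram via Definition~\ref{defn:carttransdiag}, and conclude by uniqueness of cartesian lifts over $\txt{ev}_{\id_I}$ using Lemma~\ref{lem:carttrcartfib}. Your extra care in verifying $F(\id_I)\simeq t$ and identifying $\psi_{\id_I}$ with $\beta$ just makes explicit what the paper leaves implicit.
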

\begin{proof}
  Let $(t' \colon B' \to J) := F'(\id_{I})$, then the map $\phi_{\id_{I}}$ gives
 a map $\beta \colon B' \to B$ over $J$. We can then define $\epsilon$ as the
 pullback of $\beta$ along $p$ and put $s' := s \circ \epsilon$ to get a
 diagram of this form. The construction of
 Definition~\ref{defn:carttransdiag} then gives a natural
 transformation $\phi' \colon F'' \to F$. Thus we have two cartesian
 natural transformations to $F$ with the same image in
 $\mathcal{S}_{/J}$ under evaluation at the terminal object. Thus by
 Lemma~\ref{lem:carttrcartfib}, $\phi$ and $\phi'$
 are both cartesian morphisms to $F$ with the same image in
 $\mathcal{S}_{/J}$, and so they must be equivalent --- in particular
 $F' \simeq F''$, which implies that $F'$ is indeed polynomial.
\end{proof}

As a consequence, we get:
\begin{lemma}
  The projection $\txt{ev}_{\id_{I}} \colon \txt{PolyFun}(I,J) \to
  \mathcal{S}_{/J}$ is a right fibration.
  \qed
\end{lemma}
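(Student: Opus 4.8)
The plan is to exhibit $\txt{PolyFun}(I,J)$ as a full subcategory of $\Fun(\mathcal{S}_{/I}, \mathcal{S}_{/J})^{\txt{cart}}$ and to transport the right fibration structure already available on the ambient category. Indeed, $\mathcal{S}_{/I}$ has a terminal object $\id_I$ and $\mathcal{S}_{/J}$ has pullbacks, so the definition preceding this lemma (applied with $\mathcal{C} = \mathcal{S}_{/I}$ and $\mathcal{D} = \mathcal{S}_{/J}$, and resting on Lemma~\ref{lem:carttrcartfib}) tells us that
\[ \txt{ev}_{\id_I} \colon \Fun(\mathcal{S}_{/I}, \mathcal{S}_{/J})^{\txt{cart}} \longrightarrow \mathcal{S}_{/J} \]
is a right fibration. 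By definition $\txt{PolyFun}(I,J)$ is precisely the full subcategory of $\Fun(\mathcal{S}_{/I}, \mathcal{S}_{/J})^{\txt{cart}}$ spanned by the polynomial functors, and the projection in the statement is the restriction of $\txt{ev}_{\id_I}$ along this inclusion. It thus remains to show that this restriction is still a right fibration.

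First I would record the only property that can obstruct this. Since every morphism in a right fibration is cartesian, the restriction of a right fibration $q \colon \mathcal{E} \to \mathcal{B}$ to a full subcategory $\mathcal{E}_0 \subseteq \mathcal{E}$ is again a right fibration as soon as $\mathcal{E}_0$ is closed under cartesian lifts; that is, for every object $e \in \mathcal{E}_0$ and every morphism $e' \to e$ in $\mathcal{E}$, the source $e'$ again lies in $\mathcal{E}_0$. In the case at hand this is exactly Lemma~\ref{lem:polypoly-ess}: if $F$ is polynomial and $\phi \colon F' \to F$ is a cartesian natural transformation (i.e.\ an arbitrary morphism of $\Fun(\mathcal{S}_{/I}, \mathcal{S}_{/J})^{\txt{cart}}$ with polynomial target), then $F'$ is again polynomial.

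Finally I would deduce the statement from this closure property by straightening. The right fibration $\txt{ev}_{\id_I}$ is classified by a presheaf $X \colon \mathcal{S}_{/J}^{\op} \to \mathcal{S}$ whose value at an object $b$ is the fibre of $\txt{ev}_{\id_I}$ over $b$ and whose transition maps are given by cartesian transport. The polynomial functors in the fibre over $b$ span a subspace $X_0(b) \subseteq X(b)$, and the closure property above says precisely that cartesian transport carries $X_0(b)$ into $X_0(b')$ for every $\beta \colon b' \to b$; hence the subspaces $X_0(b)$ assemble into a subpresheaf $X_0 \subseteq X$. Unstraightening the inclusion $X_0 \hookrightarrow X$ recovers $\txt{PolyFun}(I,J) \hookrightarrow \Fun(\mathcal{S}_{/I}, \mathcal{S}_{/J})^{\txt{cart}}$ over $\mathcal{S}_{/J}$, and the unstraightening of a presheaf of spaces is a right fibration. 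The only non-formal ingredient here is the closure property supplied by Lemma~\ref{lem:polypoly-ess}; everything else is a standard manipulation with right fibrations, so I do not expect a serious obstacle.
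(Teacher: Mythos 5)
Your proposal is correct and follows exactly the route the paper intends (the lemma is stated with a \qed as an immediate consequence): the ambient evaluation functor on $\Fun(\mathcal{S}_{/I},\mathcal{S}_{/J})^{\txt{cart}}$ is a right fibration by Lemma~\ref{lem:carttrcartfib}, and Lemma~\ref{lem:polypoly-ess} supplies precisely the closure of the full subcategory $\PolyFun(I,J)$ under sources of morphisms needed to restrict it. Your explicit straightening argument just spells out the formal step the paper leaves implicit.
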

It remains to understand the fibres of this fibration.  Note that, for
$\mathcal{C}$ and $\mathcal{D}$ \icats{} where $\mathcal{C}$ has a terminal
object $*$, the fibre of $\txt{ev}_{*} \colon \Fun(\mathcal{C},
\mathcal{D}) \to \mathcal{D}$ at $d \in \mathcal{D}$ is the \icat{} $\Fun_{*}(\mathcal{C},
\mathcal{D}_{/d})$ of functors that preserve the terminal object.
Restricting to polynomial functors from $\mathcal{S}_{/I}$ to
$\mathcal{S}_{/J}$ (with all natural transformations allowed), the fibre at
$t \colon B \to J$ is $\Fun^{\mathrm{R}}(\mathcal{S}_{/I},
\mathcal{S}_{/B})$, since polynomial functors are local right adjoints by
Theorem~\ref{thm:pnchar}.  This fibre can be described explicitly:

\begin{lemma}\label{lem:SIB}
  There is a natural equivalence
  \[
  (\mathcal{S}_{/I \times B})^{\op} \isoto \Fun^{\mathrm{R}}(\mathcal{S}_{/I},
  \mathcal{S}_{/B}) ,
  \]
  which sends a span $I \xfrom{s} X \xto{p} B$ to the functor
  $p_{*}s^{*}$, and sends a map of spans
\[
\begin{tikzcd}
  {} & X \arrow[swap]{dl}{s} \arrow{dr}{p}  \arrow{dd}{f}\\
  I  & & B \\
  & Y \arrow{ul}{s'} \arrow[swap]{ur}{p'}
\end{tikzcd}
\]
to the natural transformation
\[ p'_{*}s'^{*} \to p'_{*}f_{*}f^{*}s'^{*} \simeq p_{*}s^{*} \]
induced by the unit of the adjunction $f^{*} \dashv f_{*}$. 
\end{lemma}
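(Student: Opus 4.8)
The plan is to build the equivalence by composing two standard dualities and then to read off its effect on spans. Both $\mathcal{S}_{/I}$ and $\mathcal{S}_{/B}$ are presentable, and between presentable $\infty$-categories a right adjoint is the same as an accessible limit-preserving functor; the standard duality sending a right adjoint to its left adjoint therefore gives a natural equivalence
\[
\Fun^{\mathrm{R}}(\mathcal{S}_{/I}, \mathcal{S}_{/B}) \simeq \Fun^{\mathrm{L}}(\mathcal{S}_{/B}, \mathcal{S}_{/I})^{\op}.
\]
By Lemma~\ref{lem:leftadjspan} (and the chain of equivalences in its proof) the target is identified with $(\mathcal{S}_{/B \times I})^{\op} \simeq (\mathcal{S}_{/I \times B})^{\op}$, where an object $X \to I \times B$, viewed as a span $B \xfrom{p} X \xto{s} I$, corresponds to the colimit-preserving functor $s_{!}p^{*}$. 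Composing, I obtain a natural equivalence $(\mathcal{S}_{/I \times B})^{\op} \isoto \Fun^{\mathrm{R}}(\mathcal{S}_{/I}, \mathcal{S}_{/B})$, and it remains to verify that it matches the prescription of the statement on objects and morphisms.

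On objects this is immediate from the computation of adjoints: since $s_{!} \dashv s^{*}$ and $p^{*} \dashv p_{*}$, the right adjoint of the composite $s_{!}p^{*}$ is $p_{*}s^{*}$, which is exactly the functor assigned to the span $I \xfrom{s} X \xto{p} B$ in the statement. For morphisms I would trace through the two dualities. A map of spans $f \colon X \to Y$ over $I \times B$ satisfies $s = s'f$ and $p = p'f$, hence $s_{!} \simeq s'_{!}f_{!}$ and $p^{*} \simeq f^{*}p'^{*}$; on the $\Fun^{\mathrm{L}}$-side it is therefore sent to the transformation
\[
s_{!}p^{*} \simeq s'_{!}f_{!}f^{*}p'^{*} \to s'_{!}p'^{*}
\]
obtained by whiskering the counit $f_{!}f^{*} \to \id$. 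Under the adjoint duality this transformation of left adjoints is carried to its mate between the corresponding right adjoints, which reverses its direction.

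The main obstacle, and the only non-formal point, is to identify this mate explicitly. I would use that the counit of $f_{!} \dashv f^{*}$ and the unit of $f^{*} \dashv f_{*}$ are mates of one another, together with the compatibility of the mate construction with whiskering, to conclude that the mate of the counit-induced transformation above is the whiskering
\[
p'_{*}s'^{*} \to p'_{*}f_{*}f^{*}s'^{*}
\]
of the unit $\id \to f_{*}f^{*}$. Finally the identifications $p'_{*}f_{*} \simeq (p'f)_{*} = p_{*}$ and $f^{*}s'^{*} \simeq (s'f)^{*} = s^{*}$ rewrite the target as $p_{*}s^{*}$, so that the image is precisely the unit-induced natural transformation $p'_{*}s'^{*} \to p_{*}s^{*}$ of the statement. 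Naturality of the whole assignment in $I$ and $B$ is inherited from that of the two dualities.
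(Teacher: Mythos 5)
Your proof is correct and follows the same route as the paper: the paper's proof is a one-line appeal to Lemma~\ref{lem:leftadjspan} together with the duality $\Fun^{\mathrm{R}}(\mathcal{S}_{/I},\mathcal{S}_{/B}) \simeq \Fun^{\mathrm{L}}(\mathcal{S}_{/B},\mathcal{S}_{/I})^{\op}$, which is exactly your composite. Your additional verification that the mate of the whiskered counit of $f_!\dashv f^*$ is the whiskered unit of $f^*\dashv f_*$ correctly fills in the morphism-level detail that the paper leaves implicit in the phrase ``reformulation.''
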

\begin{proof}
  This is a reformulation of Lemma~\ref{lem:leftadjspan}, using the
  natural equivalence \[\Fun^{\mathrm{R}}(\mathcal{S}_{/I}, \mathcal{S}_{/B})
  \simeq \Fun^{\mathrm{L}}(\mathcal{S}_{/B}, \mathcal{S}_{/I})^\op.\qedhere\]
\end{proof}

\begin{remark}
  Restricting to cartesian natural transformations, we see that the
  fibre of
  \[\txt{ev}_{\id_{I}} \colon \txt{PolyFun}(I,J) \to \mathcal{S}_{/J}\]
  at $B \to J$ is equivalent to the core $\infty$-groupoid of
  $\mathcal{S}_{/I \times B}$.
\end{remark}

\subsection{The $\infty$-Category of Polynomial
  Functors}\label{subsec:icatpoly}
We now wish to construct an \icat{} of all polynomial functors, i.e.\
to put the \icats{} $\PolyFun(I,J)$ for varying $I$ and $J$ together
into a single \icat{} $\PolyFun$, fibred over
$\mathcal{S}\times\mathcal{S}$ (by returning $I$ and $J$). We will
define this using a double \icat{} of colax squares of $\infty$-categories,
constructed in \S\ref{subsec:SqL}. We then define a functor from polynomials
to polynomial functors with varying source and target, and prove that
this is an equivalence.

\begin{defn}
  In \S\ref{subsec:SqL} we define a double \icat{}
  $\SqCL(\LCATI)^{\txt{v=radj}}$ where
  \begin{itemize}
  \item the objects are \icats{},
  \item the vertical morphisms are right adjoints,
  \item the horizontal morphisms are arbitrary functors,
  \item the squares (or 2-cells) are colax squares, i.e.~diagrams in
    the 
  $(\infty,2)$-category of \icats{} of shape 
  \[
  \begin{tikzcd}
    \bullet \arrow{r} \arrow{d} & \bullet \arrow{d}
    \arrow[Leftarrow,shorten >= 6pt,shorten <= 6pt]{dl} \\
    \bullet \arrow{r} & \bullet.
  \end{tikzcd}
  \]
\end{itemize}
We can pull $\SqCL(\LCATI)^{\txt{v=radj,v-op}}$ back along the
  functor
  $\mathcal{S}_{/\blank}^{*} \colon \mathcal{S}^{\op} \to
  \LCatI^{\radj}$
  taking a space $I$ to $\mathcal{S}_{/I}$ and a map
  $f \colon I \to J$ to
  $f^{*} \colon \mathcal{S}_{/J} \to \mathcal{S}_{/I}$, to obtain a
  double \icat{} where
  \begin{itemize}
  \item the objects are spaces
  \item the vertical morphisms are maps of spaces,
  \item the horizontal morphisms are arbitrary functors between slices,
  \item the squares (or 2-cells) are colax squares using the
    $(\blank)^{*}$-functors for maps of spaces.
  \end{itemize}
  We define the double \icat{} $\txt{POLYFUN}$ to be the 
  sub-double \icat{} of this pullback where the horizontal morphisms
  are polynomial functors. Thus $\txt{POLYFUN}$ has
  \begin{enumerate}[(1)]
  \item spaces as objects,
  \item maps of spaces as vertical morphisms,
  \item polynomial functors as horizontal morphisms,
  \item diagrams of the form
  \[
  \begin{tikzcd}
    \mathcal{S}_{/I} \arrow{r}{P} \arrow[Rightarrow,shorten >= 6pt,shorten <= 6pt]{dr}& \mathcal{S}_{/J} 
     \\
    \mathcal{S}_{/I'} \arrow[swap]{r}{Q} \arrow{u}{f^{*}} & \mathcal{S}_{/J'} 
    \arrow[swap]{u}{g^{*}}
  \end{tikzcd}
  \]
  as squares.
  \end{enumerate}
\end{defn}

\begin{remark}
  Taking mates in the vertical direction should give an equivalence of
  double \icats{}
  \[ \SqCL(\LCATI)^{\txt{v=radj,v-op}} \isoto
  \SqL(\LCATI)^{\txt{v=ladj}}.\]
  Assuming this, our definition of $\txt{POLYFUN}$ is equivalent to
  the alternative, and perhaps more standard,
  definition where the
  vertical morphisms are the left adjoint functors
  $f_{!} \colon \mathcal{S}_{/I} \to \mathcal{S}_{/J}$. We have chosen
  our convention to match with the correct convention for polynomial
  monads, where we really do want lax transformations
  (cf.~\cref{rmk:mndcolaxsql}) with direction reversed
  (which are not the same as colax morphisms of monads) ---
  we thereby avoid unnecessarily using the above-mentioned equivalence of
  lax and colax squares via mates, which we do not prove here.
\end{remark}

\begin{propn}
  The double \icat{} $\txt{POLYFUN}$ is framed, in the sense of
  Definition~\ref{defn:framed}.
\end{propn}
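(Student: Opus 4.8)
The plan is to verify framedness straight from Definition~\ref{defn:framed}, which asks that every vertical morphism of $\txt{POLYFUN}$ admit a companion and a conjoint among the horizontal morphisms. I will exploit that $\txt{POLYFUN}$ sits, with the same objects and the same vertical morphisms, as a sub-double-\icat{} of the pullback of $\SqCL(\LCATI)^{\txt{v=radj,v-op}}$ along $\mathcal{S}_{/\blank}^{*}$, in which \emph{every} functor between slices is a horizontal morphism. That ambient double \icat{} is framed: in it a vertical right adjoint serves as its own companion (with identity structure cells), while its left adjoint---again a functor, hence a horizontal morphism---is its conjoint, the structure cells being the unit and counit of the adjunction. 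It therefore suffices to check that these companions and conjoints already lie inside $\txt{POLYFUN}$, i.e.\ that they are polynomial functors and that their structure cells are legitimate (cartesian) squares.

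The vertical morphisms are the functors $f^{*}\colon \mathcal{S}_{/J} \to \mathcal{S}_{/I}$ attached to maps of spaces $f\colon I \to J$. Their companion and conjoint are the two adjoints $f^{*}$ and $f_{!}$ (which of the two plays which role is fixed by the variance conventions of Appendix~\ref{AppA}), and both are polynomial: $f^{*}$ is represented by $J \xfrom{f} I \xto{\id} I \xto{\id} I$ and $f_{!}$ by $I \xfrom{\id} I \xto{\id} I \xto{f} J$. The companion cells are identities, hence trivially cartesian, while the conjoint cells are the unit $\id \to f^{*}f_{!}$ and the counit $f_{!}f^{*} \to \id$ of the adjunction $f_{!} \dashv f^{*}$; by Lemma~\ref{lem:!Cart} these transformations are cartesian, so all the relevant cells are valid squares of $\txt{POLYFUN}$.

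With companions, conjoints, and their structure cells all exhibited inside $\txt{POLYFUN}$, the coherence identities of Definition~\ref{defn:framed} are inherited from the framed structure of the ambient double \icat{}, and we conclude that $\txt{POLYFUN}$ is framed. I expect the only genuine bookkeeping---and hence the main, if routine, obstacle---to be confirming that each composite appearing in these coherence cells stays within the polynomial, cartesian world; this is precisely what the composition calculus of Theorem~\ref{thm:comp} and the stability of cartesian transformations under the Beck-Chevalley and (co)unit maps (Lemma~\ref{lem:!Cart}) are designed to guarantee.
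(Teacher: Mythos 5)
Your proposal is correct and follows essentially the same route as the paper: reduce to the framedness of the ambient double $\infty$-category $\SqCL(\LCATI)^{\txt{v=radj}}$ (Proposition~\ref{propn:SqFramed}) and then observe that the four framing squares for a vertical morphism $f^{*}$ already lie in $\txt{POLYFUN}$, because $f_{!}$ and $f^{*}$ are polynomial and the unit and counit of $f_{!}\dashv f^{*}$ are cartesian by Lemma~\ref{lem:!Cart}. The only difference is that you spell out the representing diagrams for $f_{!}$ and $f^{*}$, which the paper leaves implicit.
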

\begin{proof}
  In Proposition~\ref{propn:SqFramed} we prove that the double \icat{}
  $\SqCL(\LCATI)^{\txt{v=radj}}$ is framed. But for a vertical morphism of
  the form $f^{*}$ the four squares of the framing live in the
  sub-double \icat{} $\txt{POLYFUN}$ since the unit and counit
  transformations for $f_{!}\dashv f^{*}$ are cartesian. Thus
  $\txt{POLYFUN}$ is also framed.
\end{proof}

\begin{defn}
  We write $\PolyFun$ for the \icat{} of horizontal morphisms in
  $\txt{POLYFUN}$, i.e. $\txt{POLYFUN}_{[1]}$ if we view
  $\txt{POLYFUN}$ as a cocartesian fibration over $\Dop$.
\end{defn}

Applying Proposition~\ref{propn:equipmt}, we get:
\begin{cor}\label{source-and-target}
  The source-and-target projection
  $\PolyFun \to \mathcal{S} \times \mathcal{S}$ is both cartesian and
  cocartesian. \qed
\end{cor}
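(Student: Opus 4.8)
The plan is to deduce the statement directly from the general theory of framed double \icats{} developed in Appendix~\ref{AppA}, specifically from Proposition~\ref{propn:equipmt}. That proposition is the \icatl{} incarnation of Shulman's characterization of framed bicategories: for any framed double \icat{} $\mathbb{D}$, the source-and-target projection from its \icat{} of horizontal morphisms $\mathbb{D}_{[1]}$ to the product $\mathbb{D}_{[0]} \times \mathbb{D}_{[0]}$ of its objects is both cartesian and cocartesian. Since we have just shown that $\txt{POLYFUN}$ is framed, and since by construction its objects form $\mathcal{S}$ while $\PolyFun = \txt{POLYFUN}_{[1]}$, the conclusion follows at once upon applying this proposition with $\mathbb{D} = \txt{POLYFUN}$.

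Concretely, the relevant lifts over a morphism $(f,g) \colon (I',J') \to (I,J)$ in $\mathcal{S} \times \mathcal{S}$ are produced from the framing cells attached to the vertical morphisms $f^{*}$ and $g^{*}$. Given a polynomial functor $P \colon \mathcal{S}_{/I} \to \mathcal{S}_{/J}$, one obtains its cartesian lift as the restriction along $f$ and $g$, formed by horizontally composing $P$ with the framing squares of $f^{*}$ and $g^{*}$, and dually for the cocartesian lift; the framing supplies exactly the base-change data needed to build these composites together with the universal cells witnessing that they are (co)cartesian. Crucially, the framing squares we use lie inside $\txt{POLYFUN}$ precisely because the units and counits of $f_{!} \dashv f^{*}$ are cartesian (as in the proof that $\txt{POLYFUN}$ is framed), so the resulting lifts are again polynomial functors joined by cartesian natural transformations, and no appeal to the equivalence of lax and colax squares is required.

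The substantive work is thus located entirely in Proposition~\ref{propn:equipmt}, not in the present corollary. The delicate point there --- and the reason it is relegated to the appendix --- is that verifying the (co)cartesian universal property in the \icatl{} setting cannot be reduced to inspecting a set of $2$-cells as in Shulman's $1$-categorical argument; one must instead check an equivalence of mapping spaces, and this rests on the coherence of all the Beck--Chevalley transformations packaged into the double \icat{} $\SqCL(\LCATI)^{\txt{v=radj}}$. Granting that coherence, which is the whole purpose of the scaled-simplicial-set machinery of Appendix~\ref{AppA}, the identification of $\PolyFun$ as the horizontal \icat{} of a framed double \icat{} makes the present statement a formal consequence, so I would present it exactly as a one-line application of Proposition~\ref{propn:equipmt}.
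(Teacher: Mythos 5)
Your proposal is correct and coincides with the paper's own argument: the corollary is stated as an immediate application of Proposition~\ref{propn:equipmt} to the framed double \icat{} $\txt{POLYFUN}$, exactly as you present it. Your concrete description of the (co)cartesian lifts via horizontal composition with the framing squares of $f^{*}$ and $g^{*}$ also matches the explicit formulas $g_{!}Ff^{*}$ and $g^{*}Ff_{!}$ recorded in Remark~\ref{rmk:cartpoly}.
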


\begin{remark}\label{rmk:cartpoly}
  For morphisms $f \colon I \to I'$, $g \colon J \to J'$, the
  cocartesian pushforward of $F \in \PolyFun(I,J)$ along $(f,g)$
  is the composite $g_{!}Ff^{*}$, while the cartesian pullback of $G
  \in \PolyFun(I',J')$ is $g^{*}Ff_{!}$. Note that if $F$ corresponds
  to the diagram
  \[ I \xfrom{s} E \xto{p} B \xto{t} J,\]
  then the pushforward $g_{!}Ff^{*}$ corresponds to 
  \[ I' \xfrom{fs} E \xto{p} B \xto{gt} J',\]
  while the cartesian pullback is more complicated to describe
  diagrammatically.
\end{remark}

\begin{defn}\label{defn:Poly}
  Let $\Pi$ denote the category
  $\bullet \from \bullet \to \bullet \to \bullet$, which decomposes as
  a colimit
  $(\Delta^{1})^{\op} \amalg_{\Delta^{0}} \Delta^{1} \amalg_{\Delta^{0}}
  \Delta^{1}$ in $\CatI$.
  We define $\Poly$ to be the subcategory of
  $\Fun(\Pi, \mathcal{S})$ containing only those morphisms where the
  middle commuting square is cartesian. In other words, we have a
  natural equivalence
 \[\Poly \simeq \Fun((\Delta^{1})^{\op}, \mathcal{S})
  \times_{\mathcal{S}} \Fun(\Delta^{1}, \mathcal{S})^{\txt{cart}}
  \times_{\mathcal{S}} \Fun(\Delta^{1}, \mathcal{S}).\]
\end{defn}

We now define a functor $\Phi \colon \Poly \to \PolyFun$; we do this
by defining three functors to $\PolyFun$ and then combining them using
the horizontal composition in $\txt{POLYFUN}$.
\begin{defn}
  Let $\mathcal{S}_{/\blank}^{*}$ denote the functor
  $\mathcal{S}^{\op} \to \LCatI$ taking $I \in \mathcal{S}$ to
  $\mathcal{S}_{/I}$ and $f \colon I \to J$ to $f^{*}\colon
  \mathcal{S}_{/J} \to \mathcal{S}_{/I}$. This induces a functor
  \[ \Phi_{1} \colon \Fun((\Delta^{1})^{\op}, \mathcal{S}) \simeq
  \Map(\Delta^{\bullet} \times (\Delta^{1})^{\op}, \mathcal{S}) \to
  \Sq_{\bullet,1}(\LCatI)^{\txt{v=radj},\txt{v-op}} \to
  \SqCL(\LCATI)^{\txt{v=radj,v-op}},\]
  which clearly passes through $\PolyFun$.

  Combining $\mathcal{S}_{/\blank}^{*}$ instead with the functor
  \[ \Sq(\CatI)^{\txt{h=ladj}} \to
  \SqL(\CATI)^{\txt{h=radj,h-op}}\]
  of Proposition~\ref{propn:mateftr}, which takes mates in the
  horizontal direction (replacing $f^{*}$ with $f_{*}$), we get a functor
  \[ \Fun(\Delta^{1}, \mathcal{S}) \to \SqL(\CATI)^{\txt{h=radj}}_{1}.\]
  Restricting to $\Fun(\Delta^{1}, \mathcal{S})^{\txt{cart}}$, this
  actually lands in commuting squares, giving
  \[ \Phi_{2} \colon \Fun(\Delta^{1}, \mathcal{S})^{\txt{cart}} \to \Sq(\CATI)^{\txt{h=radj}}_{1},\]
  which factors through $\PolyFun$.

  Finally, combining $\mathcal{S}_{/\blank}^{*}$ with the functor 
  \[ \Sq(\CatI)^{\txt{h=radj}} \to
  \SqCL(\CATI)^{\txt{h=ladj,h-op}}, \]
  of Proposition~\ref{propn:mateftr}, which also takes mates in the
  horizontal direction (replacing $f^{*}$ with $f_{!}$), we get a
  functor
  \[ \Phi_{3} \colon \Fun(\Delta^{1}, \mathcal{S}) \to
  \SqCL(\CATI)^{\txt{h=ladj}}_{1}.\]
  This again factors through $\PolyFun$.
\end{defn}

\begin{remark}
More explicitly, $\Phi_{1}$ is given by 
\[
\begin{tikzcd}
  J  \arrow[swap]{d}{g} & I \arrow[swap]{l}{a} \arrow{d}{f} \\
  L  & \arrow{l}{b} K
\end{tikzcd}
\quad \mapsto \quad
\begin{tikzcd}
  \mathcal{S}_{/J} \arrow{r}{a^{*}}  \arrow[Rightarrow,shorten >= 8pt,shorten <= 8pt]{dr}{\sim}&
  \mathcal{S}_{/I}   \\
  \mathcal{S}_{/L} \arrow[swap]{r}{b^{*}} \arrow{u}{g^{*}} &
  \mathcal{S}_{/K}. \arrow{u}[swap]{f^{*}}
\end{tikzcd}
\]
Similarly, the functor $\Phi_{2}$ is given by
\[
\begin{tikzcd}
  I \drpullback \arrow{r}{a} \arrow[swap]{d}{f} & J \arrow{d}{g} \\
  K \arrow[swap]{r}{b} & L
\end{tikzcd}
\quad \mapsto \quad
\begin{tikzcd}
  \mathcal{S}_{/I} \arrow{r}{a_{*}}  &
  \mathcal{S}_{/J} \\
  \mathcal{S}_{/K} \arrow{u}{f^{*}}\arrow[swap]{r}{b_{*}} &
  \mathcal{S}_{/L}. \arrow{u}[swap]{g^{*}} \arrow[Rightarrow,shorten >= 8pt,shorten <= 8pt]{ul}[swap]{\sim}
\end{tikzcd}
\]
Finally, $\Phi_{3}$ is given by
\[
\begin{tikzcd}
  I \arrow{r}{a} \arrow[swap]{d}{f} & J \arrow{d}{g} \\
  K \arrow[swap]{r}{b} & L
\end{tikzcd}
\quad \mapsto \quad
\begin{tikzcd}
  \mathcal{S}_{/I} \arrow{r}{a_{!}} \arrow[Rightarrow,shorten >= 8pt,shorten <= 8pt]{dr} &
  \mathcal{S}_{/J}  \\
  \mathcal{S}_{/K} \arrow[swap]{r}{b_{!}}  \arrow{u}{f^{*}}&
  \mathcal{S}_{/L}. \arrow{u}[swap]{g^{*}}
\end{tikzcd}
\]
\end{remark}

\begin{defn}\label{defn:PolyPOLYFUN}
  The functors $\Phi_{i}$ agree appropriately under restriction to
  $\mathcal{S}$ to determine a functor
  \[ \Poly \isoto \Fun((\Delta^{1})^{\op},
  \mathcal{S})\times_{\mathcal{S}} \Fun(\Delta^{1}, \mathcal{S})^{\txt{cart}}
  \times_{\mathcal{S}} \Fun(\Delta^{1}, \mathcal{S}) \to \PolyFun
  \times_{\mathcal{S}} \PolyFun \times_{\mathcal{S}}
  \PolyFun.\]
  Combining this with the horizontal composition in $\txt{POLYFUN}$,
  \[ \PolyFun
  \times_{\mathcal{S}} \PolyFun \times_{\mathcal{S}}
  \PolyFun \isofrom
  \txt{POLYFUN}_{\bullet,3} \to \PolyFun,\]
  we get the required functor $\Phi \colon \Poly \to
  \PolyFun$.  
\end{defn}

\begin{thm}\label{thm:polyispolyfun}
  The functor $\Phi \colon \Poly \to \PolyFun$ is an equivalence.
\end{thm}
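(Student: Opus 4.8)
The plan is to prove that $\Phi \colon \Poly \to \PolyFun$ is an equivalence by combining three things we have already established: the fibrewise computation of mapping spaces, the identification of the fibres of both sides over $\mathcal{S} \times \mathcal{S}$, and the fact that $\Phi$ commutes with the source-and-target projections. Both $\Poly$ and $\PolyFun$ come equipped with projections to $\mathcal{S} \times \mathcal{S}$ (for $\PolyFun$ this is Corollary~\ref{source-and-target}; for $\Poly$ it is the source-and-target of the span-plus-map diagram), and by construction $\Phi$ lies over $\mathcal{S} \times \mathcal{S}$. So it suffices to check that $\Phi$ induces an equivalence on each fibre $\PolyFun(I,J)$, provided we also know that $\Phi$ is compatible with the cartesian (or cocartesian) pushforwards in a way that lets a fibrewise equivalence propagate to a global one.

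First I would verify the fibrewise statement. The fibre of $\Poly$ over $(I,J)$ is the $\infty$-category of diagrams $I \xleftarrow{s} E \xrightarrow{p} B \xrightarrow{t} J$ with morphisms given by the pullback-middle-square condition of Definition~\ref{defn:carttransdiag}; the fibre of $\PolyFun$ over $(I,J)$ is exactly $\PolyFun(I,J)$. On objects, $\Phi$ sends such a diagram to the polynomial functor $t_! p_* s^*$, which is essentially surjective by the very definition of polynomial functor. For the morphisms, the key input is the work of \S\ref{subsec:morpolyfun}: Lemma~\ref{lem:polypoly-ess} shows every cartesian transformation $F' \to F$ arises from a diagram of the required shape, giving essential surjectivity on morphisms, and the fibre computations via the right fibration $\txt{ev}_{\id_I} \colon \PolyFun(I,J) \to \mathcal{S}_{/J}$ (together with Lemma~\ref{lem:SIB}) identify the space of such diagrams with the mapping spaces of $\PolyFun(I,J)$. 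Concretely, I would show $\Phi$ restricts on each fibre to a map of right fibrations over $\mathcal{S}_{/J}$ (matching $\txt{ev}_{\id_I}$ against the target-of-$B$ map on $\Poly$), and that on fibres over $B \to J$ it is the equivalence $(\mathcal{S}_{/I \times B})^{\simeq} \simeq \mathcal{S}_{/I \times B}^{\simeq}$ coming from Lemma~\ref{lem:SIB}; a map of right fibrations inducing equivalences on fibres is an equivalence.

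Next I would upgrade from the fibrewise equivalence to a global one. Since the source-and-target projection on $\PolyFun$ is both cartesian and cocartesian (Corollary~\ref{source-and-target}), and the analogous projection on $\Poly$ is as well — with the pushforward along $(f,g)$ given on diagrams by $I' \xleftarrow{fs} E \xrightarrow{p} B \xrightarrow{gt} J'$, matching the formula $g_! F f^*$ of Remark~\ref{rmk:cartpoly} — it is enough to check that $\Phi$ preserves cocartesian edges. This compatibility is built into the construction of $\Phi$ in Definition~\ref{defn:PolyPOLYFUN}: the three functors $\Phi_1, \Phi_2, \Phi_3$ are assembled from the colax/lax square machinery so that horizontal composition in $\txt{POLYFUN}$ realizes exactly the normal-form composition of Theorem~\ref{thm:comp}, and the vertical maps of the double \icat{} encode the $(\blank)^*$ reindexing. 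A functor over $\mathcal{S} \times \mathcal{S}$ that preserves cocartesian edges and is a fibrewise equivalence is an equivalence.

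The hard part will be the second step, namely the careful bookkeeping that $\Phi$ preserves cocartesian morphisms and that the horizontal composition in $\txt{POLYFUN}$ applied to the three pieces $\Phi_1, \Phi_2, \Phi_3$ genuinely recovers the functor $t_! p_* s^*$ on objects and the transformation of Definition~\ref{defn:carttransdiag} on morphisms. This is where the coherence of all the Beck--Chevalley transformations encoded by the double \icat{} of colax squares (Appendix~\ref{AppA}) is essential: one must know that the distributivity and Beck--Chevalley equivalences used in Theorem~\ref{thm:comp} are the \emph{same} ones witnessed by the $\Phi_i$, so that the composite functor is literally $\Phi$ and not merely equivalent to it pointwise. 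The fibrewise equivalence, by contrast, is essentially a repackaging of results already proved in \S\ref{subsec:morpolyfun}, so I expect it to go through cleanly once the identification of $\Phi$ with those constructions on each fibre is made explicit.
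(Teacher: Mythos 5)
Your proposal follows essentially the same route as the paper: reduce to a fibrewise statement over $\mathcal{S}\times\mathcal{S}$ by checking that $\Phi$ is a map of cocartesian fibrations preserving cocartesian edges (the paper's Lemma~\ref{lem:ev03}, with exactly the explicit lift $I' \xleftarrow{fs} E \xrightarrow{p} B \xrightarrow{gt} J'$ matched against Remark~\ref{rmk:cartpoly}), and then prove the fibrewise equivalence by viewing both $\Poly(I,J)$ and $\PolyFun(I,J)$ as right fibrations over $\mathcal{S}_{/J}$ and identifying the fibres over $B\to J$ via Lemma~\ref{lem:SIB} (the paper's Proposition~\ref{propn:Phifib}). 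The argument is correct as outlined.
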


The following lemma shows that it is enough to prove this fibrewise:
\begin{lemma}\label{lem:ev03}
  The projection $\txt{ev}_{0,3} \colon \Poly \to \mathcal{S}\times
  \mathcal{S}$ is a cocartesian fibration, and $\Phi \colon \Poly \to 
  \PolyFun$ preserves cocartesian morphisms.
\end{lemma}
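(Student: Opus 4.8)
The plan is to produce the cocartesian lifts by hand, following the recipe of Remark~\ref{rmk:cartpoly}, and then to verify the universal property directly on mapping spaces. Given a polynomial diagram $D = (I \xfrom{s} E \xto{p} B \xto{t} J)$ and a morphism $(f,g)\colon (I,J) \to (I',J')$ in $\mathcal{S} \times \mathcal{S}$, I set $D' := (I' \xfrom{fs} E \xto{p} B \xto{gt} J')$ and let $\chi \colon D \to D'$ be the morphism of $\Pi$-diagrams that is the identity on $E$ and $B$ and is $f$ and $g$ on the two outer vertices. The middle square of $\chi$ is the identity square on $p$, which is in particular cartesian, so $\chi$ is a genuine morphism of $\Poly$ (Definition~\ref{defn:Poly}); by construction its source is $D$ and $\txt{ev}_{0,3}(\chi) = (f,g)$.

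First I would check that $\chi$ is $\txt{ev}_{0,3}$-cocartesian. For this it suffices to verify, for every $Z \in \Poly$ with vertices $I'', E'', B'', J''$, that the square
\[
\begin{tikzcd}
\Map_{\Poly}(D',Z) \arrow{r}{\chi^{*}} \arrow{d} & \Map_{\Poly}(D,Z) \arrow{d} \\
\Map_{\mathcal{S} \times \mathcal{S}}((I',J'),(I'',J'')) \arrow{r} & \Map_{\mathcal{S} \times \mathcal{S}}((I,J),(I'',J''))
\end{tikzcd}
\]
is cartesian, the lower map being precomposition with $(f,g)$. The key observation is that, since $\chi$ is the identity on the inner vertices $E$ and $B$ and only post-composes the outer legs with $f$ and $g$, precomposition with $\chi$ does not touch the data of a morphism on the inner vertices: for a morphism $(f',g')\colon (I',J')\to(I'',J'')$, a point of $\Map_{\Poly}(D',Z)$ lying over $(f',g')$ and its restriction along $\chi$, a point of $\Map_{\Poly}(D,Z)$ lying over $(f'f,g'g)$, carry literally the same maps $E \to E''$ and $B \to B''$ and the identical commuting and cartesian middle square --- because $D$ and $D'$ share the same middle $E \xto{p} B$ and $\chi$ is the identity there. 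Thus $\chi^{*}$ identifies the fibre of the left vertical map over $(f',g')$ with the fibre of the right vertical map over $(f'f,g'g)$, which is exactly the asserted pullback. As such a lift exists for every $D$ and every $(f,g)$, the functor $\txt{ev}_{0,3}$ is a cocartesian fibration.

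It then remains to show that $\Phi(\chi)$ is a cocartesian morphism of $\PolyFun$ over $(f,g)$. By Corollary~\ref{source-and-target} the source-and-target projection $\PolyFun \to \mathcal{S} \times \mathcal{S}$ is cocartesian, and by Remark~\ref{rmk:cartpoly} its cocartesian pushforward of $F := \Phi(D) = t_{!}p_{*}s^{*}$ along $(f,g)$ is $g_{!}Ff^{*}$, represented by $I' \xfrom{fs} E \xto{p} B \xto{gt} J'$. Since $(gt)_{!} \simeq g_{!}t_{!}$ and $(fs)^{*} \simeq s^{*}f^{*}$, we have $\Phi(D') \simeq g_{!}Ff^{*}$, so $\Phi(\chi)$ is a morphism $F \to g_{!}Ff^{*}$ over $(f,g)$ with the correct target; as $\Phi$ is a functor over $\mathcal{S} \times \mathcal{S}$, it suffices to identify $\Phi(\chi)$ with the canonical cocartesian $2$-cell. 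The hard part is precisely this last identification: one must unwind the definition of $\Phi$ on $\chi$ and match it with the cocartesian square of Remark~\ref{rmk:cartpoly}. This is a bookkeeping matter, but a delicate one, since it lives in the double \icat{} $\txt{POLYFUN}$ and requires tracking the direction of the colax $2$-cells. Concretely, because $\Phi$ applies $\Phi_{2}$ to the identity square on $p$ and encodes the outer post-compositions through the $s^{*}$- and $t_{!}$-functoriality of $\Phi_{1}$ and $\Phi_{3}$, the $2$-cell $\Phi(\chi)$ is assembled solely from the unit and counit data comparing $g_{!}$ and $f^{*}$, which is exactly the canonical cocartesian comparison. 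I expect this compatibility of $\Phi$ with the explicit pushforward formula, rather than the fibration statement itself, to be the main obstacle.
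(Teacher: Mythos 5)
Your proposal is correct and follows essentially the same route as the paper: the same explicit lift (identity on $E$ and $B$, post-composition by $f$ and $g$ on the outer legs), a mapping-space verification of cocartesianness that fills in what the paper dismisses as ``readily checked,'' and the identification of $\Phi(\chi)$ with the cocartesian edge $F \to g_{!}Ff^{*}$ of $\PolyFun$ via Remark~\ref{rmk:cartpoly}, which the paper likewise asserts by noting that the colax $2$-cell is the unit $Pf^{*}\to g^{*}g_{!}Pf^{*}$. The final unwinding you flag as the delicate point is treated at the same level of detail in the paper itself, so nothing essential is missing.
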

\begin{proof}
  For a polynomial $P$ given by $I \stackrel{s}\leftarrow E 
  \stackrel{p}\to B \stackrel{t}\to J$ and a 
  morphism $(f \colon I \to I', g\colon J\to J')$ out of $\txt{ev}_{0,3}(P)$, a 
  cocartesian lift is given by
  \[
  \begin{tikzcd}
  I \arrow[swap]{d}{f}& E \arrow{l}[above]{s} \arrow{r}{p} 
  \arrow[equal]{d} 
  \drpullback & B \arrow{r}{t} \arrow[equal]{d}& J\arrow{d}{g} \\
  I' & E \arrow{l}{fs} \arrow[swap]{r}{p} & B \arrow[swap]{r}{gt} & 
  J'  ;
\end{tikzcd}
\]
the cocartesian property is readily checked.
  Such a diagram is sent by $\Phi$ to the 
  colax square 
  \[
  \begin{tikzcd}
    {\mathcal{S}_{/I}} \arrow{r}{P} \arrow[Rightarrow,shorten >= 8pt,shorten <= 8pt]{dr} & 
	{\mathcal{S}_{/J}} 
      \\
    {\mathcal{S}_{/I'}} \arrow{u}{f^{*}} \arrow{r}[below]{g_{!} P f^{*}} & 
	{\mathcal{S}_{/J'}} \arrow{u}[swap]{g^{*}}
  \end{tikzcd}
  \]
  with the natural transformation given by the unit transformation
  $Pf^{*} \to g^{*}g_{!}Pf^{*}$.  But this is precisely the form of 
  the cocartesian edges in $\PolyFun$, as noted in Remark~\ref{rmk:cartpoly}.
\end{proof}

\begin{propn}\label{propn:Phifib}
  For fixed spaces $I$ and $J$, the functor $\Phi$ gives an equivalence
  \[
  \Poly(I,J) \to \PolyFun(I,J)
  \]
  when restricted to the fibre over $(I,J)$.
\end{propn}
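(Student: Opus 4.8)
The plan is to prove that the functor $\Phi$ restricts to an equivalence on fibers $\Poly(I,J) \to \PolyFun(I,J)$ over a fixed pair $(I,J)$. The key observation is that both sides carry a right fibration to $\mathcal{S}_{/J}$ (via evaluation at the terminal object $\id_I$), and that we have already identified the fibers of these right fibrations in the previous subsection. So the strategy is to check that $\Phi$ is a map of right fibrations over $\mathcal{S}_{/J}$, and then verify that it induces an equivalence on fibers.

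First I would observe that the evaluation $\txt{ev}_{\id_I} \colon \PolyFun(I,J) \to \mathcal{S}_{/J}$ is a right fibration whose fiber over $t \colon B \to J$ is the underlying space of $\mathcal{S}_{/I \times B}$, as established via Lemma~\ref{lem:SIB} and the discussion following it. On the other side, using the description of $\Poly$ as the fiber product $\Fun((\Delta^{1})^{\op}, \mathcal{S}) \times_{\mathcal{S}} \Fun(\Delta^{1}, \mathcal{S})^{\txt{cart}} \times_{\mathcal{S}} \Fun(\Delta^{1}, \mathcal{S})$, I would identify the analogous projection $\Poly(I,J) \to \mathcal{S}_{/J}$ sending a diagram $I \xfrom{s} E \xto{p} B \xto{t} J$ to the object $t \colon B \to J$, and show this is also a right fibration with fiber over $t \colon B \to J$ the space of spans $I \xfrom{s} E \xto{p} B$, i.e.\ the underlying space of $\mathcal{S}_{/I \times B}$. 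The cartesian condition on the middle square pins down $E$ as living over $B$ with a map to $I$, so the fiber data is exactly a span $I \from E \to B$.

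Next I would verify that $\Phi$ commutes with these two projections to $\mathcal{S}_{/J}$ up to equivalence, which amounts to checking that the component of the natural transformation $\Phi(\text{diagram})$ at $\id_I$ recovers $t \colon B \to J$; this is exactly the content of the final observation in Definition~\ref{defn:carttransdiag}, where the value $F'(\id_I)$ is identified with $B \to J$. A map of right fibrations over a common base is an equivalence if and only if it is a fiberwise equivalence, so it remains to check that on each fiber $\Phi$ induces the identity (up to equivalence) on the space underlying $\mathcal{S}_{/I \times B}$. Concretely, the fiber of $\Phi$ over $t \colon B \to J$ sends a span $I \xfrom{s} E \xto{p} B$ to the polynomial functor $p_{*}s^{*}$ viewed as the corresponding right adjoint functor $\mathcal{S}_{/I} \to \mathcal{S}_{/B}$, which is precisely the correspondence of Lemma~\ref{lem:SIB}; since that lemma is an equivalence, so is $\Phi$ on fibers.

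The main obstacle will be the coherence bookkeeping: carefully matching the fiber description coming from the double \icat{} construction of $\PolyFun$ (where the value at $\id_I$ and the identification of the fiber with $\Fun^{\mathrm{R}}(\mathcal{S}_{/I}, \mathcal{S}_{/B})$ pass through the machinery of colax squares and the framing) with the straightforward span description on the $\Poly$ side, and confirming that $\Phi$'s fiberwise effect is genuinely the equivalence of Lemma~\ref{lem:SIB} rather than merely some abstract equivalence. In particular one must ensure that the horizontal composition in $\txt{POLYFUN}$ used to assemble $\Phi$ from $\Phi_1, \Phi_2, \Phi_3$ produces, on the fiber over $\id_I$, exactly the functor $p_{*}s^{*}$ with the expected Beck--Chevalley coherences; granting the earlier results this is a matter of unwinding definitions, but it is where the real verification lies.
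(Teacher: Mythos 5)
Your proposal is correct and follows essentially the same route as the paper: both sides are right fibrations over $\mathcal{S}_{/J}$, so it suffices to check the map on fibres over each $B \to J$, where $\Phi$ restricts to the equivalence of Lemma~\ref{lem:SIB} between spans $I \from E \to B$ and $\Fun^{\mathrm{R}}(\mathcal{S}_{/I},\mathcal{S}_{/B})$. The coherence verification you flag at the end is indeed the only point the paper's proof passes over silently.
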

\begin{proof}
  Both sides are right fibrations over $\mathcal{S}_{/J}$, so it
  suffices to show that we get an equivalence on fibres over every
  $B \to J$ in $\mathcal{S}_{/J}$. The fibre of $\Poly(I,J)$ is the
  $\infty$-groupoid of spans $I \leftarrow M \to B$, and the fibre of
  $\PolyFun(I,J)$ is the $\infty$-groupoid
  $\Map^{\mathrm{R}}(\mathcal{S}_{/I},\mathcal{S}_{/B})$. The functor
  restricts precisely to the functor in Lemma~\ref{lem:SIB}, shown
  there to be an equivalence.
\end{proof}

\begin{proof}[Proof of Theorem~\ref{thm:polyispolyfun}]
  By Lemma~\ref{lem:ev03} the functor $\Phi$ is a map between
  cocartesian fibrations and preserves cocartesian edges. It therefore
  suffices to show that the induced map on fibres
  $\Poly(I,J) \to \PolyFun(I,J)$ is an equivalence, which is
  Proposition~\ref{propn:Phifib}.
\end{proof}

\subsection{Colimits of Polynomial Functors}\label{sec:colimpoly}
In this subsection we will give two descriptions of colimits of
polynomial functors: First, we will see that colimits in $\Poly$ can
be computed in $\Fun(\Pi, \mathcal{S})$, i.e.\ pointwise in the
diagram. We will also show that colimits in $\PolyFun(I,J)$ can be
computed in $\Fun(\mathcal{S}_{/I}, \mathcal{S}_{/J})$.

\begin{propn}\label{propn:Cartcolim}
  Let $\mathcal{C}$ be a small \icat{} and let $\mathcal{X}$ be an
  $\infty$-topos. The forgetful functor
  $\Fun(\mathcal{C}, \mathcal{X})^{\txt{cart}} \to \Fun(\mathcal{C},
  \mathcal{X})$ preserves and reflects
  all limits and colimits.
\end{propn}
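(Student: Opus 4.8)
The plan is to reduce everything to the objectwise computation of limits and colimits in $\Fun(\mathcal{C},\mathcal{X})$, fed by the two defining features of an $\infty$-topos: colimits are universal and colimit diagrams are van Kampen (descent). First I record that the forgetful functor $U \colon \Fun(\mathcal{C},\mathcal{X})^{\txt{cart}} \to \Fun(\mathcal{C},\mathcal{X})$ is conservative, since a cartesian transformation that is an objectwise equivalence is an equivalence and its inverse is again cartesian. Hence, to see that $U$ preserves and reflects a given (co)limit it is enough to check that the objectwise (co)limit has cartesian structure maps and is (co)universal among cones (respectively cocones) in the subcategory: preservation is then immediate, and reflection follows from the formal fact that a conservative functor which preserves (co)limits existing in its domain also reflects them. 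I use throughout that a diagram $F_\bullet \colon \mathcal{K} \to \Fun(\mathcal{C},\mathcal{X})^{\txt{cart}}$ is the same datum as a functor $\mathcal{K} \times \mathcal{C} \to \mathcal{X}$ whose square over each pair consisting of an edge of $\mathcal{K}$ and a morphism of $\mathcal{C}$ is a pullback.

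The colimit case is the heart of the argument and the place where the $\infty$-topos hypothesis is essential. Writing $F_\infty$ for the objectwise colimit and $\lambda_k \colon F_k \to F_\infty$ for the coprojections, I fix a morphism $\gamma \colon c \to c'$ of $\mathcal{C}$ and compare the two colimit diagrams $F_\bullet(c), F_\bullet(c') \colon \mathcal{K}^{\triangleright} \to \mathcal{X}$ through the transformation $\alpha := F_\bullet(\gamma)$. The hypothesis that each $F_k \to F_{k'}$ is cartesian is exactly the statement that the restriction of $\alpha$ to $\mathcal{K}$ is cartesian. Applying descent to the colimit diagram $F_\bullet(c')$ and the transformation $\alpha$ then shows that $F_\bullet(c)$ is a colimit diagram if and only if the squares of $\alpha$ over the cone maps $k \to \infty$ are pullbacks; as $F_\bullet(c)$ is the objectwise colimit, these squares are pullbacks, which is precisely the assertion that $\lambda_k$ is cartesian at $\gamma$. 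Since $\gamma$ is arbitrary, every $\lambda_k$ lies in the subcategory. For the universal property, composites of cartesian transformations are cartesian, so any cartesian map $F_\infty \to G$ restricts along the $\lambda_k$ to a cocone of cartesian maps; conversely, given cartesian maps $\mu_k \colon F_k \to G$, universality of colimits lets me push the equivalences $F_k(c) \simeq F_k(c') \times_{G(c')} G(c)$ through the colimit to obtain $F_\infty(c) \simeq F_\infty(c') \times_{G(c')} G(c)$, so the induced map is again cartesian. This yields $\Map_{\txt{cart}}(F_\infty, G) \simeq \lim_k \Map_{\txt{cart}}(F_k, G)$, identifying $F_\infty$ with the colimit in the subcategory, and preservation and reflection of colimits follow.

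For limits, reflection is formal and uses no topos hypothesis: if a cone in the subcategory maps to an objectwise limit cone, then for any competing cone of cartesian maps the induced comparison map is cartesian by the pasting lemma for pullback squares (the competing leg and the structure leg being cartesian forces the comparison square to be a pullback), so the original cone is a limit in the subcategory. Preservation of limits is the delicate step, and the one I expect to be the genuine obstacle: there is no limit-analogue of descent, and the objectwise limit need not have cartesian projections in general — already the objectwise product of two cartesian transformations exhibits this — so the naive objectwise formula does not directly compute limits whose shape is not weakly contractible. What does go through is the weakly contractible case, where the assignment $k \mapsto (F_k(c) \to F_k(c'))$ takes values in the cartesian edges of the target fibration $\txt{ev}_{1} \colon \Fun(\Delta^{1}, \mathcal{X}) \to \mathcal{X}$; I would therefore run the limit half through the fibrational fact that a cartesian fibration restricted to its cartesian edges creates weakly contractible limits (equivalently, through the right fibration $\txt{ev}_{*}$ of the preceding definition when $\mathcal{C}$ has a terminal object), which keeps the legs of a weakly contractible objectwise limit cartesian.
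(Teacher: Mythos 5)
Your colimit argument is the paper's argument: descent in the $\infty$-topos (\cite{HTT}*{Theorem 6.1.3.9(4)}) shows that the legs of the objectwise colimit cocone are cartesian, and universality of colimits (pullback along $G(c)\to G(c')$ commutes with $\colim_k$) shows that the induced map to any cartesian cocone is again cartesian; together with the observation that the cartesian mapping spaces are unions of components of the full mapping spaces, this settles the colimit half completely.

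For limits the paper only says that ``the proof is the same, but simpler,'' and your suspicion is justified: the objectwise limit of a non-weakly-contractible diagram need not have cartesian legs, and the statement itself fails there. Concretely, with $\mathcal{C}=\Delta^1$ and $\mathcal{X}=\mathcal{S}$, take $F_1=(\emptyset\to *)$ and $F_2=(*\amalg *\to *)$. Their product in $\Fun(\Delta^1,\mathcal{S})^{\txt{cart}}$ exists and is $(\emptyset\to\emptyset)$ (compute the cartesian mapping spaces directly, or identify $\Fun(\Delta^1,\mathcal{S})^{\txt{cart}}$ with a slice over the object classifier, where products are fibre products), whereas the product in $\Fun(\Delta^1,\mathcal{S})$ is $(\emptyset\to *)$; so binary products are not preserved, and similarly the terminal object $(*\to *)$ of the ambient \icat{} is not terminal in the subcategory. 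Thus your restriction to weakly contractible limits is a correction to the statement rather than a gap in your argument, and your mechanism --- the diagram lands in the cartesian edges of the cartesian fibration $\txt{ev}_{1}$ (cf.~Lemma~\ref{lem:carttrcartfib}), and such edges are closed under weakly contractible limits --- is the right one; note that your ``formal'' reflection step, which cancels a cone leg against a composite in the pasting lemma, likewise needs the indexing \icat{} to be nonempty, consistent with weak contractibility. This weaker limit statement is all the paper ever uses: every invocation of the limit half (Lemma~\ref{lem:S/B->Arr/p}, Theorem~\ref{thm:PolyFun/P=topos}) concerns limits in slice \icats{}, which are computed over shapes $\mathcal{I}^{\triangleright}$ and hence are weakly contractible.
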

\begin{proof}
  We first consider the case of colimits. Given a diagram
  $\phi \colon \mathcal{I} \to \Fun(\mathcal{C},
  \mathcal{X})^{\txt{cart}}$,
  let
  $\bar{\phi} \colon \mathcal{I}^{\triangleright} \to
  \Fun(\mathcal{C}, \mathcal{X})$
  be a colimit diagram extending the image of $\phi$ in
  $\Fun(\mathcal{C}, \mathcal{X})$. We claim that this colimiting
  cocone in $\Fun(\mathcal{C}, \mathcal{X})$ is also a colimiting
  cocone in the subcategory
  $\Fun(\mathcal{C}, \mathcal{X})^{\txt{cart}}$.

  To show this we must first prove that the commutative squares
  \nolabelcsquare{\phi(i)(c)}{\phi(i)(c')}{\bar{\phi}(\infty)(c)}{\bar{\phi}(\infty)(c')}
  are cartesian, for all maps $c \to c'$ in $\mathcal{C}$. Since
  colimits in functor \icats{} are computed objectwise, this is true
  by descent for the $\infty$-topos $\mathcal{X}$, using
  \cite{HTT}*{Theorem 6.1.3.9(4)}.
  
  Second, we must check that for any cocone
  $\phi' \colon \mathcal{I}^{\triangleright} \to \Fun(\mathcal{C},
  \mathcal{X})^{\txt{cart}}$,
  the canonical map $\bar{\phi} \to \phi'$ in
  $\Fun(\mathcal{C}, \mathcal{X})$ actually belongs to
  $\Fun(\mathcal{C}, \mathcal{X})^{\txt{cart}}$, i.e.\ it is a
  cartesian transformation. Since the transformations $\phi(i)
  \to \phi'(\infty)$ are cartesian, we have pullback squares
  \[
  \begin{tikzcd}[row sep=2.6em,column sep=2.2em]
  \phi(i)(c) \ar[d] \ar[r] \drpullback & \phi(i)(c') \ar[d]  \\
  \phi'(\infty)(c) \ar[r] & \phi'(\infty)(c').
  \end{tikzcd}
  \]
  Colimits in $\mathcal{X}$
  are universal, so this induces a pullback square of colimits
  \[
  \begin{tikzcd}[row sep=2.6em,column sep=2.2em]
  \bar{\phi}(\infty)(c) \ar[d] \ar[r] \drpullback & \bar{\phi}(\infty)(c') \ar[d]  \\
  \phi'(\infty)(c) \ar[r] & \phi'(\infty)(c').
  \end{tikzcd}
  \]
  as required.

  The proof for limits is the same, but simpler, using that limits
  commute and pullbacks preserve limits, which is true in any \icat{}.
\end{proof}

\begin{cor}\label{cor:polycolim}
  Colimits in $\Poly$ are constructed in $\Fun(\Pi, \mathcal{S})$, and
  colimits in $\Poly(I,J)$ are constructed in
  $\Fun(\Delta^{1},\mathcal{S})$ for all spaces $I,J$. In particular,
  the \icats{} $\Poly$ and $\Poly(I,J)$ are cocomplete.
\end{cor}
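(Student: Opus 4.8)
The plan is to derive Corollary~\ref{cor:polycolim} as a direct consequence of Proposition~\ref{propn:Cartcolim} together with the explicit description of $\Poly$ as a fibre product given in Definition~\ref{defn:Poly}. The key observation is that $\mathcal{S}$ is an $\infty$-topos, so Proposition~\ref{propn:Cartcolim} applies with $\mathcal{X} = \mathcal{S}$.

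First I would treat the fibrewise statement about $\Poly(I,J)$, since it is the simpler case. For fixed $I, J$, an object of $\Poly(I,J)$ is a span $I \leftarrow M \to B$ together with the data making the middle square cartesian, but with the outer legs fixed at $I$ and $J$; concretely the fibre is computed from the cartesian arrow datum $\Fun(\Delta^1, \mathcal{S})^{\txt{cart}}$. The point is that $\Poly(I,J)$ sits inside $\Fun(\Delta^1, \mathcal{S})$ as the cartesian-transformation subcategory (after fixing the endpoints), so Proposition~\ref{propn:Cartcolim} with $\mathcal{C} = \Delta^1$ tells us that the forgetful functor to $\Fun(\Delta^1, \mathcal{S})$ preserves and reflects colimits. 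Hence a colimit in $\Poly(I,J)$ is computed pointwise in $\Fun(\Delta^1, \mathcal{S})$, and since the latter is cocomplete, so is $\Poly(I,J)$.

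Next I would handle the global statement. Using the equivalence
\[
\Poly \simeq \Fun((\Delta^{1})^{\op}, \mathcal{S})
  \times_{\mathcal{S}} \Fun(\Delta^{1}, \mathcal{S})^{\txt{cart}}
  \times_{\mathcal{S}} \Fun(\Delta^{1}, \mathcal{S})
\]
from Definition~\ref{defn:Poly}, I would argue that colimits can be computed componentwise. The middle factor $\Fun(\Delta^1, \mathcal{S})^{\txt{cart}}$ has colimits computed in $\Fun(\Delta^1, \mathcal{S})$ by Proposition~\ref{propn:Cartcolim}, and the outer two factors are honest functor \icats{} into the cocomplete \icat{} $\mathcal{S}$. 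The fibre products are taken over evaluation functors to $\mathcal{S}$ (along the source and target maps); these evaluation functors preserve colimits since colimits in functor categories are computed pointwise. Therefore the pullback presentation is compatible with colimits, and a colimit in $\Poly$ is computed factor-by-factor, which is exactly a pointwise colimit in $\Fun(\Pi, \mathcal{S})$ subject only to the middle-square-cartesian condition. Since that condition is preserved under the pointwise colimit (again by Proposition~\ref{propn:Cartcolim}), the colimit in $\Poly$ agrees with the colimit taken in $\Fun(\Pi, \mathcal{S})$.

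**The main obstacle** I anticipate is verifying carefully that the fibre-product decomposition genuinely lets one reduce to the three factors, i.e.\ that forming colimits commutes with the pullbacks over $\mathcal{S}$. This requires knowing that the relevant structure maps (the evaluation-at-endpoint functors $\Fun(\Delta^1, \mathcal{S}) \to \mathcal{S}$ and the forgetful functor from $\Fun(\Delta^1,\mathcal{S})^{\txt{cart}}$) all preserve colimits, so that a colimit computed componentwise in $\Fun(\Pi, \mathcal{S})$ automatically satisfies the required compatibility at the gluing objects. This is a routine but slightly delicate diagram chase with colimits of fibre products, and it is where Proposition~\ref{propn:Cartcolim} does the real work by guaranteeing the cartesianness condition is stable under these colimits.
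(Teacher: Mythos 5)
Your proposal is correct and follows essentially the same route as the paper: decompose $\Poly$ (resp.\ $\Poly(I,J)$) as a fibre product over $\mathcal{S}$, note that all the structure maps preserve colimits (Proposition~\ref{propn:Cartcolim} handling the factor $\Fun(\Delta^1,\mathcal{S})^{\txt{cart}}$), and conclude that colimits are computed componentwise, hence pointwise in $\Fun(\Pi,\mathcal{S})$ (resp.\ $\Fun(\Delta^1,\mathcal{S})$). The ``delicate diagram chase'' you flag is exactly what the paper delegates to \cite{HTT}*{Lemma 5.4.5.5}.
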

\begin{proof}
  By definition, the \icat{} $\Poly$ is the fibre product
  $\Fun(\Delta^{1,\op}, \mathcal{S}) \times_{\mathcal{S}}
  \Fun(\Delta^{1}, \mathcal{S})^{\txt{cart}} \times_{\mathcal{S}}
  \Fun(\Delta^{1}, \mathcal{S})$. The projections to $\mathcal{S}$ all
  preserve colimits (using Proposition~\ref{propn:Cartcolim} for the
  middle term), so by \cite{HTT}*{Lemma 5.4.5.5} a diagram in $\Poly$
  is a colimit \IFF{} its composition with the projections to the
  terms in this fibre product are colimits. Now
  Proposition~\ref{propn:Cartcolim} implies that colimits are computed
  in 
  \[ \Fun(\Delta^{1,\op}, \mathcal{S}) \times_{\mathcal{S}}
  \Fun(\Delta^{1}, \mathcal{S}) \times_{\mathcal{S}}
  \Fun(\Delta^{1}, \mathcal{S}) \simeq \Fun(\Pi, \mathcal{S}).\]

  By the same argument, a diagram in \[\Poly(I,J) \simeq
  \mathcal{S}_{/I} \times_{\mathcal{S}}
  \Fun(\Delta^{1},\mathcal{S})^{\txt{cart}} \times_{\mathcal{S}}
  \mathcal{S}_{/J}\] 
  is a colimit \IFF{} its images in $\mathcal{S}_{/I}$,
  $\Fun(\Delta^{1},\mathcal{S})$, and $\mathcal{S}_{/J}$ are
  colimits. But colimits in these over-categories are computed in
  $\mathcal{S}$, so a diagram in $\Poly(I,J)$ is a colimit \IFF{} its
  image in $\Fun(\Delta^{1}, \mathcal{S})$ is a colimit.

  Since $\Fun(\Pi, \mathcal{S})$ and $\Fun(\Delta^{1}, \mathcal{S})$
  are cocomplete, it follows that so are the \icats{} $\Poly$ and $\Poly(I,J)$.
\end{proof}

\begin{remark}
  Note that the corresponding result does not hold in the classical
  $1$-categorical setting of \cite{GambinoKock} (such as in $\Set$),
  since a $1$-topos does not have descent in general.  In the
  $1$-categorical setting, only colimits of diagrams of monomorphisms
  can be computed pointwise, as exemplified by grafting of
  trees~\cite{KockTree}, as will be important below (cf.~Remark~\ref{rmk:1trees}).
\end{remark}

\begin{propn}\label{propn:colimpolyispoly}
  The forgetful functor
  $\PolyFun(I,J) \to \Fun(\mathcal{S}_{/I}, \mathcal{S}_{/J})$
  preserves colimits. In particular, the colimit of a diagram of
  polynomial functors and cartesian transformations is again a
  polynomial functor.
\end{propn}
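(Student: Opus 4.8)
The plan is to reduce the statement to an objectwise computation, using the equivalence $\Phi\colon \Poly \isoto \PolyFun$ of Theorem~\ref{thm:polyispolyfun} together with the explicit description of colimits in $\Poly(I,J)$ from Corollary~\ref{cor:polycolim}. First I would fix a diagram $k \mapsto F_{k}$ in $\PolyFun(I,J)$ and transport it to $\Poly(I,J)$; writing the representing diagram of $F_{k}$ as $I \xfrom{s_{k}} E_{k} \xto{p_{k}} B_{k} \xto{t_{k}} J$, Corollary~\ref{cor:polycolim} identifies the colimit with the object\-wise colimit diagram $I \xfrom{s} E \xto{p} B \xto{t} J$, where $E \simeq \operatorname{colim}_{k} E_{k}$ and $B \simeq \operatorname{colim}_{k} B_{k}$, and $s,t$ are the induced maps. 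Moreover, since the transition maps are cartesian transformations, Proposition~\ref{propn:Cartcolim} guarantees that the colimit cocone is again cartesian: each square with sides $\alpha_{k}\colon E_{k}\to E$, $\beta_{k}\colon B_{k}\to B$, $p_{k}$ and $p$ is a pullback, so $E_{k}\simeq E\times_{B}B_{k}$. Because colimits in $\Fun(\mathcal{S}_{/I},\mathcal{S}_{/J})$ are computed objectwise, the forgetful functor preserves this colimit exactly when $F(X)\simeq \operatorname{colim}_{k}F_{k}(X)$ in $\mathcal{S}_{/J}$, naturally in $X\in\mathcal{S}_{/I}$, where $F = t_{!}p_{*}s^{*}$.

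Then I would carry out this identification. Since $t_{k}=t\circ\beta_{k}$ and $s_{k}=s\circ\alpha_{k}$, we have $t_{k!}\simeq t_{!}(\beta_{k})_{!}$ and $s_{k}^{*}\simeq\alpha_{k}^{*}s^{*}$; as $t_{!}$ is a left adjoint it commutes with the colimit, so it suffices to prove in $\mathcal{S}_{/B}$ that $\operatorname{colim}_{k}(\beta_{k})_{!}p_{k*}\alpha_{k}^{*}(s^{*}X)\simeq p_{*}s^{*}X$. For each $k$ the square $E_{k}\to E$, $B_{k}\to B$ is cartesian, so the Beck--Chevalley equivalence of Lemma~\ref{lem:BCpbk}(iii) gives $p_{k*}\alpha_{k}^{*}\simeq\beta_{k}^{*}p_{*}$. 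Setting $Z := p_{*}s^{*}X\in\mathcal{S}_{/B}$, the left-hand side becomes $\operatorname{colim}_{k}(\beta_{k})_{!}\beta_{k}^{*}Z$, i.e.\ $\operatorname{colim}_{k}(Z\times_{B}B_{k})$.

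Finally I would conclude by descent. Since $B\simeq\operatorname{colim}_{k}B_{k}$, universality of colimits in the $\infty$-topos $\mathcal{S}$ yields $\operatorname{colim}_{k}(Z\times_{B}B_{k})\simeq Z\times_{B}(\operatorname{colim}_{k}B_{k})\simeq Z$, which completes the identification $\operatorname{colim}_{k}F_{k}(X)\simeq F(X)$. All steps are natural in $X$, so the forgetful functor preserves the colimit; the \emph{in particular} clause then follows, since the colimit exists in $\PolyFun(I,J)$ by Corollary~\ref{cor:polycolim} and Theorem~\ref{thm:polyispolyfun} and is carried to its objectwise colimit in $\Fun(\mathcal{S}_{/I},\mathcal{S}_{/J})$, which is therefore polynomial. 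The main obstacle is that the dependent-product functor $p_{*}$ does \emph{not} preserve colimits, so one cannot simply commute it past $\operatorname{colim}_{k}$; the crucial point is that the cartesianness of the representing cocone lets Beck--Chevalley replace each $p_{k*}$ by a base change $\beta_{k}^{*}$ of the \emph{single} functor $p_{*}$, after which universality of colimits reassembles the pieces.
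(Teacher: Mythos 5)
Your proof is correct, and it takes a genuinely different route from the paper's. The paper, after the same initial reduction (identifying the colimit in $\PolyFun(I,J)$ with the pointwise colimit of representing diagrams via Corollary~\ref{cor:polycolim} and Theorem~\ref{thm:polyispolyfun}), proceeds by brute force: it writes each $F_k(X)$ as an explicit colimit of limits $\colim_{b\in B(x)_j}\lim_{e\in E(x)_b}\alpha(s_xe)$, assembles the iterated colimit as a colimit over the total space of a left fibration over the index category, observes that cartesianness makes the integrand locally constant so that the colimit factors through the groupoidification $B_j$, and then matches the result with the formula for the colimit polynomial. You instead argue functorially: the identities $t_{k!}\simeq t_!(\beta_k)_!$ and $s_k^*\simeq\alpha_k^*s^*$, the Beck--Chevalley equivalence $p_{k*}\alpha_k^*\simeq\beta_k^*p_*$ for the cartesian cocone squares, and universality of colimits applied to $\colim_k\bigl((\beta_k)_!\beta_k^*Z\bigr)\simeq Z\times_B\colim_kB_k\simeq Z$. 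Both arguments ultimately rest on the same two inputs --- descent in $\mathcal{S}$ (which makes the cocone squares $E_k\simeq E\times_BB_k$ cartesian) and universality of colimits --- but yours is shorter and stays at the level of the adjoint calculus; it also has the pleasant feature that the comparison map $F_k(X)\to F(X)$ you end up inverting is, by Definition~\ref{defn:carttransdiag}, literally $t_!\bigl((\beta_k)_!\beta_k^*Z\to Z\bigr)$, so no separate identification of the cocone is needed. The one point you pass over quickly is naturality of the Beck--Chevalley identifications in the index $k$ (not just in $X$), which is needed to recognize the diagram $k\mapsto(\beta_k)_!\beta_k^*Z$ as $k\mapsto Z\times_BB_k$ compatibly with the transition maps; this is routine and is exactly the coherence the paper's Definition~\ref{defn:carttransdiag} supplies, so it is not a gap.
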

\begin{proof}
  Consider a diagram $\phi \colon \mathcal{I} \to \PolyFun(I,J)$, where
  the functor $\phi_{x}$ corresponds to the diagram
  \[ I \xfrom{s_{x}} E(x) \xto{p_{x}} B(x) \xto{t_{x}} J.\]
  By Corollary~\ref{cor:polycolim} and 
  Theorem~\ref{thm:polyispolyfun} the colimit of $\phi$ in
  $\PolyFun(I,J)$ corresponds to the diagram
  \[ I \xfrom{s} E \xto{p} B \xto{t} J,\]
  where $E := \colim_{x \in \mathcal{I}} E(x)$ and
  $B := \colim_{x \in \mathcal{I}} B(x)$. On the other hand, since
  colimits in functor \icats{} are computed pointwise, the colimit of
  the diagram in $\Fun(\mathcal{S}_{/I}, \mathcal{S}_{/J})$ is the
  functor
  \[ \phi \colon (K \to I) \mapsto \colim_{x \in \mathcal{I}} \phi_{x}(K \to I).\]
  Let us view $\phi_{x}$ as a functor $\Fun(I, \mathcal{S}) \to
  \Fun(J, \mathcal{S})$; then evaluating at $\alpha \colon I \to
  \mathcal{S}$ and $j \in J$ we have
  \[ \phi_{x}(\alpha)(j) \simeq \colim_{b \in B(x)_{j}} \lim_{e \in
    E(x)_{b}} \alpha(s_{x}e).\]
  Let $\mathcal{B} \to \mathcal{I}$ be the left fibration
  corresponding to the functor $B(\blank)$. This has a map to $J$, and
  the fibre $\mathcal{B}_{j}\to \mathcal{I}$ is the left fibration for
  the functor $B(\blank)_{j}$. Since iterated colimits are colimits
  over cocartesian fibrations, we get
  \[ \phi(\alpha)(j) \simeq \colim_{(b,x) \in \mathcal{B}_{j}} \lim_{e
    \in E(x)_{b}} \alpha(s_{x}e).\]
  Now we observe that the functor $(b,x) \mapsto \lim_{e
    \in E(x)_{b}} \alpha(s_{x}e)$ takes every morphism in
  $\mathcal{B}_{j}$ to an equivalence of spaces: Since
  $\mathcal{B}_{j} \to \mathcal{I}$ is a left fibration it suffices to
  consider morphisms of the form $(b,x) \to (B(f)b, x')$ over $f
  \colon x \to x'$ in $\mathcal{I}$. Then as $\phi(f)$ is a cartesian
  natural transformation the map $E(f)_{b} \colon E(x)_{b} \to
  E(x')_{b}$ is an equivalence and we have
  \[ \lim_{e \in E(x)_{b}} \alpha(s_{x}e) \simeq \lim_{e
    \in E(x)_{b}} \alpha(s_{x'}E(f)e) \simeq \lim_{e' \in E(x')_{b}}
  \alpha(s_{x'}e').\]
  Thus this functor from $\mathcal{B}_{j}$ factors through the space
  obtained by inverting all morphisms in $\mathcal{B}_{j}$. This space
  is precisely $B_{j}\simeq \colim_{x \in \mathcal{I}} B(x)_{j}$ by 
  \cite{HTT}*{Corollary 3.3.4.6}. Since $\mathcal{B}_{j} \to B_{j}$ is
  cofinal by   \cite{HTT}*{Corollary 4.1.2.6}, this means we can replace the
  colimit over $\mathcal{B}_{j}$ by a colimit over the space $B_{j}$.  
  Moreover, since we have pullbacks
  \[
  \begin{tikzcd}
  E(x) \drpullback \ar[d, "p_x"'] \ar[r, "\epsilon_{x}"] & E \ar[d, "p"]  \\
  B(x) \ar[r, "\beta_{x}"'] & B
  \end{tikzcd}
  \]
  by \cite{HTT}*{Theorem 6.1.3.9}, for $b \in B(x)_{j}$ we can
  identify $\lim_{e \in E(x)_{b}} \alpha(s_{x}e)$ with $\lim_{e \in
    E_{b}} \alpha(se)$. Thus we have produced a natural equivalence
  \[ \phi(x) \simeq \colim_{b \in B_{j}} \lim_{e \in E_{b}}
  \alpha(se),\]
  where the right-hand side is the formula for the polynomial functor
  corresponding to the diagram
  \[ I \xfrom{s} E \xto{p} B \xto{t} J,\]
  as required.
\end{proof}

\subsection{Slices over Polynomial Functors}\label{subsec:slicepoly}
In this subsection we consider slices of $\PolyFun$, i.e.\
overcategories $\PolyFun_{/P}$. We will show that these \icats{} are
very well-behaved; specifically, we will prove:
\begin{thm}\label{thm:PolyFun/P=topos}
  For any polynomial functor $P$, the slice $\infty$-category
  $\PolyFun_{/P}$ is an $\infty$-topos; in particular, this \icat{} is
  presentable.  Furthermore, the full inclusion
  \[\PolyFun_{/P} \simeq \Poly_{/P} \to \Fun(\Pi, \mathcal{S})_{/P}\]
  preserves all limits and colimits; it is thus the
  inverse-image part of a geometric morphism.
\end{thm}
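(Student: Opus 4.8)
The plan is to reduce everything to the explicit fibre-product presentation of $\Poly$ from Definition~\ref{defn:Poly} and to recognize $\Poly_{/P}$ as an $\infty$-topos assembled from slices of presheaf topoi. First I would dispose of $\PolyFun$: since $\Phi\colon \Poly \xrightarrow{\sim} \PolyFun$ is an equivalence (Theorem~\ref{thm:polyispolyfun}), it induces an equivalence on slices $\Poly_{/P} \xrightarrow{\sim} \PolyFun_{/P}$, so it suffices to work with $\Poly_{/P}$. Writing $P$ as $I \xfrom{s} E \xto{p} B \xto{t} J$, an object of $\Poly_{/P}$ is a map $P'\to P$ in $\Fun(\Pi,\mathcal{S})$ whose middle square is cartesian, so $\Poly_{/P}$ is a subcategory of $\Fun(\Pi,\mathcal{S})_{/P}$. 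I claim this subcategory is \emph{full}: given objects $P'\to P$ and $P''\to P$ with cartesian middle squares, any map $P'\to P''$ over $P$ automatically has cartesian middle square by the pasting lemma for pullbacks (the square of $P''\to P$ and the composite square of $P'\to P$ are cartesian, hence so is the square of $P'\to P''$). As $\Fun(\Pi,\mathcal{S})$ is a presheaf $\infty$-topos, its slice $\Fun(\Pi,\mathcal{S})_{/P}$ is again an $\infty$-topos. This full inclusion preserves all limits and colimits: closure under colimits follows from Corollary~\ref{cor:polycolim} (colimits in $\Poly$ are computed in $\Fun(\Pi,\mathcal{S})$, and this passes to the slice, as in the descent argument of Proposition~\ref{propn:Cartcolim}), while closure under limits is elementary, since being a cartesian square is itself a limit condition and limits commute with pullbacks.

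The substantive point is that $\Poly_{/P}$ is itself an $\infty$-topos. Here I would use that forming slices commutes with limits of $\infty$-categories to slice the fibre-product presentation of Definition~\ref{defn:Poly}, obtaining
\[ \Poly_{/P}\simeq \Fun((\Delta^{1})^{\op},\mathcal{S})_{/(E\to I)}\times_{\mathcal{S}_{/E}}\bigl(\Fun(\Delta^{1},\mathcal{S})^{\txt{cart}}\bigr)_{/(E\to B)}\times_{\mathcal{S}_{/B}}\Fun(\Delta^{1},\mathcal{S})_{/(B\to J)}. \]
The middle factor simplifies drastically: a cartesian morphism to $(E\xto{p} B)$ is determined by its target leg $B'\to B$, whence $\bigl(\Fun(\Delta^{1},\mathcal{S})^{\txt{cart}}\bigr)_{/(E\to B)}\simeq \mathcal{S}_{/B}$, its projection to $\mathcal{S}_{/B}$ being the identity and its projection to $\mathcal{S}_{/E}$ being the pullback functor $p^{*}$. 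The rightmost fibre product then collapses, leaving
\[ \Poly_{/P}\simeq \Fun((\Delta^{1})^{\op},\mathcal{S})_{/(E\to I)}\times_{\mathcal{S}_{/E}}\Fun(\Delta^{1},\mathcal{S})_{/(B\to J)}, \]
a pullback of slices of presheaf $\infty$-topoi, where the second leg is evaluation at the source followed by $p^{*}$.

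To conclude toposness I would check that both legs to $\mathcal{S}_{/E}$ are inverse images of geometric morphisms: evaluation at a vertex is restriction along a functor of small categories, and $p^{*}$ is base change in a topos, and each is left exact, accessible, and colimit-preserving (and remains so after slicing), hence so is their composite. Since the $\infty$-category of $\infty$-topoi and geometric morphisms admits limits, computed in $\widehat{\Cat}_{\infty}$ along the inverse-image functors (\cite{HTT}*{\S 6.3}), the displayed pullback is an $\infty$-topos, in particular presentable. Assembling the pieces, $\iota\colon \Poly_{/P}\to \Fun(\Pi,\mathcal{S})_{/P}$ is a fully faithful, left-exact, colimit-preserving functor between presentable $\infty$-topoi; by the adjoint functor theorem it has a right adjoint and is therefore the inverse-image part of a geometric morphism.

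I expect the main obstacle to lie entirely in the toposness step: correctly identifying the cartesian middle slice with $\mathcal{S}_{/B}$, verifying that the two structure maps to $\mathcal{S}_{/E}$ are genuinely geometric inverse images (and in particular getting the variance right so that the ordinary $\Cat_{\infty}$-pullback does compute a limit of $\infty$-topoi rather than merely of $\infty$-categories), and invoking the closure of $\infty$-topoi under such limits in the precise form needed. The reduction, full faithfulness, and preservation statements are comparatively routine once the descent results of Proposition~\ref{propn:Cartcolim} and Corollary~\ref{cor:polycolim} are in hand.
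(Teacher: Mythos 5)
Your proposal is correct and follows essentially the same route as the paper: reduce to $\Poly_{/P}$ via Theorem~\ref{thm:polyispolyfun}, slice the fibre-product presentation of Definition~\ref{defn:Poly}, identify $\bigl(\Fun(\Delta^1,\mathcal{S})^{\txt{cart}}\bigr)_{/p}$ with $\mathcal{S}_{/B}$ (the paper's Lemma~\ref{lem:cartslice}), conclude toposness from closure of $\infty$-topoi under limits along left exact left adjoints, and obtain preservation of limits and colimits from the descent result Proposition~\ref{propn:Cartcolim}. The only cosmetic differences are that you collapse the double pullback to a single one and spell out full faithfulness via the pasting lemma, neither of which changes the argument.
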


\begin{remark}
  This theorem is also true in the $1$-categorical case of $\Set$,
  although we are not aware of a reference.  This is a consequence of
  the observation that the maps in $\Poly$ form a class of standard
  \'etale maps in $\mathcal{P}(\Pi)$, in the axiomatic sense of
  Joyal--Moerdijk~\cite{JoyalMoerdijkOpen}.  The result now follows
  from their Corollary 2.3.
\end{remark}

\begin{lemma}\label{lem:cartslice}
  For any morphism $p \colon E \to B$ in $\mathcal{S}$, the
  functor
  \[\txt{ev}_{1} \colon \Fun(\Delta^{1},
    \mathcal{S})^{\txt{cart}}_{/p} \to \mathcal{S}_{/B}\] is an
  equivalence.
\end{lemma}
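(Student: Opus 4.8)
The plan is to recognise the functor in the statement as the map on slices induced by a right fibration, and then to verify the equivalence fibrewise. Taking $\mathcal{C} = \Delta^{1}$ (which has terminal object $1$) and $\mathcal{D} = \mathcal{S}$ (which has pullbacks) in Lemma~\ref{lem:carttrcartfib} and the definition following it, the evaluation functor
\[ \txt{ev}_{1} \colon \Fun(\Delta^{1}, \mathcal{S})^{\txt{cart}} \to \mathcal{S} \]
is a right fibration, whose (cartesian) edges are precisely the pullback squares. The functor in the lemma is exactly the functor on slices induced by $\txt{ev}_{1}$, taken over the object $p$ upstairs and over its image $B = \txt{ev}_{1}(p)$ downstairs.

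First I would observe that this induced functor $\Fun(\Delta^{1},\mathcal{S})^{\txt{cart}}_{/p} \to \mathcal{S}_{/B}$ is again a right fibration. Indeed, the slice projection $\Fun(\Delta^{1},\mathcal{S})^{\txt{cart}}_{/p} \to \Fun(\Delta^{1},\mathcal{S})^{\txt{cart}}$ is a right fibration, and composing it with $\txt{ev}_{1}$ exhibits $\Fun(\Delta^{1},\mathcal{S})^{\txt{cart}}_{/p} \to \mathcal{S}$ as a right fibration; since this map factors through the right fibration $\mathcal{S}_{/B} \to \mathcal{S}$, the cancellation property for right fibrations forces the remaining factor $\Fun(\Delta^{1},\mathcal{S})^{\txt{cart}}_{/p} \to \mathcal{S}_{/B}$ to be a right fibration as well. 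A right fibration is an equivalence exactly when all of its fibres are contractible, so it remains only to compute these fibres.

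The fibre over an object $b \colon Y \to B$ of $\mathcal{S}_{/B}$ is the space of objects of $\Fun(\Delta^{1},\mathcal{S})^{\txt{cart}}_{/p}$ lying over $b$, that is, the space of cartesian squares
\[
\begin{tikzcd}
X \drpullback \arrow{r} \arrow{d} & E \arrow{d}{p} \\
Y \arrow{r}{b} & B
\end{tikzcd}
\]
with prescribed right edge $p$ and bottom edge $b$. By the universal property of pullbacks this space is contractible, so $\txt{ev}_{1}$ is an equivalence; its inverse sends $b$ to the pullback $b^{*}p$ equipped with its canonical cartesian square to $p$.

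The routine part is the fibre computation, which is nothing more than the contractibility of the space of pullback completions of a fixed cospan. The only step needing a little care is the justification that the slice functor is a right fibration, so that checking fibres is legitimate; I would handle this by the cancellation property just indicated. Equivalently, and more conceptually, one can straighten $\txt{ev}_{1}$ to the presheaf $B \mapsto (\mathcal{S}_{/B})^{\simeq}$ on $\mathcal{S}$ and observe that slicing over $p$ produces the representable presheaf $\Map_{\mathcal{S}}(-, B)$, whose associated right fibration is $\mathcal{S}_{/B}$; this recovers the same equivalence and confirms the computation.
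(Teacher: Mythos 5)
Your proof is correct, but it takes a genuinely different route from the paper's. The paper exhibits the functor of the lemma as a pullback of the restriction functor from the full subcategory of $\Fun(\Delta^{1}\times\Delta^{1},\mathcal{S})$ spanned by cartesian squares to $\Fun(\Delta^{1}\times\Delta^{1}\setminus\{(0,0)\},\mathcal{S})$, and observes that this restriction is an equivalence because cartesian squares are precisely the diagrams right Kan extended from the punctured square; your argument instead exploits the right fibration $\txt{ev}_{1}\colon\Fun(\Delta^{1},\mathcal{S})^{\txt{cart}}\to\mathcal{S}$ supplied by Lemma~\ref{lem:carttrcartfib} and reduces to the contractibility of the space of pullback completions of a fixed cospan. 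Both arguments ultimately rest on the universal property of the pullback; yours has the virtue of reusing infrastructure the paper has already built, while the paper's is more self-contained. The one step in your write-up that deserves a precise justification is the claim that the induced functor on slices is again a right fibration: the ``cancellation property'' as you state it ($g$ and $g\circ f$ right fibrations imply $f$ is one) fails at the point-set level of simplicial sets, but it does hold in the invariant sense in which the paper uses the term, since a functor between right fibrations over a common base commuting with the projections is classified by a map of presheaves and is therefore itself a right fibration; note also that fibrewise contractibility alone would not suffice without this step, so it cannot be waved away. Cleaner still would be to invoke the general fact that for any right fibration $q\colon\mathcal{E}\to\mathcal{B}$ and any object $e\in\mathcal{E}$ the induced functor $\mathcal{E}_{/e}\to\mathcal{B}_{/q(e)}$ is an equivalence --- its fibre over $\beta\colon b\to q(e)$ being the contractible space of $q$-cartesian lifts of $\beta$ ending at $e$ --- of which the present lemma, and your fibre computation, is exactly the special case $q=\txt{ev}_{1}$, $e=p$.
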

\begin{proof}
  The 2-of-3 property for pullback squares implies that
  $\Fun(\Delta^{1}, \mathcal{S})^{\txt{cart}}_{/p}$ can be identified
  with the full subcategory of $\Fun(\Delta^{1}, \mathcal{S})_{/p}$
  spanned by cartesian squares. We can thus identify the map to
  $\mathcal{S}_{/B}$ with a pullback of the forgetful functor from the
  full subcategory of
  $\Fun(\Delta^{1} \times \Delta^{1}, \mathcal{S})$ spanned by
  cartesian squares to the \icat{} of functors from
  $\Delta^{1} \times \Delta^{1} \setminus \{(0,0)\}$ to spaces. The
  latter is an equivalence by \cite{HTT}*{Proposition 4.3.2.15}, since
  it is the forgetful functor from squares that are right Kan extended
  from $\Delta^{1} \times \Delta^{1} \setminus \{(0,0)\}$.
\end{proof}

The main point of the proof of the theorem is the following general
lemma.
\begin{lemma}\label{lem:S/B->Arr/p}
  For any map of spaces $p\colon E\to B$, the full inclusion
  \[ j_p \colon \mathcal{S}_{/B} \simeq \Fun(\Delta^1,\mathcal{S})^{\txt{cart}}_{/p} 
  \longrightarrow \Fun(\Delta^1,\mathcal{S})_{/p}
  \]
  has both a left and a right adjoint.
	The left adjoint of $j_p$ takes a square
        \[
	\begin{tikzcd}
	X \arrow[r] \ar[rd, phantom, "\alpha"]
	\arrow[swap]{d}{x} & Y\arrow{d}{y} \\
	E \arrow[swap]{r}{p} & B\end{tikzcd}\]
	to $y \in \mathcal{S}_{/B}$ and the right adjoint of $j_p$ takes it to $p_*x \times_{p_* 
	p^* y} y$.
\end{lemma}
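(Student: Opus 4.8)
The plan is to exploit the universal property of the pullback to describe $\Fun(\Delta^1,\mathcal{S})_{/p}$ very explicitly, and then read off both adjoints from a direct computation of mapping spaces. First I would note that, since a commutative square over $p$ is the same datum as a map into the pullback of its right edge along $p$, an object of $\Fun(\Delta^1,\mathcal{S})_{/p}$ is equivalent to a triple $(x,y,\phi)$ with $x \in \mathcal{S}_{/E}$, $y \in \mathcal{S}_{/B}$, and $\phi \colon x \to p^{*}y$ a morphism in $\mathcal{S}_{/E}$; a morphism $(x,y,\phi) \to (x',y',\phi')$ is a pair $(u \colon x \to x',\, v \colon y \to y')$ together with a homotopy $\phi' u \simeq p^{*}(v)\phi$. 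Concretely this packages $\Fun(\Delta^1,\mathcal{S})_{/p}$ as the fibre product $\Fun(\Delta^1,\mathcal{S}_{/E}) \times_{\mathcal{S}_{/E}} \mathcal{S}_{/B}$, formed over $\txt{ev}_{1}$ on the first factor and $p^{*}$ on the second; this is the cleanest way to keep the coherences under control. Under this description the cartesian squares are exactly those with $\phi$ an equivalence (recovering Lemma~\ref{lem:cartslice}), so that $\mathcal{S}_{/B} \simeq \Fun(\Delta^1,\mathcal{S})^{\txt{cart}}_{/p}$ and $j_{p}$ sends $z$ to $(p^{*}z,\, z,\, \id)$.

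With this in hand the left adjoint is immediate: a morphism $(x,y,\phi) \to j_{p}(z) = (p^{*}z, z, \id)$ is a pair $(u,v)$ with $u \simeq p^{*}(v)\phi$, so $u$ is determined by $v$ and the mapping space collapses to $\Map_{\mathcal{S}_{/B}}(y,z)$. This exhibits $\txt{ev}_{1}$ (the functor returning the right edge $y$ of a square) as left adjoint to $j_{p}$, as claimed. For the right adjoint I would compute $\Map(j_{p}(z),(x,y,\phi))$: such a morphism is a pair $(u \colon p^{*}z \to x,\, v \colon z \to y)$ with $\phi u \simeq p^{*}(v)$. Transposing $u$ across the adjunction $p^{*} \dashv p_{*}$ to $\hat{u} \colon z \to p_{*}x$, the constraint $\phi u \simeq p^{*}(v)$ becomes $p_{*}(\phi)\,\hat{u} \simeq \eta_{y}\,v$, where $\eta_{y} \colon y \to p_{*}p^{*}y$ is the unit (using that the transpose of $p^{*}v$ is $\eta_{y}v$, by naturality of $\eta$, and that transposition is natural in postcomposition). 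Thus a morphism is precisely a map $z \to p_{*}x \times_{p_{*}p^{*}y} y$ into the fibre product of $p_{*}(\phi)$ and $\eta_{y}$ in $\mathcal{S}_{/B}$, identifying the right adjoint as $R(x,y,\phi) = p_{*}x \times_{p_{*}p^{*}y} y$.

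The main obstacle is not these correspondences, which are routine, but making them precise as natural equivalences of mapping spaces rather than correspondences of morphisms subject to equations: in the $\infty$-categorical setting each mapping space must be exhibited as an honest limit, and the constraints $u \simeq p^{*}(v)\phi$ and $\phi u \simeq p^{*}(v)$ must be encoded coherently. This is exactly what the fibre-product presentation buys us: the hom-space between two triples is the pullback of $\Map_{\mathcal{S}_{/E}}(x,x')$ and $\Map_{\mathcal{S}_{/B}}(y,y')$ over $\Map_{\mathcal{S}_{/E}}(x,p^{*}y')$, and the adjunction $p^{*}\dashv p_{*}$ supplies a natural equivalence of the relevant hom-spaces compatible with this pullback. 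As a sanity check one verifies $\txt{ev}_{1}(j_{p}z) \simeq z$ and $R(j_{p}z) \simeq z$ (reflecting that $j_{p}$ is fully faithful), the latter since $p_{*}p^{*}y \times_{p_{*}p^{*}y} y \simeq y$. Finally, I would remark that existence of both adjoints also follows abstractly: by Proposition~\ref{propn:Cartcolim} together with the behaviour of limits and colimits under slicing, $j_{p}$ preserves limits and colimits between presentable \icats{}, so the adjoint functor theorem applies; the content of the lemma is the explicit formulas, which the mapping-space computation provides.
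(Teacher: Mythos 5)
Your proof is correct, and the computation at its core --- exhibiting $\Map(j_p z,\alpha)$ as a pullback of mapping spaces and transposing across $p^*\dashv p_*$ to land in $\Map_{/B}(z,\, p_*x\times_{p_*p^*y}y)$ --- is exactly the computation the paper uses to identify the right adjoint. The differences are organizational. For the left adjoint the paper does not compute mapping spaces: it slices the adjunction $\txt{ev}_1\dashv\txt{const}$ along $p$ via \cite{HTT}*{Proposition 5.2.5.1} and observes that the induced right adjoint (sending $y$ to its pullback square) is precisely $j_p$, so $\txt{ev}_1$ is its left adjoint for free; your direct collapse of $\Map((x,y,\phi),(p^*z,z,\id))\simeq\Map_{/B}(y,z)$ is an equally valid, more hands-on substitute. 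For the right adjoint the paper first secures \emph{existence} abstractly (Proposition~\ref{propn:Cartcolim}, presentability, and the adjoint functor theorem) and only then verifies the formula by the mapping-space computation, whereas you let the computation do both jobs at once --- which is fine, provided the naturality in $z$ of your equivalence is recorded. The one place where your write-up carries an extra obligation is the identification $\Fun(\Delta^1,\mathcal{S})_{/p}\simeq\Fun(\Delta^1,\mathcal{S}_{/E})\times_{\mathcal{S}_{/E}}\mathcal{S}_{/B}$: this is standard and true, but it is an actual equivalence of \icats{} that needs an argument, and the paper avoids it by reading the relevant pullback of mapping spaces directly off the defining pullback square for the slice. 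Net effect: the paper's route buys existence with less coherence bookkeeping; yours makes the unit, the counit, and both formulas completely transparent.
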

\begin{proof}
  The functor $\txt{ev}_{1} \colon \Fun(\Delta^{1}, \mathcal{S}) \to
  \mathcal{S}$ has as right adjoint the constant diagram functor
  (which is also given by right Kan extension). By
  \cite{HTT}*{Proposition 5.2.5.1} this induces for $p \colon E \to B$ an adjunction 
  on slice categories
  \[ \txt{ev}_{1} : \Fun(\Delta^{1}, \mathcal{S})_{/p} \rightleftarrows
  \mathcal{S}_{/B} : g_{p}, \]
  where the right adjoint $g_{p}$ takes $y\colon Y \to B$ to the pullback
        \[
	\begin{tikzcd}
	E \times_B Y \arrow[r] \arrow[d] & Y\arrow{d}{y} \\
	E \arrow[swap]{r}{p} & B
      \end{tikzcd}
      \]
  induced by the unit map. We can thus identify the right adjoint
  $g_{p}$ with $j_{p}$, so $j_{p}$ has a left adjoint with the stated
  description.

  It follows from Proposition~\ref{propn:Cartcolim} that $j_{p}$
  preserves limits and colimits, so since $\mathcal{S}_{/B}$ and
  $\Fun(\Delta^{1}, \mathcal{S})_{/p}$ are presentable the adjoint
  functor theorem implies that $j_{p}$ also has a right adjoint.
  To show the right adjoint has the claimed description, for $f$ 
  in $\mathcal{S}_{/B}$ and a
  square $\alpha$ as above, we must establish the equivalence
  \[
  \Map_{/B}( f, p_*x \times_{p_* p^* y} y)
  \simeq
  \Map_{\Fun(\Delta^1,\mathcal{S})_{/p}}(j_p f , \alpha) .
  \]
  But the mapping space on the right is the pullback
  \[
  \Map_{/E}(p^* f, x) \underset{\Map_{/B}(p_! p^* f, y)}{\times} 
  \Map_{/B}(f,y) 
  \]
  which is naturally equivalent to
  \[
  \Map_{/B}(f, p_* x) \underset{\Map_{/B}(f, p_* p^* y)}{\times} 
  \Map_{/B}(f,y) \\
  \simeq 
  \Map_{/B}( f,  p_* x \times_{p_* p^* y} y ) 
  \]
  as required.
\end{proof}

\begin{proof}[Proof of Theorem~\ref{thm:PolyFun/P=topos}]
  Suppose $P$ is represented by $I 
  \stackrel{s}\leftarrow E \stackrel{p}\to B \stackrel{t}\to I$.
By Lemma~\ref{lem:cartslice} we have an equivalence
\[
  \Fun(\Delta^1, \mathcal{S})^{\txt{cart}}_{/p} \simeq 
  \mathcal{S}_{/B}.
\]
  Using this equivalence, we have (via 
  Theorem~\ref{thm:polyispolyfun} and Definition~\ref{defn:Poly}):
  \begin{align*}
  \PolyFun_{/P} \simeq \Poly_{/P} 
  &\simeq
  \left( \Fun((\Delta^{1})^{\op}, \mathcal{S})
  \underset{\mathcal{S}}{\times} \Fun(\Delta^1, \mathcal{S})^{\txt{cart}}
  \underset{\mathcal{S}}{\times} \Fun(\Delta^{1}, 
  \mathcal{S}) \right)_{/(s,p,t)} \\
&\simeq
  \Fun((\Delta^{1})^{\op}, \mathcal{S})_{/s}
  \underset{\mathcal{S}_{/E}}{\times} \Fun(\Delta^1, \mathcal{S})^{\txt{cart}}_{/p}
  \underset{\mathcal{S}_{/B}}{\times} \Fun(\Delta^{1}, 
  \mathcal{S})_{/t} \\
  &\simeq
  \Fun((\Delta^{1})^{\op}, \mathcal{S})_{/s}
  \underset{\mathcal{S}_{/E}}{\times} \ \mathcal{S}_{/B} \
  \underset{\mathcal{S}_{/B}}{\times} \Fun(\Delta^{1}, 
  \mathcal{S})_{/t} .
  \end{align*}
  This is a double pullback of $\infty$-categories which are
  $\infty$-topoi, and the functors involved in the pullbacks are
  left exact left adjoints.  Hence the result is again an
  $\infty$-topos by \cite{HTT}*{Proposition~6.3.2.2}.  In detail, the
  four functors involved are
  \[
  \begin{tikzcd}[column sep={6em,between origins}]
	 \Fun((\Delta^{1})^{\op}, \mathcal{S})_{/s} \arrow[swap]{rd}{f_1} & & 
	 \arrow{ld}{p^*} \mathcal{S}_{/B} \arrow[swap]{rd}{\id}
	 && \arrow{ld}{f_4}\Fun(\Delta^{1}, 
  \mathcal{S})_{/t} \\
  & \mathcal{S}_{/E} && \mathcal{S}_{/B} &
  \end{tikzcd}
  \]
  where $f_1$ and $f_4$ are 
  slices of restriction functors along appropriate $\Delta^0 \to 
  \Delta^1$, hence have both adjoints by 
  \cite{HTT}*{Proposition~5.2.5.1} and its dual.  (The pullbacks are 
  pullbacks in $\LCatI$, or equivalently, pushouts in the 
  $\infty$-category of $\infty$-topoi and geometric morphisms, 
  cf.~\cite{HTT}*{6.3.1.5}.)
  
  The functor $\Poly_{/P} \to \Fun(\Pi, \mathcal{S})_{/P}$ is given by
  $\id \times_{\mathcal{S}_{/E}} j_p \times_{\mathcal{S}_{/B}} \id$,
  where $j_p: \mathcal{S}_{/B} \to \Fun(\Delta^1,\mathcal{S})_{/p}$ is
  from Lemma~\ref{lem:S/B->Arr/p}. We know from
  Proposition~\ref{propn:Cartcolim} that this functor preserves all
  limits and colimits.
\end{proof}

\begin{cor}\label{cor:PolyFun(I,J)=topos}
  For a fixed polynomial functor $P$ represented by $I \leftarrow 
  E \to B \to J$, we have 
  \[
  \PolyFun(I,J)_{/P} \simeq \mathcal{S}_{/B} .
  \]
  In particular, the $\infty$-category $\PolyFun(I,J)_{/P}$ is an 
  $\infty$-topos.
  \qed
\end{cor}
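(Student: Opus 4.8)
The plan is to run the computation from the proof of Theorem~\ref{thm:PolyFun/P=topos} fibrewise, where it collapses almost entirely. Write $P$ for the diagram $I \xfrom{s} E \xto{p} B \xto{t} J$. By Theorem~\ref{thm:polyispolyfun}, or rather its fibrewise incarnation Proposition~\ref{propn:Phifib}, the equivalence $\Phi$ lets us replace $\PolyFun(I,J)_{/P}$ by $\Poly(I,J)_{/P}$, so it suffices to work with the representing diagrams. By Definition~\ref{defn:Poly} we have
\[ \Poly(I,J) \simeq \mathcal{S}_{/I} \times_{\mathcal{S}} \Fun(\Delta^{1},\mathcal{S})^{\txt{cart}} \times_{\mathcal{S}} \mathcal{S}_{/J}, \]
where the first fibre product is taken over the shared object $E$ and the second over $B$.

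First I would invoke the general fact that slices commute with fibre products of $\infty$-categories: for $\mathcal{A} \xto{F} \mathcal{C} \xfrom{G} \mathcal{B}$ and an object $(a,b)$ lying over $c$, one has $(\mathcal{A}\times_{\mathcal{C}}\mathcal{B})_{/(a,b)} \simeq \mathcal{A}_{/a} \times_{\mathcal{C}_{/c}} \mathcal{B}_{/b}$ (this is the same fact used to split up $\PolyFun_{/P}$ in the proof of Theorem~\ref{thm:PolyFun/P=topos}). Applied to $P = (s,p,t)$ this gives
\[ \Poly(I,J)_{/P} \simeq (\mathcal{S}_{/I})_{/s} \times_{\mathcal{S}_{/E}} \bigl(\Fun(\Delta^{1},\mathcal{S})^{\txt{cart}}\bigr)_{/p} \times_{\mathcal{S}_{/B}} (\mathcal{S}_{/J})_{/t}. \]
Then $(\mathcal{S}_{/I})_{/s} \simeq \mathcal{S}_{/E}$ and $(\mathcal{S}_{/J})_{/t} \simeq \mathcal{S}_{/B}$ by the slice-of-a-slice identification, while $\bigl(\Fun(\Delta^{1},\mathcal{S})^{\txt{cart}}\bigr)_{/p} \simeq \mathcal{S}_{/B}$ by Lemma~\ref{lem:cartslice}.

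The step requiring care, and the only genuine content beyond bookkeeping, is to track the legs of the two remaining fibre products under these substitutions. Under $(\mathcal{S}_{/I})_{/s} \simeq \mathcal{S}_{/E}$ the projection to $\mathcal{S}_{/E}$ becomes the identity, so the first fibre product collapses onto the middle factor $\mathcal{S}_{/B}$; its leftward leg is the pullback functor $p^{*}$, but being paired against an identity this is immaterial. Likewise, under the equivalence $\txt{ev}_{1}$ of Lemma~\ref{lem:cartslice} the rightward leg of the middle factor and the leg of $(\mathcal{S}_{/J})_{/t}\simeq\mathcal{S}_{/B}$ are both the identity of $\mathcal{S}_{/B}$, so the second fibre product collapses as well. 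We are left with $\mathcal{S}_{/B} \times_{\mathcal{S}_{/B}} \mathcal{S}_{/B} \simeq \mathcal{S}_{/B}$, giving $\PolyFun(I,J)_{/P} \simeq \mathcal{S}_{/B}$. Finally, since $\mathcal{S}$ is an $\infty$-topos so is every slice $\mathcal{S}_{/B}$, yielding the last assertion. I expect the main obstacle to be precisely the correct identification of these projection functors, which is just the fibrewise shadow of the diagram of four functors displayed in the proof of Theorem~\ref{thm:PolyFun/P=topos}.
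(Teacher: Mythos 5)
Your proposal is correct and is essentially the paper's own argument: the corollary is left as an immediate consequence of Theorem~\ref{thm:PolyFun/P=topos}, and the intended proof is exactly the fibrewise rerun of that computation, with $(\mathcal{S}_{/I})_{/s}\simeq\mathcal{S}_{/E}$, $\bigl(\Fun(\Delta^{1},\mathcal{S})^{\txt{cart}}\bigr)_{/p}\simeq\mathcal{S}_{/B}$ via Lemma~\ref{lem:cartslice}, and $(\mathcal{S}_{/J})_{/t}\simeq\mathcal{S}_{/B}$, after which both fibre products collapse along identity legs. Your attention to identifying the projection functors is the right point of care, and your identifications are correct.
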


\begin{defn}
  We define the \icat{} $\PolyEnd$ of polynomial endofunctors by the
  pullback square
  \[
  \begin{tikzcd}
	  \PolyEnd \drpullback \arrow{r} \arrow{d} & \PolyFun \arrow{d} \\
	  \mathcal{S} \arrow{r} & \mathcal{S} \times \mathcal{S}.
  \end{tikzcd}
  \]
  Since $\PolyFun$ is cocomplete and the projection to $\mathcal{S}
  \times \mathcal{S}$ preserves colimits by
  Corollary~\ref{cor:polycolim}, it follows from \cite{HTT}*{Lemma
    5.4.5.5} that $\PolyEnd$ is also cocomplete.
\end{defn}

\begin{propn}\label{S->SxS}
  For a fixed polynomial endofunctor $P$, the \icat{} $\PolyEnd_{/P}$
  is an $\infty$-topos.
\end{propn}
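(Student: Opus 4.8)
The plan is to exhibit $\PolyEnd_{/P}$ as a pullback of $\infty$-topoi along left exact left adjoints and then invoke \cite{HTT}*{Proposition~6.3.2.2}, exactly as in the proof of Theorem~\ref{thm:PolyFun/P=topos}. Let $(I,I)$ be the image of $P$ under the source-and-target projection, so that in the defining pullback square $\PolyEnd \simeq \PolyFun \times_{\mathcal{S}\times\mathcal{S}} \mathcal{S}$ the object $P$ lies over $P \in \PolyFun$ and over $I \in \mathcal{S}$, with common image $(I,I)$. The first step is to observe that passing to slices is compatible with this pullback. Using the description $\mathcal{C}_{/c} \simeq \Fun(\Delta^1, \mathcal{C}) \times_{\mathcal{C}} \{c\}$ (via $\txt{ev}_1$), together with the facts that $\Fun(\Delta^1, \blank)$ preserves pullbacks of \icats{} and that iterated pullbacks may be rearranged (noting $\{P\} \simeq \{P\} \times_{\{(I,I)\}} \{I\}$), one obtains a natural equivalence
\[ \PolyEnd_{/P} \simeq \PolyFun_{/P} \times_{(\mathcal{S}\times\mathcal{S})_{/(I,I)}} \mathcal{S}_{/I}. \]

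Next I would verify that all three \icats{} here are $\infty$-topoi and that the two functors into the middle term are left exact left adjoints. By Theorem~\ref{thm:PolyFun/P=topos} the \icat{} $\PolyFun_{/P}$ is an $\infty$-topos, and $(\mathcal{S}\times\mathcal{S})_{/(I,I)} \simeq \mathcal{S}_{/I}\times\mathcal{S}_{/I}$ and $\mathcal{S}_{/I}$ are $\infty$-topoi. The functor $\mathcal{S}_{/I} \to \mathcal{S}_{/I}\times\mathcal{S}_{/I}$ induced by the diagonal $\mathcal{S}\to\mathcal{S}\times\mathcal{S}$ is the diagonal of $\mathcal{S}_{/I}$, which preserves all limits and colimits (both computed componentwise) and is therefore a left exact left adjoint. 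For the other leg, the map $\PolyFun_{/P} \to (\mathcal{S}\times\mathcal{S})_{/(I,I)}$ is the slice of the source-and-target projection of Corollary~\ref{source-and-target}. Under the equivalence $\PolyFun_{/P} \simeq \Poly_{/P}$ (Theorem~\ref{thm:polyispolyfun}) and the description of $\Poly$ inside $\Fun(\Pi,\mathcal{S})$ (Definition~\ref{defn:Poly}, Lemma~\ref{lem:ev03}), this factors as
\[ \Poly_{/P} \hookrightarrow \Fun(\Pi, \mathcal{S})_{/P} \xrightarrow{(\txt{ev}_0, \txt{ev}_3)_{/P}} (\mathcal{S}\times\mathcal{S})_{/(I,I)}. \]
The inclusion preserves all limits and colimits by Theorem~\ref{thm:PolyFun/P=topos}, while $(\txt{ev}_0,\txt{ev}_3)$ preserves all limits and colimits since evaluation functors do; hence its slice preserves pullbacks and the terminal object, so it is left exact, and it preserves colimits. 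The composite is thus left exact and colimit-preserving between presentable \icats{} (both being $\infty$-topoi), i.e.\ a left exact left adjoint.

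Having exhibited $\PolyEnd_{/P}$ as a pullback of $\infty$-topoi along left exact left adjoints, I conclude by \cite{HTT}*{Proposition~6.3.2.2} that $\PolyEnd_{/P}$ is an $\infty$-topos. The only genuinely delicate point is the first step: checking that slicing is compatible with the pullback defining $\PolyEnd$, and that the resulting structure map out of $\PolyFun_{/P}$ is not merely colimit-preserving but also left exact. Both are handled by reducing to the already-established behaviour of the inclusion $\Poly_{/P}\to\Fun(\Pi,\mathcal{S})_{/P}$ from Theorem~\ref{thm:PolyFun/P=topos}, together with the elementary observation that the slice of a left exact functor preserves pullbacks and terminal objects and hence all finite limits.
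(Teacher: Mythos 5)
Your proposal is correct and follows essentially the same route as the paper: both exhibit $\PolyEnd_{/P}$ as the pullback of $\PolyFun_{/P} \xrightarrow{\txt{ev}_{0,3}} \mathcal{S}_{/I}\times\mathcal{S}_{/I} \xleftarrow{\Delta} \mathcal{S}_{/I}$, check that both legs are left exact left adjoints (using the factorization of $\txt{ev}_{0,3}$ through $\Fun(\Pi,\mathcal{S})_{/P}$ and Theorem~\ref{thm:PolyFun/P=topos}), and conclude via \cite{HTT}*{Proposition~6.3.2.2}. The only cosmetic differences are that you justify the compatibility of slicing with the defining pullback explicitly (the paper takes this for granted) and that you argue $\Delta$ is left exact directly rather than identifying it as pullback along the codiagonal $I\amalg I\to I$.
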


\begin{proof}
  In the pullback diagram
\[
  \begin{tikzcd}
	  \PolyEnd_{/P} \drpullback \arrow{r} \arrow{d} & \PolyFun_{/P} \arrow{d}{\txt{ev}_{0,3}} \\
	  \mathcal{S}_{/I} \ar[r, "\Delta"'] & \mathcal{S}_{/I} \times 
	  \mathcal{S}_{/I} ,
  \end{tikzcd}
  \]
  both $\Delta$ and $\txt{ev}_{0,3}$ are left exact left adjoints.
  The former because it is pullback along the codiagonal $I\amalg I \to I$,
  the latter because it is the composite $\Poly_{/P} \to
  \mathcal{P}(\Pi)_{/P} \to \mathcal{S}_{/I} \times \mathcal{S}_{/I}$
  and here the first functor is a left exact left adjoint by
  Theorem~\ref{thm:PolyFun/P=topos} and the second is clear.
  Since the three $\infty$-categories are $\infty$-topoi, the 
  pullback is again an $\infty$-topos by 
  \cite{HTT}*{Proposition~6.3.2.2}.
\end{proof}

\section{Analytic Functors}\label{sec:anal}

\subsection{Analytic Functors and $\kappa$-Accessible Polynomial Functors}\label{subsec:analfun}

\begin{defn}
  A functor $\mathcal{S}_{/I} \to \mathcal{S}_{/J}$ is \emph{analytic}
  if it preserves weakly contractible limits and sifted colimits. 
\end{defn}

\begin{warning}
  This definition of analytic would not be correct if working over the
  category of sets instead of the category of spaces.  See
  Remark~\ref{rmk:analytic/Set} for further discussion of this subtle
  issue.
\end{warning}

From this definition it is immediate (using Theorem~\ref{thm:pnchar})
that an analytic functor is polynomial. We write $\AnFun$ for the full
subcategory of $\PolyFun$ spanned by the analytic functors, and
$\AnFun(I,J)$ for the corresponding subcategory of
$\PolyFun(I,J)$. Similarly, we define the \icat{} $\AnEnd$ of analytic
endofunctors as the pullback
\[
\begin{tikzcd}
  \AnEnd \drpullback \arrow{r} \arrow{d} & \AnFun \arrow{d} \\
  \mathcal{S} \arrow{r} & \mathcal{S} \times \mathcal{S}.
\end{tikzcd}
\]
We now wish to characterize the analytic functors (and also the
$\kappa$-accessible polynomial functors) in terms of their
representing diagrams.

\begin{defn}
  Let $\mathcal{C}$ be a cocomplete \icat, and let $\kappa$ be a
  regular cardinal.  Recall that an object $x$ is called
  \emph{$\kappa$-compact} when $\Map_{\mathcal{C}}(x, \blank) \colon
  \mathcal{C} \to \mathcal{S}$ preserves $\kappa$-filtered
  colimits~\cite{HTT}*{5.3.4.5}, and that it is called \emph{projective}
  when $\Map_{\mathcal{C}}(x, \blank)$ preserves geometric
  realizations~\cite{HTT}*{5.5.8.18}.
\end{defn}

\begin{remark}
  By \cite{HTT}*{Corollary 5.5.8.17} if $\mathcal{C}$ is cocomplete
  then a functor $F\colon \mathcal{C}\to\mathcal{D}$ preserves
  filtered colimits and geometric realizations if and only if it
  preserves sifted colimits.  In particular, $x\in \mathcal{C}$ is
  compact and projective if and only if
  $\Map_{\mathcal{C}}(x, \blank) \colon \mathcal{C} \to \mathcal{S}$
  preserves sifted colimits. If $\mathcal{C}$ is not assumed to be
  cocomplete, we still say that an object $x$ is \emph{compact
    projective} if mapping out of it preserves sifted colimits.
\end{remark}

\begin{lemma}\label{lem:slicecpt}
  Let $\mathcal{C}$ be 
  an \icat{} with finite products.
  Then
  $f \colon x \to y$ is a projective or $\kappa$-compact object in 
  $\mathcal{C}_{/y}$ if $x$
  is a projective or $\kappa$-compact object of $\mathcal{C}$. If
  $\mathcal{C}$ is cartesian closed, then the converse is also true.
\end{lemma}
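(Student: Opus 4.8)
The plan is to reduce everything to two standard facts: the formula for mapping spaces in a slice \icat{}, and the universality of colimits in $\mathcal{S}$. Throughout I would write $U \colon \mathcal{C}_{/y} \to \mathcal{C}$ for the forgetful functor, which (since $\mathcal{C}$ is cocomplete) creates all colimits, and recall the natural fibre sequence
\[ \Map_{\mathcal{C}_{/y}}(f, g) \simeq \operatorname{fib}_{f}\bigl(\Map_{\mathcal{C}}(x, z) \xrightarrow{g_{*}} \Map_{\mathcal{C}}(x, y)\bigr) \]
for $f \colon x \to y$ and $g \colon z \to y$, the fibre taken over the point $f$ and $g_{*}$ being postcomposition with the structure map. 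The key observation is that both the $\kappa$-compact case and the projective case are handled by one and the same argument, the only input being that $\Map_{\mathcal{C}}(x, \blank)$ (resp.\ $\Map_{\mathcal{C}_{/y}}(f, \blank)$) preserves the relevant class of colimits, namely $\kappa$-filtered colimits or geometric realizations.

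For the forward implication I would take $x$ to be $\kappa$-compact (resp.\ projective) and a $\kappa$-filtered diagram (resp.\ simplicial object) $g_{\bullet} \colon \mathcal{I} \to \mathcal{C}_{/y}$ with colimit $g$; since $U$ preserves colimits, $Ug \simeq \colim_{i} Ug_{i}$. Applying $\Map_{\mathcal{C}}(x, \blank)$, which preserves this colimit by hypothesis, exhibits $\Map_{\mathcal{C}}(x, Ug) \to \Map_{\mathcal{C}}(x, y)$ as the colimit, in $\mathcal{S}_{/\Map_{\mathcal{C}}(x,y)}$, of the arrows $\Map_{\mathcal{C}}(x, Ug_{i}) \to \Map_{\mathcal{C}}(x, y)$. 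It then remains to commute the fibre over $f$ with this colimit, and this is exactly the universality of colimits in $\mathcal{S}$: base change along $\{f\} \to \Map_{\mathcal{C}}(x, y)$ preserves colimits, yielding $\Map_{\mathcal{C}_{/y}}(f, g) \simeq \colim_{i} \Map_{\mathcal{C}_{/y}}(f, g_{i})$. I would stress that universality is precisely what makes this uniform: geometric realizations do not commute with finite limits in general, so it is essential that taking the fibre over a fixed point of a fixed space \emph{is} a base change, hence preserves all colimits in $\mathcal{S}$. Note this direction uses neither cartesian closedness nor pullbacks in $\mathcal{C}$.

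For the converse I would use cartesian closedness exactly to produce a colimit-preserving right adjoint to $U$. Whenever $\mathcal{C}$ has binary products, $U$ admits a right adjoint $y^{*} \colon \mathcal{C} \to \mathcal{C}_{/y}$, $c \mapsto (c \times y \to y)$, with $U y^{*} \simeq (\blank) \times y$, and the adjunction gives a natural equivalence
\[ \Map_{\mathcal{C}}(x, \blank) \simeq \Map_{\mathcal{C}_{/y}}(f, y^{*}(\blank)). \]
If $\mathcal{C}$ is moreover cartesian closed then $(\blank) \times y$ is a left adjoint, hence preserves all colimits; as colimits in $\mathcal{C}_{/y}$ are created by $U$, it follows that $y^{*}$ preserves all colimits. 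Composing with the hypothesis that $\Map_{\mathcal{C}_{/y}}(f, \blank)$ preserves $\kappa$-filtered colimits (resp.\ geometric realizations) then shows the same for $\Map_{\mathcal{C}}(x, \blank)$, i.e.\ that $x$ is $\kappa$-compact (resp.\ projective). The hard part, and the single place where the extra hypothesis is genuinely needed, is this converse step: the composite $\Map_{\mathcal{C}_{/y}}(f, \blank) \circ y^{*}$ only detects the colimits preserved by $y^{*}$, and it is cartesian closedness that upgrades $y^{*}$ from merely finite-limit-preserving to fully colimit-preserving, closing the argument.
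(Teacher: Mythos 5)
Your proposal is correct and follows essentially the same route as the paper: the forward direction identifies $\Map_{\mathcal{C}_{/y}}(f,\blank)$ as a fibre of $\Map_{\mathcal{C}}(x,\blank)$ over $\Map_{\mathcal{C}}(x,y)$ and commutes the fibre past the colimit using universality of colimits in $\mathcal{S}$, and the converse uses the right adjoint $c \mapsto (c \times y \to y)$ together with cartesian closedness to make $(\blank)\times y$ colimit-preserving. The only cosmetic difference is that you phrase the first step via an explicit base-change/fibre-sequence argument where the paper pastes pullback squares, but these are the same observation.
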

\begin{proof}
  Consider a diagram $p \colon \mathcal{I} \to \mathcal{C}_{/y}$ that
  has a colimit. Since this colimit is preserved by the forgetful
  functor to $\mathcal{C}$, we have a commutative diagram
  \[
  \begin{tikzcd}
  \colim \Map_{/y}(x, p) \arrow{d} \arrow{r} & \Map_{/y}(x, \colim p)
  \arrow{r} \arrow{d}
  & \{f\} \arrow{d} \\
  \colim \Map(x, p) \arrow{r} & \Map(x, \colim p) \arrow{r} & \Map(x,
  y).
  \end{tikzcd}
\]
 Here the right square is clearly cartesian, and the composite
 square is cartesian since colimits are universal in $\mathcal{S}$. Therefore
 the left square is also cartesian, so if the lower left
 horizontal morphism is an equivalence, so is the top left horizontal
 morphism.

 If $\mathcal{C}$ is cartesian closed, then $\Map_{\mathcal{C}}(x,
 \colim p) \simeq \Map_{/y}(x, y \times \colim p) \simeq \Map_{/y}(x,
 \colim y \times p)$, which gives the converse.
\end{proof}

\begin{propn}\label{prop:adjcolimcpt}
  Consider an adjunction 
  \[
  F : \mathcal{C} \rightleftarrows \mathcal{D} : G .
  \]    
  If $G$ and $\Map_{\mathcal{C}}(x,\blank)$ both preserve
  $\mathcal{I}$-shaped colimits, then also
  $\Map_{\mathcal{D}}(Fx, \blank)$ preserves $\mathcal{I}$-shaped
  colimits.  Conversely, if equivalences in $\mathcal{C}$ are detected
  by mapping out of a collection of objects $x$ such that the functors
  $\Map_{\mathcal{C}}(x, \blank)$ and $\Map_{\mathcal{D}}(Fx, \blank)$
  both preserve $\mathcal{I}$-shaped colimits, then also $G$ preserves
  $\mathcal{I}$-shaped colimits.
\end{propn}
\begin{proof}
  Consider a
  diagram $p \colon \mathcal{I} \to \mathcal{D}$ that has a
  colimit. If $G$ and $\Map_{\mathcal{C}}(x,\blank)$ both preserve
  $\mathcal{I}$-shaped colimits, then we have
  \[ 
  \Map_{\mathcal{D}}(Fx, \colim p) \simeq \Map_{\mathcal{C}}(x,
  \colim Gp) \simeq \colim \Map_{\mathcal{C}}(x, Gp) \simeq \colim
  \Map_{\mathcal{D}}(Fx, p),
  \]
  and so $\Map_{\mathcal{D}}(Fx, \blank)$ also preserves
  $\mathcal{I}$-shaped colimits. Conversely, for an object $x$ in
  the collection we have equivalences 
  \[
  \Map_{\mathcal{C}}(x, G(\colim p)) \simeq
  \Map_{\mathcal{D}}(Fx, \colim p) \simeq 
  \colim \Map_{\mathcal{D}}(Fx, p) \simeq 
  \Map_{\mathcal{C}}(x, \colim Gp),
  \]
  and thus $\colim Gp \to G(\colim p)$ is an equivalence.
\end{proof}

Three special cases of this result are listed in the following 
corollary:
\begin{cor}\label{cor:adjcolimcpt}
  Consider an adjunction \[F : \mathcal{C} \rightleftarrows
  \mathcal{D} : G.\]
  \begin{enumerate}[(i)]
  \item If $G$ preserves $\kappa$-filtered colimits, then $F$
    preserves $\kappa$-compact objects. If equivalences in
    $\mathcal{C}$ are detected by mapping out of $\kappa$-compact
    objects, then the converse is true.
  \item If $G$ preserves geometric realizations, then $F$
    preserves projective objects. If equivalences in
    $\mathcal{C}$ are detected by mapping out of projective
    objects, then the converse is true.
  \item If $G$ preserves sifted colimits, then $F$
    preserves compact projective objects. If equivalences in
    $\mathcal{C}$ are detected by mapping out of compact projective
    objects, then the converse is true.
  \end{enumerate}
\end{cor}

\begin{lemma}\label{lem:spantopt}
  Consider a span of spaces $I \xfrom{f} X \xto{q} *$. The functor
  $q_{*}f^{*}$ preserves $\kappa$-filtered colimits \IFF{} $X$ is
  $\kappa$-compact, and sifted colimits \IFF{} $X$ is compact
  projective, i.e.\ is a finite set.
\end{lemma}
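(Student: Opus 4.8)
The plan is to identify $q_{*}f^{*}$ with a corepresentable functor $\mathcal{S}_{/I}\to\mathcal{S}$ and then reduce the assertion to the slice criterion already established in Lemma~\ref{lem:slicecpt}. Concretely, I claim that $q_{*}f^{*}$ is corepresented by the object $(f\colon X\to I)\in\mathcal{S}_{/I}$, i.e.\ that there is a natural equivalence
\[ q_{*}f^{*}(\alpha) \simeq \Map_{\mathcal{S}_{/I}}\bigl((X\xto{f} I),\,\alpha\bigr) \]
for every $\alpha\colon A\to I$ in $\mathcal{S}_{/I}$. This is seen by unwinding the adjunction $q^{*}\dashv q_{*}$: since $q^{*}(\pt)\simeq\id_{X}$, the underlying space of $q_{*}f^{*}\alpha$ is $\Map_{\mathcal{S}}(\pt,q_{*}f^{*}\alpha)\simeq\Map_{\mathcal{S}_{/X}}(\id_{X},f^{*}\alpha)$, i.e.\ the space of sections over $X$ of the projection $A\times_{I}X\to X$. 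But a section of this pullback is exactly the datum of a map $X\to A$ over $I$, so $q_{*}f^{*}\alpha\simeq\Map_{\mathcal{S}_{/I}}(f,\alpha)$, naturally in $\alpha$.

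Granting this identification, both preservation properties become properties of the corepresenting object. By the very definition of $\kappa$-compactness, the corepresentable functor $\Map_{\mathcal{S}_{/I}}(f,\blank)$ preserves $\kappa$-filtered colimits \IFF{} $f$ is $\kappa$-compact in $\mathcal{S}_{/I}$; and, using the fact recalled above that a functor preserves sifted colimits \IFF{} it preserves both filtered colimits and geometric realizations, it preserves sifted colimits \IFF{} $f$ is compact projective in $\mathcal{S}_{/I}$.

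Next I would pass from the slice $\mathcal{S}_{/I}$ back to $\mathcal{S}$. The \icat{} $\mathcal{S}$ is cocomplete, has pullbacks, and is cartesian closed, so Lemma~\ref{lem:slicecpt}, \emph{including its converse}, applies: the object $(f\colon X\to I)$ is $\kappa$-compact (resp.\ compact projective) in $\mathcal{S}_{/I}$ \IFF{} $X$ is $\kappa$-compact (resp.\ compact projective) in $\mathcal{S}$. This already yields the $\kappa$-filtered statement.

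It remains to recall that the compact projective objects of $\mathcal{S}$ are precisely the finite sets. Finite sets are compact projective because $\Map_{\mathcal{S}}(\coprod^{n}\pt,\blank)\simeq(\blank)^{n}$ and finite products commute with sifted colimits in $\mathcal{S}$; conversely, under the identification $\mathcal{S}\simeq\mathcal{P}_{\Sigma}(\Fin)$ the compact projective objects are the retracts of finite sets, which are again finite sets. Combining these facts with the previous step gives the stated equivalences. The two points that require care are the corepresentability identification of the first step --- the only place where the specific span and the section interpretation enter --- and the appeal to the converse direction of Lemma~\ref{lem:slicecpt}, which is exactly where cartesian closedness of $\mathcal{S}$ is needed.
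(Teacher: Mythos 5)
Your proof is correct, but it takes a genuinely different route from the paper's. You observe that $q_{*}f^{*}$ is corepresented by the object $f\colon X\to I$ of $\mathcal{S}_{/I}$ (the identification is right, and is perhaps most quickly seen as $\Map_{\mathcal{S}_{/*}}(\id_{*},q_{*}f^{*}\alpha)\simeq\Map_{\mathcal{S}_{/X}}(q^{*}\id_{*},f^{*}\alpha)=\Map_{\mathcal{S}_{/X}}(\id_{X},f^{*}\alpha)\simeq\Map_{\mathcal{S}_{/I}}(f_{!}\id_{X},\alpha)$, naturally in $\alpha$; it is also the $B=*$ case of Lemma~\ref{lem:SIB}). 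From there the preservation statements become, by definition, $\kappa$-compactness resp.\ compact projectivity of $f$ in the slice, and Lemma~\ref{lem:slicecpt} (with its converse, via cartesian closedness of $\mathcal{S}$) transfers these to $X$. The paper instead passes to the left adjoint $f_{!}q^{*}$ and invokes Lemma~\ref{lem:adjcolimcpt} (left adjoints preserve compact objects iff the right adjoint preserves the corresponding colimits, using that $*$ detects equivalences and is completely compact), reduces via Lemma~\ref{lem:slicecpt} to $q^{*}$ preserving compacts, arrives at the condition that $X\times Y$ be $\kappa$-compact (resp.\ compact projective) for every such $Y$, and then needs an extra exponential argument to collapse this to the single condition on $X$. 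Your route is shorter and avoids both Lemma~\ref{lem:adjcolimcpt} and the product argument entirely; its only extra cost is the corepresentability identification, which you justify adequately, and the closing identification of the compact projective objects of $\mathcal{S}$ with finite sets, which the paper's statement also takes for granted.
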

\begin{proof}
  Since equivalences in $\mathcal{S}$ are detected by maps out of $*$,
  and $\Map(*, \blank)$ preserves all colimits,
  Corollary~\ref{cor:adjcolimcpt} implies that $q_{*}f^{*}$ preserves
  $\kappa$-filtered colimits \IFF{} $f_{!}q^{*}$ preserves
  $\kappa$-compact objects, and sifted colimits \IFF{} $f_{!}q^{*}$
  preserves compact projective objects. By Lemma~\ref{lem:slicecpt}
  this is equivalent to $q^{*}$ preserving $\kappa$-compact or compact
  projective objects, respectively. Once again using that these are
  detected in $\mathcal{S}$, this is equivalent to $X \times Y$ being
  $\kappa$-compact or compact projective for all $\kappa$-compact or
  compact projective $Y$. Thus in particular (taking $Y = *$) $X$ is
  $\kappa$-compact or compact projective, but this is enough since if
  $X$ and $Y$ are $\kappa$-compact or compact projective and $p \colon
  \mathcal{I} \to \mathcal{S}$ is a $\kappa$-filtered or sifted
  diagram, then
  \[ \Map(X \times Y, \colim p) \simeq \Map(X, \Map(Y, \colim p))
  \simeq \Map(X, \colim \Map(Y, p)) \simeq \colim \Map(X \times Y,
  p),\]
  so $X \times Y$ is also $\kappa$-compact or compact projective.
\end{proof}

\begin{propn}\label{propn:analfibre}
  Suppose $F \colon \mathcal{S}_{/I} \to \mathcal{S}_{/J}$ is a
  polynomial functor represented by a diagram
  \[ I \xfrom{s} E \xto{p} B \xto{t} J.\]
  \begin{enumerate}[(i)]
	  \item
  $F$ is $\kappa$-accessible \IFF{} the fibres of $p$ are
  $\kappa$-compact spaces.
  \item $F$ is analytic \IFF{} the fibres of
  $p$ are finite sets.
  \end{enumerate}
\end{propn}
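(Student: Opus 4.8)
The plan is to prove both parts at once by peeling the representing diagram $F \simeq t_! p_* s^*$ down to the individual fibres of $p$ and then invoking Lemma~\ref{lem:spantopt}, which already settles the case of a span over the point. Since $F$ is polynomial, Theorem~\ref{thm:pnchar} tells us it is accessible and preserves weakly contractible limits; in particular, for part (ii), being analytic is equivalent to preserving sifted colimits. Thus in both parts the entire content concerns which colimits $F$ preserves, and I will run the arguments for $\kappa$-filtered colimits and for sifted colimits in parallel. I will use repeatedly that $s^*$ and $t_!$ preserve all colimits, both being left adjoints ($s^*$ even having the further left adjoint $s_!$).

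The first step is to remove the outer leg $t_!$. The forgetful functor $t_! \colon \mathcal{S}_{/B} \to \mathcal{S}_{/J}$ not only preserves all colimits but also reflects them: writing $U_B \colon \mathcal{S}_{/B} \to \mathcal{S}$ and $U_J \colon \mathcal{S}_{/J} \to \mathcal{S}$ for the forgetful functors, we have $U_J \circ t_! \simeq U_B$, and each of $U_B$, $U_J$ creates colimits. Hence $F = t_!(p_* s^*)$ preserves $\kappa$-filtered (resp.\ sifted) colimits if and only if $p_* s^*$ does.

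The second step passes to fibres over $B$. Under the straightening equivalence $\mathcal{S}_{/B} \simeq \Fun(B, \mathcal{S})$ colimits are detected pointwise, and evaluation at a point $b \in B$ is computed by $b^*$. So $p_* s^*$ preserves a class of colimits if and only if $b^* p_* s^*$ does for every $b \in B$. For each $b$ the defining square of the fibre $E_b := p^{-1}(b)$ is cartesian, so Lemma~\ref{lem:BCpbk} supplies a Beck--Chevalley equivalence $b^* p_* \simeq (p_b)_* (i_b)^*$, where $i_b \colon E_b \hookrightarrow E$ is the inclusion and $p_b \colon E_b \to *$. Therefore $b^* p_* s^* \simeq (p_b)_* (s_b)^*$ with $s_b := s \circ i_b \colon E_b \to I$, which is precisely the functor attached to the span $I \xleftarrow{s_b} E_b \xrightarrow{p_b} *$.

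Finally, Lemma~\ref{lem:spantopt} applied to this span shows that $(p_b)_* (s_b)^*$ preserves $\kappa$-filtered colimits if and only if $E_b$ is $\kappa$-compact, and sifted colimits if and only if $E_b$ is a finite set. Chaining the three reductions proves (i) and (ii) simultaneously. The one step I would flag as the crux is the claim that $t_!$ reflects colimits, since that is what licenses discarding the outermost leg of the diagram; once this is in hand the remainder is a formal assembly of Beck--Chevalley with the pointwise computation of colimits in $\Fun(B, \mathcal{S})$.
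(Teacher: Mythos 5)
Your proof is correct and follows essentially the same route as the paper: reduce from $F = t_!p_*s^*$ to $p_*s^*$ using that $t_!$ preserves and reflects colimits, pass to the fibres of $p$ via the equivalence $\mathcal{S}_{/B}\simeq\Fun(B,\mathcal{S})$ together with the Beck--Chevalley equivalence of Lemma~\ref{lem:BCpbk}, and conclude by Lemma~\ref{lem:spantopt}. The only difference is that you spell out the justification that $t_!$ reflects colimits (via the forgetful functors to $\mathcal{S}$), which the paper takes as known.
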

\begin{proof}
  We first prove (i):
  Since $t_{!}$ preserves and reflects colimits, $F$ is $\kappa$-accessible 
  \IFF{} the functor $p_{*}s^{*}$ preserves $\kappa$-filtered
  colimits.
  Using the equivalence $\mathcal{S}_{/B} \simeq \Fun(B,
  \mathcal{S})$, we see that $p_{*}s^{*}$ preserves $\kappa$-filtered colimits
  \IFF{} the same holds for $b^{*}p_{*}s^{*}$ for
  every point $b\colon * \to B$. Consider the pullback square
  \[
  \begin{tikzcd}
  E_b \drpullback \ar[d, "i"'] \ar[r, "q"]& \{b\} \ar[d, "b"] \\
  E \ar[r, "p"'] & B.
  \end{tikzcd}
  \]
  We have a Beck-Chevalley equivalence $b^{*}p_{*}s^{*} \simeq
  q_{*}(si)^{*}$. By Lemma~\ref{lem:spantopt} this preserves
  $\kappa$-filtered colimits \IFF{} $E_{b}$ is $\kappa$-compact.
  The proof of (ii) is the same, using sifted colimits instead of 
  $\kappa$-filtered colimits.
\end{proof}

\begin{remark}
  Let $\mathcal{X}$ be an $\infty$-topos and $\mathcal{F}$ a
  collection of morphisms in $\mathcal{X}$ that is stable under
  pullback. Then $\mathcal{F}$ determines a functor $\mathcal{X}^{\op}
  \to \LCatI$ taking $X$ to the full subcategory
  $\mathcal{X}^{\mathcal{F}}_{/X} \subseteq \mathcal{X}_{/X}$ spanned
  by the morphisms in $\mathcal{F}$, and given on morphisms by taking
  pullbacks. The class $\mathcal{F}$ is called \emph{local} if this
  functor preserves limits (\ie{} is a sheaf on
  $\mathcal{X}$); see also \cite{HTT}*{Lemma 6.1.3.7} for alternative characterizations. Following \cite{GepnerKock} we say that
  $\mathcal{F}$ is a \emph{bounded local class} if in addition the
  \icats{} $\mathcal{X}^{\mathcal{F}}_{/X}$ are all essentially
  small. By \cite{HTT}*{Proposition 6.1.6.3} these are exactly the
  classes of morphisms in $\mathcal{X}$ for which there exists a
  \emph{classifier} $U'_{\mathcal{F}} \to U_{\mathcal{F}}$, meaning a
  terminal object in the subcategory
  $\mathcal{O}_{\mathcal{X}}^{(\mathcal{F})}$ of $\Fun(\Delta^{1},
  \mathcal{X})$ with objects the morphisms in $\mathcal{F}$ and
  cartesian squares as morphisms.
\end{remark}

\begin{propn}\label{propn:Poly/F}\label{propn:bddlocalslice}
  Let $\mathcal{F}$ be a bounded local class of morphisms in
  $\mathcal{S}$, with classifier $U'_{\mathcal{F}} \to U_{\mathcal{F}}$, and let
  $F$ be the polynomial functor represented by 
  $* \leftarrow  U'_{\mathcal{F}} \to 
  U_{\mathcal{F}} \to *$.  Then the forgetful functor
  $$
  \PolyFun_{/F} \to \PolyFun
  $$ 
  is fully faithful, and its image is the full subcategory
  $\PolyFun_{\mathcal{F}}$ spanned by the polynomial
  functors with ``middle map'' in $\mathcal{F}$.
\end{propn}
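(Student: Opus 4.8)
The plan is to transport everything to diagrams. By Theorem~\ref{thm:polyispolyfun} the equivalence $\Phi\colon\Poly\isoto\PolyFun$ identifies $\PolyFun_{/F}\to\PolyFun$ with the forgetful functor $\Poly_{/F}\to\Poly$, where now $F$ is literally the diagram $*\xfrom{}U'_{\mathcal{F}}\to U_{\mathcal{F}}\to *$. I would then invoke the general fact that for any object $x$ of an \icat{} $\mathcal{C}$ the projection $\mathcal{C}_{/x}\to\mathcal{C}$ is the right fibration classified by the representable presheaf $\Map_{\mathcal{C}}(\blank,x)$, that a right fibration is a fully faithful functor exactly when all its fibres are empty or contractible (i.e.\ the presheaf is $(-1)$-truncated), and that in this case the essential image is the full subcategory on those $a$ with $\Map_{\mathcal{C}}(a,x)$ non-empty. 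Both assertions of the proposition thus reduce to a single computation: for an arbitrary polynomial diagram $P=(I\xfrom{s}E\xto{p}B\xto{t}J)$, the space $\Map_{\Poly}(P,F)$ is contractible when $p\in\mathcal{F}$ and empty otherwise.

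Next I would compute this mapping space. Since the source and target of $F$ are the terminal space, the components $I\to *$ and $J\to *$ of any morphism $P\to F$ are canonical and the two outer naturality squares commute automatically. Hence, by the defining condition on morphisms in $\Poly$ (Definition~\ref{defn:Poly}), a morphism $P\to F$ is precisely a \emph{cartesian} square from the middle map $p$ to the middle map $U'_{\mathcal{F}}\to U_{\mathcal{F}}$, so that $\Map_{\Poly}(P,F)\simeq\Map_{\Fun(\Delta^{1},\mathcal{S})^{\txt{cart}}}(p,\,U'_{\mathcal{F}}\to U_{\mathcal{F}})$ is the space of cartesian squares from $p$ to the universal family.

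For the classification step, a cartesian square from $p$ to $U'_{\mathcal{F}}\to U_{\mathcal{F}}$ amounts to a map $\gamma\colon B\to U_{\mathcal{F}}$ together with an equivalence $p\simeq\gamma^{*}(U'_{\mathcal{F}}\to U_{\mathcal{F}})$ in $\mathcal{S}_{/B}$, so the mapping space above is the homotopy fibre over $[p]\in(\mathcal{S}_{/B})^{\simeq}$ of the pullback map $\Map_{\mathcal{S}}(B,U_{\mathcal{F}})\to(\mathcal{S}_{/B})^{\simeq}$, $\gamma\mapsto\gamma^{*}(U'_{\mathcal{F}}\to U_{\mathcal{F}})$. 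By the defining property of the classifying family of a bounded local class (\cite{HTT}*{\S 6.1.6}), this map is an equivalence onto the maximal subgroupoid $(\mathcal{S}_{/B})^{\mathcal{F},\simeq}$ of the full subcategory of $\mathcal{S}_{/B}$ spanned by the $\mathcal{F}$-morphisms; since $\mathcal{F}$ is closed under equivalence, that subgroupoid is a union of connected components of $(\mathcal{S}_{/B})^{\simeq}$. Therefore the fibre over $[p]$ is empty when $p\notin\mathcal{F}$ (as every pullback of the universal family lies in $\mathcal{F}$) and contractible when $p\in\mathcal{F}$, which is exactly the required dichotomy and yields both full faithfulness and the identification of the image with $\PolyFun_{\mathcal{F}}$.

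The substantive point — really the only one — is this last step: identifying the space of cartesian squares over the universal family with the homotopy fibre of the classifying equivalence, and checking that $(\mathcal{S}_{/B})^{\mathcal{F},\simeq}$ is a union of components so that the fibre is genuinely empty-or-contractible. The remaining ingredients (the passage to diagrams, the vanishing of the outer legs by terminality of the source and target of $F$, and the right-fibration criterion for full faithfulness of a slice projection) are formal.
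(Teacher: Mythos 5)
Your argument is correct and is essentially the paper's own proof: both reduce a morphism $P\to F$ to the datum of a cartesian square from the middle map $p$ onto the universal family $U'_{\mathcal{F}}\to U_{\mathcal{F}}$ and then invoke the classifying property of a bounded local class to conclude that this datum exists uniquely precisely when $p\in\mathcal{F}$. You simply make explicit the formal scaffolding (the right-fibration criterion for full faithfulness of a slice projection and the identification of the mapping space with a fibre of the classifying equivalence) that the paper leaves implicit.
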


\begin{proof}
    A morphism $P \to F$ in $\PolyFun$ is represented by a diagram
\[\begin{tikzcd}
  I \arrow{d}& E \arrow{l} \arrow{r}{p}\arrow{d} 
  \drpullback & B \arrow{r} \arrow{d}& J\arrow{d} \\
  * & U'_{\mathcal{F}} \arrow{l} \arrow{r} & U_{\mathcal{F}} \ar[r] & * .
\end{tikzcd}\]
Since $U_{\mathcal{F}}$ is the classifier for maps in the class 
$\mathcal{F}$, such a morphism exists \IFF{} $p$ belongs to $\mathcal{F}$,
and the morphism is unique if it exists. Thus the forgetful functor
from  $\PolyFun{}_{/F}$ to $\PolyFun$ is fully faithful, and its image
is precisely the full subcategory $\PolyFun_{\mathcal{F}}$.
\end{proof}

Combining this with Theorem~\ref{thm:PolyFun/P=topos}, we get:
\begin{cor}\label{cor:bbdlocaltopos}
  Let $\mathcal{F}$ be a bounded local class in $\mathcal{S}$. Then
  the \icat{} $\PolyFun_{\mathcal{F}}$ is an $\infty$-topos.
  \qedhere
\end{cor}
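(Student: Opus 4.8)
The plan is to recognize $\PolyFun_{\mathcal{F}}$ as a slice of $\PolyFun$ and then invoke the $\infty$-topos structure of such slices already established. First I would note that, because $\mathcal{F}$ is a \emph{bounded} local class, it is classified by a genuine map of spaces $U'_{\mathcal{F}} \to U_{\mathcal{F}}$ in $\mathcal{S}$ --- this is precisely the role of boundedness. Hence the diagram $* \leftarrow U'_{\mathcal{F}} \to U_{\mathcal{F}} \to *$ is an honest diagram in $\mathcal{S}$ and, by the very definition of polynomial functor, represents a polynomial endofunctor $F$.

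Next I would apply Proposition~\ref{propn:bddlocalslice}, which exhibits the forgetful functor $\PolyFun_{/F} \to \PolyFun$ as fully faithful with essential image exactly the full subcategory $\PolyFun_{\mathcal{F}}$. Since a fully faithful functor is an equivalence onto its essential image, this produces an equivalence $\PolyFun_{/F} \simeq \PolyFun_{\mathcal{F}}$.

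Finally, Theorem~\ref{thm:PolyFun/P=topos} guarantees that for \emph{every} polynomial functor $P$ the slice $\PolyFun_{/P}$ is an $\infty$-topos. Specializing to $P = F$ and transporting the topos structure across the equivalence above, I conclude that $\PolyFun_{\mathcal{F}}$ is an $\infty$-topos, as claimed. The argument is essentially bookkeeping, with all the genuine work packaged into the two cited results; the only step warranting a moment's care is confirming that $F$ is a bona fide polynomial functor, i.e.\ that boundedness really does place the classifying family $U'_{\mathcal{F}} \to U_{\mathcal{F}}$ inside $\mathcal{S}$ rather than merely in a larger universe, so that Theorem~\ref{thm:PolyFun/P=topos} is applicable.
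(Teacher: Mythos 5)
Your argument is exactly the paper's: the corollary is obtained by combining Proposition~\ref{propn:bddlocalslice} (identifying $\PolyFun_{\mathcal{F}}$ with the slice $\PolyFun_{/F}$ over the polynomial functor $F$ represented by the classifying family $* \leftarrow U'_{\mathcal{F}} \to U_{\mathcal{F}} \to *$) with Theorem~\ref{thm:PolyFun/P=topos}. Your added remark that boundedness is what guarantees the classifying family lives in $\mathcal{S}$ is a correct and worthwhile observation, but the route is the same.
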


Specializing to $\kappa$-accessible and analytic functors, this gives:
\begin{cor}\label{cor:Polykappa=topos}
  Let $\kappa$ be a regular cardinal.
  \begin{enumerate}[(i)]
  \item Let $U'_{\kappa} \to U_{\kappa}$ be the classifying morphism
    for maps whose fibres are $\kappa$-compact spaces, and let
    $P_{\kappa}$ be the polynomial functor represented by
    \[ * \from U'_{\kappa} \to U_{\kappa} \to *.\]
    Then the \icat{} $\PolyFun_\kappa$ of $\kappa$-accessible
    polynomial functors is equivalent to
    $\PolyFun_{/P_{\kappa}}$. Moreover, $\PolyFun_{\kappa}$ is an
    $\infty$-topos.
  \item Let $\mathbf{E}$ be the polynomial functor represented by
      \[* \from \iota\Fin_{*} \to \iota\Fin \to *,\]
      where the middle map is the classifier for morphisms with finite
      discrete fibres. Then $\AnFun$ is equivalent to
      $\PolyFun_{/\mathbf{E}}$. Moreover, the $\infty$-category
      $\AnFun$ is an $\infty$-topos.
  \end{enumerate}
\end{cor}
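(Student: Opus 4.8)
Both parts follow by applying Proposition~\ref{propn:bddlocalslice} and Corollary~\ref{cor:bbdlocaltopos} to an appropriate bounded local class $\mathcal{F}$ of morphisms in $\mathcal{S}$, after identifying the resulting subcategory $\PolyFun_{\mathcal{F}}$ via the fibrewise criterion of Proposition~\ref{propn:analfibre}. The plan is thus: first, observe that the relevant class is a bounded local class with the stated classifying family; second, use Proposition~\ref{propn:analfibre} to identify $\PolyFun_{\mathcal{F}}$ with $\PolyFun_\kappa$ (resp.\ $\AnFun$); third, invoke Proposition~\ref{propn:bddlocalslice} for the equivalence with the slice and Corollary~\ref{cor:bbdlocaltopos} for the $\infty$-topos property. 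The only genuinely new work is the first step.

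For (i), I take $\mathcal{F}_\kappa$ to be the class of relatively $\kappa$-compact morphisms, i.e.\ those whose fibres are $\kappa$-compact spaces; this is a bounded local class classified by $U'_{\kappa} \to U_{\kappa}$ (cf.\ \cite{HTT}*{\S 6.1.6}). By Proposition~\ref{propn:analfibre}(i) a polynomial functor has middle map in $\mathcal{F}_\kappa$ \IFF{} it is $\kappa$-accessible, so the full subcategory $\PolyFun_{\mathcal{F}_\kappa}$ coincides with $\PolyFun_\kappa$. Proposition~\ref{propn:bddlocalslice} then gives the equivalence $\PolyFun_\kappa \simeq \PolyFun_{/P_{\kappa}}$, and Corollary~\ref{cor:bbdlocaltopos} shows this \icat{} is an $\infty$-topos.

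For (ii), I take $\mathcal{F}_{\mathrm{fin}}$ to be the class of maps with finite discrete fibres, whose classifying family is $\iota\Fin_{*} \to \iota\Fin$ as in the statement. To apply the machinery I must check that this is a bounded local class: stability under base change is immediate, since fibres are preserved by pullback; locality holds because finite-discreteness of the fibres of $E \to B$ can be tested after pulling back along any effective epimorphism onto $B$ (a point of $B$ lifts to the cover, and the fibre is computed there); and the class is bounded, since finite discrete spaces are $\omega$-small. Proposition~\ref{propn:analfibre}(ii) then identifies $\PolyFun_{\mathcal{F}_{\mathrm{fin}}}$ with $\AnFun$, and Proposition~\ref{propn:bddlocalslice} together with Corollary~\ref{cor:bbdlocaltopos} yields $\AnFun \simeq \PolyFun_{/\mathbf{E}}$ along with the $\infty$-topos property.

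The only step with real content is verifying that these two collections genuinely form bounded local classes with the indicated classifiers; for relatively $\kappa$-compact maps this is part of Lurie's general theory, whereas for finite-discrete-fibre maps it is the short direct check sketched above. I expect this locality verification to be the main obstacle, since everything downstream is a formal consequence of Propositions~\ref{propn:bddlocalslice} and~\ref{propn:analfibre} and Corollary~\ref{cor:bbdlocaltopos}.
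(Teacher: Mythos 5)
Your proposal is correct and follows essentially the same route as the paper: identify the relevant functors via Proposition~\ref{propn:analfibre} as those whose middle map lies in the bounded local class of maps with $\kappa$-compact (resp.\ finite discrete) fibres, then apply Proposition~\ref{propn:bddlocalslice} and Corollary~\ref{cor:bbdlocaltopos}. The paper simply asserts that these classes are bounded local classes without the explicit verification you sketch, but that check is a reasonable addition rather than a divergence.
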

\begin{proof}
  By Proposition~\ref{propn:analfibre}, the $\kappa$-accessible
  polynomial functors are those whose ``middle map'' belong to the
  bounded local class $\mathcal{F}_\kappa$ of maps with
  $\kappa$-compact fibres. This is equivalent to
  $\PolyFun_{/P_{\kappa}}$ by Proposition~\ref{propn:bddlocalslice},
  and is an $\infty$-topos by Corollary~\ref{cor:bbdlocaltopos}. This
  proves (i), and (ii) follows similarly since analytic functors are
  characterized by having ``middle map'' in the bounded local class of
  maps with finite discrete fibres.
\end{proof}
\begin{remark}
  Note that Corollary~\ref{cor:Polykappa=topos} does not have an analogue 
  in ordinary category theory, because of the lack of classifiers.
\end{remark}
\begin{remark}\label{rmk:PolyFunnotacc}
  The whole $\infty$-category $\PolyFun$ (without cardinal bounds on
  the middle representing maps) is cocomplete by
  Corollary~\ref{cor:polycolim}, but it is not accessible, since
  neither is $\Fun(\Delta^{1}, \mathcal{S})^{\txt{cart}}$. (In
  particular, $\PolyFun$ does not admit a terminal object.) In the
  $\kappa$-bounded case, the minimal generating set for
  $\Fun(\Delta^{1}, \mathcal{S})_{\kappa}^{\txt{cart}} \simeq
  \mathcal{S}_{/U_\kappa}$
  is the set of isomorphism classes of $\kappa$-compact spaces.
  Without the cardinal bound,
  $\Fun(\Delta^{1}, \mathcal{S})^{\txt{cart}}$ is the union of all
  these, and a generating set would have to exhaust
  $\bigcup_\kappa U_\kappa \simeq \iota \mathcal{S}$, which is too big
  to form a set.
\end{remark}

\subsection{Analytic Endofunctors, Symmetric Sequences, and Homotopical Species}\label{subsec:analendo}
In this subsection we will relate analytic endofunctors to (coloured)
symmetric sequences and the homotopical analogue of Joyal's species.

We saw in Corollary~\ref{cor:Polykappa=topos} that the \icat{} $\AnFun$ of analytic
functors is equivalent to the slice
$\PolyFun_{/\mathbf{E}}$. Combining this with
Corollary~\ref{cor:PolyFun(I,J)=topos}, we get:
\begin{cor}\label{cor:An(*)}
  We have
  \[
  \AnEnd(*) 
  \ \simeq \
  \mathcal{S}_{/\iota\Fin}
  \ \simeq \
  \Fun(\iota\Fin,\mathcal{S})
  \ \simeq \
  \prod_{n=0}^{\infty} \Fun(B \Sigma_{n}, \mathcal{S}) . \qedhere \]
\end{cor}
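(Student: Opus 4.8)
The plan is to obtain the displayed chain of equivalences by stringing together results already established, since each individual link is essentially formal once the correct objects have been identified. First I would unwind the definition of $\AnEnd$: recall that $\AnEnd$ is defined as the pullback of $\AnFun \to \mathcal{S} \times \mathcal{S}$ along the diagonal $\mathcal{S} \to \mathcal{S} \times \mathcal{S}$, so its fibre over the point $* \in \mathcal{S}$ is precisely the fibre of $\AnFun$ over $(*,*) \in \mathcal{S} \times \mathcal{S}$, namely $\AnFun(*,*)$, the $\infty$-category of analytic endofunctors of $\mathcal{S}_{/*} \simeq \mathcal{S}$. Thus $\AnEnd(*) \simeq \AnFun(*,*)$.

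Next I would apply Corollary~\ref{cor:Polykappa=topos}(ii), which gives an equivalence $\AnFun \simeq \PolyFun_{/\mathbf{E}}$ over $\mathcal{S} \times \mathcal{S}$, where $\mathbf{E}$ is the polynomial endofunctor represented by $* \from \iota\Fin_{*} \to \iota\Fin \to *$. Passing to the fibre over $(*,*)$ yields $\AnFun(*,*) \simeq \PolyFun(*,*)_{/\mathbf{E}}$. Now Corollary~\ref{cor:PolyFun(I,J)=topos}, applied with $I = J = *$ and $P = \mathbf{E}$, identifies $\PolyFun(*,*)_{/\mathbf{E}}$ with $\mathcal{S}_{/B}$, where $B$ is the middle-right object of the representing diagram of $\mathbf{E}$; here $B = \iota\Fin$. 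This establishes the first equivalence $\AnEnd(*) \simeq \mathcal{S}_{/\iota\Fin}$.

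The remaining two equivalences are standard facts about the space $\iota\Fin$. Since $\iota\Fin$ is an $\infty$-groupoid (the core of the category of finite sets), the straightening equivalence gives $\mathcal{S}_{/\iota\Fin} \simeq \Fun(\iota\Fin, \mathcal{S})$. Finally, the core decomposes into connected components indexed by the cardinality of a finite set, the component of cardinality $n$ being $B\Sigma_{n}$; that is, $\iota\Fin \simeq \coprod_{n=0}^{\infty} B\Sigma_{n}$. Since a mapping $\infty$-category out of a coproduct is the product of the mapping $\infty$-categories, we conclude $\Fun(\iota\Fin, \mathcal{S}) \simeq \prod_{n=0}^{\infty} \Fun(B\Sigma_{n}, \mathcal{S})$.

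The proof has no genuinely hard step; the only point demanding care is the bookkeeping around fibres. One must check that the equivalence of Corollary~\ref{cor:Polykappa=topos}(ii) is compatible with the projections to $\mathcal{S} \times \mathcal{S}$, so that it restricts to an equivalence on the fibres over $(*,*)$, and that the object playing the role of ``$B$'' in Corollary~\ref{cor:PolyFun(I,J)=topos} really is $\iota\Fin$ for the chosen representing diagram of $\mathbf{E}$. Both are immediate from the statements as given, so I expect the entire argument to reduce to citing these results in the correct order.
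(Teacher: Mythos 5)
Your proposal is correct and follows essentially the same route as the paper, which states the corollary as an immediate consequence of combining Corollary~\ref{cor:Polykappa=topos}(ii) (identifying $\AnFun$ with $\PolyFun_{/\mathbf{E}}$) with Corollary~\ref{cor:PolyFun(I,J)=topos} (identifying the slice over a polynomial functor with $\mathcal{S}_{/B}$, here $B = \iota\Fin$), the last two equivalences being the standard straightening and the decomposition $\iota\Fin \simeq \coprod_{n} B\Sigma_{n}$. Your added care about compatibility with the projections to $\mathcal{S}\times\mathcal{S}$ is exactly the bookkeeping the paper leaves implicit.
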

\begin{remark}\label{rmk:symseq}
  In the corollary,
  $\prod_{n=0}^{\infty} \Fun(B \Sigma_{n}, \mathcal{S})$ is the
  \icat{} of \emph{symmetric sequences} in $\mathcal{S}$.  The
  canonical monoidal structure on $\AnEnd(*)$ given by composition
  thus carries over to a monoidal structure on the \icat{} of
  symmetric sequences. Unravelling the formula for composition from
  Theorem~\ref{thm:comp}, we see that this is an $\infty$-categorical
  version of the substitution product on symmetric sequences,
  introduced by Kelly~\cite{KellyOnOperads} to exhibit operads as
  monoids therein.
\end{remark}
 \begin{defn}\label{defn:I-Coll}
  More generally, for a space $I$, we can consider \emph{$I$-coloured symmetric
  sequences} (or \emph{$I$-collections}): these are by definition 
  presheaves on 
  $\mathbf{E}(I) \times I$.  (We shall see a tree interpretation later on 
  in Definition~\ref{defn:Coll}.)
\end{defn}
\begin{propn}\label{propn:An(I)}
  The \icat{} $\AnEnd(I)$ of analytic endofunctors of
  $\mathcal{S}_{/I}$ is equivalent to that of $I$-coloured symmetric
  sequences.
\end{propn}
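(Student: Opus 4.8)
The plan is to mimic the computation of $\AnEnd(*)$ in Corollary~\ref{cor:An(*)}, replacing the point by a general space $I$ and accounting for the fact that the universal analytic functor $\mathbf{E}$ lives over the terminal object $(*,*)$ rather than over $(I,I)$. First I would unwind the definitions: since $\AnEnd$ is the pullback of $\AnFun \to \mathcal{S}\times\mathcal{S}$ along the diagonal, the fibre $\AnEnd(I)$ is precisely the fibre $\AnFun(I,I)$ of $\AnFun \to \mathcal{S}\times\mathcal{S}$ over $(I,I)$. By Corollary~\ref{cor:Polykappa=topos}(ii), $\AnFun$ is identified with the slice $\PolyFun_{/\mathbf{E}}$, with $\mathbf{E}$ sitting over $(*,*)$.

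Next I would use that $\PolyFun \to \mathcal{S}\times\mathcal{S}$ is a cartesian fibration (Corollary~\ref{source-and-target}), together with the general fact that for a cartesian fibration $\pi\colon \mathcal{E}\to\mathcal{B}$ and an object $e$ over $b$, the fibre of $\mathcal{E}_{/e}\to\mathcal{B}_{/b}$ over a morphism $\beta\colon b'\to b$ is the slice $(\mathcal{E}_{b'})_{/\beta^{*}e}$ of the fibre over the cartesian pullback $\beta^{*}e$ (every map into $e$ lying over $\beta$ factors uniquely through the cartesian lift). Applying this to $\pi=(\PolyFun\to\mathcal{S}\times\mathcal{S})$, $e=\mathbf{E}$, and $b=(*,*)$ (so that $\mathcal{B}_{/b}\simeq\mathcal{S}\times\mathcal{S}$, as $(*,*)$ is terminal), and taking $\beta\colon(I,I)\to(*,*)$, I obtain
\[ \AnEnd(I) \simeq \PolyFun(I,I)_{/\mathbf{E}_{I,I}}, \]
where $\mathbf{E}_{I,I}$ is the cartesian pullback of $\mathbf{E}$ along $(I,I)\to(*,*)$. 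By Remark~\ref{rmk:cartpoly} this pullback is $f^{*}\mathbf{E}f_{!}$ for $f\colon I\to *$.

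Then I would invoke Corollary~\ref{cor:PolyFun(I,J)=topos}, which gives $\PolyFun(I,I)_{/P}\simeq\mathcal{S}_{/B}$ when $P$ is represented by $I\leftarrow E\to B\to I$; here $B$ is the value of $P$ at the terminal object $\id_{I}$ of $\mathcal{S}_{/I}$ (as right adjoints preserve terminal objects, $p_{*}s^{*}$ sends $\id_{I}$ to $\id_{B}$, and $t_{!}$ then returns $B\to I$). Although Remark~\ref{rmk:cartpoly} warns that the full representing diagram of a cartesian pullback is awkward to write down, I need only its $B$, and this is computed directly: since $f_{!}(\id_{I})\simeq I$ in $\mathcal{S}$, we get $\mathbf{E}_{I,I}(\id_{I})\simeq f^{*}\mathbf{E}(I)\simeq \mathbf{E}(I)\times I$ over $I$. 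Hence $B=\mathbf{E}(I)\times I$, so $\AnEnd(I)\simeq\mathcal{S}_{/\mathbf{E}(I)\times I}$. Finally, since $\mathbf{E}(I)\times I$ is a space, straightening yields $\mathcal{S}_{/\mathbf{E}(I)\times I}\simeq\Fun(\mathbf{E}(I)\times I,\mathcal{S})\simeq\mathcal{P}(\mathbf{E}(I)\times I)$, which is by Definition~\ref{defn:I-Coll} the \icat{} of $I$-coloured symmetric sequences.

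The main obstacle is the second step: identifying the fibre of $\PolyFun_{/\mathbf{E}}$ over $(I,I)$ with the slice of the fibre $\PolyFun(I,I)$ over the cartesian pullback $\mathbf{E}_{I,I}$. This is where the cartesian fibration structure of Corollary~\ref{source-and-target} is essential and where care is required to apply the general slice-of-fibration lemma correctly. Everything else reduces to the two cited corollaries together with the elementary computation of the value of $f^{*}\mathbf{E}f_{!}$ on the terminal object, so no further hard work beyond that identification is expected.
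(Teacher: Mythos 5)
Your proposal is correct and follows essentially the same route as the paper's proof: identify $\AnEnd(I)$ with the slice of $\PolyFun(I,I)$ (equivalently $\PolyEnd(I)$) over the cartesian pullback $i^{*}\mathbf{E}i_{!}$, apply Corollary~\ref{cor:PolyFun(I,J)=topos}, and compute the value on $\id_{I}$ to get $I\times\mathbf{E}(I)$. The only difference is that you spell out the slice-of-cartesian-fibration step that the paper leaves implicit.
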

\begin{proof}
  Let $\mathbf{E}_{I}$ be the cartesian pullback (in the fibration
  $\AnEnd \to \mathcal{S}$) of $\mathbf{E}$ to an
  endofunctor of $I$, i.e.~the pullback along $(i,i)$ for
  $i \colon I \to *$; by Remark~\ref{rmk:cartpoly} this is the
  composite $i^{*}\mathbf{E}i_{!}$. Then $\AnEnd(I)$ is equivalent to
  $\PolyEnd(I)_{/\mathbf{E}_{I}}$. By
  Corollary~\ref{cor:PolyFun(I,J)=topos} this means that $\AnEnd(I)$
  is equivalent to $\mathcal{S}_{/\mathbf{E}_{I}(\id_{I})}$. But here
  \[ \mathbf{E}_{I}(\id_{I}) \simeq i^{*}\mathbf{E}i_{!}(\id_{I})
  \simeq i^{*}\mathbf{E}(I) \simeq I \times \mathbf{E}(I).\qedhere\]

\end{proof}

\begin{lemma}\label{lem:E(X)}
  We have the following explicit formula for evaluation of 
  $\mathbf{E}$ on a space $X$:
  \[
  \mathbf{E}(X) = \colim_{k \in \iota\Fin} \Map(k,X) = 
  \coprod_{k\in\mathbb{N}} X^{\times k}_{\mathrm{h}\Sigma_{k}}  . \qed
  \]
\end{lemma}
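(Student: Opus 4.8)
The plan is to evaluate $\mathbf{E} = t_! p_* s^*$ directly from its representing diagram $* \xleftarrow{s} \iota\Fin_* \xrightarrow{p} \iota\Fin \xrightarrow{t} *$ (Corollary~\ref{cor:Polykappa=topos}(ii)), applying the three functors in turn to $X \in \mathcal{S} \simeq \mathcal{S}_{/*}$. Since $s$ and $t$ have target $*$, two of the three functors are immediate: $s^*$ sends $X$ to the constant family $\iota\Fin_* \times X \to \iota\Fin_*$, and $t_!$ is the forgetful functor $\mathcal{S}_{/\iota\Fin} \to \mathcal{S}$, which under the straightening equivalence $\mathcal{S}_{/\iota\Fin} \simeq \Fun(\iota\Fin, \mathcal{S})$ is the colimit functor $\colim_{\iota\Fin}$. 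It therefore remains only to identify the family $p_* s^* X$ over $\iota\Fin$.

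First I would compute $p_* s^* X$ fibrewise. For a point $b \colon * \to \iota\Fin$ classifying a finite set $k$, the fibre of the universal family $p$ over $b$ is the discrete set $k$ itself; writing $i \colon k \hookrightarrow \iota\Fin_*$ and $q \colon k \to *$ for the resulting pullback square, the Beck--Chevalley equivalence (Lemma~\ref{lem:BCpbk}, exactly as in the proof of Proposition~\ref{propn:analfibre}) gives $b^* p_* s^* X \simeq q_* i^* s^* X \simeq q_*(si)^* X$. Now $(si)^* X$ is the constant family with value $X$ over $k$, so applying the dependent product $q_*$ along $k \to *$ yields $\prod_{k} X \simeq X^{k} = \Map(k, X)$. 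Carrying this out naturally over all of $\iota\Fin$ exhibits $p_* s^* X$ as the functor $k \mapsto \Map(k, X)$, and applying $t_! = \colim_{\iota\Fin}$ gives the first equality
\[ \mathbf{E}(X) \simeq \colim_{k \in \iota\Fin} \Map(k, X). \]

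For the second equality I would decompose the core groupoid of finite sets into its connected components, $\iota\Fin \simeq \coprod_{n=0}^{\infty} B\Sigma_n$, indexed by cardinality, with $\Sigma_n = \mathrm{Aut}(\underline{n})$. Since colimits distribute over coproducts of indexing $\infty$-groupoids,
\[ \colim_{k \in \iota\Fin} \Map(k, X) \simeq \coprod_{n=0}^{\infty} \colim_{k \in B\Sigma_n} \Map(k, X), \]
and on the $n$-th component the functor is constant with value $X^{\times n} = \Map(\underline{n}, X)$, equipped with its $\Sigma_n$-action permuting the coordinates. A colimit of such a functor over $B\Sigma_n$ is by definition the homotopy orbit space $X^{\times n}_{h\Sigma_n}$, so assembling the components gives $\mathbf{E}(X) \simeq \coprod_{n} X^{\times n}_{h\Sigma_n}$, as required.

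The only genuinely delicate points are the two identifications underlying the fibrewise computation: that the fibre of the universal finite-set family $p \colon \iota\Fin_* \to \iota\Fin$ over the point classifying $k$ is the discrete set $k$ (so that the homotopy limit defining $p_*$ collapses to a finite product $X^k$), and that $\iota\Fin$ really splits as $\coprod_n B\Sigma_n$ with the permutation action of $\Sigma_n$ on $X^{\times n}$. Both are standard facts about the classifier $\iota\Fin_* \to \iota\Fin$ and the core of $\Fin$; once they are in hand, every remaining step is an instance of base change together with the identification of $t_!$ with the colimit functor.
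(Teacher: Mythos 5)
Your proof is correct, and since the paper states this lemma with no proof at all (it is marked \qed as immediate from the definitions), your computation is exactly the standard unwinding the authors intend: base change along points of $\iota\Fin$ identifies $p_*s^*X$ with the functor $k \mapsto \Map(k,X)$, the lower shriek along $\iota\Fin \to *$ is the colimit, and splitting $\iota\Fin \simeq \coprod_n B\Sigma_n$ gives the homotopy quotients. The two delicate points you flag (the fibre of the classifier over the point classifying $k$ being $k$ itself, and the identification of $t_!$ with $\colim_{\iota\Fin}$ via straightening) are indeed the only real content, and both are handled correctly; the Beck--Chevalley step matches the argument already used in the proof of Proposition~\ref{propn:analfibre}.
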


The relationship with $\mathbf{E}$ leads to a useful explicit
formula for evaluation of analytic endofunctors:
\begin{propn}\label{propn:pushforwardpbsq}
  Suppose $P$ is an analytic endofunctor, represented by the diagram
  \[
  \begin{tikzcd}
    I \arrow[swap]{d}{\jmath}& E \arrow{l}[above]{s} \arrow{r}{p} \arrow[swap]{d}{\epsilon} 
    \drpullback & B \arrow{r}{t} \arrow{d}{\beta}& I\arrow{d}{\jmath} \\
    * & \iota\Fin_{*} \arrow{l}{u} \arrow[swap]{r}{q} & \iota\Fin \arrow{r} & *.
  \end{tikzcd}
  \]
  Then for every map $f \colon X \to I$ there is a natural pullback
  square
  \[
  \begin{tikzcd}
	Y \drpullback
	\arrow{r}{} \arrow[swap]{d}{p_{*}s^{*}f} & 
	\mathbf{E}(X) \arrow{d}{\mathbf{E}(f)} \\
	B \arrow[swap]{r}{\bar s} & \mathbf{E}(I),
	\end{tikzcd}
  \]
  where
  $\bar s \colon B \to q_* u^* I = \mathbf{E}(I)$ corresponds to
  $u_!  q^* B = E \stackrel{s}\to I$ under the adjunction
  $u_! q^* \dashv q_* u^*$.
\end{propn}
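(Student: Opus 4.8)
The plan is to recognise both vertical maps of the desired square as instances of the same operation -- dependent product along a universal family of finite sets -- and then to produce the square itself as a Beck--Chevalley base change along the already-cartesian middle square of the representing diagram. Throughout I work in $\mathcal{S}_{/\iota\Fin}$: both $\mathbf{E}(f) = q_{*}u^{*}f$ and $\bar s$ are maps over $\iota\Fin$ (with $\pi\circ\bar s \simeq \beta$, where $\pi\colon \mathbf{E}(I)\to\iota\Fin$ is the structure map), and pullbacks in the slice agree with those in $\mathcal{S}$, so it suffices to exhibit the pullback over $\iota\Fin$.

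First I would describe $\mathbf{E}(f)$ geometrically. Let $\mathcal{U} := \pi^{*}\iota\Fin_{*}$ be the pullback of $q$ along $\pi$, with projection $w\colon \mathcal{U}\to\mathbf{E}(I)$; this is the universal family of finite sets over $\mathbf{E}(I)$, and it carries a universal map $g\colon \mathcal{U}\to I$, namely the counit $c\colon \mathcal{U} = q^{*}q_{*}u^{*}I \to u^{*}I$ followed by the projection $u^{*}I\to I$. I claim that as an object over $\mathbf{E}(I)$ one has $\mathbf{E}(X) = q_{*}u^{*}X \simeq w_{*}\,g^{*}f$, so that $\mathbf{E}(f)$ is the dependent product along $w$ of the pullback of $f$ to $\mathcal{U}$. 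This is a direct adjunction computation: since $g = (u^{*}I\to I)\circ c$ one gets $g^{*}f \simeq c^{*}u^{*}X$, and mapping into $w_{*}c^{*}u^{*}X$ from any $T\to\mathbf{E}(I)$ unwinds, via $w^{*}T\simeq q^{*}T$ and the adjunction $q^{*}\dashv q_{*}$, to exactly the space of maps $T\to q_{*}u^{*}X$ over $\mathbf{E}(I)$.

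Next I would identify the bottom map. Because the middle square of the representing diagram is cartesian we have $E\simeq q^{*}B$, and $\bar s$, being the $u_{!}q^{*}\dashv q_{*}u^{*}$-adjunct of $s\colon E\to I$, is precisely the map classifying the family $(p\colon E\to B,\ s\colon E\to I)$. Concretely this says that pulling the universal family back along $\bar s$ recovers $p$ together with the factorisation of $s$ through $g$: the square
\[
\begin{tikzcd}
  E \arrow{r}{\hat s} \arrow[swap]{d}{p} & \mathcal{U} \arrow{d}{w} \\
  B \arrow[swap]{r}{\bar s} & \mathbf{E}(I)
\end{tikzcd}
\]
is cartesian and $g\circ\hat s \simeq s$. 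Granting this, I finish by applying the Beck--Chevalley equivalence of Lemma~\ref{lem:BCpbk} to this cartesian square, which gives $\bar s^{*}w_{*}\simeq p_{*}\hat s^{*}$, whence
\[ \bar s^{*}\mathbf{E}(X) \simeq \bar s^{*}w_{*}g^{*}f \simeq p_{*}\hat s^{*}g^{*}f \simeq p_{*}s^{*}f = Y, \]
the last step using $g\hat s\simeq s$. As every construction is functorial in $f$, the resulting square is natural, exhibiting $Y$ as the pullback of $\mathbf{E}(f)$ along $\bar s$.

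The main obstacle is bookkeeping rather than geometry: both the identification $\mathbf{E}(X)\simeq w_{*}g^{*}f$ and the cartesianness of the square together with $g\hat s\simeq s$ amount to translating the adjunction-theoretic definitions of $\mathbf{E}(f)$ and of $\bar s$ into the universal-family picture, and the care lies in checking that the counit of $q^{*}\dashv q_{*}$ and the adjunct defining $\bar s$ are compatibly matched under the Beck--Chevalley mate for the cartesian middle square (equivalently, that $c\circ q^{*}\bar s$ recovers $\langle\epsilon,s\rangle$). Once these two identifications are in place, the conclusion follows immediately from the already-established Lemma~\ref{lem:BCpbk}.
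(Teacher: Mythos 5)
Your proof is correct, but it takes a genuinely different route from the paper's. The paper stays entirely with the given representing diagram: it uses Lemma~\ref{lem:!Cart} to produce a cartesian natural transformation $p_{*}s^{*} \to p_{*}s^{*}\jmath^{*}\jmath_{!} \simeq p_{*}\epsilon^{*}u^{*}\jmath_{!}$, applies the Beck--Chevalley equivalence of Lemma~\ref{lem:BCpbk} to the given cartesian middle square to rewrite this as a cartesian transformation $\eta \colon p_{*}s^{*} \to \beta^{*}q_{*}u^{*}\jmath_{!}$, and then obtains the asserted square by evaluating at $f$ and pasting pullback squares. You instead introduce the universal family $w\colon \mathcal{U}\to\mathbf{E}(I)$ with its evaluation map $g\colon \mathcal{U}\to I$, identify $\mathbf{E}(f)\simeq w_{*}g^{*}f$, recognise $\bar s$ as the classifying map of $(p,s)$ so that $(E,p,s)$ is the base change of $(\mathcal{U},w,g)$ along $\bar s$, and conclude by Beck--Chevalley for that cartesian square. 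Both arguments ultimately rest on Lemma~\ref{lem:BCpbk}; yours makes explicit the universal property of $\mathbf{E}(I)$ --- the square literally asserts that $p_{*}s^{*}$ is the pullback of the universal dependent product $w_{*}g^{*}$ along its classifying map, which fits naturally with the classifier viewpoint of Proposition~\ref{propn:bddlocalslice} --- at the price of the auxiliary identification $\mathbf{E}(f)\simeq w_{*}g^{*}f$, itself a small adjunction computation that you sketch correctly (the key compatibility being that the $q^{*}\dashv q_{*}$-adjunct of a map into $q_{*}u^{*}I$ is obtained by composing with the counit $c$ and projecting, which is exactly how $g$ is defined). The paper's route avoids introducing $\mathcal{U}$ and $g$ but instead leans on the cartesianness of the unit for $\jmath_{!}\dashv\jmath^{*}$. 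Your remark that naturality in $f$ follows because every step is a natural transformation of functors is the right level of care.
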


\begin{proof}
By Lemma~\ref{lem:!Cart} we have a cartesian natural transformation
$p_{*}s^{*} \to p_{*}s^{*}\jmath^{*}\jmath_{!} \simeq
p_{*}\epsilon^{*}u^{*}\jmath_{!}$, and using Lemma~\ref{lem:BCpbk} we have a
Beck-Chevalley equivalence that identifies this with a natural
transformation $\eta \colon p_{*}s^{*} \to \beta^{*}q_{*}u^{*}\jmath_{!}$. Consider
the diagram
\[
\begin{tikzcd}
  Y \arrow{r} \arrow{d} & \beta^{*}q_{*}u^{*}X \arrow{d} \arrow{r} & q_{*}u^{*}X \arrow{d}\\
  B \arrow{r}& \beta^{*}q_{*}u^{*}I \arrow{d} \arrow{r}& q_{*}u^{*}I \arrow{d}\\
  & B \arrow[swap]{r}{\beta} & \iota\Fin.
\end{tikzcd}
\]
Here the bottom right square and the composite square in the right
column are cartesian by definition of $\beta^{*}$, hence the top right
square is also cartesian. The top left square is cartesian since $\eta$ is a cartesian
natural transformation, so the composite square in the top row is
cartesian.
\end{proof}

\begin{cor}\label{cor:analytic}
	For $P\colon \mathcal{S} \to \mathcal{S}$ an analytic endofunctor
	as in Proposition~\ref{propn:pushforwardpbsq}, we have
	\[
	P(X) \simeq \coprod_{n\in \mathbb{N}} B_n \times_{\Sigma_{n}} X^{\times n}.
      \]
      where $B_{n}$ is the fibre of $B \to \iota\Fin$ at an
      $n$-element set.
\end{cor}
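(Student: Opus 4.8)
The plan is to specialise Proposition~\ref{propn:pushforwardpbsq} to the case $I=J=\pt$ and extract the coproduct formula. First I would note that, writing $P=t_{!}p_{*}s^{*}$ with $t\colon B\to\pt$, the functor $t_{!}\colon\mathcal{S}_{/B}\to\mathcal{S}_{/\pt}=\mathcal{S}$ simply forgets the structure map to $B$. Hence for the unique map $f\colon X\to\pt$ the value $P(X)\simeq t_{!}p_{*}s^{*}(f)$ is the underlying space of $p_{*}s^{*}f\in\mathcal{S}_{/B}$, which is precisely the object called $Y$ in Proposition~\ref{propn:pushforwardpbsq}. That proposition identifies $Y$ with the pullback $B\times_{\mathbf{E}(\pt)}\mathbf{E}(X)$, so it remains only to compute this pullback.

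By Lemma~\ref{lem:E(X)} we have $\mathbf{E}(X)\simeq\coprod_{n}X^{\times n}_{h\Sigma_{n}}$ and, taking $X=\pt$, also $\mathbf{E}(\pt)\simeq\coprod_{n}B\Sigma_{n}$; moreover $\mathbf{E}(f)$ restricts on the $n$-th summand to the canonical projection $X^{\times n}_{h\Sigma_{n}}\to B\Sigma_{n}$. Since coproducts in the $\infty$-topos $\mathcal{S}$ are disjoint and colimits are universal, the map $\bar s\colon B\to\mathbf{E}(\pt)$ decomposes $B$ as $\coprod_{n}B_{n}$ with $B_{n}$ the summand lying over $B\Sigma_{n}$, and the pullback distributes over the coproduct to give
\[
  P(X)\;\simeq\;\coprod_{n}\bigl(B_{n}\times_{B\Sigma_{n}}X^{\times n}_{h\Sigma_{n}}\bigr).
\]

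It then remains to identify each summand with $B_{n}\times_{\Sigma_{n}}X^{\times n}$. Under the straightening equivalence $\mathcal{S}_{/B\Sigma_{n}}\simeq\Fun(B\Sigma_{n},\mathcal{S})$, which is symmetric monoidal for the fibre product over $B\Sigma_{n}$ on one side and the pointwise product on the other, the map $B_{n}\to B\Sigma_{n}$ corresponds to a $\Sigma_{n}$-space $\widehat{B}_{n}$ (its homotopy fibre) and $X^{\times n}_{h\Sigma_{n}}\to B\Sigma_{n}$ to the functor $W\colon B\Sigma_{n}\to\mathcal{S}$ classifying $X^{\times n}$ with its permutation action. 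Thus the fibre product $B_{n}\times_{B\Sigma_{n}}X^{\times n}_{h\Sigma_{n}}$ corresponds to the pointwise product $\widehat{B}_{n}\times W$, and passing back to a space — i.e.\ taking the colimit over $B\Sigma_{n}$, which computes the underlying total space — gives
\[
  B_{n}\times_{B\Sigma_{n}}X^{\times n}_{h\Sigma_{n}}\;\simeq\;\bigl(\widehat{B}_{n}\times X^{\times n}\bigr)_{h\Sigma_{n}}\;=\;\widehat{B}_{n}\times_{\Sigma_{n}}X^{\times n}.
\]
Writing $B_{n}$ for the $\Sigma_{n}$-space $\widehat{B}_{n}$ (rather than its quotient, which is the summand of the total space $B$) then yields the asserted formula. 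I expect the only genuine subtlety to be exactly this last bookkeeping step — keeping the two meanings of $B_{n}$ apart, and checking that straightening interchanges fibre products over $B\Sigma_{n}$ with pointwise products — after which everything is a direct substitution. An alternative to the monoidal-straightening argument is to compute $\colim_{B_{n}}(W\circ g)\simeq\colim_{B\Sigma_{n}}g_{!}g^{*}W$ and invoke the projection formula $g_{!}g^{*}W\simeq\widehat{B}_{n}\times W$ for the left Kan extension along $g\colon B_{n}\to B\Sigma_{n}$.
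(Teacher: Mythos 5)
Your proposal is correct and follows essentially the same route as the paper: the paper's proof is exactly the one-line calculation $P(X)\simeq B\times_{\iota\Fin}\mathbf{E}(X)\simeq B\times_{\iota\Fin}\coprod_n X^{\times n}_{h\Sigma_n}\simeq\coprod_n(B_n\times X^{\times n})_{h\Sigma_n}$ obtained by combining Proposition~\ref{propn:pushforwardpbsq} with Lemma~\ref{lem:E(X)}, which is precisely your argument with the intermediate steps (disjointness of coproducts, distributing the pullback, and the identification of the fibre product over $B\Sigma_n$ with the homotopy quotient of the product) spelled out. Your care about the two readings of $B_n$ is consistent with the paper's convention, which takes $B_n$ to be the fibre of $\beta$ at an $n$-element set as in Lemma~\ref{lem:elmaps}.
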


\begin{proof}
	We calculate, using Proposition~\ref{propn:pushforwardpbsq} and 
	Lemma~\ref{lem:E(X)}:
	\[
	P(X) \simeq B \times_{\iota\Fin} \mathbf{E}(X) \simeq
	B \times_{\iota\Fin} \coprod_{n\in \mathbb{N}} (X^{\times n})_{\mathrm{h}\Sigma_{n}}
	\simeq  \coprod_{n\in \mathbb{N}} (B_n \times X^{\times n})_{\mathrm{h}\Sigma_{n}}.
	\qedhere
	\]
\end{proof}

\begin{remark}
  The formula in Corollary~\ref{cor:analytic} is the origin of the
  terminology ``analytic'': the spaces $B_n$ are the coefficients of
  the ``Taylor expansion'' of $P$.  Joyal~\cite{JoyalAnalytique}
  introduced analytic functors as a categorical analogue of
  exponential generating functions of species, defining them as left
  Kan extensions of species (which are functors
  $\iota \Fin \to \Set$).  He characterized analytic endofunctors of the
  category of sets as those endofunctors that preserve filtered
  colimits and weakly preserve wide pullbacks.  In our approach we
  have \emph{defined} analytic functors in terms of exactness
  properties, but can state an $\infty$-version of Joyal's theorem as
  follows:
\end{remark}
\begin{defn}
We call a functor $F\colon \iota\Fin\to\mathcal{S}$ a \emph{homotopical 
species}.  By left Kan extension along the (non-full) inclusion
$\iota\Fin \to \mathcal{S}$, it defines an endofunctor
$F\colon \mathcal{S}\to\mathcal{S}$,
described explicitly by the formula
\[
F(X) \simeq \colim_{n\in \iota\Fin} \colim_{n\to X} F[n] 
\simeq \colim_{n\in \iota\Fin} F[n] \times X^n 
\simeq \coprod_{n\in\mathbb{N}} (F[n] \times X^n)_{\mathrm{h}\Sigma_{n}}.
\]
\end{defn}
On the other hand, by unstraightening, it corresponds to a map $B \to
\iota\Fin$, and hence to an analytic functor (via Corollary~\ref{cor:An(*)}).
This analytic endofunctor is canonically identified with $F\colon \mathcal{S}
\to \mathcal{S}$, by Corollary~\ref{cor:analytic}, giving:
\begin{propn}[``Joyal's theorem for homotopical species'']
  An endofunctor $P\colon\mathcal{S} \to \mathcal{S}$ is analytic 
  (i.e.~preserves filtered colimits and weakly contractible limits)
  if and only if
  it is the left Kan extension of a ``homotopical species'' (i.e.~a functor
  $F \colon \iota\Fin \to \mathcal{S}$). \qed
\end{propn}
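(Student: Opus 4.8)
The plan is to deduce the biconditional directly from the equivalence $\AnEnd(*) \simeq \Fun(\iota\Fin, \mathcal{S})$ of Corollary~\ref{cor:An(*)}, by identifying its inverse with the left Kan extension functor along $\iota\Fin \hookrightarrow \mathcal{S}$. Concretely, I would show that the analytic endofunctor attached to a homotopical species $F$ under Corollary~\ref{cor:An(*)} agrees with the left Kan extension $\mathrm{Lan}\,F$; both implications then follow by reading this single identification in the two directions. Throughout, ``analytic'' means the defined property (preserving weakly contractible limits and sifted colimits), as this is exactly what Corollary~\ref{cor:An(*)} encodes.

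First I would make the correspondence explicit. By Corollary~\ref{cor:An(*)}, an analytic endofunctor $P$ of $\mathcal{S}$, represented by $* \leftarrow E \to B \to *$ together with its unique map to $\mathbf{E}$, is the same datum as the object $\beta \colon B \to \iota\Fin$ of $\mathcal{S}_{/\iota\Fin}$. Since $\iota\Fin$ is an $\infty$-groupoid, straightening converts $\beta$ into a functor $F \colon \iota\Fin \to \mathcal{S}$ --- a homotopical species --- with $F[n] \simeq B_n$, the fibre of $\beta$ over the point of $B\Sigma_n \subset \iota\Fin$. Conversely, any species $F$ unstraightens to a map $B \to \iota\Fin$ and thus determines an analytic endofunctor $P_F$, and $P \mapsto F$ is an equivalence.

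Second I would carry out the key identification $P_F \simeq \mathrm{Lan}\,F$. On one side, Corollary~\ref{cor:analytic} evaluates
\[ P_F(X) \simeq \coprod_{n} (B_n \times X^{\times n})_{h\Sigma_n} \simeq \coprod_{n} (F[n] \times X^{\times n})_{h\Sigma_n}, \]
using $B_n \simeq F[n]$. On the other side, the definition of homotopical species preceding the proposition records that the left Kan extension of $F$ along $\iota\Fin \hookrightarrow \mathcal{S}$ is given by the very same formula $\mathrm{Lan}\,F(X) \simeq \coprod_{n} (F[n] \times X^{\times n})_{h\Sigma_n}$. Hence $P_F \simeq \mathrm{Lan}\,F$ as endofunctors of $\mathcal{S}$, naturally in $F$.

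Finally I conclude. If $P \simeq \mathrm{Lan}\,F$ for some species $F$, then $P \simeq P_F$ lies in the essential image of $\AnEnd(*) \hookrightarrow \Fun(\mathcal{S}, \mathcal{S})$, so $P$ is analytic. Conversely, if $P$ is analytic, the correspondence yields a species $F$ with $P \simeq P_F \simeq \mathrm{Lan}\,F$, and this $F$ is moreover unique up to equivalence since Corollary~\ref{cor:An(*)} is an equivalence. The step needing the most care is the identification $P_F \simeq \mathrm{Lan}\,F$: one must check that straightening $\beta$ and then left Kan extending reproduces $P$ on the nose --- in particular that the residual $\Sigma_n$-actions encoded by the two homotopy quotients match under $B_n \simeq F[n]$, so that the coend formula for $\mathrm{Lan}\,F$ agrees term by term with the formula of Corollary~\ref{cor:analytic}. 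Everything else is a formal consequence of the two cited corollaries.
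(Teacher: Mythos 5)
Your proposal is correct and follows essentially the same route as the paper: the paper's own argument is precisely the identification, via Corollary~\ref{cor:An(*)} (unstraightening a species to a map $B\to\iota\Fin$) and the formula of Corollary~\ref{cor:analytic}, of the analytic endofunctor attached to a species with its left Kan extension. Your extra care about matching the $\Sigma_n$-actions in the two homotopy quotients is a reasonable point to flag, but it is exactly what the paper's phrase ``canonically identified \dots by Corollary~\ref{cor:analytic}'' is asserting.
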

\begin{remark}\label{rmk:analytic/Set}
  From the viewpoint of species, analytic functors over sets are
  actually not the optimal notion, since it is not true in general
  that the exponential generating function of a species agrees with
  the cardinality of its associated analytic functor.  This \emph{is}
  true over spaces (and in fact already over groupoids, as first
  observed by Baez and Dolan~\cite{BaezDolanFSFD} who introduced
  groupoid-valued species under the name {\em stuff types}).  What
  goes wrong in the set case is the behaviour of quotients of group
  actions, which is also responsible for the mere {\em weak}
  preservation of connected limits in Joyal's original theorem.
\end{remark}

\subsection{Trees and Analytic Endofunctors}\label{subsec:treesanalend}
In this subsection we will describe analytic endofunctors in terms of
trees. This uses the interpretation of trees as polynomial
endofunctors from \cite{KockTree}:
\begin{defn}\label{defn:tree}
  A \emph{tree} is by definition a polynomial
  \[
  A \xfrom{s} M \xto{p} N \xto{t} A
  \]
  for which:
  \begin{enumerate}[(1)]
  \item The spaces $A$, $M$, and $N$ are all finite sets.
  \item The function $t$ is injective.
  \item The function $s$ is injective, with a unique element $R$ (the
    \emph{root}) in the complement of its image.
  \item Define a successor function $\sigma \colon A \to A$ as follows:
  First, set
    $\sigma(R) = R$. For $e \in s(M)$ (which is the complement of $R$ 
	in $A$), take $e'$ in $M$ with $s(e') = e$ and set 
	$\sigma(e) = t(p(e'))$. Then for every $e$ there exists some 
	$k\in\mathbb{N}$ such that
    $\sigma^{k}(e) = R$.
  \end{enumerate}
\end{defn}

\begin{remark}
  The intuition behind this notion of ``tree'' is as follows: we think
  of $A$ as the set of edges of the tree, $N$ as the set of
  nodes (our trees do not have nodes at their leaves or root),
  and $M$ as the set of pairs $(v, e)$ where $v$ is a node and
  $e$ is an incoming edge of $v$. The function $s$ is the projection
  $s(v,e) = e$, the function $p$ is the projection $p(v,e) = v$, and
  the function $t$ assigns to each node its unique outgoing edge.
\end{remark}

\begin{defn}
  The \emph{elements} of a tree are its edges and nodes, and a tree can be
  constructed by gluing edges and nodes, as will be formalized below.
  Let $\eta$ denote the tree
  \[ 
  * \from \emptyset \to \emptyset \to * 
  \]
  consisting of an edge without nodes; it is
  called the trivial tree.
  For $n = 0,1,\ldots$ let $C_{n}$ denote
  \[
  n+1 \hookleftarrow n \to * \hookrightarrow n+1,
\]
where the first and last morphisms are disjoint inclusions of $1$ and
$n$ elements in $n+1$; it is the corolla (one-node tree) with $n$
incoming edges.  We refer to the trivial tree and the corollas as
\emph{elementary trees}.
  
  We define $\bbOel$ and $\bbOint$ to be the full subcategories of
  $\AnEnd$ spanned by the elementary trees and all the trees,
  respectively.
\end{defn}

\begin{remark}\label{rmk:1trees}
  Since trees correspond to diagrams of sets, $\bbOel$ and $\bbOint$
  are ordinary categories, and they are equivalent to those considered
  by Kock~\cite{KockTree} (where they are denoted
  $\operatorname{elTr}$ and $\operatorname{tEmb}$, respectively).  It
  is a consequence of the tree axioms (see \cite[Proposition
  1.1.3]{KockTree}) that the morphisms in $\bbOint$ are \emph{tree
    embeddings}, meaning injective on nodes and edges.  The subscript
  ``int'' stands for \emph{inert}; in \S\ref{subsec:comparison} we
  will embed $\bbOint$ into a bigger category of trees $\bbO$, where
  the inert morphisms become the right class of an (active, inert)
  factorization system.
\end{remark}

The category $\bbOint$ admits certain pushouts (and colimits built
from them), namely ones corresponding to grafting of trees: if
$\eta \to S$ picks out the root and $\eta \to R$ picks out a leaf,
then the pushout $S \amalg_\eta R$ calculated in $\AnEnd$ (where it
exists since colimits in $\AnEnd$ can be calculated in
$\Fun(\Pi,\mathcal{S})$) is again a tree $T$, in which $R$ and $S$ are
naturally subtrees --- $T$ is ``$S$ grafted onto $R$''.  Hence the
pushout is also a pushout in $\bbOint$.  Furthermore, since the spaces
involved in the colimit are just sets and since the maps are
injections, the colimit can actually be calculated in $\Set$.  The
details can be found in \cite{KockTree}.
  
For a tree $T \in \bbOint$, we write $\el(T) = \bbOelT$ for the category
$\bbOel \times_{\bbOint} (\bbOint)_{/T}$, and call it the \emph{category of
elements} of $T$.  (Seeing $T$ as a presheaf on $\bbOel$ given by $E
\mapsto \Map_{\bbOint}(E,T)$, this really is its category of elements.)

The grafting construction can readily be iterated to establish the 
following result, which is intuitively clear:
\begin{lemma}\label{lem:treecolim}
  Every tree $T$ is canonically the colimit in $\bbOint$, and in $\AnEnd$,
  of its elementary subtrees:
  $$
  T \simeq \colim_{E \in \el(T)} E .
  $$
\end{lemma}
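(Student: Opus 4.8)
The plan is to compute the colimit pointwise and reduce it to a computation with finite sets. The diagram $\el(T) \to \bbOint \hookrightarrow \AnEnd$ sends each elementary subtree to itself, and its transition maps are tree embeddings, hence cartesian; by the discussion of grafting together with Corollary~\ref{cor:polycolim} and Theorem~\ref{thm:polyispolyfun}, its colimit in $\AnEnd$ is computed in $\Fun(\Pi,\mathcal{S})$, that is, separately in each of the four components of the diagrams $A \xfrom{s} M \xto{p} N \xto{t} A$. The first step is to record the shape of the indexing category: $\el(T)$ is the poset whose objects are the edges and nodes of $T$, with a unique morphism $e \to v$ whenever the edge $e$ is incident to the node $v$ (either $e = t(v)$, or $e$ is an incoming edge of $v$), and no other non-identity morphisms --- in particular there are no morphisms between distinct nodes and none out of a node.

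Each of the four components is now a diagram of finite sets along injections, so by \cite{HTT}*{Corollary 3.3.4.6} its colimit in $\mathcal{S}$ is the classifying space of the associated category of elements (Grothendieck construction). For the $N$- and $M$-components the relevant set-valued functors vanish on the edge-objects $\eta$, and since there are no non-identity morphisms between the node-objects, these Grothendieck constructions are discrete and compute $N$ and $M$ respectively. The main step is the edge-component, where the functor sends $\eta$ to a point and each corolla $C_n$ to its set of $n+1$ edges. Its category of elements has exactly one connected component for each edge $x \in A$, consisting of the trivial subtree $\eta_x$ on $x$ together with the nodes incident to $x$ and the inclusions $\eta_x \to v$. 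By the tree axioms of Definition~\ref{defn:tree} --- the injectivity of $s$ and $t$ --- the edge $x$ lies below at most one node and above at most one node, so this component is one of $\pt$, a single arrow, or a span $v \leftarrow \eta_x \to v'$; in every case it is weakly contractible. Hence the classifying space is the discrete set $A$. I expect this contractibility to be the crux of the argument: it is exactly the point where trees (as opposed to general graphs, which may contain cycles) are needed.

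Finally I would assemble the four component colimits. The structure maps $s, p, t$ of the colimit diagram are the ones induced on classifying spaces, and tracing through the inclusions shows they agree with those of $T$; together with the cocone given by the tautological embeddings $E \hookrightarrow T$ this identifies $\colim_{E \in \el(T)} E$ with the polynomial $A \xfrom{s} M \xto{p} N \xto{t} A$ defining $T$, as an object of $\AnEnd$. Since $\bbOint \hookrightarrow \AnEnd$ is fully faithful and $T$ lies in $\bbOint$, the same cocone also exhibits $T$ as the colimit in $\bbOint$. Conceptually this is the global form of iterated grafting: one could instead argue by induction on the number of nodes, peeling off a topmost corolla to write $T \simeq T' \amalg_\eta C_m$ and invoking that grafting is a pushout in $\AnEnd$; the obstacle in that route is to identify $\el(T)$ with the pushout $\el(T') \amalg_{\el(\eta)} \el(C_m)$ and to commute this pushout past the colimit, which is precisely what the direct computation above circumvents.
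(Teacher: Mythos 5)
Your proof is correct, but it takes a genuinely different route from the paper: the paper disposes of this lemma in one line by citing \cite{KockTree}*{Corollary 1.1.24}, i.e.\ it relies on the external $1$-categorical statement that a tree is the iterated grafting of its elementary subtrees, combined with the preceding observation that such graftings (pushouts over $\eta$ along injections of finite sets) are computed in $\Set$ and preserved by the inclusion into $\AnEnd$. You instead give a self-contained computation: reduce to $\Fun(\Pi,\mathcal{S})$ via Corollary~\ref{cor:polycolim} and Theorem~\ref{thm:polyispolyfun}, identify the poset $\el(T)$ explicitly, and evaluate each component of the colimit as the classifying space of a category of elements via \cite{HTT}*{Corollary 3.3.4.6}. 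This buys a proof that does not depend on the combinatorial induction in \cite{KockTree} and makes visible exactly where the tree axioms enter; the paper's route is shorter and reuses the grafting machinery it needs anyway for the active--inert analysis later. One point in your edge-component analysis deserves a touch more care: injectivity of $s$ and $t$ gives that an edge $x$ is incident to at most one node from below and at most one from above, but does not by itself exclude that these coincide, in which case the ``span'' $v \leftarrow \eta_x \to v'$ degenerates to a two-object category with two parallel arrows, whose classifying space is a circle rather than a point. This configuration is ruled out by axioms (3) and (4) of Definition~\ref{defn:tree} (if $x$ were both the outgoing and an incoming edge of the same node, then $\sigma(x)=x$, forcing $x$ to be the root, which is not in the image of $s$); your parenthetical about cycles shows you see that acyclicity is the real input, but the justification as written attributes it to the wrong axioms.
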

\begin{proof}
  This is a reformulation of \cite{KockTree}*{Corollary 1.1.24}.
\end{proof}

\begin{lemma}\label{lem:elmaps}
  Given an analytic endofunctor $P$ represented by a diagram
  \[
  \begin{tikzcd}
	I \arrow{d}& E \drpullback \arrow{l}[above]{s} \arrow{r}{p} \arrow{d}{\epsilon} 
	 & B \arrow{r}{t} \arrow{d}{\beta}& I\arrow{d} \\
	* & \iota\Fin_{*} \arrow{l} \arrow{r} & \iota\Fin \arrow{r} & *,
  \end{tikzcd}
  \]
  there are natural equivalences
  \[
  \Map(\eta, P) \simeq I, \qquad \Map(C_{n}, P) \simeq \fib{B}{n},
  \]
  where $\fib{B}{n}$ is the fibre of $\beta$ at an $n$-element set.
\end{lemma}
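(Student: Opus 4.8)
The plan is to compute both mapping spaces by combining the cocartesian fibration structure on $\AnEnd$ with the diagrammatic description of cartesian transformations. First I would recall that the projection $\AnEnd \to \mathcal{S}$ recording the space of colours is a cocartesian fibration: it is pulled back from the source-and-target projection $\PolyFun \to \mathcal{S}\times\mathcal{S}$ (cocartesian by Corollary~\ref{source-and-target}) along the diagonal, and $\AnFun$ is closed under the relevant pushforwards since by Remark~\ref{rmk:cartpoly} and Proposition~\ref{propn:analfibre} pushing forward leaves the middle map $p$ unchanged. Concretely, for $f\colon K \to I$ the pushforward of the endofunctor represented by $K \xfrom{s} E \xto{p} B \xto{t} K$ is represented by $I \xfrom{fs} E \xto{p} B \xto{ft} I$. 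The standard mapping-space formula for a cocartesian fibration then produces a map of spaces
\[ \Map_{\AnEnd}(\eta, P) \to \Map_{\mathcal{S}}(\pt, I) \simeq I \]
whose fibre over $f\colon \pt \to I$ is the space $\Map_{\AnFun(I,I)}(f_!\eta f^*, P)$ of cartesian transformations, and likewise a map $\Map_{\AnEnd}(C_n, P) \to \Map_{\mathcal{S}}(n+1, I) \simeq I^{n+1}$ with fibre $\Map_{\AnFun(I,I)}(g_! C_n g^*, P)$ over $g\colon n+1 \to I$. I would then identify these fibrewise spaces of cartesian transformations with spaces of refinement diagrams, via Theorem~\ref{thm:polyispolyfun} together with Definition~\ref{defn:carttransdiag} and Lemma~\ref{lem:polypoly-ess}.

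For $\eta$, the pushforward $f_!\eta f^*$ is represented by $I \from \emptyset \to \emptyset \to I$, so a cartesian transformation to $P$ consists of a map $\beta\colon \emptyset \to B$ together with the induced refinement square, which is automatically cartesian. Every such datum is uniquely determined, since all mapping spaces out of $\emptyset$ are contractible; hence each fibre is contractible. As a map of spaces with contractible fibres is an equivalence, I conclude $\Map_{\AnEnd}(\eta, P) \simeq I$.

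For $C_n$, the pushforward $g_! C_n g^*$ is represented by $I \xfrom{a} n \to \pt \xto{c} I$, where $a\colon n \to I$ is the leaf part of $g$ and $c\in I$ is the root value. By Definition~\ref{defn:carttransdiag} a cartesian transformation to $P$ is a point $b\in B$ (the image of $\beta\colon \pt\to B$) together with an equivalence $\epsilon\colon n \isoto E_b$ realizing the cartesian square, subject to $s\epsilon = a$ and $t(b) = c$. Assembling the total space over all $g\in I^{n+1}$, these two constraints pin down $g$ up to contractible choice, so the projection to $\{(b,\epsilon)\}$ is an equivalence and I obtain
\[ \Map_{\AnEnd}(C_n, P) \simeq \{(b,\epsilon) : b\in B,\ \epsilon\colon n \isoto E_b\}. \]

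The final and most delicate step is to recognise this last space as the fibre $\fib{B}{n}$. Here I would use that the left-hand square of the representing diagram of $P$ is cartesian, so $E \simeq B\times_{\iota\Fin}\iota\Fin_{*}$ and hence $E_b$ is canonically the finite set $\beta(b)$. Since $\iota\Fin$ is a groupoid with $\operatorname{Aut}([n]) \simeq \Sigma_n$, the fibre of $\beta\colon B \to \iota\Fin$ over the point $[n]$ is precisely $\{(b,\gamma) : \gamma\colon \beta(b) \isoto [n]\}$, which matches $\{(b,\epsilon)\}$ under $\epsilon = \gamma^{-1}$. The subtlety I expect to be the main obstacle is exactly this identification: the $\Sigma_n$-torsor of identifications of $E_b$ with the standard $n$-element set is not extra structure added by hand but is automatically encoded in the homotopy fibre of $\beta$ over a point of $B\Sigma_n \subseteq \iota\Fin$, and it is this fact that makes $\Map(C_n, P) \simeq \fib{B}{n}$ hold on the nose rather than only up to a torsor.
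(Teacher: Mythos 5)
Your proof is correct and follows essentially the same route as the paper's: both reduce $\Map(C_n,P)$ to the space of points $b\in B$ equipped with an identification of the fibre $E_b$ with $n$, and then recognise this as the homotopy fibre of the classifying map $\beta$ over $[n]\in\iota\Fin$ (the paper phrases this last step as the pullback $B\times_{\iota\Fin}\iota\Fin_{n/}$ together with the contractibility of $\iota\Fin_{n/}$). The only difference is organisational: you fibre the mapping space over $\Map_{\mathcal{S}}(n+1,I)$ via the cocartesian fibration $\AnEnd\to\mathcal{S}$ and then collapse that fibration, whereas the paper observes directly that the colour datum $n+1\to I$ is determined by the middle cartesian square.
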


\begin{proof}
  It is clear that a map $\eta \to P$ is uniquely determined by the
  map $* \to I$, so $\Map(\eta, P) \simeq I$. For $C_{n}$, observe
  that since $n+1$ is the disjoint union of the images of $*$ and $n$,
  the space of maps $C_{n} \to P$ is equivalent to the space of
  cartesian squares 
  \[
  \begin{tikzcd}
	  n \drpullback \arrow{r}{u} \arrow{d} & * \arrow{d} \\
	  E \arrow[swap]{r}{p} & B .
\end{tikzcd}
\]
  More formally,
  this space is described as the pullback
  \[
  \begin{tikzcd}
    \Map(C_n,P) \arrow[phantom]{r}{\simeq} & 
	\Map_{\Fun^{\operatorname{cart}}(\Delta^1,\mathcal{S})}(u,p) 
	\drpullbackS
	\arrow{r}{}
	\arrow{d}{} & B \arrow{d}{\beta} \\
	& \iota\Fin_{n/} \arrow[swap]{r}{\mathrm{codom}} & \iota\Fin .
  \end{tikzcd}
  \]
 But $\iota\Fin_{n/}$ is contractible, so the pullback is $B_n$, as 
  asserted.
\end{proof}

\begin{defn}\label{defn:Coll}
  A \emph{coloured collection} or \emph{coloured symmetric
  sequence} is a presheaf on $\bbOel$. 
\end{defn}

\begin{remark}
The intuition is that the inclusion $\{\eta\} \to \bbOel$ defines a
projection $\mathcal{P}(\bbOel) \to \mathcal{S}$, which can be
interpreted as assigning to a coloured collection its space of
colours, and that the value of a presheaf on the corolla $C_n$ is the
space of $n$-ary operations of the coloured collection.  The $n+1$
different maps of trees $\eta \to C_n$ then extract from an $n$-ary
operation its $n$ input colours and its output colour.
\end{remark}

\begin{defn}
  The inclusion $i \colon \bbOel \to \AnEnd$ extends to a unique
  colimit-preserving functor $i_{!} \colon \mathcal{P}(\bbOel) \to \AnEnd$
  with right adjoint $i^{*} \colon \AnEnd \to \mathcal{P}(\bbOel)$ given by
  the restricted Yoneda functor, i.e.
  \[
  P \mapsto \Map_{\AnEnd}(i(\blank), P).
  \]
\end{defn}

\begin{propn}\label{propn:AnEndpsh}
  The functor $i^{*} \colon \AnEnd \to \mathcal{P}(\bbOel)$ is an equivalence.
\end{propn}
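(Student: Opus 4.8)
The plan is to analyse the adjunction $i_{!} \dashv i^{*}$ and prove the equivalence by showing that $i_{!}$ is fully faithful while $i^{*}$ is conservative; together these force both the unit and the counit to be equivalences. The conceptual content is that the elementary trees are \emph{completely compact} generators of $\AnEnd$, so that $\AnEnd$ is freely generated under colimits by $\bbOel$ and hence is the full presheaf $\infty$-category $\mathcal{P}(\bbOel)$ (no Segal condition, in contrast to the situation for all trees).

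The crux is to show that each elementary tree is completely compact, i.e.\ that $\Map_{\AnEnd}(i(a),\blank)\colon \AnEnd \to \mathcal{S}$ preserves all colimits for $a = \eta$ and $a = C_{n}$. By Lemma~\ref{lem:elmaps} these corepresented functors are $\Map(\eta, P)\simeq I$ and $\Map(C_{n}, P)\simeq B_{n}$, where $P$ is represented by $I \leftarrow E \to B \to I$ and $B_{n}$ is the fibre of $B \to \iota\Fin$ over an $n$-element set. Now colimits in $\AnEnd$ are computed through the representing diagrams (Corollary~\ref{cor:polycolim} and Proposition~\ref{propn:colimpolyispoly}): for $P = \colim_{x} P_{x}$ one has $I \simeq \colim_{x} I_{x}$ and $B \simeq \colim_{x} B_{x}$. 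Thus $\Map(\eta,\blank)$ preserves colimits immediately, and since colimits in $\mathcal{S}$ are universal, the fibre $B_{n} = B\times_{\iota\Fin}\{n\} \simeq \colim_{x} (B_{x})_{n}$ is again a colimit, so $\Map(C_{n},\blank)$ is cocontinuous as well. This is the only step where the passage from sets to spaces is essential: it is precisely descent in the $\infty$-topos $\mathcal{S}$ that makes these corepresentable functors cocontinuous, and this is what fails in the $1$-categorical setting.

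Granting complete compactness, fully faithfulness of $i_{!}$ follows by the standard computation. Writing $F \simeq \colim_{j} y(a_{j})$ and $G \simeq \colim_{k} y(b_{k})$ as canonical colimits of representables and using that $i_{!}$ preserves colimits with $i_{!}y \simeq i$, one obtains
\[
\Map_{\AnEnd}(i_{!}F, i_{!}G) \simeq \lim_{j} \Map_{\AnEnd}\bigl(i(a_{j}), \textstyle\colim_{k} i(b_{k})\bigr) \simeq \lim_{j} \colim_{k} \Map_{\bbOel}(a_{j}, b_{k}) \simeq \lim_{j} G(a_{j}) \simeq \Map_{\mathcal{P}(\bbOel)}(F,G),
\]
where the middle equivalence uses complete compactness of $i(a_{j})$ together with full faithfulness of the inclusion $i$. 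Equivalently, the unit $\id \to i^{*}i_{!}$ is an equivalence.

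Finally I would show $i^{*}$ is conservative and conclude. A morphism $\phi\colon P \to P'$ of analytic endofunctors is a cartesian transformation, hence by Definition~\ref{defn:carttransdiag} and Lemma~\ref{lem:polypoly-ess} it is determined by the induced maps on colours $I \to I'$ and on operations $B \to B'$ over $\iota\Fin$, the map on $E$ being a pullback. Now $i^{*}\phi$ is an equivalence iff it is an objectwise equivalence of presheaves, i.e.\ iff $I \to I'$ and $B_{n} \to B'_{n}$ are equivalences for all $n$; since $\iota\Fin \simeq \coprod_{n} B\Sigma_{n}$ and an equivalence on the fibre over a basepoint of a connected base yields an equivalence of total spaces, $B \to B'$ is then an equivalence, hence so is $E \to E'$, and therefore $\phi$ is an equivalence. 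To finish, since $i_{!}$ is fully faithful the unit is an equivalence, so each $\eta_{i^{*}P}$ is an equivalence; the triangle identity $(i^{*}\epsilon_{P})\circ \eta_{i^{*}P} = \id_{i^{*}P}$ then shows $i^{*}\epsilon_{P}$ is an equivalence, and conservativity of $i^{*}$ promotes this to $\epsilon_{P}$ being an equivalence. Thus both unit and counit are equivalences, so $i^{*}$ is an equivalence with inverse $i_{!}$.
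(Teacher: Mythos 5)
Your proof is correct and follows essentially the same route as the paper: the substantive verifications — that the corepresentables of $\eta$ and $C_n$ return $I$ and $B_n$ (Lemma~\ref{lem:elmaps}), that these preserve colimits because colimits in $\AnEnd$ are computed on representing diagrams and fibres commute with colimits by descent in $\mathcal{S}$, and that they jointly detect equivalences — are exactly the hypotheses the paper checks before invoking its general criterion (Lemma~\ref{lem:pshcond}). The only difference is in the formal packaging: the paper deduces the equivalence from those hypotheses via the monadicity theorem, whereas you argue directly through full faithfulness of $i_!$, conservativity of $i^*$, and the triangle identities; both are standard and valid.
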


To prove this, we shall use the following general criterion.
\begin{lemma}\label{lem:pshcond}
  Suppose $\mathcal{C}$ is a cocomplete and locally small \icat{} and
  $i \colon \mathcal{C}_{0} \hookrightarrow \mathcal{C}$ is the
  inclusion of an essentially small full subcategory $\mathcal{C}_{0}$
  of $\mathcal{C}$ such that
  \begin{enumerate}[(i)]
  \item the objects of $\mathcal{C}_{0}$ are \emph{completely
      compact}, i.e.~for $C \in \mathcal{C}_{0}$ the functor
    $\Map_{\mathcal{C}}(C, \blank)$ preserves colimits,
  \item the functors $\Map_{\mathcal{C}}(C, \blank)$ for $C \in
    \mathcal{C}_{0}$ are \emph{jointly conservative}, i.e.~if a map $f
    \colon X \to Y$ in $\mathcal{C}$ is such that $f_{*} \colon
    \Map_{\mathcal{C}}(C,X) \to \Map_{\mathcal{C}}(C, Y)$ is an
    equivalence for all $C \in \mathcal{C}_{0}$, then $f$ is an equivalence.
  \end{enumerate}
  Then the adjunction \[i_{!} : \mathcal{P}(\mathcal{C}_{0})
  \rightleftarrows \mathcal{C} : i^{*}\]
  is an adjoint equivalence.
\end{lemma}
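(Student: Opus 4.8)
The plan is to run the standard criterion for when a restricted Yoneda functor out of a cocomplete \icat{} is an equivalence, using the two hypotheses separately: complete compactness (i) will give that the unit of the adjunction is invertible, and joint conservativity (ii) that the counit is. Recall first that, by the universal property of presheaf \icats{} as free cocompletions \cite{HTT}*{Theorem 5.1.5.6}, the functor $i_!$ is the essentially unique colimit-preserving extension of $i$ along the Yoneda embedding $y\colon \mathcal{C}_{0} \to \mathcal{P}(\mathcal{C}_{0})$; in particular $i_!y \simeq i$, and its right adjoint is the restricted Yoneda functor $i^{*}$ as stated.

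First I would check that the unit $\eta\colon \id \to i^{*}i_!$ is an equivalence. The key observation is that $i^{*}$ preserves colimits: since colimits of presheaves are computed objectwise and $i^{*}X = \Map_{\mathcal{C}}(i(\blank), X)$, this is exactly the statement that each $\Map_{\mathcal{C}}(C, \blank)$ preserves colimits, i.e.~hypothesis (i). Hence $i^{*}i_!$ preserves colimits, as does $\id$, so by \cite{HTT}*{Theorem 5.1.5.6} the transformation $\eta$ is an equivalence \IFF{} it is so on representables. On $y(C)$ we have $i_!y(C) \simeq i(C)$, so $i^{*}i_!y(C) \simeq \Map_{\mathcal{C}}(i(\blank), i(C)) \simeq \Map_{\mathcal{C}_{0}}(\blank, C) \simeq y(C)$, using that $\mathcal{C}_{0} \hookrightarrow \mathcal{C}$ is fully faithful; tracing through the unit identifies this with the identity. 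Thus $\eta$ is an equivalence, and consequently $i_!$ is fully faithful.

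It then remains to show the counit $\epsilon\colon i_!i^{*} \to \id$ is an equivalence, and here I would combine two facts. Since $\eta$ is an equivalence, the triangle identity forces $i^{*}\epsilon_{X}$ to be an equivalence for every $X \in \mathcal{C}$. On the other hand, $i^{*}$ is conservative: a map $f$ in $\mathcal{C}$ has $i^{*}f$ an equivalence \IFF{} $\Map_{\mathcal{C}}(C, f)$ is an equivalence for all $C \in \mathcal{C}_{0}$ (equivalences of presheaves being detected objectwise), and by hypothesis (ii) this implies $f$ is an equivalence. Combining these, $\epsilon_{X}$ is an equivalence for every $X$, so $i_! \dashv i^{*}$ is an adjoint equivalence.

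I expect the only genuinely delicate point to be the bookkeeping in the unit computation — confirming that the abstract unit $y(C) \to i^{*}i_!y(C)$ really is carried to the identity under the fullness identification $i^{*}i_!y(C) \simeq y(C)$, rather than to some other self-equivalence of $y(C)$; everything else is a formal consequence of the two hypotheses together with the universal property of the free cocompletion.
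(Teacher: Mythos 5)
Your proof is correct, and it rests on exactly the same two observations as the paper's: hypothesis (i) says precisely that $i^{*}$ preserves colimits, and hypothesis (ii) says precisely that $i^{*}$ is conservative (equivalences of presheaves being detected objectwise). Where you differ is in how these are assembled. The paper invokes the monadicity theorem \cite{HA}*{Theorem 4.7.3.5} to conclude that $i_{!}\dashv i^{*}$ is monadic, observes that the monad $i^{*}i_{!}$ is colimit-preserving and restricts to the Yoneda embedding on $\mathcal{C}_{0}$ (hence is the identity), and then cites \cite{HA}*{Corollary 4.7.3.16}. You instead verify the unit and counit directly: the unit is a map of colimit-preserving functors out of $\mathcal{P}(\mathcal{C}_{0})$, so by the universal property of the free cocompletion it is an equivalence as soon as it is one on representables, where it reduces to full faithfulness of $i$; the counit is then handled by the triangle identity together with conservativity of $i^{*}$. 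Your route is slightly more elementary --- it avoids checking the split-simplicial-colimit condition in the monadicity theorem (which is in any case automatic here, since $\mathcal{C}$ is cocomplete and $i^{*}$ preserves all colimits) and unwinds by hand the content of \cite{HA}*{Corollary 4.7.3.16}. The one point you flag as delicate is indeed the only one requiring care, and your resolution is right: the component of the unit at $y(C)$ is adjoint to the identity of $i_{!}y(C)\simeq i(C)$, hence is identified with the map $\Map_{\mathcal{C}_{0}}(\blank,C)\to\Map_{\mathcal{C}}(i(\blank),i(C))$ induced by $i$, which is an equivalence because $i$ is fully faithful; whether or not it is literally ``the identity'' under your identification is immaterial.
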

\begin{proof}
  The functor $i^{*}$ preserves colimits since the objects of
  $\mathcal{C}_{0}$ are completely compact, and detects equivalences
  since they are jointly conservative.
  The composite
  $i^{*}i_{!} \colon \mathcal{P}(\mathcal{C}_{0}) \to
  \mathcal{P}(\mathcal{C}_{0})$
  is thus a colimit-preserving functor that restricts to the Yoneda
  embedding on $\mathcal{C}_{0}$; it must therefore be the identity,
  and so the unit transformation $\id \to i^{*}i_{!}$ is an equivalence.
  To see that the counit transformation $i_{!}i^{*}\to \id$ is also an
  equivalence, it suffices to show that it is an equivalence after
  applying the conservative functor $i^{*}$, which now follows from
  the invertibility of the unit and one of the adjunction identities.
\end{proof}

\begin{proof}[Proof of Proposition~\ref{propn:AnEndpsh}]
  By Lemma~\ref{lem:pshcond} it suffices to check that the objects in
  $\bbOel$ jointly detect equivalences and are completely compact. A
  morphism
  \[
  \begin{tikzcd}
    I \arrow{r} \arrow{d}& E \drpullback
	\arrow{r}\arrow{d}& B\arrow{r}\arrow{d} &
    I \arrow{d}\\ 
    I' \arrow{r}& E'\arrow{r} & B'\arrow{r} & I'
  \end{tikzcd}
  \]
  in $\AnEnd$ is an equivalence \IFF{} the maps $I \to I'$ and $B \to
  B'$ are equivalences. The latter map is an equivalence \IFF{} for
  every $n$ the map on fibres $B_{n} \to B'_{n}$ is an equivalence. It
  thus follows from Lemma~\ref{lem:elmaps} that the objects in
  $\bbOel$ detect equivalences. Similarly, mapping out of them
  preserves colimits since these are computed levelwise by
  Corollary~\ref{cor:polycolim} and pullbacks preserve colimits.
\end{proof}

Having described analytic functors in terms of elementary trees, we now
describe them in terms of general trees.

\begin{defn}\label{def:Seg}
  The inclusion $u\colon \bbOel \to \bbOint$ induces a geometric 
  morphism $u_{*} \colon \mathcal{P}(\bbOel) \to \mathcal{P}(\bbOint)$, fully 
  faithful since $u$ is,
  hence identifying $\mathcal{P}(\bbOel)$ as a left exact localization
  of $\mathcal{P}(\bbOint)$.  We denote the image by $\PSeg(\bbOint)$
  and call its objects \emph{Segal presheaves}:
  \[
  \mathcal{P}(\bbOel) \stackrel\sim\to \PSeg(\bbOint) \subset \mathcal{P}(\bbOint)
  \]
  A presheaf $\Phi \in \mathcal{P}(\bbOint)$ is thus a Segal presheaf if it
  is a right Kan extension of its restriction to $\bbOel$.  This right Kan
  extension is calculated in the standard way using limits: 
  A presheaf $\Phi$ is Segal when the natural
  map $\Phi(T) \to \lim_{E \in \el(T)^{\op}} \Phi(E)$ is an equivalence.
  (Recall that $\el(T)=\bbOelT$ is the category of elements of $T$.)
\end{defn}

\begin{defn}
  Let $i_{0}$ denote the inclusion $\bbOint \hookrightarrow
  \AnEnd$. This extends to a unique colimit-preserving functor $i_{0,!} \colon
  \mathcal{P}(\bbOint) \to \AnEnd$ with right adjoint $i_{0}^{*}$
  given by the restricted Yoneda embedding.
\end{defn}

The commutative triangle of inclusion functors
  \[ 
  \begin{tikzpicture}
  \matrix (m) [matrix of math nodes,row sep=2.4em,column sep=1.1em,text height=1.5ex,text depth=0.25ex]
  {  \pgfmatrixnextcell \AnEnd \pgfmatrixnextcell \\
  \bbOel \pgfmatrixnextcell \pgfmatrixnextcell \bbOint \\ };
  \path[->,font=\footnotesize] %
  (m-2-1) edge node[left] {$i$} (m-1-2)%
  (m-2-3) edge node[right] {$i_0$} (m-1-2)%
  (m-2-1) edge node[below] {$u$} (m-2-3);%
  \end{tikzpicture}
  \]
induces a commutative diagram of right adjoint functors
  \[ 
  \begin{tikzcd}[shorten >= 8pt,shorten <= 8pt] 
	  {} & \AnEnd 
	  \arrow{ddl}[above]{i^{*}} \arrow{ddr}{i_{0}^{*}} & \\
	  &&\\
	\mathcal{P}(\bbOel) 
	&&   \mathcal{P}(\bbOint).
		\arrow{ll}{u^{*}}
  \end{tikzcd}
  \]
The functor $u^{*}$ given by composition with $u$ also has a right
adjoint $u_{*}$, given by right Kan extension along $u^{\op}$.
\begin{lemma}\label{lem:i0=u*i*}
  The natural transformation
  \[ i_{0}^{*} \to u_{*} u^{*} i_{0}^{*} \simeq u_{*} i^{*},\]
  induced by the unit for the adjunction $u^{*} \dashv u_{*}$,
  is an equivalence.
\end{lemma}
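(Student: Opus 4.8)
The plan is to prove the statement pointwise, checking that for every tree $T \in \bbOint$ the component of the transformation at $T$ is an equivalence of spaces. First I would unwind both sides. The source is $i_0^*(P)(T) \simeq \Map_{\AnEnd}(T,P)$. For the target, the equivalence $u^* i_0^* \simeq i^*$ is just the triangle identity $i \simeq i_0 u$ transported through the restricted Yoneda functors, so that $u_* u^* i_0^* \simeq u_* i^*$; and by the right Kan extension formula recalled in Definition~\ref{def:Seg}, the value of $u_* i^*(P)$ at $T$ is $\lim_{E \in \el(T)} i^*(P)(E) \simeq \lim_{E \in \el(T)} \Map_{\AnEnd}(E,P)$, the limit over the category of elements $\el(T) = \bbOelT$.

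Next I would identify the map itself. By construction of the right Kan extension, the component at $T$ of the unit $\mathrm{id} \to u_* u^*$ is the canonical comparison map from $\Map_{\AnEnd}(T,P)$ into $\lim_{E \in \el(T)} \Map_{\AnEnd}(E,P)$; its projection onto the factor indexed by an object $(E, E \to T)$ of $\el(T)$ is precomposition with the structure map $E \to T$.

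The key geometric input is Lemma~\ref{lem:treecolim}, which exhibits $T$ as the colimit $\colim_{E \in \el(T)} E$ in $\AnEnd$, with cocone given by the structure maps $E \to T$. Applying the contravariant functor $\Map_{\AnEnd}(\blank, P)$, which carries colimits to limits, turns this colimit cocone into a limit cone and produces an equivalence $\Map_{\AnEnd}(T, P) \isoto \lim_{E \in \el(T)} \Map_{\AnEnd}(E, P)$. Since this equivalence and the comparison map of the previous paragraph are both canonically determined by the same family of precomposition maps $\Map_{\AnEnd}(T,P) \to \Map_{\AnEnd}(E,P)$ indexed by $\el(T)$, they coincide, and the statement follows.

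The only delicate point is this last matching of the two descriptions, which I expect to be the main (though minor) obstacle: one must verify that the map produced by the adjunction unit for $u^* \dashv u_*$ really agrees with the one induced by the colimit cocone of Lemma~\ref{lem:treecolim}. This is a bookkeeping matter rather than a computation, since both maps are uniquely pinned down by the universal property of the limit together with the requirement that each projection be precomposition along $E \to T$; no further work beyond this compatibility is needed.
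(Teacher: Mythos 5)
Your proof is correct and follows essentially the same route as the paper's: evaluate pointwise at a tree $T$, use Lemma~\ref{lem:treecolim} to write $T \simeq \colim_{E\in\el(T)} E$ in $\AnEnd$, let $\Map_{\AnEnd}(\blank,P)$ convert this colimit into a limit, and recognize the result as the pointwise right Kan extension formula for $u_* i^* P$. The extra care you take in checking that the unit's component agrees with the cocone-induced comparison map (both being pinned down by the same precomposition projections) is a point the paper passes over silently, and is handled correctly.
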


\begin{proof}
  For $P\in \AnEnd$ and $T\in \bbOint$, we have
  \[
  (i_0^{*}P)(T) \simeq \Map(i_0 T,P) 
  \simeq \Map\left(i_0\left(\colim_{E\in \el(T)} u E\right),P\right) 
  \simeq \Map\left(\colim_{E\in \el(T)} i_0 u E,P\right) 
  \simeq \lim_{E\in \el(T)^{\op}} \Map(i E,P),
  \]
  since $T$ is the colimit of its elementary subtrees in $\bbOint$ and
  this colimit is preserved by the inclusion $i_{0}$ by
  Lemma~\ref{lem:treecolim}. But this gives precisely the limit
  formula for the right Kan extension $u_*i^* P$.
\end{proof}

\begin{propn}\label{prop:ff_int}
  The functor $i_{0}^{*}\colon \AnEnd \to \mathcal{P}(\bbOint)$ is fully
  faithful with image the Segal presheaves. In other words, it induces
  an equivalence
  \[ \AnEnd \isoto \PSeg(\bbOint).\]
\end{propn}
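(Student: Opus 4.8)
The plan is to recognize that essentially all the work has already been done in Proposition~\ref{propn:AnEndpsh}, Lemma~\ref{lem:i0=u*i*}, and Definition~\ref{def:Seg}, so that the proof amounts to assembling these three facts. The key input is Lemma~\ref{lem:i0=u*i*}, which provides a natural equivalence $i_0^* \simeq u_* i^*$, expressing the restricted Yoneda functor on all trees as the restricted Yoneda functor on elementary trees followed by right Kan extension along $u^{\op}$.

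I would then combine this factorization with the two remaining inputs. By Proposition~\ref{propn:AnEndpsh}, the functor $i^* \colon \AnEnd \to \mathcal{P}(\bbOel)$ is an equivalence. By Definition~\ref{def:Seg}, the functor $u_* \colon \mathcal{P}(\bbOel) \to \mathcal{P}(\bbOint)$ is fully faithful (because $u$ is), with essential image precisely the Segal presheaves $\PSeg(\bbOint)$. Hence $i_0^* \simeq u_* \circ i^*$ is a composite of an equivalence followed by a fully faithful functor, so it is itself fully faithful; and since $i^*$ is essentially surjective, the essential image of $i_0^*$ coincides with that of $u_*$, namely $\PSeg(\bbOint)$. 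A fully faithful functor is an equivalence onto its essential image, which yields the asserted equivalence $\AnEnd \isoto \PSeg(\bbOint)$.

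The main obstacle is not in this final assembly but in the ingredients already established: the identification $i_0^* \simeq u_* i^*$ rests on the fact that every tree is the colimit of its elementary subtrees (Lemma~\ref{lem:treecolim}) and that $i_0$ preserves this colimit, while the fully faithfulness of $u_*$ with image the Segal presheaves is exactly the statement that a presheaf on $\bbOint$ lies in the localization $\PSeg(\bbOint)$ if and only if it is right Kan extended from $\bbOel$. Granting these, the only point one need note carefully is that composing an equivalence with a fully faithful functor preserves the essential image, which is immediate.
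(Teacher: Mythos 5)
Your proposal is correct and follows exactly the paper's own argument: factor $i_0^*\simeq u_*\circ i^*$ via Lemma~\ref{lem:i0=u*i*}, invoke Proposition~\ref{propn:AnEndpsh} for $i^*$ being an equivalence, and use that $u_*$ is fully faithful with image $\PSeg(\bbOint)$ by Definition~\ref{def:Seg}. Nothing to add.
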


\begin{proof}
	By Lemma~\ref{lem:i0=u*i*}, $i_{0}^{*}$ factors as $i^*$ followed 
	by $u_*$.  But $i^*$ is an equivalence by 
	Proposition~\ref{propn:AnEndpsh}, so $i_{0}^{*}$ is fully 
	faithful because $u_*$ is, and has the same 
	image as $u_*$, which is $\PSeg(\bbOint)$ by definition.
\end{proof}

\section{Initial Algebras and Free Monads}
\label{sec:IAFM}

\subsection{Initial Lambek Algebras}\label{subsec:lambek}
In this subsection we prove an \icatl{} version of the existence
theorem for initial Lambek algebras. In ordinary category theory, the
study of initial algebras for endofunctors goes back to
Lambek~\cite{LambekFixpoint}, while the existence result is due to
Ad\'amek~\cite{AdamekFreeAlgebras}. 
In the present account, we establish the initial-algebra
theorem via a lightweight version of bar-cobar duality.

\begin{defn}\label{defn:algP}
  Let $P:\mathcal{C}\to\mathcal{C}$ be any endofunctor.  Recall that a
  {\em Lambek $P$-algebra} is a pair $(A,a)$ where $A$ is an object of
  $\mathcal{C}$ and $a\colon PA\to A$ is a morphism of $\mathcal{C}$.
  Dually, a {\em Lambek $P$-coalgebra} is a pair $(C,c)$ where $C$ is
  an object and $c\colon C \to PC$ is a morphism.  (We shall omit the
  attribute `Lambek' for the rest of this subsection.)
  Formally, the
  $\infty$-categories of $P$-algebras and $P$-coalgebras are defined as
  pullbacks
  $$
  \begin{tikzcd}
	\alg_{P}(\mathcal{C}) \drpullback \ar[r] \ar[d] & \mathcal{C}^{\Delta^1} \ar[d, "{(\txt{ev}_0,\txt{ev}_1)}"] \\
	\mathcal{C} \ar[r, "{(P,\id)}"'] & \mathcal{C}\times\mathcal{C} ,
	\end{tikzcd}
  \qquad
  \begin{tikzcd}
	\coalg_{P}(\mathcal{C}) \drpullback \ar[r] \ar[d] & \mathcal{C}^{\Delta^1} \ar[d, "{(\txt{ev}_0,\txt{ev}_1)}"] \\
	\mathcal{C} \ar[r, "{(\id,P)}"'] & \mathcal{C}\times\mathcal{C} .
	\end{tikzcd}
  $$
\end{defn}

\begin{defn}
  If $(A,a)$ is a $P$-algebra and $(C,c)$ is a $P$-coalgebra, then a
  \emph{$P$-twisting morphism} is a morphism $f \colon C \to A$ in $\mathcal{C}$
  together with a commutative square
  $$
  \begin{tikzcd}
	  & C \ar[ddd, "f"] \ar[ld, "c"'] \\[-4ex]
PC \ar[d, "Pf"'] & \\
PA \ar[rd, "a"'] & \\[-4ex]
& A  .
  \end{tikzcd}
  $$
  We define the space $\Tw_P(C,A)$ 
  of $P$-twisting morphisms from $C$ to $A$ as the equalizer
  $$
  \Tw_P(C,A) \to \Map(C,A) \rightrightarrows \Map(C,A)  ,
  $$
  where the two maps $\Map(C,A) \to \Map(C,A)$ are the identity
  and $f\mapsto a \circ Pf \circ c$.
  (The equation $f \simeq a \circ Pf \circ f$ may be viewed as the
  analogue of the {\em Maurer--Cartan equation} in this context.)
\end{defn}

It will be useful to note that a $P$-twisting morphism may also be seen as a
$\Tw(P)$-algebra in the twisted arrow \icat{} $\Tw(\mathcal{C})$, as we proceed
to establish.
  
\begin{defn}
  Recall that, for $\mathcal{C}$ an $\infty$-category, the {\em twisted arrow
  \icat{}} $\Tw(\mathcal{C})$ has
  as
  objects the morphisms in $\mathcal{C}$, and a morphism in
  $\Tw(\mathcal{C})$ from $f'\colon X'\to Y'$ to $f\colon X\to Y$ is a
  commutative diagram
  $$
  \begin{tikzcd}
	  & X \ar[ddd, "f"] \ar[ld] \\[-4ex]
X' \ar[d, "f'"'] & \\
Y' \ar[rd] & \\[-4ex]
& Y .
  \end{tikzcd}
  $$
  See \cite{BarwickMackey} or \cite{HA}*{\S 5.2.1} for a more formal definition.
  Note that in our convention it is the {\em codomain} component
  that determines the direction of the morphism.  (Lurie~\cite{HA}*{\S 5.2.1}
  uses the opposite convention.)
  
  There is a canonical left fibration 
  $$(\operatorname{dom}, \operatorname{codom})  \colon 
  \Tw(\mathcal{C}) \to \mathcal{C}^{\op}\times \mathcal{C} ,
  $$
  corresponding to the functor $\Map(-,-)\colon \mathcal{C}^{\op}\times \mathcal{C} \to 
  \mathcal{S}$.
\end{defn}

\begin{propn}\label{propn:algTwP}
  The endofunctor $P\colon \mathcal{C}\to\mathcal{C}$ induces an
  endofunctor $\Tw(P) \colon \Tw(\mathcal{C}) \to \Tw(\mathcal{C})$
  and a morphism of endofunctors
  \csquare{\Tw(\mathcal{C})}{\Tw(\mathcal{C})}{\mathcal{C}^{\op}
    \times \mathcal{C}}{\mathcal{C}^{\op} \times
    \mathcal{C}.}{\Tw(P)}{}{}{P^{\op} \times P} 
    This induces a
  functor
  \[
  \alg_{\Tw(P)}(\Tw(\mathcal{C})) \longrightarrow 
  \alg_{P^{\op} \times P}(\mathcal{C}^{\op} \times \mathcal{C}) \simeq
  \alg_{P^{\op}}(\mathcal{C}^{\op}) \times \alg_{P}(\mathcal{C}) \simeq 
  \coalg_{P}(\mathcal{C})^{\op} \times \alg_{P}(\mathcal{C})
  \]
  which is a left fibration such that the fibre over $(C,A)\in\coalg_{P}(\mathcal{C})^{\op} \times \alg_{P}(\mathcal{C})$ is the space $\Tw_P(C,A)$ of
  $P$-twisting morphism from $C$ to $A$.
\end{propn}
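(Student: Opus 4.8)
The plan is to deduce the whole statement from the functoriality of the twisted arrow construction together with the standard closure properties of left fibrations, and to resolve the fibre computation by unwinding the Maurer--Cartan condition. First I would produce $\Tw(P)$ and its defining square. The assignment $\mathcal{C}\mapsto\Tw(\mathcal{C})$ is functorial in $\mathcal{C}$, and $(\operatorname{dom},\operatorname{codom})$ is a natural transformation from $\Tw(-)$ to the functor $(-)^{\op}\times(-)$ on $\CatI$ (this is part of the construction in \cite{HA}*{\S 5.2.1} and \cite{BarwickMackey}). Evaluating this natural transformation at $P$ yields the endofunctor $\Tw(P)$ together with the asserted square over $(\operatorname{dom},\operatorname{codom})$, whose lower horizontal map is $P^{\op}\times P$. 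The induced functor on algebras is then pure functoriality of the pullbacks defining $\alg_{(-)}$: applying $(\operatorname{dom},\operatorname{codom})$ levelwise gives compatible maps $\Tw(\mathcal{C})^{\Delta^1}\to(\mathcal{C}^{\op}\times\mathcal{C})^{\Delta^1}$ and $\Tw(\mathcal{C})\to\mathcal{C}^{\op}\times\mathcal{C}$ intertwining the structure maps $(\Tw(P),\id)$, $(P^{\op}\times P,\id)$ and the evaluations $(\txt{ev}_0,\txt{ev}_1)$. The target is rewritten using that algebras in a product split as a product of algebras, and that $\alg_{P^{\op}}(\mathcal{C}^{\op})\simeq\coalg_{P}(\mathcal{C})^{\op}$.

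The substantive point is the left-fibration claim, whose key input is that $(\operatorname{dom},\operatorname{codom})\colon\Tw(\mathcal{C})\to\mathcal{C}^{\op}\times\mathcal{C}$ is itself a left fibration (it classifies $\Map(-,-)$, as recalled just before the statement). I would prove the general fact that, for any map of cospans whose three vertical legs are left fibrations, the induced map on pullbacks is a left fibration, and apply it to the levelwise $(\operatorname{dom},\operatorname{codom})$ map between the two cospans $\Tw(\mathcal{C})\xrightarrow{(\Tw(P),\id)}\Tw(\mathcal{C})\times\Tw(\mathcal{C})\xleftarrow{(\txt{ev}_0,\txt{ev}_1)}\Tw(\mathcal{C})^{\Delta^1}$ and its primed analogue over $\mathcal{C}^{\op}\times\mathcal{C}$. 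The three legs are left fibrations: the outer one by hypothesis, its square by stability under products, and $\Tw(\mathcal{C})^{\Delta^1}\to(\mathcal{C}^{\op}\times\mathcal{C})^{\Delta^1}$ since left fibrations are preserved by $\Fun(\Delta^1,-)$ \cite{HTT}*{\S 2.1}.

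The hard part will be the general cospan fact, because a naive horn-filling argument does not obviously work: the algebra condition couples the $\Tw(\mathcal{C})$-component to the $\Tw(\mathcal{C})^{\Delta^1}$-component, so the two factors cannot be lifted independently. I would sidestep this by base-changing all three leg fibrations to the target pullback $W'\simeq\coalg_{P}(\mathcal{C})^{\op}\times\alg_{P}(\mathcal{C})$ and then using that left fibrations over a fixed base are closed under fibre product: if $X\to\mathcal{B}$ and $Y\to\mathcal{B}$ are left fibrations then $X\times_{\mathcal{B}}Y\to Y\to\mathcal{B}$ is a composite of left fibrations, hence a left fibration. A short rearrangement of iterated pullbacks (substituting $W'$ into each base-changed leg) identifies the fibre product over $W'$ of the three base-changed legs with $\alg_{\Tw(P)}(\Tw(\mathcal{C}))$ itself, so the projection to $W'$ is a left fibration.

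Finally, the fibre. Since a left fibration has groupoidal fibres, it remains to identify the fibre over a point $((C,c),(A,a))$. I would unwind the algebra data lying over this point: such an object is a morphism $\theta\colon C\to A$ (a point of the fibre of $(\operatorname{dom},\operatorname{codom})$ over $(C,A)$, i.e.\ of $\Map(C,A)$) together with a morphism $\Tw(P)(\theta)\to\theta$ in $\Tw(\mathcal{C})$ whose domain- and codomain-components are the prescribed $c$ and $a$. Spelling out the definition of a morphism in $\Tw(\mathcal{C})$ (a domain map $C\to PC$, a codomain map $PA\to A$, and the triangle condition) shows that, with $c$ and $a$ fixed, this structure is exactly a homotopy $\theta\simeq a\circ P\theta\circ c$. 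Computing the fibre as the corresponding iterated pullback and using that the fibres of $(\operatorname{dom},\operatorname{codom})$ are mapping spaces exhibits it as the equalizer of $\id$ and $\theta\mapsto a\circ P\theta\circ c$ on $\Map(C,A)$, which is precisely $\Tw_{P}(C,A)$.
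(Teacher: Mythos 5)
Your overall route coincides with the paper's: the square comes from naturality of $\Tw(-)\to(-)^{\op}\times(-)$, the functor on algebra \icats{} from functoriality of the defining pullbacks, the left-fibration claim from a general statement about pullbacks of cospans of left fibrations, and the fibre from unwinding the algebra datum over $((C,c),(A,a))$ into the Maurer--Cartan equalizer; your fibre computation agrees with the paper's (which takes the pullback of the fibres of the three legs and recognizes the equalizer as a pullback square). The problem is the justification of the left-fibration step. Write $X_1,X_2,X_3$ for the three legs base-changed to $W':=\coalg_{P}(\mathcal{C})^{\op}\times\alg_{P}(\mathcal{C})$. The iterated-pullback rearrangement does identify $\alg_{\Tw(P)}(\Tw(\mathcal{C}))$ with $X_1\times_{X_2}X_3$ --- but this fibre product is taken over the \emph{middle leg} $X_2$, not over $W'$. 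The closure property you invoke (that $X\times_{\mathcal{B}}Y\to Y\to\mathcal{B}$ is a composite of left fibrations) only covers cospans whose vertex is the base itself, so it does not apply; and you cannot repair it by factoring as $X_1\times_{X_2}X_3\to X_3\to W'$ and claiming $X_1\to X_2$ is a left fibration, because a functor between left fibrations over a common base, commuting with the projections, is in general not a left fibration. (Already over $\Delta^1$ such a map is a commuting square of spaces $S_0\to S_1$, $T_0\to T_1$, and the lifting condition for the total functor forces the homotopy fibres of $S_1\to T_1$ to be contractible --- false for, say, $S_0=T_0=T_1=*$ and $S_1$ non-contractible.) If instead ``fibre product over $W'$ of the three legs'' is read literally as $X_1\times_{W'}X_2\times_{W'}X_3$, the identification with $\alg_{\Tw(P)}(\Tw(\mathcal{C}))$ is simply false, as the fibres acquire a superfluous factor from $X_2$.

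What you actually need is that the pullback of a cospan of left fibrations over a fixed base (with maps over that base) is again a left fibration over that base. This is true, by either of two arguments: the paper's direct one (Lemma~\ref{lem:pbcocart}, fed into Lemma~\ref{lem:algcoCart}) --- given a morphism downstairs, choose compatible cocartesian lifts in each of the three legs; the resulting morphism in the pullback is cocartesian, and since every edge of a left fibration is cocartesian this exhibits the projection as a left fibration --- or the observation that $\mathrm{LFib}(W')\subseteq\Cat_{\infty/W'}$ is reflective (equivalently: straighten the cospan to $\Fun(W',\mathcal{S})$ and unstraighten the pointwise pullback), hence closed under limits computed in the slice. With either repair, your base change to $W'$ becomes unnecessary, since the cospan fact applies directly to the original three legs over $\mathcal{C}^{\op}\times\mathcal{C}$, its square, and its $\Delta^1$-power, which is exactly what the paper does.
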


Before we prove this, we make some simple observations:
\begin{lemma}\label{lem:pbcocart}
  Consider a commutative diagram of \icats{}
  \[
  \begin{tikzcd}
    \mathcal{E}' \arrow{r} \arrow{d} & \mathcal{E} \arrow{d} &
    \mathcal{E}'' \arrow{l} \arrow{d} \\
    \mathcal{B}' \arrow{r}  & \mathcal{B} &
    \mathcal{B}'' \arrow{l}
  \end{tikzcd}
  \]
  where the vertical maps are cocartesian fibrations and the upper
  horizontal maps preserve cocartesian morphisms. Then the induced
  functor
  \[ \mathcal{E}' \times_{\mathcal{E}} \mathcal{E}'' \to \mathcal{B}'
  \times_{\mathcal{B}} \mathcal{B}'' \]
  is again a cocartesian fibration, and the canonical functors to
  $\mathcal{E}'$, $\mathcal{E}$, and $\mathcal{E}''$ all preserve
  cocartesian morphisms. Moreover, if the vertical maps are actually left
  fibrations, then so is this new map.
\end{lemma}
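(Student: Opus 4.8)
The statement (Lemma~\ref{lem:pbcocart}) asserts a stability property of cocartesian fibrations under pullback. Let me think about what needs to be proven and sketch a plan.

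We have a diagram where vertical maps are cocartesian fibrations and the top horizontal maps preserve cocartesian morphisms. We form the iterated pullback (the "fibre product of fibre products") and need to show the induced map is a cocartesian fibration, that the projections preserve cocartesian edges, and the left-fibration refinement.

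Let me plan the proof.The plan is to reduce the statement to the well-known pullback-stability of cocartesian fibrations, established for instance in \cite{HTT}*{Corollary 2.4.2.5}, applied twice and combined with the characterization of cocartesian morphisms in a pullback. First I would recall the key fact: if $\mathcal{E} \to \mathcal{B}$ is a cocartesian fibration and $\mathcal{B}' \to \mathcal{B}$ is any functor, then the pullback $\mathcal{E} \times_{\mathcal{B}} \mathcal{B}' \to \mathcal{B}'$ is again a cocartesian fibration, and a morphism in the pullback is cocartesian \IFF{} its image in $\mathcal{E}$ is cocartesian. The corresponding statement for left fibrations is \cite{HTT}*{Proposition 2.1.2.5 and its consequences}: left fibrations are stable under pullback, and again cocartesian (equivalently, all) morphisms are detected in the total space.

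The main work is organizing the double pullback. I would proceed in two stages. Write $\mathcal{P} := \mathcal{E}' \times_{\mathcal{E}} \mathcal{E}''$ and $\mathcal{Q} := \mathcal{B}' \times_{\mathcal{B}} \mathcal{B}''$, so the target of our map is $\mathcal{Q}$. The strategy is to factor the projection $\mathcal{P} \to \mathcal{Q}$ through intermediate pullbacks. Concretely, $\mathcal{P} \to \mathcal{Q}$ can be written as the composite of base changes: first pull back the cocartesian fibration $\mathcal{E}' \to \mathcal{B}'$ along $\mathcal{Q} \to \mathcal{B}'$, then observe that the remaining data (the factor $\mathcal{E}''$ over $\mathcal{E}$) contributes a further pullback. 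Using that $\mathcal{E}' \to \mathcal{E}$ preserves cocartesian morphisms, together with the fact that $\mathcal{E}'' \to \mathcal{B}''$ and $\mathcal{E}'' \to \mathcal{E}$ interact compatibly, one checks that the fibrewise structure assembles correctly. The cocartesian lifts in $\mathcal{P}$ over a morphism in $\mathcal{Q}$ are obtained by taking cocartesian lifts in each of $\mathcal{E}'$ and $\mathcal{E}''$; these are compatible over $\mathcal{E}$ precisely because the top horizontal functors preserve cocartesian morphisms, so the pair of lifts glues to a genuine lift in $\mathcal{P}$.

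The step I expect to be the main obstacle is verifying that the candidate lift in $\mathcal{P}$ is genuinely $\mathcal{P}$-cocartesian, and not merely a compatible pair of cocartesian lifts. The cleanest way to handle this is to invoke the characterization of cocartesian edges in an inverse limit of cocartesian fibrations: an edge is cocartesian for the projection to $\mathcal{Q}$ \IFF{} its images in $\mathcal{E}'$, $\mathcal{E}$, and $\mathcal{E}''$ are each cocartesian for their respective projections. This is where the hypothesis that the top horizontal maps preserve cocartesian edges is essential, since it guarantees that an edge cocartesian in $\mathcal{E}'$ maps to one cocartesian in $\mathcal{E}$, ensuring the pasting is coherent. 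Once this characterization is in hand, the claim that the projections $\mathcal{P} \to \mathcal{E}'$, $\mathcal{P} \to \mathcal{E}$, and $\mathcal{P} \to \mathcal{E}''$ all preserve cocartesian morphisms is immediate. Finally, the left-fibration refinement follows by the same argument: a left fibration is a cocartesian fibration whose fibres are $\infty$-groupoids, and the fibre of $\mathcal{P} \to \mathcal{Q}$ over a point is the corresponding fibre product of the fibres of $\mathcal{E}' \to \mathcal{B}'$ and $\mathcal{E}'' \to \mathcal{B}''$ over $\mathcal{E} \to \mathcal{B}$, which is an $\infty$-groupoid whenever each factor is.
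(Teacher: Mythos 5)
Your proposal is correct and takes essentially the same route as the paper, whose entire proof is the observation that a compatible choice of cocartesian lifts in $\mathcal{E}'$, $\mathcal{E}$, and $\mathcal{E}''$ (compatible because the top horizontal functors preserve cocartesian morphisms) yields a cocartesian morphism in the fibre product. The one wobble is your opening claim that $\mathcal{E}' \times_{\mathcal{E}} \mathcal{E}'' \to \mathcal{B}' \times_{\mathcal{B}} \mathcal{B}''$ factors as a composite of base changes of the given fibrations --- it does not, since $\mathcal{E}'' \to \mathcal{E}$ is not assumed to be a fibration --- but you never rely on this, and your actual argument (glue cocartesian lifts over $\mathcal{E}$, then check cocartesianness via the mapping-space criterion, using that mapping spaces in a pullback of $\infty$-categories are pullbacks of mapping spaces) is exactly the intended one.
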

\begin{proof}
  Given a morphism $f$ in $\mathcal{B}' \times_{\mathcal{B}}
  \mathcal{B}''$ it is easy to see that the morphism in $\mathcal{E}'
  \times_{\mathcal{E}} \mathcal{E}''$ corresponding to a compatible
  choice of cocartesian morphisms over the images of $f$ in
  $\mathcal{B}'$, $\mathcal{B}$, and $\mathcal{B}''$ is cocartesian.
\end{proof}

\begin{lemma}\label{lem:algcoCart}
  Suppose $\mathcal{C}$ and $\mathcal{D}$ are $\infty$-categories
  equipped with endofunctors $P \colon \mathcal{C}\to\mathcal{C}$ and
  $Q \colon \mathcal{D}\to\mathcal{D}$, and $F \colon \mathcal{D}\to\mathcal{C}$ is a
  cocartesian fibration which is compatible with $P$ and $Q$ in the sense that
  there is a commutative square
  \[\begin{tikzcd}
    \mathcal{D} \ar[r, "Q"] \ar[d, "F"'] & \mathcal{D} \ar[d, "F"] \\
    \mathcal{C} \ar[r, "P"'] & \mathcal{C},
  \end{tikzcd}\]
  and $Q$ preserves $F$-cocartesian morphisms. Then the resulting functor
  $\alg_Q(\mathcal{D})\to\alg_P(\mathcal{C})$ is a cocartesian fibration.
  Furthermore, if $F$ is actually a left fibration, then
  $\alg_Q(\mathcal{D})\to\alg_P(\mathcal{C})$ is itself a left fibration.
\end{lemma}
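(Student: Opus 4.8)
The plan is to realize the functor $\alg_Q(\mathcal{D}) \to \alg_P(\mathcal{C})$ as the map induced on pullbacks by a morphism of cospans of cocartesian fibrations, and then to invoke Lemma~\ref{lem:pbcocart}. Recall from Definition~\ref{defn:algP} that $\alg_Q(\mathcal{D})$ and $\alg_P(\mathcal{C})$ are the pullbacks of the cospans $\mathcal{D} \xrightarrow{(Q,\id)} \mathcal{D}\times\mathcal{D} \xleftarrow{(\txt{ev}_0,\txt{ev}_1)} \mathcal{D}^{\Delta^1}$ and $\mathcal{C} \xrightarrow{(P,\id)} \mathcal{C}\times\mathcal{C} \xleftarrow{(\txt{ev}_0,\txt{ev}_1)} \mathcal{C}^{\Delta^1}$, respectively. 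I would map the first cospan to the second using the vertical functors $F$, $F\times F$, and the functor $F^{\Delta^1}\colon \mathcal{D}^{\Delta^1} \to \mathcal{C}^{\Delta^1}$ given by postcomposition with $F$, as in
\[
\begin{tikzcd}
  \mathcal{D} \arrow{r}{(Q,\id)} \arrow[swap]{d}{F} & \mathcal{D}\times\mathcal{D} \arrow{d}{F\times F} & \mathcal{D}^{\Delta^1} \arrow[swap]{l}{(\txt{ev}_0,\txt{ev}_1)} \arrow{d}{F^{\Delta^1}} \\
  \mathcal{C} \arrow[swap]{r}{(P,\id)} & \mathcal{C}\times\mathcal{C} & \mathcal{C}^{\Delta^1} \arrow{l}{(\txt{ev}_0,\txt{ev}_1)}
\end{tikzcd}
\]
The left-hand square commutes by the compatibility hypothesis $FQ \simeq PF$, and the right-hand square commutes because $F^{\Delta^1}$ applies $F$ pointwise, so $\txt{ev}_i \circ F^{\Delta^1} \simeq F \circ \txt{ev}_i$. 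Passing to pullbacks recovers precisely the functor $\alg_Q(\mathcal{D}) \to \alg_P(\mathcal{C})$.

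It then remains to check the two hypotheses of Lemma~\ref{lem:pbcocart}. First, all three vertical maps must be cocartesian fibrations: $F$ is one by assumption; $F\times F$ is a product of cocartesian fibrations, hence a cocartesian fibration whose cocartesian morphisms are exactly the pairs of cocartesian morphisms; and $F^{\Delta^1}$ is a cocartesian fibration by the standard fact that $\Fun(\Delta^1,-)$ carries cocartesian fibrations to cocartesian fibrations, its cocartesian morphisms being precisely the squares that are pointwise (i.e.\ under both $\txt{ev}_0$ and $\txt{ev}_1$) cocartesian (see \cite{HTT}). Second, the two upper horizontal legs must preserve cocartesian morphisms: the leg $(Q,\id)$ sends an $F$-cocartesian morphism $\phi$ to $(Q\phi,\phi)$, whose first component is $F$-cocartesian precisely because $Q$ preserves $F$-cocartesian morphisms (our hypothesis) and whose second component is $\phi$ itself, so the pair is $(F\times F)$-cocartesian; and the leg $(\txt{ev}_0,\txt{ev}_1)$ sends a pointwise-cocartesian square to the pair of its two edges, which are cocartesian by the description just given.

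With both hypotheses verified, Lemma~\ref{lem:pbcocart} yields that $\alg_Q(\mathcal{D}) \to \alg_P(\mathcal{C})$ is a cocartesian fibration. For the final clause, if $F$ is a left fibration then so are $F\times F$ and $F^{\Delta^1}$ --- the latter because $\Fun(\Delta^1,-)$ preserves left fibrations, a consequence of the stability of left anodyne maps under pushout-product with monomorphisms --- and the ``Moreover'' clause of Lemma~\ref{lem:pbcocart} then gives that $\alg_Q(\mathcal{D}) \to \alg_P(\mathcal{C})$ is itself a left fibration. I expect the only genuinely substantive point to be the identification of the cocartesian morphisms of $F^{\Delta^1}$ with the pointwise-cocartesian squares, since this is exactly what makes the evaluation leg $(\txt{ev}_0,\txt{ev}_1)$ preserve cocartesian morphisms; everything else is bookkeeping with Lemma~\ref{lem:pbcocart}.
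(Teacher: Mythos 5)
Your proof is correct and follows essentially the same route as the paper: both express $\alg_Q(\mathcal{D})$ and $\alg_P(\mathcal{C})$ as pullbacks of the cospans from Definition~\ref{defn:algP}, map one cospan to the other via $F$, $F\times F$, and $F^{\Delta^1}$, and apply Lemma~\ref{lem:pbcocart}. The only difference is that you spell out the verification that the legs $(Q,\id)$ and $(\txt{ev}_0,\txt{ev}_1)$ preserve cocartesian morphisms, which the paper leaves implicit.
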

\begin{proof}
  The functor $F$ induces morphisms of cocartesian (respectively,
  left) fibrations
  \[
  \begin{tikzcd}
  \mathcal{D}^{\Delta^1} \ar[r] \ar[d] & \mathcal{D}^{\partial\Delta^1} \ar[d] \\
  \mathcal{C}^{\Delta^1} \ar[r] & \mathcal{C}^{\partial\Delta^1}
  \end{tikzcd}
  \quad \text{ and } \quad
  \begin{tikzcd}
  \mathcal{D} \ar[d] \ar[r, "{(Q,\id)}"] & \mathcal{D}\times\mathcal{D} \ar[d] \\
  \mathcal{C} \ar[r, "{(P,\id)}"'] & \mathcal{C}\times\mathcal{C}
  \end{tikzcd}
  \]
  (note that this second square commutes by virtue of our assumption
  on $F$).  Taking pullbacks, we obtain the natural map
  $\alg_Q(\mathcal{D})\to\alg_P(\mathcal{C})$ which is therefore a
  cocartesian (respectively, left) fibration by
  Lemma~\ref{lem:pbcocart}.
\end{proof}

\begin{proof}[Proof of Proposition~\ref{propn:algTwP}]
  It follows from Lemma~\ref{lem:algcoCart} applied to the left
  fibration $F \colon \Tw(\mathcal{C}) \to \mathcal{C}^{\op} \times
  \mathcal{C}$ that the induced map
  \[\alg_{\Tw(P)}(\Tw(\mathcal{C})) \to \coalg_{P}(\mathcal{C})^{\op}
  \times \alg_{P}(\mathcal{C})\]
  is a left fibration. It remains to see that the fibre over
  $(C,A)\in\coalg_{P}(\mathcal{C})^{\op} \times \alg_{P}(\mathcal{C})$
  is the space of $P$-twisting morphisms.  By construction, this fibre
  is obtained as the pullback of fibres of the induced left
  fibrations. The fibre of $F$ over the object $(C,A)$ is the space
  $\Map(C,A)$, and the fibre of
  $F^{\Delta^1} \colon
  \Tw(\mathcal{C})^{\Delta^1}\to (\mathcal{C}^{\op})^{\Delta^1}\times\mathcal{C}^{\Delta^1}$
  is computed as $\Map(C,A)^{\Delta^1}\simeq\Map(C,A)$.  Hence the
  pullback $M$ of the fibres fits into the commutative square
  \[\begin{tikzcd}
  M \ar[rr] \ar[d] && \Map(C,A) \ar[d, "{\txt{diag.}}"] \\
  \Map(C,A) \ar[rr, "{f\mapsto(a\circ Pf\circ c,f)}"'] && 
  \Map(C,A)\times\Map(C,A).
  \end{tikzcd}\]
  But this $M$ is just a pullback reformulation of the equalizer definition of 
  $\Tw_P(C,A)$.
\end{proof}

\begin{lemma}
  A $P$-coalgebra morphism $(C,c) \to (C',c')$ 
  induces a map
  $\Tw_P(C',A) \to \Tw_P(C,A)$ by pre-composition.  Similarly, a $P$-algebra morphism $(A',a') \to (A,a)$ induces 
  a map $\Tw_P(C,A') \to  \Tw_P(C,A)$
  by post-composition.
\end{lemma}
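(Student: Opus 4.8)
The plan is to read off the entire statement from the left fibration produced in Proposition~\ref{propn:algTwP}. That proposition gives a left fibration
\[
\alg_{\Tw(P)}(\Tw(\mathcal{C})) \longrightarrow \coalg_{P}(\mathcal{C})^{\op} \times \alg_{P}(\mathcal{C})
\]
whose fibre over $(C,A)$ is the space $\Tw_P(C,A)$. Since left fibrations over a base $\mathcal{B}$ are, via straightening, the same data as functors $\mathcal{B} \to \mathcal{S}$, this fibration is classified by a functor
\[
\Tw_P(\blank,\blank) \colon \coalg_{P}(\mathcal{C})^{\op} \times \alg_{P}(\mathcal{C}) \longrightarrow \mathcal{S}, \qquad (C,A) \mapsto \Tw_P(C,A).
\]
Functoriality in each variable is then automatic: every morphism in the base induces a map between the corresponding fibres, and this is exactly the structure the lemma asserts.

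It remains only to match up the variances and to identify the induced maps concretely. First I would unpack the variance. A $P$-coalgebra morphism $g \colon (C,c) \to (C',c')$ is a morphism $(C',c') \to (C,c)$ in $\coalg_{P}(\mathcal{C})^{\op}$, so applying the classifying functor (with $A$ fixed) yields a map $\Tw_P(C',A) \to \Tw_P(C,A)$, as claimed. Dually, a $P$-algebra morphism $h \colon (A',a') \to (A,a)$ is already a morphism in $\alg_{P}(\mathcal{C})$, so (with $C$ fixed) it yields $\Tw_P(C,A') \to \Tw_P(C,A)$.

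Finally I would identify these maps with pre- and post-composition. The forgetful functor $\alg_{\Tw(P)}(\Tw(\mathcal{C})) \to \Tw(\mathcal{C})$ sends a twisting morphism to its underlying arrow $C \to A$, and is compatible with the base projection down to the left fibration $\Tw(\mathcal{C}) \to \mathcal{C}^{\op}\times\mathcal{C}$, which is classified by the mapping-space bifunctor $\Map_{\mathcal{C}}(\blank,\blank)$ whose functoriality is precisely pre-composition in the first variable and post-composition in the second. Since $\Tw_P(C,A)$ sits inside $\Map_{\mathcal{C}}(C,A)$ as the equalizer defining the twisting condition, the induced maps on $\Tw_P$ are the restrictions of these (pre/post)-composition maps. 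The only genuine content, and the step I would expect to need a direct check, is that pre-composition with $g$ and post-composition with $h$ actually preserve the twisting (Maurer--Cartan) condition: for $f' \in \Tw_P(C',A)$ one computes $a \circ P(f'g) \circ c \simeq a\circ Pf'\circ (Pg\circ c) \simeq a\circ Pf'\circ (c'\circ g) \simeq f'\circ g$ using the coalgebra identity for $g$ and the twisting identity for $f'$, and symmetrically for $h$ using the algebra identity. This compatibility with the equalizers is what makes the restriction well-defined, after which the identification with the abstract functoriality above is formal.
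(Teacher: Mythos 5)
Your proposal is correct and takes essentially the same route as the paper, whose entire proof is that the statement is immediate from the description of these spaces as fibres of the left fibration of Proposition~\ref{propn:algTwP}; you simply spell out the straightening, the variance bookkeeping, and the identification with pre/post-composition. The explicit check that composition preserves the Maurer--Cartan condition is a harmless extra, though it is already packaged into the left fibration and not needed as a separate verification.
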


\begin{proof}
  This is immediate from the description of these spaces as fibres of a left fibration.
\end{proof}

If $(C,c)$ is a $P$-coalgebra, then $(PC,Pc)$ is a $P$-coalgebra,
and $c\colon (C,c) \to (PC, Pc)$ is a $P$-coalgebra morphism.
The following is the key property of twisting morphisms:
\begin{lemma}\label{lem:PCA}
  For a $P$-coalgebra $(C,c)$ and a $P$-algebra $(A,a)$,
  the map $\Tw_P(PC,A) \to \Tw_P(C,A)$ which sends $g$ to $g\circ c$, is an equivalence, 
  with inverse the map $\Tw_P(C,A)\to\Tw_P(PC,A)$ which sends $f$ to $a \circ Pf$.
\end{lemma}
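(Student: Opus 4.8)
The plan is to exhibit the two displayed maps as genuine maps of twisting-morphism spaces, to identify their two composites with the Maurer--Cartan operators, and then to observe that these operators act as the identity on the relevant twisting-morphism spaces. Throughout I write $\Phi_{C,A}\colon \Map(C,A)\to\Map(C,A)$ for the operator $f\mapsto a\circ Pf\circ c$, so that by definition $\Tw_P(C,A)$ is the equalizer of $\id$ and $\Phi_{C,A}$; likewise $\Phi_{PC,A}(g)=a\circ Pg\circ Pc$ computes the equalizer presenting $\Tw_P(PC,A)$, using that $(PC,Pc)$ is again a $P$-coalgebra.

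First I would record that the map $\alpha\colon g\mapsto g\circ c$ is exactly the precomposition map induced by the $P$-coalgebra morphism $c\colon (C,c)\to(PC,Pc)$, hence is well defined by the preceding lemma. For the reverse map $\beta\colon f\mapsto a\circ Pf$ I would factor it as
\[
\Tw_P(C,A)\xrightarrow{\ P\ }\Tw_P(PC,PA)\xrightarrow{\ a_*\ }\Tw_P(PC,A),
\]
where the first arrow sends $f$ to $Pf$ (a twisting morphism from $(PC,Pc)$ to $(PA,Pa)$, since applying $P$ to the equation $f\simeq a\circ Pf\circ c$ yields $Pf\simeq Pa\circ P(Pf)\circ Pc$) and the second is postcomposition along the $P$-algebra morphism $a\colon(PA,Pa)\to(A,a)$, well defined by the preceding lemma. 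On underlying maps in $\Map$, a direct computation using $Pg\circ Pc=P(g\circ c)$ then gives $\alpha\circ\beta=\Phi_{C,A}$ and $\beta\circ\alpha=\Phi_{PC,A}$.

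It remains to see that the endomorphism of $\Tw_P(C,A)$ induced by $\Phi_{C,A}$ is homotopic to the identity, and symmetrically for $\Phi_{PC,A}$ on $\Tw_P(PC,A)$. Here I would argue from the equalizer presentation directly: the forgetful map $\Tw_P(C,A)\to\Map(C,A)$ carries a tautological homotopy $\theta\colon \id\simeq\Phi_{C,A}$ between its two composites to $\Map(C,A)$, and the self-map induced by $\Phi_{C,A}$ sends a point $(f,\theta_f)$ to $(\Phi_{C,A}f,(\Phi_{C,A})_*\theta_f)$. Concatenating $\theta$ with its own $\Phi_{C,A}$-translate produces a path from $(f,\theta_f)$ to $(\Phi_{C,A}f,(\Phi_{C,A})_*\theta_f)$ compatible with the equalizer structure, upgrading $\theta$ to a homotopy of self-maps $\id\simeq\alpha\circ\beta$; the argument for $\beta\circ\alpha$ is identical. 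I expect the main obstacle to be exactly this last step: one must check that the tautological homotopy assembles coherently into a homotopy of endomorphisms of the equalizer, and not merely into a homotopy after forgetting down to $\Map(C,A)$. Since that forgetful map is not a monomorphism, the homotopy cannot be detected downstairs and must instead be produced directly through the universal property of the equalizer.
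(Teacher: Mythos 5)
Your proposal is correct and follows essentially the same route as the paper: both identify $\alpha\circ\beta$ and $\beta\circ\alpha$ with the Maurer--Cartan operators $f\mapsto a\circ Pf\circ c$ and $g\mapsto a\circ Pg\circ Pc$, and then contract these to the identity using the defining homotopy $f\simeq a\circ Pf\circ c$ carried by a twisting morphism (the paper phrases this as the pasted square being ``homotopic to the original square since $f$ is twisting''). Your closing remark correctly isolates the only delicate point --- that the tautological homotopy must be promoted to a homotopy of self-maps of the equalizer rather than merely detected after the non-monic forgetful map to $\Map(C,A)$ --- and the square witnessing this ($\theta$ against $\Phi_*\theta$ on parallel edges) commutes tautologically, so the argument goes through.
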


\begin{proof}
  We first detail the inverse.
  If $f\colon C \to A$ is a twisting morphism with square 
  $$\begin{tikzcd}
	PC \ar[d, "Pf"'] &   C \ar[d, "f"] \ar[l, "c"'] \\
	PA \ar[r, "a"'] & A ,
  \end{tikzcd}$$
  apply $P$ and paste with a trivial square like this:
  $$\begin{tikzcd}
	PPC \ar[d, "PPf"'] &  PC \ar[d, "Pf"] \ar[l, "Pc"'] \\
	PPA \ar[r, "Pa"'] \ar[d, "Pa"']  & PA \ar[d, "a"]\\
	PA  \ar[r, "a"'] & A . 
  \end{tikzcd}$$
  The left vertical composition is $Pa\circ PPf\simeq P(a \circ Pf)$, so
  the composite square exhibits $a \circ Pf$ as a twisting morphism,
  as required.  

  To see that the two constructions are inverse, we check that the respective composites are naturally equivalent to the respective identity functors. Starting with the 
  square for $f\colon C \to A$, going left and then back right gives
  $$\begin{tikzcd}
     PC \ar[d, "Pc"'] & C \ar[d, "c"] \ar[l, "c"']\\	
	 PPC \ar[d, "PPf"'] & PC \ar[d, "Pf"] \ar[l, "Pc"'] \\
    PPA \ar[r, "Pa"'] \ar[d, "Pa"'] & PA \ar[d, "a"] \\
    PA \ar[r, "a"'] & A ,
  \end{tikzcd}$$
  but this is homotopic to the original square for $f$ since $f$ is 
  twisting.
  On the other hand, starting with the square for $g\colon PC \to A$,
  going right and then back left gives
  $$\begin{tikzcd}
	 PPC \ar[d, "PPc"'] & PC \ar[d, "Pc"] \ar[l, "Pc"'] \\
	PPPC \ar[d, "PPg"'] & PPC \ar[d, "Pg"] \ar[l, "PPc"'] \\
	PPA \ar[r, "Pa"'] \ar[d, "Pa"'] & PA \ar[d, "a"] \\
	PA \ar[r, "a"'] & A
  \end{tikzcd}$$
  which is also homotopic to the original square for $g$ since $g$ is 
  twisting.
\end{proof}

\begin{defn}\label{defn:cobar}
  Assume $\mathcal{C}$ has filtered colimits and that $P\colon 
  \mathcal{C}\to\mathcal{C}$ preserves
  them. Then for a $P$-coalgebra $C$ we have a diagram
  \[ C \xto{c} PC \xto{Pc} P^{2}C \to \cdots.\]
  Let $I_{C} := \colim_{n \to \infty} P^{n}C$ be the colimit of this
  sequence. Then there is a canonical map
  \[ 
I_{C}\simeq \colim_{n}
  P^{n+1}C \to P(\colim_{n} P^{n}\mathcal{C}) \simeq PI_{C}.\]
  Since $P$ preserves filtered colimits, this map is an
  equivalence. If $u$ denotes its inverse, the pair $(I_{C}, u)$ is a
  $P$-algebra. We denote this $P$-algebra $\Omega C$ and refer to it
  as the \emph{cobar construction} of $C$. 
\end{defn}

We will now establish a universal property of the cobar construction,
which in particular implies that it determines a functor
$\Omega \colon \coalg_{P}(\mathcal{C}) \to \alg_{P}(\mathcal{C})$.
Under further assumptions, we will see that it is left adjoint to a
dual \emph{bar} construction, which gives a $P$-coalgebra from a
$P$-algebra.

\begin{lemma}\label{lem:UA}
  If $(U,u)$ is a $P$-coalgebra for which $u$ is an equivalence,
  with inverse $v$ giving a $P$-algebra $(U,v)$,
  then for any $P$-algebra $(A,a)$ we have
  $$
  \Map_{\alg_P}( U, A ) \simeq \Tw_P (U, A)  .
  $$
\end{lemma}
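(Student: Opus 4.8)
The plan is to realise both sides as equalizers of self-maps of the single mapping space $\Map(U,A)$, and then to compare these two equalizers by transporting along the equivalence $u$. Throughout I write $\operatorname{Eq}(-,-)$ for the equalizer of a pair of parallel maps of spaces.

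First I would compute the left-hand side. Since $\alg_P(\mathcal{C})$ is defined (Definition~\ref{defn:algP}) as a pullback of $\infty$-categories, and the mapping-space functor preserves such pullbacks, the mapping space decomposes accordingly. Concretely, $\Map_{\mathcal{X}}(x,y)$ is the fibre of $(\txt{ev}_0,\txt{ev}_1)\colon \mathcal{X}^{\Delta^1}\to\mathcal{X}\times\mathcal{X}$ over $(x,y)$; as $\Fun(\Delta^1,-)$ and the formation of fibres both preserve limits, applying this to $\alg_P(\mathcal{C}) \simeq \mathcal{C}\times_{\mathcal{C}\times\mathcal{C}}\mathcal{C}^{\Delta^1}$ gives
\[
\Map_{\alg_P}\big((U,v),(A,a)\big) \simeq \Map_{\mathcal{C}}(U,A)\underset{\Map(PU,PA)\times\Map(U,A)}{\times}\Map_{\mathcal{C}^{\Delta^1}}(v,a).
\]
Unravelling the evaluation maps, where the first factor sends $g$ to $(Pg,g)$ and the second sends a commuting square to its pair of edges, identifies this pullback with the space of maps $g\colon U\to A$ equipped with a homotopy $g\circ v\simeq a\circ Pg$. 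Writing $\Psi,\Phi\colon \Map(U,A)\to\Map(PU,A)$ for $\Psi(g)=g\circ v$ and $\Phi(g)=a\circ Pg$, this says $\Map_{\alg_P}((U,v),(A,a))\simeq \operatorname{Eq}(\Psi,\Phi)$.

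The right-hand side is, by the definition of $\Tw_P$, the equalizer $\operatorname{Eq}(\id,\Theta)$ of the two maps $\id$ and $\Theta\colon \Map(U,A)\to\Map(U,A)$, where $\Theta(g)=a\circ Pg\circ u$. Now I would exploit that $u$ is an equivalence with inverse $v$: precomposition $u^{*}\colon \Map(PU,A)\to\Map(U,A)$ is then itself an equivalence, and using $v\circ u\simeq \id_U$ one checks $u^{*}\Psi(g)=g\circ v\circ u\simeq g$ and $u^{*}\Phi(g)=a\circ Pg\circ u=\Theta(g)$, that is $u^{*}\circ\Psi\simeq\id$ and $u^{*}\circ\Phi\simeq\Theta$. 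Since an equalizer is the pullback of the diagonal, postcomposing both arrows of the diagram with the equivalence $u^{*}$ yields an equivalent cospan and hence an equivalent equalizer; therefore $\operatorname{Eq}(\Psi,\Phi)\simeq\operatorname{Eq}(\id,\Theta)=\Tw_P(U,A)$, which is the assertion.

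The main obstacle is the bookkeeping in the first step: ensuring that the pullback-of-mapping-spaces description coherently reproduces the equalizer $\operatorname{Eq}(\Psi,\Phi)$ rather than merely matching underlying points, and matching the two evaluation maps correctly under the identification $\alg_P(\mathcal{C}) \simeq \mathcal{C}\times_{\mathcal{C}\times\mathcal{C}}\mathcal{C}^{\Delta^1}$. Everything after that is a formal consequence of $u$ being an equivalence; one need only take care that the homotopies $g\circ v\circ u\simeq g$ and the resulting comparison of equalizers are implemented coherently, and that $\Tw_P$ is read off from its equalizer definition exactly as stated.
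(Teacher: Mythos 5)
Your proposal is correct and follows essentially the same route as the paper: both compute $\Map_{\alg_P}((U,v),(A,a))$ from the pullback definition of $\alg_P(\mathcal{C})$, use the equalizer description of $\Map_{\mathcal{C}^{\Delta^1}}(v,a)$ to exhibit the mapping space as the equalizer of $g\mapsto g\circ v$ and $g\mapsto a\circ Pg$, and then use invertibility of $u$ to identify this with the defining equalizer of $\Tw_P(U,A)$. The only cosmetic difference is that you transport the parallel pair along $u^{*}$ whereas the paper phrases the comparison via the equivalence $f\mapsto f\circ v$ in a single commuting diagram of forks.
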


\begin{proof}
  Consider the diagram
  \[
  \begin{tikzcd}
    \Map_{\alg_{P}(\mathcal{C})}((U,v),(A,a)) 
	\drpullbackS
	\arrow{r} \arrow{d} & \Map_{\mathcal{C}^{\Delta^{1}}}(v, a)
  \arrow{d} \\
  \Map_{\mathcal{C}}(U, A) \ar[r, "{f{\mapsto}(Pf,f)}"]
  \ar[d, shift left, "f{\mapsto}a\circ Pf \circ u"' inner sep=1.5ex] \ar[d, shift
  right, "f{\mapsto}f" inner sep=1.5ex] 
  & \Map_{\mathcal{C}}(PU,PA)
  \times \Map_{\mathcal{C}}(U,A) \ar[d, shift left, "g{\mapsto}a\circ
  g"' inner sep=1.5ex] \ar[d, shift right, 
  "f{\mapsto}f\circ v" inner sep=1.5ex] \\
  \Map_{\mathcal{C}}(U, A) \ar[r, "f{\mapsto}f\circ v"'] & 
  \Map_{\mathcal{C}}(PU, A)
  \end{tikzcd}
  \]
  The top square is a pullback by definition of $\alg_{P}(\mathcal{C})$ 
  as a pullback.  The bottom map is an equivalence since $v$ is an equivalence.
  Since the right fork is an equalizer, it follows (by a standard argument, for 
  example by expressing equalizers as pullbacks) that also
  the left fork is an equalizer, hence $\Map_{\alg_P}( U, A ) \simeq \Tw_P (U, 
  A)$ as required.
\end{proof}

\begin{defn}
  Assume that $\mathcal{C}$ has filtered colimits and that $P\colon 
  \mathcal{C}\to\mathcal{C}$ preserves
  them.  Given a $P$-coalgebra $(C,c)$, the {\em universal $P$-twisting morphism}
  $(C,c) \to (\Omega C, u)$ is the canonical map
  $$
  C \to \colim_{n \to \infty} P^n C ,
  $$
  which is $P$-twisting by virtue of the diagram
  $$
  \begin{tikzcd}
	  & C \ar[ddd] \ar[ld, "c"'] \\[-4ex]
PC \ar[d] & \\
P(\Omega C) \ar[rd, "u"'] & \\[-4ex]
& \Omega C 
  \end{tikzcd}
  $$
  (where all the morphisms are the canonical ones from the colimit diagram
  defining $\Omega C$). 
\end{defn}

\begin{propn}\label{propn:OmegaCA}
  For a $P$-coalgebra $(C,c)$ and a $P$-algebra $(A,a)$ there is a
  canonical equivalence
  $$
  \Map_{\alg_P}(\Omega C, A) \simeq \Tw_P(C,A)
  $$
  given by precomposing with the universal $P$-twisting morphism.
\end{propn}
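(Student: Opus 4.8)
The plan is to use the cobar colimit to bridge Lemma~\ref{lem:UA} and Lemma~\ref{lem:PCA}. First I would dispose of the algebra side. By construction (Definition~\ref{defn:cobar}) the algebra $\Omega C = (I_{C}, u)$ has invertible structure map $u$, so its inverse $u^{-1}$ exhibits $(I_{C}, u^{-1})$ as a $P$-coalgebra whose structure map is an equivalence. Lemma~\ref{lem:UA} then supplies a canonical equivalence
\[ \Map_{\alg_{P}}(\Omega C, A) \simeq \Tw_{P}(\Omega C, A), \]
and it remains only to produce an equivalence $\Tw_{P}(\Omega C, A) \simeq \Tw_{P}(C, A)$ and to check that under it the composite agrees with precomposition by the universal twisting morphism.

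Next I would present $\Omega C$ as a sequential colimit of coalgebras. The cobar tower $(C,c) \xto{c} (PC, Pc) \xto{Pc} (P^{2}C, P^{2}c) \to \cdots$ consists of coalgebra morphisms, and the very construction of the coalgebra structure $u^{-1}$ on $I_{C} = \colim_{n} P^{n}C$ makes the colimit cocone maps $\iota_{n} \colon P^{n}C \to \Omega C$ into coalgebra morphisms; concretely $u^{-1}\circ\iota_{n} \simeq P\iota_{n}\circ P^{n}c$, which is exactly the identification of the shift $\colim_{n} P^{n}C \simeq \colim_{n} P^{n+1}C$ used to define $u^{-1}$. Using the functor $\Tw_{P}(-, A)\colon \coalg_{P}(\mathcal{C})^{\op} \to \mathcal{S}$ coming from the left fibration of Proposition~\ref{propn:algTwP}, I claim this colimit is carried to the limit $\lim_{n}\Tw_{P}(P^{n}C, A)$. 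This follows from the equalizer presentation of $\Tw_{P}(-,A)$: it is the equalizer of the identity and the Maurer--Cartan endomorphism of $\Map_{\mathcal{C}}(-, A)$, the functor $\Map_{\mathcal{C}}(-, A)$ sends the colimit $\colim_{n} P^{n}C$ to the limit $\lim_{n}\Map_{\mathcal{C}}(P^{n}C, A)$, and equalizers commute with limits; the coherence just recorded is what guarantees that the two endomorphisms match those of the individual terms $\Tw_{P}(P^{n}C, A)$.

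Finally I would collapse the tower. By Lemma~\ref{lem:PCA} each transition map $\Tw_{P}(P^{n+1}C, A) \to \Tw_{P}(P^{n}C, A)$, given by precomposition with $P^{n}c$, is an equivalence, so the limit of the tower is computed by the projection to its bottom term $\Tw_{P}(C, A)$, and this projection is an equivalence. Under the identifications this projection is precomposition with the colimit inclusion $\iota_{0}\colon C = P^{0}C \to \Omega C$, whose underlying map of $\mathcal{C}$ is precisely the universal twisting morphism; tracing back through Lemma~\ref{lem:UA} shows the total equivalence sends a $P$-algebra morphism $g$ to the twisting morphism $g$ composed with the universal twisting morphism, as claimed.

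The step I expect to be the main obstacle is the interchange in the middle paragraph: showing rigorously that $\Tw_{P}(-, A)$ converts the cobar colimit into $\lim_{n}\Tw_{P}(P^{n}C, A)$. This hinges on the fact that $I_{C}$, with the structure $u^{-1}$, really is the colimit of the cobar tower in $\coalg_{P}(\mathcal{C})$ --- equivalently that the cocone maps are coalgebra morphisms --- so that the $\Tw_{P}$-tower has the transition maps to which Lemma~\ref{lem:PCA} applies, together with the compatibility of the equalizer description of $\Tw_{P}$ with cofiltered limits. Once these interchanges are in place, matching the abstract equivalence with honest precomposition is routine bookkeeping.
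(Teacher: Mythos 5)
Your proposal is correct and follows essentially the same route as the paper: both first invoke Lemma~\ref{lem:UA} to replace $\Map_{\alg_P}(\Omega C, A)$ by $\Tw_P(\colim_n P^n C, A)$, then interchange the equalizer defining $\Tw_P$ with the sequential limit $\lim_n \Map(P^n C, A)$ (the paper phrases this as computing the limit of one big diagram in two ways), and finally collapse the resulting tower $\lim_n \Tw_P(P^n C, A)$ to its bottom term using Lemma~\ref{lem:PCA}. The only cosmetic difference is that you package the interchange via functoriality of $\Tw_P(-,A)$ on the colimit of coalgebras, whereas the paper writes out the ladder diagram explicitly; the mathematical content is identical.
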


\begin{proof}
  By Lemma~\ref{lem:UA}, we have
  $$
  \Map_{\alg_P}(\Omega C, A) \simeq \Tw_P (\colim_n P^n C, 
  A) ,
  $$
  and the latter space is described as an equalizer
\[
 	\Tw_P(\colim_n P^n C,A) \to \Map_\mathcal{C}(\colim_n P^n C,A) 
	\rightrightarrows \Map_\mathcal{C}(\colim_n P^n C,A) .
\]
  The mapping spaces are in turn limits.  Altogether we can write down a
  big commutative diagram
  $$\begin{tikzcd}
    \vdots \ar[d] && \vdots \ar[d] \\
	\Map(P^2 C, A) \ar[rr, shift left, "{\id}"] 
	\ar[rr, shift right, "{a \circ P( - ) \circ P^2(c)}"'] \ar[d]
	&& \Map(P^2 C, A) \ar[d]
	\\
	\Map(PC, A) \ar[rr, shift left, "{\id}"] 
	\ar[rr, shift right, "{a \circ P( - ) \circ P(c)}"']  
	\ar[d]
	&& \Map(PC, A) \ar[d]
	\\
    \Map(C, A) \ar[rr, shift left, "{\id}"] 
    \ar[rr, shift right, "{a \circ P( - ) \circ c}"']  
	&& \Map(C, A)
  \end{tikzcd}$$
  (It is clear that it commutes, both for the identity maps
  and for the other horizontal maps.)

  We calculate the limit of this diagram in two ways.  First we
  calculate the limit of each column, yielding the parallel pair of maps
  \[ \Map_\mathcal{C}(\colim_n P^n C,A) \rightrightarrows
  \Map_\mathcal{C}(\colim_n P^n C,A),\]
  and then we take the equalizer of this to obtain
  $\Tw_P(\colim_n P^n C,A)$.  On the other hand, we can calculate the
  limit by first taking the equalizer of each row.  That gives in each
  row the space $\Tw_P(P^n C, A)$, and then we can calculate the
  sequential limit of this new column.  Now note that all the maps in
  the new column are equivalences: this follows from 
  Lemma~\ref{lem:PCA}.  So the limit is equivalent to just the zeroth
  space $\Tw_P(C, A)$ as claimed.
\end{proof}

\begin{propn}\label{propn:initalg}
  If $\mathcal{C}$ is an $\infty$-category with filtered colimits and an initial 
  object $\emptyset$, and $P\colon \mathcal{C}\to\mathcal{C}$ is a 
  filtered-colimit-preserving endofunctor, then the $\infty$-category
  $\alg_P(\mathcal{C})$ has an initial object, given by $\Omega\emptyset$.
\end{propn}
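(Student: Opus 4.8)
The plan is to deduce this directly from the universal property of the cobar construction established in Proposition~\ref{propn:OmegaCA}. First I would observe that the initial object $\emptyset$ carries a canonical $P$-coalgebra structure: the structure map $c\colon \emptyset \to P\emptyset$ is the essentially unique morphism out of the initial object, and this choice is canonical since $\Map_{\mathcal{C}}(\emptyset, P\emptyset)$ is contractible. The hypotheses on $\mathcal{C}$ and $P$ are exactly those required to form $\Omega\emptyset$ as in Definition~\ref{defn:cobar} and to apply Proposition~\ref{propn:OmegaCA}.

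Applying that proposition to the coalgebra $(\emptyset, c)$ then gives, for every $P$-algebra $(A,a)$, a natural equivalence
\[
\Map_{\alg_P}(\Omega\emptyset, A) \simeq \Tw_P(\emptyset, A),
\]
so it remains only to check that $\Tw_P(\emptyset, A)$ is contractible for all $A$. For this I would unwind the definition of $\Tw_P(\emptyset, A)$ as the equalizer
\[
\Tw_P(\emptyset, A) \to \Map(\emptyset, A) \rightrightarrows \Map(\emptyset, A).
\]
Since $\emptyset$ is initial, $\Map_{\mathcal{C}}(\emptyset, A)$ is contractible, so this is an equalizer of two (necessarily homotopic) self-maps of a contractible space; expressing the equalizer as the pullback of $\Map(\emptyset,A) \to \Map(\emptyset,A) \times \Map(\emptyset,A) \leftarrow \Map(\emptyset,A)$ along the diagonal, one sees immediately that it is again contractible.

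Combining the two steps, $\Map_{\alg_P}(\Omega\emptyset, A) \simeq *$ for every $P$-algebra $A$, which is exactly the assertion that $\Omega\emptyset$ is an initial object of $\alg_P(\mathcal{C})$. There is no real obstacle here, since the substance of the argument --- the universal property relating $\Omega$ to twisting morphisms --- has already been carried out in Proposition~\ref{propn:OmegaCA}; the only points requiring minor care are confirming that the canonical coalgebra structure on $\emptyset$ makes the cited equivalence applicable, and that the equalizer over a contractible mapping space collapses to a point.
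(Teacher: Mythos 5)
Your proposal is correct and follows exactly the paper's argument: equip $\emptyset$ with its unique coalgebra structure, invoke Proposition~\ref{propn:OmegaCA} to identify $\Map_{\alg_P}(\Omega\emptyset, A)$ with $\Tw_P(\emptyset,A)$, and observe the latter is contractible. The paper dismisses the contractibility of $\Tw_P(\emptyset,A)$ as clear, whereas you spell it out via the equalizer/pullback description --- a harmless elaboration of the same proof.
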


\begin{proof}
  $\emptyset$ has a unique coalgebra structure, and 
  Proposition~\ref{propn:OmegaCA} gives, for any $P$-algebra $A$,
  $$
  \Map_{\alg_P}(\Omega \emptyset, A) \simeq \Tw_P(\emptyset,A).
  $$
  Since the latter space is clearly contractible, it follows that
  $\Omega\emptyset$ is an initial $P$-algebra.
\end{proof}

\begin{remark}
  The constructions, results and proofs go through more generally when
  $\mathcal{C}$ has $\kappa$-filtered colimits and $P\colon 
  \mathcal{C}\to\mathcal{C}$ preserves them.  The
  important point is that even if $P$ does not preserve $\omega$-filtered
  colimits, there is still a transition map $I \to PI$ at each colimit
  step, so that $I$ is again a $P$-coalgebra.  $P$ can now be applied
  iteratively again, the next colimit can be taken, and so on, until the
  resulting chain is longer than $\kappa$, and $P$ will preserve the
  colimit to yield an invertible structure map for the resulting
  $P$-coalgebra, and hence a $P$-algebra.  Accepting notation such as
  $\colim_{n \leq \kappa} P^n C$ for transfinite application of $P$
  alternated with taking colimits, all the subsequent constructions go
  through.
\end{remark}

\begin{remark}
  All the constructions, results and proofs can be dualized: assume that
  $\mathcal{C}$ has cofiltered limits, and that $P$ preserves them.  Then there is a
  functor
  $$
  B\colon \alg_P \to \coalg_P
  $$
  taking a $P$-algebra $(A,a)$ to the limit of the chain $A
  \stackrel{a}\leftarrow PA \stackrel{Pa}\leftarrow P^2 A \leftarrow \cdots$.
  (This is called the {\em bar construction}.)

  The notion of $P$-twisting morphism is still the same, but now the
  results are about applying $P$ to $A$ instead of $C$.  
  Lemma~\ref{lem:PCA}
  becomes the statement that the following maps are inverse homotopy 
  equivalences:
  \begin{eqnarray*}
	\Tw_P(C,PA) & \longrightarrow & \Tw_P(C,A) \\
    g & \longmapsto & a \circ g \\
	Pf \circ c  & \longmapsfrom & f 
  \end{eqnarray*}
  Assuming that $\mathcal{C}$ has cofiltered limits and $P$ preserves them,
  we get
  $$
  \Map_{\alg_P}(C, BA) \simeq \Tw_P(C,A).
  $$
  Putting together the two sides of duality, we get:
\end{remark}

\begin{thm}
  If $\mathcal{C}$ has filtered colimits and cofiltered limits, and if
  $P\colon \mathcal{C}\to\mathcal{C}$ preserves them, then $\Omega$ is left adjoint
  to $B$.  Altogether
  $$
  \Map_{\alg_P}( \Omega C, A) \simeq \Tw_P(C,A) \simeq
  \Map_{\coalg_P}(C, BA) .
  $$
\end{thm}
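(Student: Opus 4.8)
The plan is to assemble the adjunction directly out of the two representability statements already established, using that both $\Omega C$ and $B A$ (co)represent a single profunctor of twisting morphisms. First I would record that under the stated hypotheses both constructions are available as functors: the cobar construction $\Omega \colon \coalg_P(\mathcal{C}) \to \alg_P(\mathcal{C})$ requires only filtered colimits preserved by $P$ (Definition~\ref{defn:cobar}), while its dual, the bar construction $B \colon \alg_P(\mathcal{C}) \to \coalg_P(\mathcal{C})$, requires cofiltered limits preserved by $P$; both hypotheses hold here. The crucial structural input is Proposition~\ref{propn:algTwP}, which exhibits $\Tw_P$ as a genuine functor
\[ \Tw_P \colon \coalg_P(\mathcal{C})^{\op} \times \alg_P(\mathcal{C}) \to \mathcal{S}, \]
classified by the left fibration $\alg_{\Tw(P)}(\Tw(\mathcal{C})) \to \coalg_P(\mathcal{C})^{\op} \times \alg_P(\mathcal{C})$.

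Next I would upgrade the two half-statements to natural equivalences of functors of both variables. Proposition~\ref{propn:OmegaCA} says that for fixed $C$ the object $\Omega C$ corepresents $\Tw_P(C,-)$, the equivalence $\Map_{\alg_P}(\Omega C, A) \simeq \Tw_P(C,A)$ being given by precomposition with the universal twisting morphism; since $\Tw_P$ is functorial in $C$ as well, the corepresenting objects are forced to assemble into the functor $\Omega$ and the comparison becomes natural in the pair $(C,A)$, i.e.\ an equivalence of functors $\coalg_P(\mathcal{C})^{\op} \times \alg_P(\mathcal{C}) \to \mathcal{S}$. Dually, the bar construction gives a natural equivalence
\[ \Map_{\coalg_P}(C, B A) \simeq \Tw_P(C,A) \]
of functors of the same two variables.

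Composing these two natural equivalences yields
\[ \Map_{\alg_P}(\Omega C, A) \simeq \Map_{\coalg_P}(C, B A) \]
as functors $\coalg_P(\mathcal{C})^{\op} \times \alg_P(\mathcal{C}) \to \mathcal{S}$, which by the mapping-space characterization of adjunctions exhibits $\Omega$ as left adjoint to $B$; the displayed triple equivalence of the statement is exactly this composite. The main obstacle is precisely the passage from the pointwise representability of Proposition~\ref{propn:OmegaCA} to naturality in both variables simultaneously --- that is, checking that the corepresenting objects $\Omega C$ cohere into a functor and that the comparison is natural in $C$, not merely in $A$. This is where one leans on the functoriality of $\Tw_P$ packaged by the left fibration of Proposition~\ref{propn:algTwP}: a family of (co)representing objects for a two-variable functor to $\mathcal{S}$ assembles canonically into a functor once the two-variable functoriality is known, so the naturality in both slots is automatic and no further coherence bookkeeping is required.
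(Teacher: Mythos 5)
Your proposal is correct and follows essentially the same route as the paper, which simply combines Proposition~\ref{propn:OmegaCA} with its dual statement for the bar construction (``putting together the two sides of duality''). Your additional care in extracting two-variable naturality from the left fibration of Proposition~\ref{propn:algTwP}, so that the pointwise (co)representability assembles into an adjunction, makes explicit a step the paper leaves implicit, but it is the same argument.
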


In the case of interest here, $\mathcal{C}$ will be presentable, and $P$
will be analytic.  In particular $P$ then preserves filtered colimits, and
also preserves cofiltered limits (since these are weakly contractible),
so the theorem applies.

\subsection{Free Monads}\label{subsec:freemndexist}

\begin{defn}\label{defn:Ptil}
  Let $\mathcal{C}$ be an $\infty$-category with binary coproducts,
  let $P\colon \mathcal{C} \to \mathcal{C}$ be an endofunctor, and let $X$ 
  be an object of $\mathcal{C}$.  Define a new endofunctor
  $P_X \colon \mathcal{C}_{X/} \to \mathcal{C}_{X/}$ as the 
  composite
  \[
  \mathcal{C}_{X/} \stackrel{u_X}\longrightarrow
  \mathcal{C} \stackrel{P}\longrightarrow
  \mathcal{C} \stackrel{a_X}\longrightarrow
  \mathcal{C}_{X/} ,
  \]
  where $u_X$ is the forgetful functor, with left adjoint $a_X = X\amalg( \ )$.
\end{defn}
\begin{lemma}\label{lem:Ptil}
  In the situation of the previous definition, there is a canonical
  equivalence
  $$
  \alg_{P_X}(\mathcal{C}_{X/}) \simeq \alg_P(\mathcal{C})_{X/},
  $$
  where $\alg_{P}(\mathcal{C})_{X/}  := \alg_{P}(\mathcal{C})
    \times_{\mathcal{C}} \mathcal{C}_{/X}.$
\end{lemma}
\begin{proof}
  Both $\infty$-categories are defined as pullbacks:
  \[
  \begin{tikzcd}
	\alg_{P_X}(\mathcal{C}_{X/}) \drpullback \ar[r] \ar[d] & 
	(\mathcal{C}_{X/})^{\Delta^1} \ar[d, "{(\txt{ev}_0,\txt{ev}_1)}"] \\
	\mathcal{C}_{X/} \ar[r, "{(P_X, \id)}"'] & \mathcal{C}_{X/}\times \mathcal{C}_{X/}
  \end{tikzcd}
\qquad\qquad
  \begin{tikzcd}
	\alg_P(\mathcal{C})_{X/} \drpullback \ar[r] \ar[d] & 
	\alg_P(\mathcal{C}) \drpullback \ar[r] \ar[d] &
	\mathcal{C}^{\Delta^1} \ar[d, "{(\txt{ev}_0,\txt{ev}_1)}"] \\
	\mathcal{C}_{X/} \ar[r, "u_X"'] & \mathcal{C} \ar[r, "{(P, \id)}"'] & 
	\mathcal{C}\times \mathcal{C}  .
  \end{tikzcd}
  \]
  The bottom functors can be factored into three steps, respectively:
  \[
  \begin{tikzcd}
	\mathcal{C}_{X/} \ar[r, "{(u_X, \id)}"'] 
	& \mathcal{C}\times \mathcal{C}_{X/} \ar[r, "{P\times \id}"']
	& \mathcal{C}\times \mathcal{C}_{X/} \ar[r, "{a_X \times \id}"']
	& \mathcal{C}_{X/}\times \mathcal{C}_{X/},
  \end{tikzcd}\]
\[  \begin{tikzcd}
	\mathcal{C}_{X/} \ar[r, "{(u_X, \id)}"'] 
	& \mathcal{C}\times \mathcal{C}_{X/} \ar[r, "{P \times\id}"']
	& \mathcal{C}\times \mathcal{C}_{X/} \ar[r, "{\id \times u_X}"']
	& \mathcal{C}\times \mathcal{C} .
  \end{tikzcd}
  \]
  The first two steps are the same, and for the last step we have
  an equivalence of pullbacks
  \[
  \begin{tikzcd}
  (\mathcal{C}_{X/})^{\Delta^1} \ar[d] 
  & \ar[l] \dlpullback \cdot \drpullback \ar[r] \ar[d] & 
  \mathcal{C}^{\Delta^1} \ar[d] \\
  \mathcal{C}_{X/}\times \mathcal{C}_{X/} & \ar[l, "{a_X \times \id}"] 
  \mathcal{C}\times \mathcal{C}_{X/} \ar[r, "{\id\times u_X}"'] & \mathcal{C}\times \mathcal{C}
  \end{tikzcd}
  \]
  since $a_X$ is left adjoint to $u_X$.
\end{proof}

\begin{lemma}\label{lem:Upres}
  Suppose $\mathcal{C}$ has colimits of shape $K$ and $P\colon \mathcal{C} 
  \to \mathcal{C}$ preserves them.  Then $\alg_P(\mathcal{C})$ has colimits
  of shape $K$ and the forgetful functor $U\colon \alg_P(\mathcal{C}) \to 
  \mathcal{C}$ preserves and reflects them.
\end{lemma}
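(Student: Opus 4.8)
The plan is to read everything off the pullback presentation of $\alg_P(\mathcal{C})$ from Definition~\ref{defn:algP}, combined with the standard behaviour of colimits in pullbacks of $\infty$-categories. Writing the defining square as
\[
\begin{tikzcd}
\alg_{P}(\mathcal{C}) \ar[r, "G"] \ar[d, "U"'] & \mathcal{C}^{\Delta^1} \ar[d, "{(\txt{ev}_0,\txt{ev}_1)}"] \\
\mathcal{C} \ar[r, "{(P,\id)}"'] & \mathcal{C}\times\mathcal{C} ,
\end{tikzcd}
\]
I would first note that the left-hand projection is precisely the forgetful functor $U$, that $G$ sends a $P$-algebra $(A,a)$ to the arrow $a$, and that $(\txt{ev}_0,\txt{ev}_1)$ is a categorical fibration (being restriction along the monomorphism $\partial\Delta^1\hookrightarrow\Delta^1$), so that this strict pullback does compute the fibre product.

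Next I would verify the hypotheses needed to apply \cite{HTT}*{Lemma 5.4.5.5} to this cospan. All three $\infty$-categories admit colimits of shape $K$: $\mathcal{C}$ by assumption, while $\mathcal{C}\times\mathcal{C}$ and $\mathcal{C}^{\Delta^1}$ have them computed componentwise and pointwise respectively. Both legs preserve $K$-colimits: the functor $(\txt{ev}_0,\txt{ev}_1)$ because evaluation preserves pointwise colimits, and $(P,\id)$ because colimits in $\mathcal{C}\times\mathcal{C}$ are componentwise and $P$ preserves $K$-colimits by hypothesis. Lemma 5.4.5.5 then yields that $\alg_P(\mathcal{C})$ admits colimits of shape $K$, and that for a diagram $D\colon K\to\alg_P(\mathcal{C})$ a cocone $\bar D$ is a colimit cocone if and only if its images under the projections to $\mathcal{C}$ and to $\mathcal{C}^{\Delta^1}$ are colimit cocones.

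It then remains to rephrase this detection criterion purely in terms of $U$, and this is the only point that requires any thought. The projection to $\mathcal{C}$ is $U$ itself, whereas the projection to $\mathcal{C}^{\Delta^1}$ is $G$; since colimits in $\mathcal{C}^{\Delta^1}$ are pointwise, $G\bar D$ is a colimit cocone exactly when both $\txt{ev}_1 G\bar D = U\bar D$ and $\txt{ev}_0 G\bar D = P U\bar D$ are colimit cocones in $\mathcal{C}$. Hence $\bar D$ is a colimit in $\alg_P(\mathcal{C})$ precisely when $U\bar D$ and $PU\bar D$ are both colimits. The crucial observation — where the hypothesis on $P$ enters a second time — is that the condition on $PU\bar D$ is redundant: if $U\bar D$ is a colimit cocone in $\mathcal{C}$, then, as $P$ preserves colimits of shape $K$, so is $PU\bar D$. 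Consequently $\bar D$ is a colimit in $\alg_P(\mathcal{C})$ if and only if $U\bar D$ is, which is exactly the assertion that $U$ preserves and reflects colimits of shape $K$. The argument is entirely formal; the one thing to get right is this redundancy, which is what upgrades mere preservation to preservation-and-reflection.
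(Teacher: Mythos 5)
Your proposal is correct and follows essentially the same route as the paper: both read the statement off the defining pullback square via \cite{HTT}*{Lemma 5.4.5.5} and then note that the colimit condition on the image in $\mathcal{C}^{\Delta^{1}}$ reduces, via $(\txt{ev}_0,\txt{ev}_1)$, to the conditions on $U\bar D$ and $PU\bar D$, the latter being redundant because $P$ preserves $K$-colimits. You spell out this last reduction more explicitly than the paper does, but the argument is the same.
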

\begin{proof}
  The \icat{} $\alg_P(\mathcal{C})$ is defined as a pullback
  \[
  \begin{tikzcd}[row sep=3em,column sep=2.5em]
  \alg_{P}(\mathcal{C}) \drpullback \ar[r] \ar[d, "U"'] & 
  \mathcal{C}^{\Delta^{1}} \ar[d, "{(\txt{ev}_0,\txt{ev}_1)}"] \\
  \mathcal{C} \ar[r, "{(P,\id)}"'] & \mathcal{C} \times \mathcal{C}.
  \end{tikzcd}
  \]
  Here the \icats{} $\mathcal{C}^{\Delta^{1}}$, $\mathcal{C}$, and
  $\mathcal{C} \times \mathcal{C}$ have colimits of shape $K$, and the
  functors $(\txt{ev}_{0},\txt{ev}_{1})$ and $(P, \id)$ preserve
  them. It therefore follows from \cite{HTT}*{Lemma 5.4.5.5} that
  $\alg_{P}(\mathcal{C})$ has colimits of shape $K$, and that a
  diagram $K^{\triangleright} \to \alg_{P}(\mathcal{C})$ is a colimit
  \IFF{} its images in $\mathcal{C}$ and $\mathcal{C}^{\Delta^{1}}$
  are colimits. Since the functor $(\txt{ev}_0,\txt{ev}_1)$ preserves
  and reflects
  colimits, this is equivalent to the image under $U$ being a colimit.
\end{proof}

\begin{propn}\label{propn:algPmonadic}
  Suppose $\mathcal{C}$ is an \icat{} with filtered colimits and binary
  coproducts, and $P
  \colon \mathcal{C} \to \mathcal{C}$ is an endofunctor that preserves
  filtered colimits. Then the forgetful functor $U \colon \alg_{P}(\mathcal{C})
  \to \mathcal{C}$ has a left adjoint, and the resulting adjunction is
  monadic.
\end{propn}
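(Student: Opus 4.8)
The plan is to build the left adjoint pointwise as a free algebra, using the initial-algebra result of the previous subsection, and then to deduce monadicity from the $\infty$-categorical Barr--Beck--Lurie theorem \cite{HA}*{Theorem 4.7.3.5}.

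For the left adjoint, I would argue that for each $X \in \mathcal{C}$ the value $F(X)$ should be an initial object of the comma $\infty$-category $X \downarrow U$ of $P$-algebras equipped with a map from $X$; by Lemma~\ref{lem:Ptil} this is precisely $\alg_P(\mathcal{C})_{X/} \simeq \alg_{P_X}(\mathcal{C}_{X/})$, with $P_X = a_X P u_X$. Now $\mathcal{C}_{X/}$ has an initial object $\id_X$ and inherits filtered colimits from $\mathcal{C}$ (the forgetful functor $u_X$ creates colimits indexed by weakly contractible, in particular filtered, diagrams), while $P_X$ preserves filtered colimits, being a composite of $u_X$ (which does), $P$ (which does by hypothesis), and the left adjoint $a_X = X \amalg (-)$ (which preserves all colimits). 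Hence Proposition~\ref{propn:initalg} produces an initial object $\Omega(\id_X)$ of $\alg_{P_X}(\mathcal{C}_{X/})$. Since $X \downarrow U$ has an initial object for every $X$, the pointwise criterion for adjoints \cite{HTT}*{Proposition 5.2.4.2} shows that $U$ admits a left adjoint $F$.

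For monadicity I would check the two remaining hypotheses of Barr--Beck--Lurie. Conservativity of $U$ is immediate from the pullback presentation $\alg_P(\mathcal{C}) = \mathcal{C} \times_{\mathcal{C}\times\mathcal{C}} \mathcal{C}^{\Delta^1}$ of Definition~\ref{defn:algP}: a morphism of algebras is an equivalence \IFF{} its images in $\mathcal{C}$ and in $\mathcal{C}^{\Delta^1}$ are, and the relevant square in $\mathcal{C}^{\Delta^1}$ is an equivalence as soon as the underlying map $A \to B$ is, since functoriality of $P$ forces $PA \to PB$ to be an equivalence. Thus $U$ reflects equivalences.

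The delicate point --- and the step I expect to be the main obstacle --- is the requirement that $U$ preserve colimits of $U$-split simplicial objects, because $P$ is only assumed to preserve \emph{filtered} colimits, so Lemma~\ref{lem:Upres} does not apply to $\Delta^{\op}$-shaped colimits. The resolution is that such colimits are absolute: if $A_\bullet \colon \Delta^{\op} \to \alg_P(\mathcal{C})$ is $U$-split, then $U A_\bullet$ is a split simplicial object, whose geometric realization is preserved by \emph{every} functor, in particular by $P$ (as $P$ carries the splitting of $U A_\bullet$ to a splitting of $P U A_\bullet$). Consequently the image of $A_\bullet$ in $\mathcal{C}^{\Delta^1}$ --- the diagram of structure maps $P A_n \to A_n$ --- is split at both endpoints and has an absolute colimit, so rerunning the pullback argument of Lemma~\ref{lem:Upres} for this particular diagram (the structure functors $(P,\id)$ and $(\txt{ev}_0,\txt{ev}_1)$ automatically preserve absolute colimits) and invoking \cite{HTT}*{Lemma 5.4.5.5} shows that $A_\bullet$ has a colimit in $\alg_P(\mathcal{C})$ which $U$ preserves and reflects. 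With the left adjoint, conservativity, and this colimit condition established, Barr--Beck--Lurie gives that $U$ is monadic.
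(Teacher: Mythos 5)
Your proposal is correct and follows essentially the same route as the paper: the left adjoint is obtained pointwise by identifying $\alg_P(\mathcal{C})_{X/}$ with $\alg_{P_X}(\mathcal{C}_{X/})$ via Lemma~\ref{lem:Ptil} and invoking Proposition~\ref{propn:initalg}, and monadicity follows from \cite{HA}*{Theorem 4.7.3.5} using that $U$-split simplicial colimits are absolute, hence preserved by $P$, so that the pullback description of $\alg_P(\mathcal{C})$ together with \cite{HTT}*{Lemma 5.4.5.5} yields their existence and preservation. Your treatment of conservativity is slightly more explicit than the paper's, but the argument is the same.
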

\begin{proof}
  To see that $U$ has a left adjoint, it suffices by \cite{GepnerKock}*{Corollary 2.3}
  to show that for every $X \in \mathcal{C}$ the \icat{}
  \[ \alg_{P}(\mathcal{C})_{X/}  := \alg_{P}(\mathcal{C})
    \times_{\mathcal{C}} \mathcal{C}_{X/}\]
  has an initial object. But $\alg_{P}(\mathcal{C})_{X/}$ can be
  identified with $\alg_{P_X}(\mathcal{C}_{X/})$ by
  Lemma~\ref{lem:Ptil}, where $P_X = a_X \circ P \circ u_X$ as in
  Definition~\ref{defn:Ptil}. Moreover, the functor $P_X$ preserves
  filtered colimits (indeed $u_X$ preserves filtered colimits by the
  dual of Lemma~\ref{lem:slicecreates}, $P$ preserves filtered
  colimits by assumption, and $a_X$ is a left adjoint).  Therefore
  $\alg_{P_{X}}(\mathcal{C}_{X/})$ has an initial object by
  Proposition~\ref{propn:initalg} since $\mathcal{C}_{X/}$ obviously
  has an initial object, and has filtered colimits by the dual of
  Lemma~\ref{lem:slicecreates}.

  To show that the resulting adjunction is monadic, we apply the
  Lurie--Barr--Beck monadicity theorem \cite{HA}*{Theorem 4.7.3.5}.
  For this we must show that $U$ detects equivalences, which is clear,
  and that $\alg_{P}(\mathcal{C})$ has colimits of $U$-split
  simplicial diagrams, and $U$ preserves these.  Consider a $U$-split
  simplicial diagram
  $A_{\bullet} \colon \Dop \to \alg_{P}(\mathcal{C})$.
  By (the proof of) \cite{HTT}*{Lemma 5.4.5.5} it is enough to show
  that the images of $A_{\bullet}$ in $\mathcal{C}$ and
  $\mathcal{C}^{\Delta^{1}}$ have colimits, and these are preserved by
  the functors
  $(P, \id) \colon \mathcal{C} \to \mathcal{C} \times \mathcal{C}$ and
  $(\txt{ev}_{0}, \txt{ev}_{1}) \colon \mathcal{C}^{\Delta^{1}}\to
  \mathcal{C} \times \mathcal{C}$.
  Since $A_{\bullet}$ is $U$-split, it follows from \cite{HA}*{Remark
    4.7.2.3} that $U(A_{\bullet})$ has a colimit $C \in \mathcal{C}$,
  and this is preserved by any functor, in particular by $P$. Thus the
  map $PC \simeq \colim PU(A_{\bullet}) \to \colim
  U(A_{\bullet}) \simeq C$ induced by the algebra structure maps in
  $A_{\bullet}$ is a colimit in $\mathcal{C}^{\Delta^{1}}$. Thus 
  $A_\bullet$ has a colimit in $\alg_P(\mathcal{C})$ and 
  $U$ preserves it.
\end{proof}

\begin{notation}
  For any \icat{} $\mathcal{C}$ we write $\Mnd(\mathcal{C})$ for the
  \icat{} of monads on $\mathcal{C}$, defined as the \icat{}
  $\Alg(\End(\mathcal{C}))$ of associative algebras in
  $\End(\mathcal{C})$ with respect to the monoidal structure given by
  composition, as in \cite{HA}*{\S 4.7.1}. If $T \in
  \Mnd(\mathcal{C})$ is a monad on $\mathcal{C}$, we write
  $\Alg_{T}(\mathcal{C})$ for the \icat{} of $T$-algebras in
  $\mathcal{C}$ (which can be defined as the \icat{}
  $\LMod_{T}(\mathcal{C})$ of left $T$-modules via the action of
  $\End(\mathcal{C})$ on $\mathcal{C}$). Note that we write lowercase
  $\alg$ for Lambek algebras for an endofunctor 
  and uppercase $\Alg$ for algebras for a monad.
\end{notation}
\begin{defn}
  If $\mathcal{C}$ is an \icat{} with filtered colimits and
  binary coproducts, and $P \in \End(\mathcal{C})$ is a
  filtered-colimit-preserving endofunctor on $\mathcal{C}$, we write
  $\freemonad{P} \in \Mnd(\mathcal{C})$ for the monad
  associated to the monadic adjunction 
  \[ \mathcal{C} \rightleftarrows \alg_{P}(\mathcal{C})\]
  of Proposition~\ref{propn:algPmonadic}; this exists by
  \cite{HA}*{Proposition 4.7.3.3}.
\end{defn}

With this notation, we have:
\begin{cor}\label{cor:alg=Alg}
  There is a canonical equivalence
  \[
  \alg_P(\mathcal{C}) \simeq \Alg_{\freemonad{P}}(\mathcal{C})
  \]
  over $\mathcal{C}$. \qed
\end{cor}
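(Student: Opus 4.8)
The plan is to observe that this corollary is essentially a restatement of the monadicity established in Proposition~\ref{propn:algPmonadic}, so that almost no new work is required. Recall that in the framework of \cite{HA}*{\S 4.7.3}, an adjunction $F \dashv U$ with $U \colon \mathcal{D} \to \mathcal{C}$ induces a monad $T = UF \in \Mnd(\mathcal{C})$ (existing by \cite{HA}*{Proposition 4.7.3.3}) together with a canonical comparison functor $\mathcal{D} \to \Alg_{T}(\mathcal{C})$ that commutes with the two forgetful functors down to $\mathcal{C}$; by definition the adjunction is \emph{monadic} precisely when this comparison functor is an equivalence.

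First I would invoke Proposition~\ref{propn:algPmonadic}, which tells us that the forgetful functor $U \colon \alg_{P}(\mathcal{C}) \to \mathcal{C}$ admits a left adjoint and that the resulting adjunction is monadic. Next I would note that, by the very definition of $\freemonad{P}$, the monad $T = UF$ induced by this adjunction is exactly $\freemonad{P}$, so that $\Alg_{T}(\mathcal{C}) = \Alg_{\freemonad{P}}(\mathcal{C})$. Combining these two facts, the canonical comparison functor $\alg_{P}(\mathcal{C}) \to \Alg_{\freemonad{P}}(\mathcal{C})$ is an equivalence, and it lies over $\mathcal{C}$ because the comparison functor is compatible with the respective forgetful functors by construction.

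The only point that really needs care is to cite the correct statements from \cite{HA} — namely the construction of the comparison functor and the characterization of monadicity as the assertion that this comparison functor is an equivalence (\cite{HA}*{Theorem 4.7.3.5}). There is no genuine obstacle here, since all the substantive verification (existence of the left adjoint, detection of equivalences by $U$, and the creation of colimits of $U$-split simplicial diagrams) was already carried out in the proof of Proposition~\ref{propn:algPmonadic}; the corollary merely unwinds the word \emph{monadic} into an explicit equivalence over $\mathcal{C}$.
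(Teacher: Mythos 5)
Your argument is correct and is exactly the intended one: the paper leaves this corollary unproved (it carries a \qed) precisely because it is the definitional unwinding of ``monadic'' applied to the adjunction of Proposition~\ref{propn:algPmonadic}, with $\freemonad{P}$ defined as the monad of that adjunction. Your citation of the comparison functor and \cite{HA}*{Theorem 4.7.3.5} makes explicit what the paper treats as immediate, so the two approaches coincide.
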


The following result shows that $\freemonad{P}$ is the \emph{free} monad 
on $P$:
\begin{propn}\label{propn:freemnd}
  Suppose $\mathcal{C}$ is an \icat{} with filtered colimits and
  binary coproducts, and $P \colon \mathcal{C} \to \mathcal{C}$ is an
  endofunctor that preserves filtered colimits. 
  Then for every monad $T$ on $\mathcal{C}$ the morphism
  \[ \Map_{\Mnd(\mathcal{C})}(\freemonad{P}, T) \to
  \Map_{\txt{End}(\mathcal{C})}(P, T) \] induced by the natural
  transformation $P \to \freemonad{P}$ is an equivalence.
\end{propn}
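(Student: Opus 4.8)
The plan is to reduce the statement to a comparison of mapping spaces of Lambek algebras, where Corollary~\ref{cor:alg=Alg} applies. Throughout, I write $U_T \colon \Alg_T(\mathcal{C}) \to \mathcal{C}$ for the forgetful functor of a monad $T$, with left adjoint $F_T$, unit $\eta$ and counit $\varepsilon$, so that $U_T F_T \simeq T$ as endofunctors. The comparison map in the statement is precomposition with the unit $\iota \colon P \to \freemonad{P}$, sending a monad map $\phi \colon \freemonad{P} \to T$ to $\phi \circ \iota$; I would factor this map as a chain of natural equivalences.

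First I would invoke the $\infty$-categorical theory of monads (Appendix~\ref{sec:monad}, building on \cite{HA}): for monads $S,T$ on $\mathcal{C}$, restriction of structure identifies the space of monad maps $S \to T$ with the space of functors $\Alg_T(\mathcal{C}) \to \Alg_S(\mathcal{C})$ over $\mathcal{C}$. Taking $S = \freemonad{P}$ and using Corollary~\ref{cor:alg=Alg}, which gives $\Alg_{\freemonad{P}}(\mathcal{C}) \simeq \alg_P(\mathcal{C})$ over $\mathcal{C}$, this yields
\[ \Map_{\Mnd(\mathcal{C})}(\freemonad{P}, T) \simeq \Map_{\CatI_{/\mathcal{C}}}\bigl(\Alg_T(\mathcal{C}), \alg_P(\mathcal{C})\bigr). \]
Next, by the pullback definition of $\alg_P(\mathcal{C})$ as $\mathcal{C} \times_{\mathcal{C} \times \mathcal{C}} \mathcal{C}^{\Delta^1}$, a functor $\Alg_T(\mathcal{C}) \to \alg_P(\mathcal{C})$ lying over $\mathcal{C}$ is exactly a natural transformation $P U_T \to U_T$ of functors $\Alg_T(\mathcal{C}) \to \mathcal{C}$ (a ``$P$-algebra structure on $U_T$''), so the right-hand side becomes $\Map_{\Fun(\Alg_T(\mathcal{C}), \mathcal{C})}(P U_T, U_T)$.

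The third and final step is the formal equivalence $\Map_{\Fun(\Alg_T(\mathcal{C}), \mathcal{C})}(P U_T, U_T) \simeq \Map_{\End(\mathcal{C})}(P, T)$. I would exhibit mutually inverse maps directly from the adjunction $F_T \dashv U_T$: a transformation $\beta \colon P \to T$ is sent to $a \circ (\beta U_T) \colon P U_T \to U_T$, where $a := U_T \varepsilon \colon T U_T \to U_T$ is the action, while a transformation $\alpha \colon P U_T \to U_T$ is sent to $(\alpha F_T) \circ (P \eta) \colon P \to T$. That these are inverse follows from the triangle identities together with the monad unit law, exactly as in the $1$-categorical computation; in the $\infty$-setting this becomes a coherent diagram chase using the adjunction data. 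Composing the three steps gives the desired equivalence, and unwinding the identifications (using that $\iota$ corresponds to the tautological $P$-algebra structure on the forgetful functor of $\alg_P(\mathcal{C})$, together with the monad axioms for $\phi$) shows that the composite is precisely precomposition with $\iota$, as required.

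The hard part will be Step~1: making precise, in the $\infty$-categorical framework for monads, that monad maps out of $\freemonad{P}$ correspond to functors over $\mathcal{C}$ between algebra \icats{}, and ensuring this identification is natural enough that the composite of the three equivalences genuinely recovers the map induced by $\iota \colon P \to \freemonad{P}$. Step~3, though formal, also requires care to promote the triangle-identity argument to an actual homotopy inverse rather than a mere bijection on components; both of the needed coherences should, however, be available from the theory of adjunctions and the monad appendix. Note that the hypothesis that $P$ preserve filtered colimits is used only to guarantee that $\freemonad{P}$ exists and that Corollary~\ref{cor:alg=Alg} holds, whereas $T$ may be an arbitrary monad, since Steps~1--3 are purely formal in $T$.
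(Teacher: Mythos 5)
Your proposal is correct and follows essentially the same route as the paper: the paper's proof combines Corollary~\ref{cor:mndeq} (monad maps into $T$ as functors over $\mathcal{C}$ out of $\Alg_T(\mathcal{C})$), Corollary~\ref{cor:alg=Alg} ($\Alg_{\freemonad{P}}(\mathcal{C}) \simeq \alg_P(\mathcal{C})$), and Proposition~\ref{propn:algPmap}, whose proof is exactly your Steps 2 and 3 (the pullback description of $\alg_P(\mathcal{C})$ followed by the mate equivalence $\Map(PU_T, U_T) \simeq \Map(P, U_TF_T)$). If anything you are more explicit than the paper, which does not spell out the adjunction inverse maps or the compatibility of the composite with precomposition by $\iota$.
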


The final ingredient needed for the proof of Proposition~\ref{propn:freemnd}
is the following observation:
\begin{propn}\label{propn:algPmap}
  For any endofunctor $P \colon \mathcal{C} \to
  \mathcal{C}$ and any adjunction \[L : \mathcal{C} \rightleftarrows
  \mathcal{D} : R\] there is a natural equivalence
  \[ \Map_{/\mathcal{C}}(\mathcal{D}, \alg_{P}(\mathcal{C})) \simeq
  \Map_{\End(\mathcal{C})}(P, RL).\]
\end{propn}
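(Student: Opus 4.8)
The plan is to unwind the definition of $\alg_P(\mathcal{C})$ as a pullback to rewrite the left-hand side as a space of natural transformations, and then to transport that space across the adjunction $L \dashv R$ via precomposition. Throughout, $U \colon \alg_P(\mathcal{C}) \to \mathcal{C}$ denotes the forgetful functor, i.e.~the left-hand leg of the defining pullback square of Definition~\ref{defn:algP}, and $\Map_{/\mathcal{C}}(\mathcal{D}, \alg_P(\mathcal{C}))$ is by definition the fibre over $R$ of the map $U \circ (\blank) \colon \Map_{\CatI}(\mathcal{D}, \alg_P(\mathcal{C})) \to \Map_{\CatI}(\mathcal{D}, \mathcal{C})$.

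First I would compute this fibre. Since $\Map_{\CatI}(\mathcal{D}, \blank)$ preserves limits, applying it to the pullback square defining $\alg_P(\mathcal{C})$ exhibits $\Map_{\CatI}(\mathcal{D}, \alg_P(\mathcal{C}))$ as the pullback of $\Map_{\CatI}(\mathcal{D}, \mathcal{C}^{\Delta^1})$ and $\Map_{\CatI}(\mathcal{D}, \mathcal{C})$ over $\Map_{\CatI}(\mathcal{D}, \mathcal{C})^{\times 2}$, where the bottom map sends a functor $R' \colon \mathcal{D} \to \mathcal{C}$ to $(P R', R')$ and the right-hand map is $(\txt{ev}_0, \txt{ev}_1)$. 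A functor $\mathcal{D} \to \mathcal{C}^{\Delta^1}$ is the same datum as a natural transformation between two functors $\mathcal{D} \to \mathcal{C}$, with $(\txt{ev}_0, \txt{ev}_1)$ recovering its source and target; hence taking the fibre over $R$ identifies
\[
\Map_{/\mathcal{C}}(\mathcal{D}, \alg_P(\mathcal{C})) \simeq \Map_{\Fun(\mathcal{D}, \mathcal{C})}(PR, R),
\]
the space of natural transformations $PR \to R$.

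Second I would apply the adjunction. The assignment $\Fun(\blank, \mathcal{C})$, viewed as a functor of $(\infty,2)$-categories that reverses $1$-morphisms, carries the adjunction $L \dashv R$ to an adjunction between the precomposition functors
\[
(\blank) \circ R \colon \Fun(\mathcal{C}, \mathcal{C}) \rightleftarrows \Fun(\mathcal{D}, \mathcal{C}) \colon (\blank) \circ L,
\]
with $(\blank) \circ R$ the left adjoint (the variance of $1$-cells is reversed, so left and right adjoints are interchanged). Evaluating this adjunction on $P \in \End(\mathcal{C})$ and $R \in \Fun(\mathcal{D}, \mathcal{C})$ gives
\[
\Map_{\Fun(\mathcal{D}, \mathcal{C})}(PR, R) \simeq \Map_{\Fun(\mathcal{C}, \mathcal{C})}(P, RL) = \Map_{\End(\mathcal{C})}(P, RL),
\]
which together with the previous display yields the desired equivalence; each step is functorial in $\mathcal{D}$ and in the adjunction, so the equivalence is natural.

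The one point requiring care is the passage that produces the precomposition adjunction $(\blank) \circ R \dashv (\blank) \circ L$ out of $L \dashv R$. The robust way to justify it is to observe that the unit $\eta \colon \id_{\mathcal{C}} \to RL$ and counit $\varepsilon \colon LR \to \id_{\mathcal{D}}$ whisker to natural transformations $\id \to ((\blank) \circ L)((\blank) \circ R)$ and $((\blank) \circ R)((\blank) \circ L) \to \id$ whose triangle identities are inherited from those of $L \dashv R$ by functoriality of whiskering; equivalently, one invokes that any $2$-functor preserves adjunctions. Once this is in place, the remainder is pure bookkeeping with the pullback defining $\alg_P(\mathcal{C})$, so I expect no further obstacle.
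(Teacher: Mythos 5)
Your proposal is correct and follows essentially the same route as the paper: both first use the pullback definition of $\alg_P(\mathcal{C})$ to identify $\Map_{/\mathcal{C}}(\mathcal{D}, \alg_P(\mathcal{C}))$ with the space of natural transformations $PR \to R$, and then pass to $\Map_{\End(\mathcal{C})}(P, RL)$ via the precomposition adjunction induced by $L \dashv R$. You merely spell out in more detail the step the paper dismisses with ``by adjunction''.
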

\begin{proof}
  It is enough to establish
  \[
  \Map_{/\mathcal{C}}(\mathcal{D}, \alg_{P}(\mathcal{C})) \simeq
  \Map_{\Fun(\mathcal{D}, \mathcal{C})}(PR, R) ,
  \]
  because the latter space is equivalent to $\Map_{\End(\mathcal{C})}(P, RL)$
  by adjunction: precomposing with the adjunction $L \dashv R$ we get for
  any \icat{} $\mathcal{X}$ an adjunction
  \[ R^{*} : \Fun(\mathcal{D}, \mathcal{X}) \rightleftarrows
    \Fun(\mathcal{C}, \mathcal{X}) : L^{*}, \]
  with $R^{*}$ left adjoint to $L^{*}$, and hence a natural equivalence of
  mapping spaces
  \[ \Map_{\Fun(\mathcal{D}, \mathcal{X})}(FR, G) \simeq
    \Map_{\Fun(\mathcal{C}, \mathcal{X})}(F, GL).\]
  Consider the diagram
  \[
    \begin{tikzcd}
      \mathcal{D} \ar[rdd, bend right=16, "R"'] \ar[rd, dotted, "\Phi"] 
      \ar[rrd, dotted, bend left=16, "\Psi"] & \\
	& \alg_{P}(\mathcal{C}) \drpullback \ar[r] \ar[d] & \mathcal{C}^{\Delta^1} \ar[d, "{(\txt{ev}_0,\txt{ev}_1)}"] \\
	& \mathcal{C} \ar[r, "{(P,\id)}"'] & \mathcal{C}\times\mathcal{C}.
    \end{tikzcd}
  \]
  Since $\alg_P(\mathcal{C})$ is defined as a pullback, we see that
  giving $\Phi \in \Map_{/\mathcal{C}}(\mathcal{D}, \alg_{P}(\mathcal{C}))$ 
  is equivalent to giving $\Psi$, which amounts precisely 
  to giving a natural transformation from $PR$ to $R$,
  as required.
\end{proof}

\begin{proof}[Proof of Proposition~\ref{propn:freemnd}.]
  Combine the equivalences of 
  Proposition~\ref{propn:algPmap}, Corollary~\ref{cor:alg=Alg}, and
  Corollary~\ref{cor:mndeq}.
\end{proof}

\begin{defn}
  Let $\mathcal{C}$ be an \icat{} with filtered colimits. We write
  $\End^{\omega}(\mathcal{C})$ for the full subcategory of
  $\End(\mathcal{C})$ spanned by the endofunctors that preserve
  filtered colimits. These are closed under composition, and so we get
  an \icat{} $\Mnd^{\omega}(\mathcal{C}) := \Alg(\End^{\omega}(\mathcal{C}))$, 
  the full subcategory of
  $\Mnd(\mathcal{C})$ spanned by the monads that preserve
  filtered colimits.
\end{defn}

\begin{cor}\label{cor:freemonadC}
  Suppose $\mathcal{C}$ is an \icat{} with filtered colimits and
  binary coproducts. Then the forgetful functor
  $\Mnd^{\omega}(\mathcal{C}) \to \End^{\omega}(\mathcal{C})$ has a
  left adjoint.
\end{cor}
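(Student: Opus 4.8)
The plan is to show that the free monad $\freemonad{P}$ constructed in \S\ref{subsec:freemndexist} already lies in $\Mnd^{\omega}(\mathcal{C})$ whenever $P \in \End^{\omega}(\mathcal{C})$, and that its universal property from Proposition~\ref{propn:freemnd} then restricts to the desired adjunction. First I would fix $P \in \End^{\omega}(\mathcal{C})$. Since $\mathcal{C}$ has filtered colimits and binary coproducts and $P$ preserves filtered colimits, the hypotheses of Proposition~\ref{propn:algPmonadic} are met, so we obtain a monadic adjunction $F : \mathcal{C} \rightleftarrows \alg_{P}(\mathcal{C}) : U$ whose associated monad is $\freemonad{P}$, with underlying endofunctor $UF$.

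The key step is to verify that $\freemonad{P} \in \Mnd^{\omega}(\mathcal{C})$, i.e.\ that $UF$ preserves filtered colimits. Here $F$ is a left adjoint and hence preserves all colimits, while by Lemma~\ref{lem:Upres} (taking $K$ to range over filtered diagrams and using that $P$ preserves filtered colimits) the forgetful functor $U$ preserves filtered colimits. Thus $\freemonad{P} = UF$ preserves filtered colimits, so it is a monad in $\Mnd^{\omega}(\mathcal{C})$.

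It then remains to transport the universal property. Proposition~\ref{propn:freemnd} provides, for every monad $T$, an equivalence
\[ \Map_{\Mnd(\mathcal{C})}(\freemonad{P}, T) \simeq \Map_{\End(\mathcal{C})}(P, T) \]
induced by the natural transformation $P \to \freemonad{P}$. Restricting to $T \in \Mnd^{\omega}(\mathcal{C})$ and using that $\Mnd^{\omega}(\mathcal{C}) \subseteq \Mnd(\mathcal{C})$ and $\End^{\omega}(\mathcal{C}) \subseteq \End(\mathcal{C})$ are \emph{full} subcategories (so the relevant mapping spaces are computed in the subcategories), this reads as $\Map_{\Mnd^{\omega}(\mathcal{C})}(\freemonad{P}, T) \simeq \Map_{\End^{\omega}(\mathcal{C})}(P, T)$; note that the unit $P \to \freemonad{P}$, being a morphism in $\End(\mathcal{C})$ between two objects of $\End^{\omega}(\mathcal{C})$, is automatically a morphism there. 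Hence $P \to \freemonad{P}$ is a universal arrow from $P$ to the forgetful functor, and the existence of the left adjoint follows from the standard pointwise criterion for the existence of adjoints (e.g.\ \cite{HTT}*{Proposition 5.2.4.2}).

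The main obstacle is precisely the key step: a free construction could a priori enlarge the underlying functor in a way that destroys preservation of filtered colimits, which would take $\freemonad{P}$ out of $\End^{\omega}(\mathcal{C})$ and invalidate the argument. This is dispatched cleanly, however, since $F$ preserves all colimits and $U$ preserves filtered colimits by Lemma~\ref{lem:Upres}; everything else is formal manipulation of full subcategories together with the already established universal property of $\freemonad{P}$.
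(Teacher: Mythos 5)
Your proof is correct and follows essentially the same route as the paper: the paper likewise deduces the adjoint from Proposition~\ref{propn:freemnd} by observing that $\freemonad{P}$ is an initial object of $\Mnd^{\omega}(\mathcal{C})_{P/}$. Your explicit check that $\freemonad{P}=UF$ preserves filtered colimits (via Lemma~\ref{lem:Upres} and the fact that $F$ is a left adjoint) is a point the paper leaves implicit here — it only records the analogous statement for sifted colimits later, in Lemma~\ref{lem:Pbarsift} — so making it explicit is a small but genuine improvement.
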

\begin{proof}
  By Proposition~\ref{propn:freemnd}, for each
  $P\in \End^{\omega}(\mathcal{C})$  the \icat{}
  $\Mnd^{\omega}(\mathcal{C})_{P/}$ has an initial object, namely the
  free monad $\freemonad{P}$ on $P$. This implies that the forgetful
  functor has a left adjoint, which assigns to every endofunctor $P$ its
  free monad $\freemonad{P}$.
\end{proof}

Our next goal is to prove that this free monad adjunction is itself
monadic, at least if we impose further restrictions on the monads:
\begin{defn}
  Suppose $\mathcal{C}$ is an \icat{} with sifted colimits. We write
  $\End^{\sigma}(\mathcal{C})$ for the full subcategory of
  $\End(\mathcal{C})$ spanned by the endofunctors that preserve sifted
  colimits, and let $\Mnd^{\sigma}(\mathcal{C})$ denote the full
  subcategory of $\Mnd(\mathcal{C})$ of monads whose underlying
  endofunctors preserve sifted colimits.
\end{defn}

\begin{lemma}\label{lem:Pbarsift}
  Suppose $\mathcal{C}$ is an \icat{} with sifted colimits and binary
  coproducts. If $P \colon \mathcal{C} \to \mathcal{C}$ preserves
  sifted colimits, then the underlying endofunctor of the free monad
  $\freemonad{P}$ on $P$ also preserves sifted colimits.
\end{lemma}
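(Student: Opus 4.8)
The plan is to exploit that the underlying endofunctor of $\freemonad{P}$ is the composite $UF$, where $U \colon \alg_P(\mathcal{C}) \to \mathcal{C}$ is the forgetful functor of Proposition~\ref{propn:algPmonadic} and $F$ is its left adjoint (the free-algebra functor). Since a monad arising from an adjunction $F \dashv U$ has underlying endofunctor $UF$, it suffices to show that $F$ preserves all colimits and that $U$ preserves sifted colimits; the composite will then preserve sifted colimits. Note that the hypotheses guarantee the free monad exists in the first place, as sifted colimits include filtered ones, so Proposition~\ref{propn:algPmonadic} applies.

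First I would observe that $F$, being a left adjoint, preserves all colimits, in particular sifted ones. Second, I would appeal to Lemma~\ref{lem:Upres}: for each sifted index \icat{} $K$, the \icat{} $\mathcal{C}$ has $K$-indexed colimits and $P$ preserves them by assumption, so that lemma gives that $\alg_P(\mathcal{C})$ has such colimits and that $U$ preserves (and reflects) them. Hence $U$ preserves all sifted colimits.

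Putting these together, for a sifted diagram $D \colon K \to \mathcal{C}$ one has
\[ \freemonad{P}(\colim_K D) \simeq UF(\colim_K D) \simeq U(\colim_K FD) \simeq \colim_K(UFD) \simeq \colim_K(\freemonad{P}\,D), \]
where $F$ preserves the colimit in the second equivalence and $U$ preserves the sifted colimit of $FD$ in the third. This shows $\freemonad{P}$ preserves sifted colimits. There is no serious obstacle here: the only point demanding care is the bookkeeping of colimit classes --- it is essential that $U$ need only preserve \emph{sifted}, not arbitrary, colimits, which is precisely what Lemma~\ref{lem:Upres} supplies once $P$ is assumed to preserve them. (A more computational alternative would instead identify $UF$ with the sequential colimit $\colim_n W_n$, where $W_0 = \id$ and $W_{n+1} = \id \amalg P W_n$, coming from the cobar construction of Definition~\ref{defn:cobar} together with the fact that the coslice forgetful functors preserve sequential, indeed weakly contractible, colimits by the dual of Lemma~\ref{lem:slicecreates}; one would then induct to see each $W_n$ preserves sifted colimits and commute the two colimits. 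But the adjoint argument above is cleaner.)
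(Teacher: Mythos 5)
Your argument is exactly the paper's: the proof of Lemma~\ref{lem:Pbarsift} reduces to showing that $U \colon \alg_P(\mathcal{C}) \to \mathcal{C}$ preserves sifted colimits, which is supplied by Lemma~\ref{lem:Upres}, while $F$ preserves all colimits as a left adjoint. Your write-up just spells out the same steps in more detail, so it is correct and matches the paper's proof.
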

\begin{proof}
  It suffices to show that the forgetful functor
  $U \colon \alg_{P}(\mathcal{C}) \to \mathcal{C}$ preserves sifted
  colimits, but this is a special case of Lemma~\ref{lem:Upres}.
\end{proof}

Thus if $\mathcal{C}$ is an \icat{} with sifted colimits and binary
coproducts, then the free monad functor restricts to give an
adjunction
\[ F : \End^{\sigma}(\mathcal{C}) \rightleftarrows
\Mnd^{\sigma}(\mathcal{C}) : U.\]
\begin{propn}\label{propn:Mndhasifted}
  Suppose $\mathcal{C}$ is an \icat{} with sifted colimits. Then
  $\Mnd^{\sigma}(\mathcal{C})$ has sifted colimits, and these are
  preserved by $U$.
\end{propn}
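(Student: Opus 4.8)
The plan is to recognize $\Mnd^{\sigma}(\mathcal{C})$ as the \icat{} of associative algebras in the monoidal \icat{} $\End^{\sigma}(\mathcal{C})$ and then invoke the general principle on sifted colimits of algebras, \cite{HA}*{Proposition 3.2.3.1}. First I would observe that $\End^{\sigma}(\mathcal{C})$ is a monoidal subcategory of $\End(\mathcal{C})$: the identity functor preserves sifted colimits, and, exactly as noted for $\End^{\omega}(\mathcal{C})$, the composite of two sifted-colimit-preserving endofunctors again preserves them. Since $\End^{\sigma}(\mathcal{C}) \hookrightarrow \End(\mathcal{C})$ is thus a full monoidal subcategory, an algebra lands in it precisely when its underlying endofunctor does, so $\Mnd^{\sigma}(\mathcal{C}) \simeq \Alg(\End^{\sigma}(\mathcal{C}))$ compatibly with the forgetful functors to the underlying endofunctors.

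Next I would check that $\End^{\sigma}(\mathcal{C})$ admits sifted colimits and that the inclusion $\End^{\sigma}(\mathcal{C}) \hookrightarrow \End(\mathcal{C}) = \Fun(\mathcal{C},\mathcal{C})$ creates them, so that they are computed pointwise. The key point is that a pointwise colimit $\colim_{i} F_{i}$ of functors each preserving sifted colimits again preserves sifted colimits: for a sifted diagram $X_{\bullet}$ in $\mathcal{C}$,
\[ (\colim_{i} F_{i})(\colim_{k} X_{k}) \simeq \colim_{i}\colim_{k} F_{i}(X_{k}) \simeq \colim_{k}\colim_{i} F_{i}(X_{k}) \simeq \colim_{k}(\colim_{i} F_{i})(X_{k}),\]
using only that colimits commute with colimits. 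Hence $\End^{\sigma}(\mathcal{C})$ is closed under sifted (indeed all) colimits in $\End(\mathcal{C})$, and these are pointwise.

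With this in hand I would verify the hypothesis of \cite{HA}*{Proposition 3.2.3.1}, that the composition product $\otimes = \circ$ on $\End^{\sigma}(\mathcal{C})$ preserves sifted colimits separately in each variable. For the outer variable, $F \mapsto F \circ G$ preserves all colimits, since $(\colim_{i} F_{i}) \circ G$ is computed pointwise as $\colim_{i}(F_{i}\circ G)$. For the inner variable, fix $F \in \End^{\sigma}(\mathcal{C})$; since sifted colimits in $\End^{\sigma}(\mathcal{C})$ are pointwise and $F$ preserves sifted colimits in $\mathcal{C}$, evaluating at any $X$ gives $F\bigl((\colim_{i} G_{i})(X)\bigr) \simeq \colim_{i} F(G_{i}(X))$, so $F \circ \colim_{i} G_{i} \simeq \colim_{i}(F \circ G_{i})$.

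Finally, \cite{HA}*{Proposition 3.2.3.1} applies to conclude that $\Alg(\End^{\sigma}(\mathcal{C}))$ admits sifted colimits and that the forgetful functor to $\End^{\sigma}(\mathcal{C})$ preserves (in fact creates) them; transporting along the identification of the first paragraph yields the statement for $U \colon \Mnd^{\sigma}(\mathcal{C}) \to \End^{\sigma}(\mathcal{C})$. The only real work lies in the second and third paragraphs — confirming that the relevant colimits in the full subcategory $\End^{\sigma}(\mathcal{C})$ are genuinely pointwise and that composition is separately sifted-cocontinuous. Once these bookkeeping points are in place the conclusion is immediate from the cited general result, which is the main step I would lean on.
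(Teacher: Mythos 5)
Your proposal is correct and follows essentially the same route as the paper: identify $\Mnd^{\sigma}(\mathcal{C})$ with $\Alg(\End^{\sigma}(\mathcal{C}))$, note that composition commutes with sifted colimits separately in each variable because the endofunctors involved preserve them, and apply \cite{HA}*{Proposition 3.2.3.1}. The paper states this in two lines; your extra verifications (that sifted colimits in $\End^{\sigma}(\mathcal{C})$ are computed pointwise and that the subcategory is closed under them) are exactly the implicit bookkeeping behind that citation.
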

\begin{proof}
  $\Mnd^{\sigma}(\mathcal{C})$ is the \icat{} of associative algebras
  in the monoidal \icat{} $\End^{\sigma}(\mathcal{C})$, where the
  tensor product, i.e.\ composition, commutes with sifted colimits in
  each variable (since we are considering endofunctors that preserve
  these colimits). The result is therefore a special case of
  \cite{HA}*{Proposition 3.2.3.1}.
\end{proof}

\begin{cor}\label{cor:freesiftedmonad}
  Suppose $\mathcal{C}$ is an \icat{} with sifted colimits and binary
  coproducts. Then the adjunction
  \[ F : \End^{\sigma}(\mathcal{C}) \rightleftarrows
  \Mnd^{\sigma}(\mathcal{C}) : U.\]
  is monadic.
\end{cor}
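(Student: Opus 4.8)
The plan is to apply the Lurie--Barr--Beck monadicity theorem \cite{HA}*{Theorem 4.7.3.5} to the adjunction $F \dashv U$, which is already in hand from the discussion preceding the corollary (the free monad functor $F$ exists by Corollary~\ref{cor:freemonadC}, and lands in $\Mnd^{\sigma}(\mathcal{C})$ by Lemma~\ref{lem:Pbarsift}). Concretely, I must verify two things: that $U \colon \Mnd^{\sigma}(\mathcal{C}) \to \End^{\sigma}(\mathcal{C})$ is conservative, and that $\Mnd^{\sigma}(\mathcal{C})$ admits colimits of $U$-split simplicial diagrams with $U$ preserving them.

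First I would dispatch conservativity. Since $\Mnd^{\sigma}(\mathcal{C}) = \Alg(\End^{\sigma}(\mathcal{C}))$ by definition, the functor $U$ is exactly the forgetful functor from associative algebras to their underlying objects in the monoidal \icat{} $\End^{\sigma}(\mathcal{C})$. Such forgetful functors reflect equivalences: a morphism of algebras is an equivalence precisely when its underlying morphism is (this is a standard property of algebra \icats{}, cf.~\cite{HA}). Hence $U$ is conservative.

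Second, I would obtain the required colimits directly from Proposition~\ref{propn:Mndhasifted}, which has already established that $\Mnd^{\sigma}(\mathcal{C})$ has all sifted colimits and that $U$ preserves them. The key observation is that $\Delta^{\op}$ is a sifted \icat{}, so geometric realizations are a special case of sifted colimits; in particular every simplicial diagram, and a fortiori every $U$-split one, admits a colimit in $\Mnd^{\sigma}(\mathcal{C})$, and $U$ preserves it. This verifies the remaining hypothesis of the monadicity theorem, and the corollary follows.

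The substantive content has in fact already been carried out in Proposition~\ref{propn:Mndhasifted}, whose proof reduces the construction and preservation of sifted colimits of monads to \cite{HA}*{Proposition 3.2.3.1}, using that composition of sifted-colimit-preserving endofunctors commutes with sifted colimits in each variable. Consequently this corollary is essentially a bookkeeping application of the monadicity criterion rather than a source of genuine difficulty. The only point that requires a moment's care --- and the nearest thing to an obstacle --- is recognizing that the full preservation of sifted colimits subsumes the weaker condition (colimits of $U$-split simplicial diagrams) actually demanded by \cite{HA}*{Theorem 4.7.3.5}, so that no separate verification of the split-simplicial case is needed.
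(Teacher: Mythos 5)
Your proposal is correct and follows the paper's own argument essentially verbatim: both invoke the Lurie--Barr--Beck theorem, deduce the existence and preservation of $U$-split simplicial colimits from the sifted colimits supplied by Proposition~\ref{propn:Mndhasifted} (since $\Dop$ is sifted), and settle conservativity by the standard fact that the forgetful functor from associative algebras reflects equivalences (the paper cites \cite{HA}*{Lemma 3.2.2.6} for this).
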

\begin{proof}
  We already know from Proposition~\ref{propn:Mndhasifted} that
  $\Mnd^{\sigma}(\mathcal{C})$ has all sifted colimits and that $U$
  preserves these. It therefore suffices by \cite{HA}*{Theorem 4.7.3.5} to
  show that $U$ detects equivalences, which follows from
  \cite{HA}*{Lemma 3.2.2.6}.
\end{proof}

We end this subsection by noting that, under rather restrictive
hypotheses on $\mathcal{C}$, this implies that $\Mnd^{\sigma}(\mathcal{C})$ is
presentable:
\begin{defn}\label{defn:siftpres}
  Let us say that an \icat{} $\mathcal{C}$ is
  \emph{compact projectively generated} if it is of the form
  $\mathcal{P}_{\Sigma}(\mathcal{C}_{0})$ for some small \icat{}
  $\mathcal{C}_{0}$ with finite coproducts, using the notation of
  \cite{HTT}*{\S 5.5.8}.
\end{defn}
\begin{remark}\label{rmk:siftpres}
  The only reason for introducing this notion is that it implies that
  $\End^{\sigma}(\mathcal{C})$ is presentable. We believe this should
  be true for any presentable \icat{} $\mathcal{C}$, but we will not
  attempt to prove this as it is not needed for our purposes.
\end{remark}

\begin{cor}\label{cor:mndsigmapres}
  Suppose $\mathcal{C}$ is a compact projectively generated \icat{}. Then
  $\Mnd^{\sigma}(\mathcal{C})$ is a presentable \icat{}.
\end{cor}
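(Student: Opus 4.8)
The plan is to exhibit $\Mnd^{\sigma}(\mathcal{C})$ as the \icat{} of algebras for an accessible monad on a presentable \icat{}, and then invoke the general fact that such \icats{} of algebras are presentable. Concretely, Corollary~\ref{cor:freesiftedmonad} tells us that the free-monad adjunction
\[ F : \End^{\sigma}(\mathcal{C}) \rightleftarrows \Mnd^{\sigma}(\mathcal{C}) : U \]
is monadic. Writing $\mathbb{T} := UF$ for the induced monad on $\End^{\sigma}(\mathcal{C})$, monadicity gives an equivalence $\Mnd^{\sigma}(\mathcal{C}) \simeq \LMod_{\mathbb{T}}(\End^{\sigma}(\mathcal{C}))$, so it suffices to show that the right-hand side is presentable.

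The first input is that $\End^{\sigma}(\mathcal{C})$ is itself presentable, which is the role of sifted-presentability promised in Remark~\ref{rmk:siftpres}. Writing $\mathcal{C} \simeq \mathcal{P}_{\Sigma}(\mathcal{C}_{0})$ as in Definition~\ref{defn:siftpres}, the universal property of $\mathcal{P}_{\Sigma}$ as the free sifted-cocompletion of $\mathcal{C}_{0}$ (\cite{HTT}*{\S 5.5.8}) identifies sifted-colimit-preserving endofunctors of $\mathcal{C}$ with arbitrary functors out of the generators:
\[ \End^{\sigma}(\mathcal{C}) \simeq \Fun(\mathcal{C}_{0}, \mathcal{C}). \]
Since $\mathcal{C}_{0}$ is small and $\mathcal{C}$ is presentable, this functor \icat{} is presentable by \cite{HTT}*{Proposition 5.5.3.6}.

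The second input is that the monad $\mathbb{T} = UF$ is accessible. Here $F$ is a left adjoint and hence preserves all colimits, while $U$ preserves sifted colimits by Proposition~\ref{propn:Mndhasifted}; consequently $\mathbb{T}$ preserves sifted colimits, and in particular filtered colimits, so it is an accessible endofunctor of $\End^{\sigma}(\mathcal{C})$. I would then conclude by the standard fact that, for a presentable \icat{} $\mathcal{E}$ and an accessible monad $\mathbb{T}$ on $\mathcal{E}$, the \icat{} $\LMod_{\mathbb{T}}(\mathcal{E})$ is presentable: the forgetful functor creates limits and sufficiently filtered colimits, while the remaining colimits are built from free algebras via the monadic bar resolution (cf.~\cite{HA}*{\S 4.2.3}). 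Applying this with $\mathcal{E} = \End^{\sigma}(\mathcal{C})$ gives the result.

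The main obstacle is this last step, and in particular taking care of why it cannot be shortcut. The monoidal \icat{} $\End^{\sigma}(\mathcal{C})$ (whose tensor product is composition) is \emph{not} presentably monoidal: composition preserves all colimits in the first variable but only sifted colimits in the second, so the usual presentability criterion for associative algebras in a presentably monoidal \icat{} does not apply to $\Mnd^{\sigma}(\mathcal{C})$ directly. Accessibility of $\mathbb{T}$ is exactly the property that compensates for this defect, and correctly invoking the presentability of modules over an accessible monad (rather than of algebras in a presentably monoidal \icat{}) is the crux of the argument.
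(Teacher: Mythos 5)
Your proof is correct and follows essentially the same route as the paper's: both identify $\End^{\sigma}(\mathcal{C})\simeq\Fun(\mathcal{C}_{0},\mathcal{C})$ as presentable, invoke the monadicity of $F\dashv U$ from Corollary~\ref{cor:freesiftedmonad} together with the preservation of sifted colimits by $U$ (Proposition~\ref{propn:Mndhasifted}), and conclude via the general presentability result for algebras over an accessible monad on a presentable \icat{} (which the paper cites as \cite{enr}*{Lemma A.5.8, Proposition A.5.9} rather than spelling out). Your closing remark about why the presentably-monoidal shortcut fails is a nice observation but not needed for the argument.
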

\begin{proof}
  Since $\mathcal{C}$ is compact projectively generated, the \icat{}
  $\End^{\sigma}(\mathcal{C})$ is equivalent to $\Fun(\mathcal{C}_{0},
  \mathcal{C})$ where $\mathcal{C}_{0}$ is a small \icat{}, and so
  this \icat{} is presentable. Moreover, the \icat{}
  $\Mnd^{\sigma}(\mathcal{C})$ has sifted colimits by
  Proposition~\ref{propn:Mndhasifted} and these are preserved by the
  forgetful functor to $\End^{\sigma}(\mathcal{C})$. Applying
  \cite{enr}*{Lemma A.5.8, Proposition A.5.9} to the 
  adjunction 
  \[ F : \End^{\sigma}(\mathcal{C}) \rightleftarrows
  \Mnd^{\sigma}(\mathcal{C}) : U,\]
  which is monadic by Corollary~\ref{cor:freesiftedmonad}, it follows
  that $\Mnd^{\sigma}(\mathcal{C})$ is presentable.
\end{proof}

\subsection{An Explicit Description of the Free Monad}\label{subsec:freecolim}

We will now give a more explicit description of the free monad
$\freemonad{P}$ as the colimit of a sequence of functors.
\begin{defn}\label{defn:Pn}
  For $\mathcal{C}$ an $\infty$-category with filtered colimits and binary 
  coproducts, and $P\colon \mathcal{C} \to \mathcal{C}$ an endofunctor that
  preserves filtered colimits,
  we will recursively define endofunctors $P_n$ and natural transformations
  \[
  \begin{tikzcd}
	P_0 \ar[r, "f_1"] 
	& P_1 \ar[r, "f_2"] 
	& P_2 
	\ar[r, "f_3"] 
	& \cdots.
  \end{tikzcd}
  \]
  Here $P_0 := \id_{\mathcal{C}}$, and recursively $P_{n+1} :=
  \id_{\mathcal{C}} \amalg (P \circ P_n) $.  For the natural
  transformations, $f_1$ is the coproduct inclusion, and $f_{n+1} :=
  \id_{\mathcal{C}} \amalg P (f_n) $.
\end{defn}

\begin{propn}\label{propn:freemonadind}
  With notation as above, we have a natural equivalence
  $$
  \colim_{n\to\infty} P_n \isoto \freemonad{P}.
  $$
\end{propn}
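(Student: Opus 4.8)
The plan is to identify the free monad $\freemonad{P}$ with the free-$P$-algebra construction and then compute the underlying object of the free algebra via the cobar construction of \S\ref{subsec:lambek}. Recall from Proposition~\ref{propn:algPmonadic} that $\freemonad{P}$ is the monad of the monadic adjunction $F \dashv U$ with $U \colon \alg_P(\mathcal{C}) \to \mathcal{C}$, so that $\freemonad{P} = U F$ and $F(X)$ is the free $P$-algebra on $X$, i.e.\ the initial object of $\alg_P(\mathcal{C})_{X/}$. By Lemma~\ref{lem:Ptil} this coslice is equivalent to $\alg_{P_X}(\mathcal{C}_{X/})$, where $P_X = a_X P u_X$ as in Definition~\ref{defn:Ptil}. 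Since $\mathcal{C}_{X/}$ has the initial object $\id_X$ and filtered colimits, and $P_X$ preserves filtered colimits (as noted in the proof of Proposition~\ref{propn:algPmonadic}), Proposition~\ref{propn:initalg} identifies this initial object with the cobar construction $\Omega_{P_X}(\id_X) = \colim_{n} P_X^{n}(\id_X)$ (Definition~\ref{defn:cobar}).

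First I would compute the underlying $\mathcal{C}$-object of this colimit. The forgetful functor $\alg_P(\mathcal{C})_{X/} \to \mathcal{C}$ corresponds under Lemma~\ref{lem:Ptil} to $u_X$ composed with the forgetful functor out of $\alg_{P_X}(\mathcal{C}_{X/})$, so $\freemonad{P}(X) \simeq u_X\bigl(\colim_n P_X^n(\id_X)\bigr)$. An induction then shows $u_X(P_X^n(\id_X)) \simeq P_n(X)$, compatibly with the transition maps: for $n=0$ both sides are $X$, and since $u_X a_X \simeq X \amalg (\blank)$ we get
\[
u_X\bigl(P_X^{n+1}(\id_X)\bigr) \simeq X \amalg P\bigl(u_X P_X^{n}(\id_X)\bigr) \simeq X \amalg P(P_n(X)) = P_{n+1}(X),
\]
matching Definition~\ref{defn:Pn}. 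Moreover the unique coalgebra map $\id_X \to P_X(\id_X)$ has underlying $\mathcal{C}$-map the coproduct inclusion, so the induced transition maps are precisely the $f_{n+1} = \id_{\mathcal{C}} \amalg P(f_n)$. As $u_X$ preserves filtered colimits (the dual of Lemma~\ref{lem:slicecreates}), this gives $\freemonad{P}(X) \simeq \colim_n P_n(X)$ for each $X$.

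Finally I would upgrade this to an equivalence of endofunctors. Since colimits in $\End(\mathcal{C})$ are computed pointwise, $\colim_n P_n$ is the endofunctor $X \mapsto \colim_n P_n(X)$, so it remains to make the identification natural in $X$. Rather than relying on naturality of the object-level computation, I would construct an explicit comparison transformation $\theta \colon \colim_n P_n \to \freemonad{P}$. Using the monad unit $\id \to \freemonad{P}$, the universal natural transformation $P \to \freemonad{P}$ of Proposition~\ref{propn:freemnd}, and the multiplication of $\freemonad{P}$, one defines natural transformations $\theta_n \colon P_n \to \freemonad{P}$ by induction: $\theta_0$ is the unit, and $\theta_{n+1} \colon \id \amalg (P \circ P_n) \to \freemonad{P}$ is the unit on the first summand and, on the second, the composite of the whiskering $P \circ P_n \to \freemonad{P} \circ \freemonad{P}$ (built from $P \to \freemonad{P}$ and $\theta_n$) with the multiplication $\freemonad{P} \circ \freemonad{P} \to \freemonad{P}$. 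These are compatible with the $f_n$, so they assemble to $\theta$, and the pointwise computation of the previous paragraph shows $\theta$ is an equivalence.

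The main obstacle is precisely this last step: because $P_X$, and hence the cobar tower, depends on the object $X$, the clean object-level identification $u_X(P_X^n(\id_X)) \simeq P_n(X)$ is not manifestly natural in $X$. The real work is therefore in coherence — either exhibiting the monad-theoretic transformation $\theta$ above and verifying its compatibility with the $f_n$ up to coherent homotopy, or arguing that the free-algebra functor $F$ together with the equivalence of Lemma~\ref{lem:Ptil} organizes the towers $\{P_X^n(\id_X)\}_X$ into a single functorial sequence whose pointwise colimit is $\colim_n P_n$.
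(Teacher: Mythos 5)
Your proposal is correct and follows essentially the same route as the paper: construct a natural transformation $\colim_n P_n \to \freemonad{P}$ by induction (the paper's $\pi_{n+1}$ uses the structure map $\phi\colon P\freemonad{P}\to\freemonad{P}$, which agrees with your composite of the whiskering with $\mu$), verify compatibility with the $f_n$, and then check it is a pointwise equivalence by identifying $P_nX$ with $P_X^n(\id_X)$ in the initial-algebra colimit of Proposition~\ref{propn:initalg}. Your closing concern about naturality is resolved exactly as you suggest and as the paper does: the comparison map is built once at the level of endofunctors, so only the object-level identification needs to be checked.
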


\begin{lemma}
  Let $F \colon \mathcal{C} \to \alg_{P}(\mathcal{C})$ be the left
  adjoint to the forgetful functor $U \colon \alg_{P}(\mathcal{C}) \to
  \mathcal{C}$. The composite of $F$ with the forgetful functor
  $\alg_{P}(\mathcal{C}) \to \mathcal{C}^{\Delta^{1}}$ corresponds to
  a natural transformation $\phi \colon P\freemonad{P} \to
  \freemonad{P}$. The induced transformation
  \[  \id_{\mathcal{C}} \amalg P\freemonad{P} \xto{\eta | \phi}
    \freemonad{P},\]
  where $\eta$ is the unit for the adjunction $F \dashv U$ and
  $\eta|\phi$ means $\eta$ on the first summand and $\phi$ 
  on the second summand,
  is an equivalence.
\end{lemma}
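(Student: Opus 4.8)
The plan is to identify the displayed map, pointwise in $X$, with the structure map of an initial Lambek algebra, which is automatically invertible. Fix $X \in \mathcal{C}$. By definition $F(X)$ is the initial object of $\alg_P(\mathcal{C})_{X/}$, which by Lemma~\ref{lem:Ptil} is equivalent to $\alg_{P_X}(\mathcal{C}_{X/})$ for $P_X = a_X \circ P \circ u_X$ as in Definition~\ref{defn:Ptil}. As observed in the proof of Proposition~\ref{propn:algPmonadic}, the functor $P_X$ preserves filtered colimits, and $\mathcal{C}_{X/}$ has filtered colimits and an initial object (namely $\id_X$), so by Proposition~\ref{propn:initalg} the initial $P_X$-algebra exists and is given by the cobar construction $\Omega(\id_X)$.

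First I would unwind this initial algebra on underlying objects. Under the equivalence of Lemma~\ref{lem:Ptil}, the forgetful functor $u_X \colon \mathcal{C}_{X/} \to \mathcal{C}$ carries the initial $P_X$-algebra to $\freemonad{P}(X) = UF(X)$, and its structure map from $X$ is precisely the unit $\eta_X \colon X \to \freemonad{P}(X)$. The $P_X$-algebra structure is a morphism $P_X(\freemonad{P}(X)) \to \freemonad{P}(X)$ in $\mathcal{C}_{X/}$; since $P_X(\freemonad{P}(X)) = X \amalg P\freemonad{P}(X)$ by definition of $P_X$, on underlying objects this is a map $X \amalg P\freemonad{P}(X) \to \freemonad{P}(X)$. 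The next step is to check that its restriction to the summand $X$ is $\eta_X$ (forced by the morphism being one under $X$) and that its restriction to $P\freemonad{P}(X)$ is the Lambek $P$-algebra structure $\phi_X$ of $F(X)$, i.e.\ the image of $F$ under $\alg_P(\mathcal{C}) \to \mathcal{C}^{\Delta^1}$; hence this map is exactly $\eta_X \amalg \phi_X$.

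It then remains to note that the structure map of this initial algebra is an equivalence: in the cobar construction of Definition~\ref{defn:cobar} the structure map is defined as the inverse of the canonical colimit comparison, which is invertible because $P_X$ preserves filtered colimits. Therefore $\eta_X \amalg \phi_X$ is an equivalence for each $X$. Finally I would promote this to an equivalence of functors: both $\id_{\mathcal{C}} \amalg P\freemonad{P}$ and $\freemonad{P}$ are endofunctors of $\mathcal{C}$, and $\eta \amalg \phi$ is a genuine natural transformation between them, being assembled from the natural transformations $\eta$ and $\phi$ and the functorial coproduct; a natural transformation that is a pointwise equivalence is an equivalence.

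The main obstacle I expect is the bookkeeping in the second step, namely verifying that the $P_X$-algebra structure of the initial object decomposes, on underlying objects and naturally in $X$, as $\eta_X$ on the $X$-summand and $\phi_X$ on the $P\freemonad{P}(X)$-summand. This requires carefully tracing the equivalence $\alg_P(\mathcal{C})_{X/} \simeq \alg_{P_X}(\mathcal{C}_{X/})$ of Lemma~\ref{lem:Ptil} together with the defining description of $\phi$; once these identifications are pinned down, the remaining steps are formal.
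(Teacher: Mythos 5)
Your proof is correct and follows essentially the same route as the paper's: evaluate at $X$, identify the map $\eta_X \amalg \phi_X$ with the structure map of the initial $P_X$-algebra $\Omega(\id_X)$ in $\mathcal{C}_{X/}$ via Lemma~\ref{lem:Ptil}, and observe that this structure map is invertible by construction in the cobar construction. The paper states this in a single sentence; your version simply spells out the bookkeeping you correctly flag as the only nontrivial point.
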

\begin{proof}
  Evaluating at $X \in \mathcal{C}$ the map $X \amalg
  P\freemonad{P}(X) \to \freemonad{P}(X)$ is the structure map
  $P_{X}\freemonad{P}X \to \freemonad{P}X$ exhibiting $\freemonad{P}X$
  as the initial $P_{X}$-algebra, which we know is an equivalence.
\end{proof}

\begin{proof}[Proof of Proposition~\ref{propn:freemonadind}]
  To define a natural transformation $\colim_{n\to\infty} P_n \to
  \freemonad{P}$ we recursively define natural transformations
  $\pi_{n} \colon P_{n}
  \to \freemonad{P}$ and equivalences $\pi_{n} \circ f_{n} \simeq
  \pi_{n-1}$. 

  We start by setting $\pi_{0} := \eta \colon \id_{\mathcal{C}} \to
  \freemonad{P}$, and then given $\pi_{n}$ we define $\pi_{n+1}$ as
  the composite
  \[ P_{n+1} = \id \amalg PP_{n} \xto{\id \amalg P\pi_{n}} \id \amalg
  P\freemonad{P} \xto{\eta | \phi} \freemonad{P}.\]

  We then have a commutative diagram
\[
\begin{tikzcd}
  \id \arrow[bend left=20]{drr}{\eta}  \arrow[swap, hookrightarrow]{d}{f_{1}}\\
  \id \amalg P \arrow[swap]{r}{\id \amalg P\eta}& \id \amalg P \freemonad{P}
  \arrow[swap]{r}{\eta | \phi} & \freemonad{P},
\end{tikzcd}
\]
which gives an equivalence $\pi_{1} \circ f_{1} \simeq \pi_{0}$. For
$n > 0$, the composite 
$\pi_{n+1} \circ f_{n+1}$ is equivalent to the composite
\[ \id \amalg PP_{n-1} \xto{\id \amalg Pf_{n-1}} \id \amalg PP_{n}
\xto{\id \amalg P\pi_{n}} \id \amalg P\freemonad{P} \xto{\eta \amalg
  \phi} \freemonad{P}.\] We can rewrite this as
\[ \id \amalg PP_{n-1} \xto{\eta | \phi \circ P(\pi_{n}f_{n})}
\freemonad{P},\]
which is equivalent to $\eta |  \phi\pi_{n-1}$ --- this is the
same as $\pi_{n}$ by definition, giving the required equivalence
$\pi_{n+1} \circ f_{n+1} \simeq \pi_{n}$.

It remains to show that the induced map
$\colim_{n} P_{n}X \to \freemonad{P}X$ is an equivalence for all
$X \in \mathcal{C}$.  By definition, $\freemonad{P}(X)$ is the
underlying object in $\mathcal{C}$ of the initial $P_X$-algebra
(notation as in Definition~\ref{defn:Ptil}), in turn described in
Proposition~\ref{propn:initalg} as the colimit of $P_X^n (\id_X)$ as
$n\to \infty$.  But on underlying objects we clearly have
$P_n X \simeq P_X^n (\id)$, and under this identification, the
transition maps $f_n \colon P_{n-1}X \to P_n X$ are the iterated
$P_X$-coalgebra structure maps
$c_n \colon P_X^{n-1}(\id) \to P_X^n (\id)$, as in
Definition~\ref{defn:cobar}.  Hence
$\freemonad P \simeq \colim_{n\to \infty} P_n$ as required.
\end{proof}

\begin{lemma}\label{lem:en}
  For $(A,a)$ a $P$-algebra, the underlying map of the counit, 
  $U\epsilon_A\colon UFUA 
  \to UA$ is naturally identified with the colimit of the sequence of
  maps $e_n \colon P_n A \to A$, defined recursively with
  $e_0 \colon A \to A$
  the identity, and $e_{n+1}$ defined as the composite
  $$
  P_{n+1} A = A \amalg  P P_n A \xrightarrow{\id\amalg P(e_n)} A\amalg PA \xrightarrow{\id|a} A.  
  $$
\end{lemma}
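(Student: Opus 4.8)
The plan is to reduce the identification to the explicit colimit description of the initial $P_A$-algebra and then to trace the resulting maps down to underlying objects. First I would recall that, by the adjunction $F \dashv U$, the counit $\epsilon_A \colon FUA \to (A,a)$ is the unique map of $P$-algebras corresponding to $\id_A$ under the equivalence $\Map_{\alg_P(\mathcal{C})}(FA,(A,a)) \simeq \Map_{\mathcal{C}}(A,A)$; equivalently, using Lemma~\ref{lem:Ptil}, it is the underlying $P$-algebra map of the unique morphism out of the initial $P_A$-algebra $FA$ into the $P_A$-algebra $D$ that $(A,a)$ determines via $\id_A \colon A \to A$. Reading off Definition~\ref{defn:Ptil}, this target $D$ has underlying object $\id_A \in \mathcal{C}_{A/}$ and structure map $d = \id|a \colon A \amalg PA \to A$ (the identity on the summand $A$ and $a$ on $PA$). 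By Proposition~\ref{propn:initalg} together with Proposition~\ref{propn:freemonadind}, $FA = \colim_n P_A^n(\id_A)$, whose underlying object is $\colim_n P_n A$ with transition maps the underlying maps $f_n$ of the coalgebra structure maps.

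Next I would invoke the standard description of the unique morphism out of an initial algebra presented in this colimit form: it is the colimit $\colim_n \phi_n$ of the approximating maps $\phi_n \colon P_A^n(\id_A) \to D$ defined recursively by letting $\phi_0$ be the unique map from the initial object $\id_A$ of $\mathcal{C}_{A/}$ and setting $\phi_{n+1} := d \circ P_A(\phi_n)$. One checks that the $\phi_n$ form a cocone under the defining diagram of $FA$ --- i.e.\ $\phi_{n+1}$ precomposed with the transition map equals $\phi_n$ --- by an induction whose base case uses that $\id_A$ is initial in $\mathcal{C}_{A/}$; the resulting $\colim_n \phi_n$ is a $P_A$-algebra morphism, hence by initiality coincides with the map above. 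This explicit formula can alternatively be extracted from the twisting-morphism equivalence of Proposition~\ref{propn:OmegaCA}, under which the unique map corresponds to the single point of the contractible space $\Tw_{P_A}(\id_A, D)$.

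Finally I would pass to underlying objects. Since $P_A = a_A \circ P \circ u_A$, the underlying map of $P_A(g)$ for a morphism $g$ of $\mathcal{C}_{A/}$ with underlying map $\underline{g}$ is $\id_A \amalg P(\underline{g})$. An induction then shows $\underline{\phi_n} \simeq e_n$: for $n=0$ the underlying map of $\phi_0$ is $\id_A = e_0$, and for the inductive step $\underline{\phi_{n+1}} \simeq d \circ (\id_A \amalg P(\underline{\phi_n})) \simeq (\id|a) \circ (\id_A \amalg P(e_n)) = e_{n+1}$, matching the recursive definition of the $e_n$. Therefore $U\epsilon_A \simeq \colim_n \underline{\phi_n} \simeq \colim_n e_n$, and naturality in $(A,a)$ follows since every construction involved is natural.

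The main obstacle I expect is the rigorous $\infty$-categorical justification of the explicit colimit formula $\colim_n \phi_n$ for the unique algebra map --- in particular the coherent recursive choice of the $\phi_n$ and the cocone structure --- rather than the bookkeeping on underlying objects, which is routine once Lemma~\ref{lem:Ptil} is in hand. Leaning on Proposition~\ref{propn:OmegaCA}, via the universal $P_A$-twisting morphism out of the initial coalgebra $\id_A$, is the cleanest way to supply this coherence, after which the underlying-object computation proceeds exactly as above.
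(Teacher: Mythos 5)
Your proposal is correct and follows essentially the same route as the paper: reduce via Lemma~\ref{lem:Ptil} to the unique $P_A$-algebra map out of the initial $P_A$-algebra $\colim_n P_A^n(\id_A)$ into the $P_A$-algebra $\id_A$ with structure map $\id|a$, obtain its recursive approximations from the twisting-morphism equivalence of Proposition~\ref{propn:OmegaCA} (the paper extracts the recursion $f \mapsto (\id|a)\circ P_A(f)$ from the proof of Lemma~\ref{lem:PCA}, which is exactly your $\phi_{n+1} = d\circ P_A(\phi_n)$), and then pass to underlying objects using that $U$ preserves filtered colimits. The coherence obstacle you flag is indeed handled precisely the way you suggest, so there is nothing missing.
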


\begin{proof}
  We know that $FUA$ is the underlying object in $\alg_P(\mathcal{C})$
  of $\colim_{n\to \infty} P_A^n (\id_A)$, the
  initial $P_A$-algebra.  Since $(A,a)$ is a $P$-algebra, the morphism
  $\id_A$ becomes naturally a $P_A$-algebra, hence
  there is a unique homomorphism of $P_A$-algebras $\colim_{n\to
  \infty} P_A^n (\id_A) \to \id_A$.  The image under the forgetful functor
  $\alg_{P_A}(\mathcal{C}_{A/}) \to \alg_P(\mathcal{C})$ is the counit
  $\epsilon_A$.  By Proposition~\ref{propn:OmegaCA}, this corresponds to
  the unique $P_A$-twisting morphism $\id_A \to \id_A$.  By (the proof
  of) Lemma~\ref{lem:PCA}, the counit $\epsilon_A
  \colon \colim_{n\to \infty} P_A^n (\id_A) \to \id_A$ is induced by the
  sequence of twisting morphisms
  $$
  e_n \colon P_A^n (\id_A)  \to \id_A,
  $$
  where $e_0 \colon \id_A \to \id_A$
  is the identity map, and recursively
  $e_{n+1}$ is defined as the composite
  $$
  (A\amalg P)^{n+1}(\id_A) \simeq (A\amalg P)(A\amalg P)^n(\id_A)
  \xrightarrow{(A\amalg P)(e_n)} (A\amalg P)(A) \xrightarrow{\id|a} A .
  $$
  The forgetful functor $U\colon \alg_{P_A}(\mathcal{C}_{A/}) \to
  \mathcal{C}$ preserves filtered colimits by Lemma~\ref{lem:Upres}, so
  under the identifications $P_n A \simeq P_A^n(\id_A)$, valid in
  $\mathcal{C}$, this is precisely the sequence of maps of the statement.
\end{proof}

\begin{construction}\label{constr:PnPm}
  We define natural transformations, for  $m,n\geq 0$
  $$
  \mu_{m,n}\colon P_n \circ P_m \to P_{m+n}
  $$
  recursively as follows.  For the base case $n=0$ (all $m\geq 0$) we take
  $\mu_{m,0} \colon \id_{\mathcal{C}}\circ P_m
  \to P_m$ to be the identity natural transformation.
  Assuming $\mu_{m,n}\colon P_n \circ P_m \to P_{m+n}$ 
  defined, define $\mu_{m,n+1}$ to be the composite
  $$
  P_{n+1} \circ P_m
  \simeq P_m \amalg (P \circ P_n \circ P_m)
    \xrightarrow{\id \amalg P(\mu_{m,n})}
  P_m \amalg P\circ P_{m+n}
  \to
  P_m \amalg P_{m+n+1} \to P_{m+n+1} .
  $$
  Here the second arrow is the sum inclusion $P \circ P_{m+n} \to
  \id_{\mathcal{C}} \amalg (P \circ P_{m+n}) \simeq P_{m+n+1}$ and the third
  adds the natural transformation $P_m \to P_{m+n+1}$ which is a composite
  of $f_k$ in the defining chain.
\end{construction}

\begin{lemma}\label{lem:mu++}
  The natural transformations $\mu_{m,n}$ are compatible with the 
  transition maps $f_k$ in both variables. More precisely,
  we have commutative diagrams for all $m,n\geq 0$
  \[
  \begin{tikzcd}
	P_n \circ P_m \ar[r, "\mu_{m,n}"] \ar[d, "f_{n+1}P_{m}"'] 
	\arrow[dr, phantom, "{(1)}"]
	& P_{m+n} \ar[d, "f_{m+n+1}"] \\
	P_{n+1}\circ P_m \ar[r, "\mu_{m,n+1}"'] & P_{m+n+1},
  \end{tikzcd}
  \qquad
  \begin{tikzcd}
	P_n \circ P_m \ar[r, "\mu_{m,n}"] \ar[d, "P_n(f_{m+1})"']
	\arrow[dr, phantom, "{(2)}"]
	& P_{m+n} \ar[d, "f_{m+1+n}"] \\
	P_n\circ P_{m+1} \ar[r, "\mu_{m+1,n}"'] & P_{m+1+n}   .
  \end{tikzcd}
  \]
\end{lemma}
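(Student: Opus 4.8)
The plan is to prove the two squares separately, each by induction on $n$ for all $m$ simultaneously. The structural fact that drives everything is that right whiskering by $P_m$ preserves coproducts of functors, since coproducts in functor $\infty$-categories are computed objectwise; thus the recursive definition of $P_{n+1}$ gives
\[
P_{n+1} \circ P_m \simeq P_m \amalg (P \circ P_n \circ P_m).
\]
Consequently the space of natural transformations out of $P_{n+1}\circ P_m$ into any fixed functor splits as the product of the mapping spaces out of the two summands. To compare two such transformations — and in particular to produce the homotopy filling a naturality square — it therefore suffices to compare their restrictions to $P_m$ and to $P\circ P_n\circ P_m$ independently. This decomposition is exactly what keeps the $\infty$-categorical bookkeeping tractable: the coherent filling of each square is furnished by the universal property of the coproduct, not constructed by hand.

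For the base cases $n=0$, both $\mu_{m,0}$ and $\mu_{m+1,0}$ are identities, and upon unwinding that $\mu_{m,1}$ restricted to the summand $P_m$ of $P_1\circ P_m$ is exactly $f_{m+1}$, each square collapses to the tautology $f_{m+1}\simeq f_{m+1}$.

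For the inductive step of the first square I would decompose $P_{n+1}\circ P_m = P_m \amalg (P\circ P_n\circ P_m)$ and compare the two composites $f_{m+n+2}\circ \mu_{m,n+1}$ and $\mu_{m,n+2}\circ f_{n+2}$ (the latter whiskered by $P_m$) on each summand. On the summand $P_m$ both restrict to the evident composite of transition maps $P_m\to P_{m+n+2}$ along the defining chain, and these agree by construction of the third arrow in Construction~\ref{constr:PnPm}. On the summand $P\circ P_n\circ P_m$, unwinding the recursive definitions of $\mu_{m,n+1}$, $\mu_{m,n+2}$, and of $f_{n+2}=\id\amalg P(f_{n+1})$ and $f_{m+n+2}=\id\amalg P(f_{m+n+1})$, both composites are seen to factor as $P$ applied to the two sides of the first square at level $n$, followed by the inclusion $P\circ P_{m+n+1}\hookrightarrow P_{m+n+2}$. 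This summand thus reduces precisely to the inductive hypothesis, whiskered by $P$ and postcomposed with a coproduct inclusion.

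The inductive step of the second square proceeds identically: after the same source decomposition, the $P_m$-summand matches by the definition of the composite-of-$f$'s map (the only arithmetic being $(m+1)+n=m+(n+1)$ to line up the indices), while on $P\circ P_n\circ P_m$ one recognizes, via $f_{m+n+1}=f_{(m+1)+n}$, the left-hand side of the inductive hypothesis for the second square, again after applying $P$ and including. In both inductions the sole content beyond the definitions is this single appeal to the previous case. The main obstacle is therefore not any difficult computation but organizing the recursion so that the summand-wise homotopies assemble compatibly into a genuine filling; as noted, this is automatic once one observes that mapping out of $P_m\amalg(P\circ P_n\circ P_m)$ splits as a product of mapping spaces, so a homotopy of the whole is nothing but a pair of homotopies of the restrictions.
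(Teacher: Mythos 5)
Your proposal is correct and follows essentially the same route as the paper: induction on $n$, using the decomposition $P_{n+1}\circ P_m\simeq P_m\amalg(P\circ P_n\circ P_m)$ so that the square at level $n+1$ splits into $P$ applied to the inductive hypothesis on one summand plus evidently commuting pieces on the other. The paper writes this as an explicit pasting of three squares rather than invoking the splitting of mapping spaces out of a coproduct, but the content is identical.
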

\begin{proof}
  For both statements, the proof is by induction on $n$, the $n=0$ 
  cases being trivial.  Assuming we have the square (1), we also have
  \[
  \begin{tikzcd}
	P_{n+1}\circ P_m \ar[r, equal] \ar[d, "f_{n+2}P_{m}"'] &[-3ex]
	P_m \amalg P (P_n P_m) \ar[d, "\id \amalg P(f_{n+1}P_{m})"'] 
	\ar[rr, "\id\amalg P(\mu_{m,n})"] 	\arrow[drr, phantom, "{(3)}"]
&& 
	P_m \amalg P P_{m+n} \ar[d, "\id\amalg P(f_{m+n+1})"] \ar[r] &
	P_m \amalg P_{m+n+1} \ar[d] \ar[r] &
	P_{m+n+1} \ar[d, "f_{m+n+2}"] \\
	P_{n+2}\circ P_m \ar[r, equal] &
	P_m \amalg P(P_{n+1} P_m) 
	\ar[rr, "\id\amalg P(\mu_{m,n+1})"'] 
&&
	P_m \amalg P P_{m+n+1} \ar[r] &
	P_m \amalg P_{m+n+2} \ar[r] &
	P_{m+n+2}
  \end{tikzcd}
  \]
  Indeed, square (3) commutes by induction (the right summand is
  $P$ applied to the square (1) of the induction hypothesis) and
  the two following squares obviously commute.  The horizontal composites
  are precisely $\mu_{m,n+1}$ and $\mu_{m,n+2}$.

  Assuming that square (2) commutes, we also have
  \[
  \begin{tikzcd}
	P_{n+1}\circ P_m \ar[r, equal] \ar[d, "P_n(f_{m+1})"'] &[-3ex]
	P_m \amalg P (P_n P_m) \ar[d, "f_{m+1} \amalg PP_n(f_{m+1})"'] 
	\ar[rr, "\id\amalg P(\mu_{m,n})"] \arrow[drr, phantom, "{(4)}"]
	&& 		
	P_m \amalg P P_{m+n} \ar[d, "f_{m+1}\amalg P(f_{m+1+n})"] \ar[r] 
	&[-3ex]
	P_m \amalg P_{m+n+1} \ar[d] \ar[r] &[-3ex]
	P_{m+n+1} \ar[d, "f_{m+n+2}"] \\
	P_{n+1}\circ P_{m+1} \ar[r, equal] &
	P_{m+1} \amalg P(P_n P_{m+1}) 
	\ar[rr, "\id\amalg P(\mu_{m+1,n})"'] &&
	P_{m+1} \amalg P P_{m+1+n} \ar[r] &
	P_{m+1} \amalg P_{m+n+2} \ar[r] &
	P_{m+n+2}
  \end{tikzcd}
  \]
  Indeed, the square (4) commutes by induction (the right summand is
  $P$ applied to the square (2) of the induction hypothesis) and
  the two following squares obviously commute.  The horizontal composites
  are precisely $\mu_{m,n+1}$ and $\mu_{m+1,n+1}$.  
\end{proof}

\begin{lemma}\label{lem:mun}
	The colimit of the sequence of natural transformations
	$$
	\mu_{m,n} \colon P_n \circ P_m \to P_{m+n}
	$$
	for $m\to \infty$ is naturally identified with the maps
	$$
	e_n \colon P_n \circ \freemonad{P} \to \freemonad{P} 
	$$
	of Lemma~\ref{lem:en}.
\end{lemma}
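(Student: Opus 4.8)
The plan is to prove the identification by induction on $n$, using that both sides are defined by the same recursion and that the colimit over $m$ commutes with every step of it. The essential preliminary observation is that each $P_n$ preserves filtered colimits: this is clear for $P_0=\id_{\mathcal{C}}$, and if it holds for $P_n$ then $P\circ P_n$ preserves filtered colimits (as $P$ does by hypothesis), while $\id_{\mathcal{C}}\amalg(P\circ P_n)$ preserves them as well, since binary coproducts are themselves colimits and colimits commute with colimits. Consequently $\colim_m(P_n\circ P_m)\simeq P_n\circ\colim_m P_m\simeq P_n\circ\freemonad{P}$ by Proposition~\ref{propn:freemonadind}, and similarly $\colim_m P_{m+n}\simeq\freemonad{P}$ by cofinality of the shift $m\mapsto m+n$. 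Moreover, square~(2) of Lemma~\ref{lem:mu++} says precisely that $\{\mu_{m,n}\}_m$ is a morphism of sequential diagrams in $m$, so its colimit is a well-defined map $P_n\circ\freemonad{P}\to\freemonad{P}$; this is the map to be compared with $e_n$.

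For the base case $n=0$, the transformations $\mu_{m,0}$ are identities by Construction~\ref{constr:PnPm}, so their colimit is $\id_{\freemonad{P}}$, which is $e_0$ by Lemma~\ref{lem:en}. For the inductive step I would apply $\colim_m$ termwise to the recursive definition of $\mu_{m,n+1}$, writing the last two arrows of that definition as a single map and displaying it as
\[ P_{n+1}\circ P_m \simeq P_m \amalg (P\circ P_n\circ P_m) \xrightarrow{\ \id\amalg P(\mu_{m,n})\ } P_m \amalg (P\circ P_{m+n}) \xrightarrow{\ (g_m,\,\iota_m)\ } P_{m+n+1}, \]
where $g_m\colon P_m\to P_{m+n+1}$ is the displayed composite of transition maps $f_k$ and $\iota_m\colon P\circ P_{m+n}\to\id_{\mathcal{C}}\amalg(P\circ P_{m+n})=P_{m+n+1}$ is the coproduct inclusion. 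Since coproducts and $P$ both commute with the filtered colimit over $m$, the source identifies with $\freemonad{P}\amalg(P\circ P_n\circ\freemonad{P})=P_{n+1}\circ\freemonad{P}$, the first arrow becomes $\id\amalg P(e_n)$ by the inductive hypothesis (using that $P$ applied to a morphism of filtered diagrams induces $P$ of the induced map on colimits), and the middle term becomes $\freemonad{P}\amalg(P\circ\freemonad{P})$. It then remains to identify $\colim_m(g_m,\iota_m)$ with the map $\id\,|\,\phi$ of Lemma~\ref{lem:en}.

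This last identification is the crux. On the first summand, $g_m$ is a composite of the transition maps of the colimit system, so the coprojections satisfy $\lambda_{m+n+1}\circ g_m\simeq\lambda_m$; hence $\colim_m g_m$ is the canonical self-identification $\freemonad{P}\isoto\freemonad{P}$, i.e.\ the identity. On the second summand, $\iota_m$ is the inclusion of $P\circ P_{m+n}$ into $P_{m+n+1}=\id_{\mathcal{C}}\amalg(P\circ P_{m+n})$; passing to the colimit (reindexing cofinally by $k=m+n$) yields the inclusion of $P\circ\freemonad{P}$ into $\id_{\mathcal{C}}\amalg(P\circ\freemonad{P})\simeq\freemonad{P}$, which under the equivalence $\eta\amalg\phi$ of the lemma preceding Proposition~\ref{propn:freemonadind} is exactly $\phi$. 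Thus $\colim_m(g_m,\iota_m)\simeq(\id\,|\,\phi)$, and therefore $\colim_m\mu_{m,n+1}\simeq(\id\,|\,\phi)\circ(\id\amalg P(e_n))=e_{n+1}$, completing the induction.

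The main obstacle I anticipate is the $\infty$-categorical bookkeeping of coherence: one must verify that the colimit of the commuting recursion diagrams for $\mu_{m,n+1}$ really is the recursion diagram for $e_{n+1}$, that is, that all the pointwise identifications above are natural and assemble compatibly across $m$. The compatibility in $m$ is provided by square~(2) of Lemma~\ref{lem:mu++}, and the interchange of $\colim_m$ with coproducts and with $P$ is the formal content of ``colimits commute with colimits'' together with the hypothesis that $P$ preserves filtered colimits; once these are in place, the identification of $\colim_m\iota_m$ with $\phi$ is forced by the very construction of the equivalence $\freemonad{P}\simeq\colim_m P_m$.
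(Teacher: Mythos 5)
Your proof is correct and follows essentially the same route as the paper's: induction on $n$, applying $\colim_m$ termwise to the recursive definition of $\mu_{m,n+1}$ and matching the result with the recursive description of $e_{n+1}$ from Lemma~\ref{lem:en}. You supply more detail than the paper does at the final step (identifying the colimit of the coproduct inclusions and transition maps with $\id\,|\,\phi$), but the argument is the same one.
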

\begin{proof}
  Induction on $n$.  The case $n=0$ is clear, as both maps are the identity.
  Suppose $\displaystyle \colim_{m\to\infty} \mu_{m,n} \simeq e_n$. 
  Write down $\mu_{m,n+1}$
  according to the recursive definition:
  $$
  P_{n+1} P_m  
  \simeq P_m \amalg (P  P_n  P_m)
    \xrightarrow{\id \amalg P(\mu_{m,n})}
  P_m \amalg P P_{m+n}
  \to
  P_m \amalg P_{m+n+1} \to P_{m+n+1} .
  $$
  Take the colimit as $m\to\infty$ to find
  $$
  P_{n+1} \freemonad{P}
  \simeq \freemonad{P} \amalg (P P_n \freemonad{P})
    \xrightarrow{\id \amalg P(e_n)}
  \freemonad{P} \amalg P\circ \freemonad{P}
  \to
  \freemonad{P} \amalg \freemonad{P} \to \freemonad{P} ,
  $$
  by induction, using that all the functors commute with filtered 
  colimits.  But this is precisely the recursive description of
  $e_{n+1}$, given in \ref{lem:en}.
\end{proof}

\begin{propn}\label{propn:mudescript}
  The multiplication $\mu\colon \freemonad{P}\circ
  \freemonad{P}\to\freemonad{P}$ is the colimit, for $n\to \infty$, of the
  natural transformations $\mu_n \colon P_n \circ P_n \to P_{2n}$ of
  Construction~\ref{constr:PnPm}.  The unit $\eta\colon \id_{\mathcal{C}}
  \to \freemonad{P}$ is the colimit of the sequence of natural
  transformations $\eta_n \colon \id_{\mathcal{C}} \to P_n$.
\end{propn}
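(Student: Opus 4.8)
The plan is to derive both assertions from the presentation $\freemonad{P} \simeq \colim_n P_n$ of Proposition~\ref{propn:freemonadind}, combined with the colimit descriptions of the unit and counit of the monadic adjunction $F \dashv U$ of Proposition~\ref{propn:algPmonadic}. I will dispose of the unit first, since it is essentially immediate, and then treat the multiplication, where the real content lies.

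For the unit, recall that $\eta \colon \id_{\mathcal{C}} \to \freemonad{P}$ is the unit of the adjunction $F\dashv U$, and that in the proof of Proposition~\ref{propn:freemonadind} it is identified with the leg $\pi_0$ of the colimit cocone under $\freemonad{P} \simeq \colim_n P_n$. Since $P_0 = \id_{\mathcal{C}}$, the canonical maps $\eta_n \colon \id_{\mathcal{C}} = P_0 \to P_n$ are exactly the transition maps of the defining chain, and their colimit (over the filtered category $\mathbb{N}$, on which the constant diagram $\id_{\mathcal{C}}$ has colimit $\id_{\mathcal{C}}$) is this very leg $\pi_0 = \eta$. This gives $\eta \simeq \colim_n \eta_n$ with no further work.

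For the multiplication I would start from the standard formula $\mu = U\epsilon F$ for the monad induced by $F \dashv U$, so that at an object $X$ the component $\mu_X$ is the underlying map $U\epsilon_{F(X)}$ of the counit at the free algebra $F(X)$, whose underlying object is $\freemonad{P}(X)$. Applying Lemma~\ref{lem:en} to the $P$-algebra $F(X)$ expresses $\mu$ as $\colim_n e_n$, the colimit of the natural transformations $e_n \colon P_n \circ \freemonad{P} \to \freemonad{P}$; and Lemma~\ref{lem:mun} in turn identifies each $e_n$ with $\colim_m \mu_{m,n}$. Hence $\mu$ is the colimit over $\mathbb{N}\times\mathbb{N}$ of the doubly-indexed system $\mu_{m,n}\colon P_n\circ P_m \to P_{m+n}$, whose compatibility in both variables is exactly the content of Lemma~\ref{lem:mu++}.

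The decisive step is then to replace this colimit over $\mathbb{N}\times\mathbb{N}$ by a colimit along the diagonal. Since the diagonal $\mathbb{N} \to \mathbb{N}\times\mathbb{N}$, $n \mapsto (n,n)$, is cofinal, the colimit over $\mathbb{N}\times\mathbb{N}$ agrees with $\colim_n \mu_{n,n} = \colim_n \mu_n$, which yields the claimed formula once the source and target are identified. On sources, $\colim_{(m,n)} P_n \circ P_m \simeq \colim_n \bigl(P_n \circ \colim_m P_m\bigr) \simeq \freemonad{P}\circ\freemonad{P}$, using that each $P_n$ preserves filtered colimits (which holds because $P$ does) and that precomposition with $\freemonad{P}$ preserves colimits; on targets, the addition functor $\mathbb{N}\times\mathbb{N}\to\mathbb{N}$ is cofinal, so $\colim_{(m,n)} P_{m+n} \simeq \colim_k P_k \simeq \freemonad{P}$. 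I expect the main obstacle to be purely bookkeeping: verifying that the cofinality equivalences identifying source and target intertwine the two cocones --- the iterated one coming from $\mu = \colim_n\colim_m \mu_{m,n}$ and the diagonal one defining $\colim_n\mu_n$ --- so that the resulting map is genuinely $\mu$ and not merely some map between the correct functors. Here the two commuting squares of Lemma~\ref{lem:mu++} are precisely what guarantee that the system is defined over $\mathbb{N}\times\mathbb{N}$ in the first place, making the cofinality arguments applicable.
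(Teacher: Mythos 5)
Your proposal is correct and follows essentially the same route as the paper: both arguments combine Lemma~\ref{lem:en} (identifying $\mu$ with $\colim_n e_n$ via the counit at a free algebra), Lemma~\ref{lem:mun} (identifying $e_n$ with $\colim_m \mu_{m,n}$), and the compatibilities of Lemma~\ref{lem:mu++}, with the passage between the diagonal colimit and the iterated colimit over $\mathbb{N}\times\mathbb{N}$ handled by cofinality. You spell out the cofinality of the diagonal and of addition, and the identification of source and target, more explicitly than the paper does, and your treatment of the unit is the same easy observation the paper leaves implicit.
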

\begin{proof}
  Thanks to the compatibilities with the transition maps of
  Lemma~\ref{lem:mu++}, we can compute the colimit first by holding $n$ 
  fixed. Lemma~\ref{lem:mun} tells us that for each $n$ fixed, the 
  $m\to\infty$ colimit
  is the map $e_n\colon P_n \freemonad{P} \to \freemonad{P}$, and 
  Lemma~\ref{lem:en} then tells us that the $n\to\infty$ colimit of those
  is the monad multiplication.
\end{proof}

\subsection{Free Monads in Families}\label{subsec:freemndfamily}
In this section we will extend our results on free monads to the
setting of monads and endofunctors on varying base \icats{}. In
\S\ref{subsec:monad},
we review results of \cite{adjmnd} that
lead to a commutative triangle
\opctriangle{\Mnd(\CATI)_{\lax}}{\End(\CATI)_{\lax}}{\CatI,}{}{}{}
where $\Mnd(\CATI)_{\lax}$ is an \icat{} of monads and lax morphisms,
$\End(\CATI)_{\lax}$ is an \icat{} of endofunctors and lax morphisms, and
the functors to $\CatI$ send monads and endofunctors to the \icat{}
they are defined on. This is given fibrewise by the forgetful functor $\Alg(\End(\mathcal{C}))^{\op}
 \to \End(\mathcal{C})^{\op}$.
We then show
in Corollary~\ref{cor:Mndradjcocart} that if we restrict to the
subcategory $\CatI^{\radj}$ where the morphisms are right adjoint
functors, we get a commutative diagram
\opctriangle{\Mnd(\CATI)_{\lax}^{\radj}}{\End(\CATI)_{\lax}^{\radj}}{\CatI^{\radj},}{}{}{}
where the two downward functors are cocartesian fibrations, the
horizontal functor preserves cocartesian morphisms, and the right-hand
functor is also a cartesian fibration. We need to introduce notation
for a restricted version of these \icats{}:
\begin{defn}
  Let $\LCatISR$ be the \icat{} of compact projectively generated
  \icats{} (in the sense of Definition~\ref{defn:siftpres} --- but see
  Remark~\ref{rmk:siftpres}), with morphisms the functors that are
  right adjoints and preserve sifted colimits. Then we define
  $\LEndLSR$ to be the full subcategory of the pullback
  of $\LEnd(\LCATI)_{\lax} \to \LCatI$ to $\LCatISR$ spanned by
  the endofunctors that preserve sifted colimits; we also define
  $\LMndLSR$ similarly.
\end{defn}

\begin{propn}\label{propn:Mndsigmacocart}
  There is a commuting diagram
  \opctriangle{\LMndLSR}{\LEndLSR}{\LCatISR,}{}{}{}
  where the two downward functors are cocartesian fibrations and the
  horizontal functor preserves cocartesian morphisms. Moreover, both
  the downward functors are also cartesian fibrations.
\end{propn}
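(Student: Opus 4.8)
The plan is to deduce everything from Corollary~\ref{cor:Mndradjcocart} by two formal reductions followed by one genuinely new input. First I would form the pullback of the triangle $\Mnd^{\colax,\radj}\to\End^{\colax,\radj}\to\CatI^{\radj}$ of Corollary~\ref{cor:Mndradjcocart} along the (non-full) inclusion $\LCatISR\hookrightarrow\CatI^{\radj}$. Since the base change of a (co)cartesian fibration is again one, with (co)cartesian edges detected in the original, and since preservation of cocartesian edges is stable under pullback, this yields a triangle over $\LCatISR$ in which both legs are cocartesian fibrations, the horizontal map preserves cocartesian edges, and the right-hand leg is moreover cartesian. I would then pass to the full subcategories $\LEndCSR$ and $\LMndCSR$ of the pulled-back total \icats{} spanned by the sifted-colimit-preserving objects, invoking the standard principle that the restriction of a (co)cartesian fibration to a full subcategory of its total \icat{} is again a (co)cartesian fibration --- with the same (co)cartesian edges, compatibly with the horizontal map --- provided the (co)cartesian lifts of morphisms having one endpoint in the subcategory land back in the subcategory.

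So the first two reductions reduce to checking that the transport functors preserve the property ``preserves sifted colimits.'' Over a morphism $g\colon\mathcal{C}\to\mathcal{D}$ of $\LCatISR$, with left adjoint $f\dashv g$, the two transport functors relating the endofunctor fibres are the conjugations $F\mapsto gFf$ and $G\mapsto fGg$, while the cocartesian transport on monads has underlying endofunctor $T\mapsto gTf$ (because the horizontal functor preserves cocartesian edges). Each such composite is built from $f$, which, being a left adjoint, preserves all colimits, and from $g$, which by definition of $\LCatISR$ preserves sifted colimits (and, being a right adjoint, all limits); consequently every one of these transports carries sifted-colimit-preserving functors to sifted-colimit-preserving functors. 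This already establishes that $\LEndCSR\to\LCatISR$ is bicartesian, that $\LMndCSR\to\LCatISR$ is cocartesian, and that the horizontal functor preserves cocartesian edges --- i.e.\ everything except the cartesianness of $\LMndCSR\to\LCatISR$.

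The hard part will be this last assertion, since Corollary~\ref{cor:Mndradjcocart} supplies cartesian lifts only for endofunctors, and the forgetful functor $\Mnd\to\End$ does \emph{not} preserve cartesian edges: the conjugate endofunctor $fTg$ underlying an endofunctor-level cartesian lift of a monad $T$ admits no natural monad structure (there is no unit or multiplication available). I would instead use the criterion that a cocartesian fibration is cartesian exactly when each cocartesian transport functor between fibres admits an adjoint on the side dictated by the opposite-category conventions of \S\ref{sec:monad} (cf.\ Warning~\ref{warn:end}), and produce that adjoint by the adjoint functor theorem. The fibres $\Mnd^{\sigma}(\mathcal{C})$ are presentable by Corollary~\ref{cor:mndsigmapres}, the objects of $\LCatISR$ being sifted-presentable. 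The cocartesian transport $g_!\colon\Mnd^{\sigma}(\mathcal{C})\to\Mnd^{\sigma}(\mathcal{D})$ is accessible and preserves limits: its composite with the limit-creating, sifted-colimit-preserving forgetful functor (Proposition~\ref{propn:Mndhasifted}) agrees with $g_!$ on underlying endofunctors, namely $T\mapsto gTf$, which preserves limits because $g$ does and preserves filtered colimits because $g$ preserves sifted colimits and $f$ is a left adjoint. The adjoint functor theorem then furnishes the required adjoint, hence the cartesian lifts.

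The two delicate points in this final step are the bookkeeping of the various opposite categories --- needed to confirm that the adjoint functor theorem delivers the adjoint on the correct side, so that it really encodes cartesian rather than cocartesian lifts --- and the verification that these fibrewise adjoints assemble into genuine cartesian morphisms of the total \icat{}. For the latter I would check the relevant base-change compatibilities against the already-established cocartesian structure, so that the fibrewise right adjoints organize into a cleaving. I expect the op-bookkeeping to be the main obstacle, since it is where the colax, $\sigma$, and $\radj$ conventions interact most subtly.
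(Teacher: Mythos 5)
Your proposal is correct and follows essentially the same route as the paper: the cocartesian claims and the cartesianness of $\LEndCSR\to\LCatISR$ are read off from Corollary~\ref{cor:Mndradjcocart} by checking that the explicit transport functors preserve the sifted-colimit condition, and the cartesianness of $\LMndCSR\to\LCatISR$ is obtained by producing a left adjoint to the cocartesian pushforward $\phi^{\op}_{!}$ on $\Mnd^{\sigma}$ via presentability (Corollary~\ref{cor:mndsigmapres}) and the adjoint functor theorem, reducing to the endofunctor level through the limit- and sifted-colimit-preserving monadic forgetful functor. The only cosmetic difference is that the paper justifies the endofunctor-level accessibility and limit-preservation by citing the bicartesianness of $\LEndCSR\to\LCatISR$ rather than computing $T\mapsto gTf$ directly, and your final worry about assembling fibrewise adjoints into cartesian lifts is already discharged by the standard criterion that a cocartesian fibration is cartesian precisely when its pushforwards admit right adjoints.
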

\begin{proof}
  It is immediate from Corollary~\ref{cor:Mndradjcocart} that the
  downward functors are cocartesian fibrations and the horizontal
  functor preserves cocartesian morphisms: from the
  description of the cocartesian morphisms there it follows
  that these full subcategories contain the cocartesian morphisms
  whose sources lie in the subcategories. Similarly, the right-hand
  functor is a cartesian fibration. 

  It remains to prove that the functor $\LMndLSR \to \LCatISR$ is a
  cartesian fibration. Since we know it is a cocartesian fibration,
  this is equivalent to showing that the functor
  $\phi^{\op}_{!} \colon \Mnd^{\sigma}(\mathcal{C}) \to
  \Mnd^{\sigma}(\mathcal{D})$
  corresponding to the cocartesian pushforward along a map
  $\phi \colon \mathcal{C} \to \mathcal{D}$ has a left adjoint. Since
  the forgetful functor $\LMndLSR \to \LEndLSR$ preserves cocartesian
  morphisms we have a commutative square
  \csquare{\Mnd^{\sigma}(\mathcal{C})}{\Mnd^{\sigma}(\mathcal{D})}{\End^{\sigma}(\mathcal{C})}{\End^{\sigma}(\mathcal{D}).}{\phi^{\op}_!}{}{}{\phi^{\op}_!}
  Here we know that $\Mnd^{\sigma}(\mathcal{C})$ and
  $\Mnd^{\sigma}(\mathcal{D})$ are presentable \icats{} by
  Corollary~\ref{cor:mndsigmapres}. By the adjoint functor theorem it
  therefore suffices to show that the functor $\phi^{\op}_{!}$ is
  accessible and preserves limits. But the forgetful functor
  $\Mnd^{\sigma}(\mathcal{D}) \to \End^{\sigma}(\mathcal{D})$ is a
  monadic right adjoint by Corollary~\ref{cor:freesiftedmonad} and
  preserves sifted colimits by
  Proposition~\ref{propn:Mndhasifted}. Thus limits and sifted colimits
  are computed in $\End^{\sigma}(\mathcal{D})$, so it suffices
  to show that the composite
  $\Mnd^{\sigma}(\mathcal{C}) \to \End^{\sigma}(\mathcal{D})$
  preserves limits and is accessible. The same observations apply to
  the forgetful functor
  $\Mnd^{\sigma}(\mathcal{C}) \to \End^{\sigma}(\mathcal{C})$, so in
  the end it is enough to prove that
  $\phi_{!}^{\op} \colon \End^{\sigma}(\mathcal{C})
  \to \End^{\sigma}(\mathcal{D})$
  preserves limits and is accessible, or equivalently that this is a
  right adjoint. But this follows from the projection
  $\LEndLSR \to \LCatISR$
  being a cartesian and cocartesian fibration (or from the explicit
  description of the cocartesian morphisms).
\end{proof}

\begin{propn}\label{propn:freemonadomega}
  The forgetful functor $\LMndLSR
  \to \LEndLSR$ has a right adjoint that
  commutes with the projections to $\LCatISR$, which takes an
  endofunctor to its free monad.
\end{propn}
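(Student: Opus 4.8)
The plan is to construct the right adjoint one fibre at a time and then assemble the fibrewise right adjoints by means of the fibrewise criterion for relative adjunctions. First I would identify the fibres of the forgetful functor $U \colon \LMndCSR \to \LEndCSR$. By construction of these \icats{} (and the conventions recorded in Warning~\ref{warn:end}), over an object $\mathcal{C} \in \LCatISR$ the functor $U$ restricts to $U_{\mathcal{C}} \colon \Mnd^{\sigma}(\mathcal{C})^{\op} \to \End^{\sigma}(\mathcal{C})^{\op}$, the opposite of the usual forgetful functor $\Mnd^{\sigma}(\mathcal{C}) \to \End^{\sigma}(\mathcal{C})$. Since every object of $\LCatISR$ is sifted-presentable, it has sifted colimits and binary coproducts, so Corollary~\ref{cor:freesiftedmonad} applies and exhibits the free-monad functor as a left adjoint to this usual forgetful functor. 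Passing to opposites turns this into an adjunction $U_{\mathcal{C}} \dashv F_{\mathcal{C}}^{\op}$, where $F_{\mathcal{C}}^{\op}$ is the opposite of the free-monad functor; in particular each $U_{\mathcal{C}}$ is a left adjoint, and its right adjoint sends an endofunctor to its free monad.

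With this fibrewise picture in hand, I would invoke Proposition~\ref{propn:Mndsigmacocart}: the projections $p \colon \LMndCSR \to \LCatISR$ and $q \colon \LEndCSR \to \LCatISR$ are cocartesian fibrations, and $U$ preserves cocartesian morphisms. These are precisely the hypotheses of the fibrewise criterion for relative adjunctions, \cite{HA}*{Proposition 7.3.2.6}: a functor over the base that preserves cocartesian edges is a left adjoint relative to the base if and only if it is a left adjoint on each fibre. Applying this to $U$ shows that $U$ is a left adjoint relative to $\LCatISR$, and hence admits a right adjoint $G \colon \LEndCSR \to \LMndCSR$ relative to $\LCatISR$. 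By the very definition of a relative adjunction we have $p \circ G \simeq q$, so $G$ commutes with the projections to $\LCatISR$, and the unit and counit of the adjunction lie over identities of $\LCatISR$.

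Finally I would check that $G$ is indeed the free-monad construction. A relative adjunction restricts on each fibre to an ordinary adjunction, so the fibre $G_{\mathcal{C}}$ is the right adjoint to $U_{\mathcal{C}}$ computed above, namely $F_{\mathcal{C}}^{\op}$; under the identification of this fibre with $\End^{\sigma}(\mathcal{C})^{\op}$, it sends an endofunctor to its free monad. Thus $G$ is a right adjoint to $U$ that commutes with the projections to $\LCatISR$ and takes an endofunctor to its free monad, as required.

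The main obstacle is bookkeeping rather than substance: because the fibres of $\LMndCSR$ and $\LEndCSR$ are the \emph{opposites} of the \icats{} of monads and endofunctors, one must track the variance carefully, so that the forgetful functor appears fibrewise as a \emph{left} adjoint (with the free monad as its right adjoint) and the relative criterion then produces a right adjoint to $U$. The only genuine inputs are the two facts supplied earlier: that $U$ preserves cocartesian edges (Proposition~\ref{propn:Mndsigmacocart}) and that the fibrewise free monads exist (Corollary~\ref{cor:freesiftedmonad}); everything else is the formal machinery of \cite{HA}*{\S 7.3.2}.
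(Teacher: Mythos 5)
Your proposal is correct and follows essentially the same route as the paper: reduce to the fibrewise statement via (the dual of) \cite{HA}*{Proposition 7.3.2.6}, identify the fibre functor with $\Mnd^{\sigma}(\mathcal{C})^{\op} \to \End^{\sigma}(\mathcal{C})^{\op}$, and invoke the fibrewise existence of free monads (the paper cites Corollary~\ref{cor:freemonadC} where you cite Corollary~\ref{cor:freesiftedmonad}, but these supply the same adjunction). Your extra care about variance and the identification of the relative right adjoint with the free-monad construction is consistent with, and slightly more explicit than, the paper's argument.
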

\begin{proof}
  By (the dual of) \cite{HA}*{Proposition 7.3.2.6} it suffices to show
  that the functor on fibres over each
  $\mathcal{C} \in \LCatISR$ has a right
  adjoint. But this functor can be identified with the forgetful
  functor
  $\Mnd^\sigma(\mathcal{C})^{\op} \to \End^\sigma(\mathcal{C})^{\op}$,
  so this follows from Corollary~\ref{cor:freemonadC}.
\end{proof}

We now wish to prove that this free monad adjunction is in fact monadic
(which we saw fibrewise in Corollary~\ref{cor:freesiftedmonad}):
\begin{thm}\label{thm:siftedfreemonadmonadic}
  The forgetful functor $\LMndLSR \to \LEndLSR$ has a right adjoint that
  commutes with the projections to $\LCatISR$, and
  the resulting adjunction is comonadic.
\end{thm}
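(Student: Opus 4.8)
The right adjoint is already provided by Proposition~\ref{propn:freemonadomega}, so writing $\mathcal{U}\colon \LMndCSR \to \LEndCSR$ for the forgetful functor and $\mathcal{G}$ for its right adjoint, we have an adjunction $\mathcal{U} \dashv \mathcal{G}$ in which $\mathcal{U}$ is a left adjoint; ``comonadic'' thus refers to $\mathcal{U}$. The plan is to verify the hypotheses of the dual of the Barr--Beck--Lurie theorem \cite{HA}*{Theorem 4.7.3.5}: it suffices to show that $\mathcal{U}$ is conservative and that $\LMndCSR$ admits limits of $\mathcal{U}$-split cosimplicial objects which are preserved by $\mathcal{U}$.

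For conservativity, I would use that $\mathcal{U}$ is a functor over $\LCatISR$ whose restriction to the fibre over $\mathcal{C}$ is the opposite of the forgetful functor $\Mnd^{\sigma}(\mathcal{C}) \to \End^{\sigma}(\mathcal{C})$, which is conservative by \cite{HA}*{Lemma 3.2.2.6} (as already used in Corollary~\ref{cor:freesiftedmonad}). Given a morphism of $\LMndCSR$ sent by $\mathcal{U}$ to an equivalence, its image in $\LCatISR$ is an equivalence since $\mathcal{U}$ commutes with the projections; factoring the morphism as a cocartesian edge (which $\mathcal{U}$ preserves by Proposition~\ref{propn:Mndsigmacocart}) followed by a fibrewise one, and invoking fibrewise conservativity, then shows it is an equivalence.

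The substance is the second condition. Given a $\mathcal{U}$-split cosimplicial object $X^\bullet$, its image $\mathcal{U}X^\bullet$ is a split cosimplicial object of $\LEndCSR$ and hence admits an absolute limit; projecting along $\LEndCSR \to \LCatISR$, the underlying cosimplicial \icat{} is again split, with an absolute limit $\mathcal{C}_\infty$. Using that $\LMndCSR \to \LCatISR$ is also a cartesian fibration (Proposition~\ref{propn:Mndsigmacocart}), I would transport $X^\bullet$ along the limit cone into the fibre $\Mnd^{\sigma}(\mathcal{C}_\infty)^{\op}$, check that the transported diagram remains split after applying the fibrewise forgetful functor, and then appeal to fibrewise comonadicity (the opposite of the monadicity in Corollary~\ref{cor:freesiftedmonad}), which provides the limit in the fibre $\Mnd^{\sigma}(\mathcal{C}_\infty)^{\op}$ together with its preservation by the forgetful functor. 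This fibre limit should then be the desired limit in $\LMndCSR$, preserved by $\mathcal{U}$. Equivalently, one can assemble these fibrewise equivalences into a single statement: the comparison functor $\LMndCSR \to \coalg_{\mathcal{U}\mathcal{G}}(\LEndCSR)$ is a morphism over $\LCatISR$ that is a fibrewise equivalence by Corollary~\ref{cor:freesiftedmonad}, and hence an equivalence.

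I expect the main obstacle to be exactly this limit computation. Limits in a (co)cartesian fibration are governed by cartesian transport, whereas Proposition~\ref{propn:Mndsigmacocart} only guarantees that $\mathcal{U}$ preserves \emph{co}cartesian edges; the delicate point is therefore to show that $\mathcal{U}$ carries the fibrewise limit over $\mathcal{C}_\infty$ to the correct limit of $\mathcal{U}X^\bullet$. The way through is to exploit that the limit on the base is \emph{absolute} (the relevant cosimplicial diagram is split), so that its cone, together with the extra codegeneracies of the splitting, controls the cartesian lifts and exhibits the total limit as a relative limit that $\mathcal{U}$ necessarily respects despite preserving only cocartesian edges. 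Dually phrased, the same obstacle reappears as the need to equip $\coalg_{\mathcal{U}\mathcal{G}}(\LEndCSR)$ with a fibration structure over $\LCatISR$ whose fibres are the fibrewise coalgebra \icats{}, so that fibrewise equivalence of the comparison functors implies a global equivalence.
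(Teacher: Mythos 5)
Your overall skeleton is right, and the first two ingredients are fine: the right adjoint is the one from Proposition~\ref{propn:freemonadomega}, and your conservativity argument (factor a morphism as a cocartesian edge over an equivalence in $\LCatISR$ followed by a fibrewise morphism, then invoke fibrewise conservativity as in Corollary~\ref{cor:freesiftedmonad}) is exactly what happens inside the paper's Proposition~\ref{propn:fibadjmnd}. You have also put your finger on precisely the right difficulty --- but the step where you resolve it is a gap, not a proof. The mechanism you propose (cartesian-transport $X^\bullet$ into the fibre over $\mathcal{C}_\infty$, take the fibrewise limit, and argue that absoluteness of the base limit forces $\mathcal{U}$ to respect it) does not go through as stated. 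The forgetful functor genuinely fails to preserve cartesian edges: over a right adjoint $R$ with left adjoint $L$ the cartesian pullback of an endofunctor $Q$ is $LQR$ (Proposition~\ref{propn:EndMndloccoC}), which is not a monad, so the cartesian pullback of monads (which exists only by an adjoint functor theorem argument in Proposition~\ref{propn:Mndsigmacocart}) cannot have it as underlying endofunctor. Consequently $\mathcal{U}$ applied to the cartesian transport of $X^\bullet$ is \emph{not} the cartesian transport of $\mathcal{U}X^\bullet$, so your intermediate step ``check that the transported diagram remains split after applying the fibrewise forgetful functor'' has no evident meaning; and absoluteness of the base limit gives existence of the limit upstairs but says nothing about its preservation by $\mathcal{U}$, since only $\mathcal{U}X^\bullet$ is split, not $X^\bullet$ itself.

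What the paper does instead is pass to opposite categories and prove a general fibrewise monadicity criterion, Proposition~\ref{propn:fibadjmnd}, whose proof is a double-resolution argument in the style of Wolff's proof of monadicity for enriched categories. Given the $U$-split simplicial object $\phi$, one forms the bisimplicial object $FT^{\bullet}U\phi$ with $T = UF$, augmented by $\phi$ in one direction and by row-wise colimits in the other. The columns are fibrewise bar resolutions, hence colimit diagrams; the rows become split after applying $U$, because $F$ is a left adjoint and so commutes with the colimits of the split diagrams $T^{n}U\phi$; and the interchange Lemma~\ref{lem:bisimpcond} transfers the colimit statement from columns to rows. The whole point of this detour is to avoid ever asking $U$ to preserve the edges along which the (co)limit is transported: only fibrewise preservation of geometric realizations (Proposition~\ref{propn:Mndhasifted}) and left-adjointness of $F$ enter. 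Your closing alternative, a fibrewise comparison with $\coalg_{\mathcal{U}\mathcal{G}}(\LEndCSR)$, runs into the same obstruction in a different guise, as you yourself note. So: correct diagnosis, but the cure still has to be supplied, and it is the bisimplicial argument (or something equivalent to it) that you would need to write out.
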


To prove this  we  will use the following general observation:
\begin{propn}\label{propn:fibadjmnd}
  Suppose we have a diagram
  \opctriangle{\mathcal{C}}{\mathcal{D}}{\mathcal{B}}{U}{p}{q}
  where
  \begin{enumerate}[(1)]
  \item $p$ and $q$ are cocartesian fibrations,
  \item for $b \in \mathcal{B}$ the \icat{} $\mathcal{C}_{b}$ has
    geometric realizations,
  \item for $f \colon b \to b'$ in $\mathcal{B}$, the cocartesian
    pushforward functor $f_{!} \colon \mathcal{C}_{b} \to
    \mathcal{C}_{b'}$ preserves geometric realizations,
  \item $U$ has a left adjoint $F \colon \mathcal{D} \to \mathcal{C}$
    such that $pF \simeq q$,
  \item the adjunction $F \dashv U$ restricts in each fibre to an
    adjunction $F_{b} \dashv U_{b}$,
  \item $U_{b} \colon \mathcal{C}_{b} \to \mathcal{D}_{b}$ preserves
    geometric realizations,
  \item $U_{b}$ detects equivalences for every $b \in \mathcal{B}$.
  \end{enumerate}
  Then the adjunction $F \dashv U$ is monadic.
\end{propn}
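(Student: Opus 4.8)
The plan is to deduce monadicity from the Lurie--Barr--Beck theorem \cite{HA}*{Theorem 4.7.3.5}, reducing both of its hypotheses to the fibrewise data supplied by (2)--(7). Since a left adjoint $F$ is already provided by (4), it remains to show that $U$ is conservative and that $\mathcal{C}$ admits geometric realizations of $U$-split simplicial objects which $U$ preserves. The passage between the total categories and their fibres will be carried out using the calculus of relative colimits for cocartesian fibrations, as in \cite{HTT}*{\S 4.3.1}.

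First I would check conservativity. Let $\alpha\colon x\to y$ be a morphism of $\mathcal{C}$ with $U\alpha$ invertible. Because $qU=p$ by (4), its image $p\alpha=qU\alpha$ is invertible in $\mathcal{B}$, so after transporting along this equivalence we may assume $\alpha$ lies in a single fibre $\mathcal{C}_b$. By (5) the functor $U$ restricts on $\mathcal{C}_b$ to $U_b$, so $U_b\alpha\simeq U\alpha$ is invertible, whence $\alpha$ is invertible by (7). Thus $U$ detects equivalences.

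Next, let $A_\bullet\colon \Delta^{\op}\to\mathcal{C}$ be a $U$-split simplicial object. Then $UA_\bullet$ is split, so by \cite{HA}*{Remark 4.7.2.3} its colimit is absolute; applying $q$ and using $qU=p$, the simplicial object $pA_\bullet$ is split and has a colimit $b_\infty:=\colim pA_\bullet$ in $\mathcal{B}$ preserved by every functor. I would then push $A_\bullet$ into the fibre over $b_\infty$: choosing cocartesian lifts of the structure maps $pA_n\to b_\infty$ yields a simplicial object $\widetilde{A}_\bullet\colon \Delta^{\op}\to\mathcal{C}_{b_\infty}$ together with a comparison $A_\bullet\to\widetilde{A}_\bullet$, and by the relative-colimit criterion of \cite{HTT}*{\S 4.3.1} a geometric realization of $\widetilde{A}_\bullet$ formed inside $\mathcal{C}_{b_\infty}$ is simultaneously a colimit of $A_\bullet$ in $\mathcal{C}$. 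Such a fibre colimit exists by (2), giving existence of $\colim A_\bullet$. For preservation one runs the same computation for $q$: the colimit $\colim UA_\bullet$ is the fibre colimit in $\mathcal{D}_{b_\infty}$ of the pushforward of $UA_\bullet$, while $U(\colim A_\bullet)=U_{b_\infty}(\colim\widetilde{A}_\bullet)\simeq\colim U_{b_\infty}\widetilde{A}_\bullet$ by (6). It then suffices to identify $U_{b_\infty}\widetilde{A}_\bullet$ with the pushforward of $UA_\bullet$, that is, to know that $U$ carries the chosen cocartesian edges $A_n\to\widetilde{A}_n$ to cocartesian edges, so that the two relative-colimit computations match up.

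The step I expect to be the main obstacle is precisely this last compatibility. Conservativity and the fibrewise existence of geometric realizations are immediate from (7) and (2), but upgrading the fibrewise preservation (6) to genuine preservation by $U$ hinges on threading the splitness of $UA_\bullet$ through the relative-colimit calculus and then verifying that pushing forward into the fibre $\mathcal{C}_{b_\infty}$ commutes with $U$. This is exactly where $U$ must respect the cocartesian pushforwards, and where condition (3)---ensuring that these pushforwards are themselves compatible with the formation of geometric realizations---does the real work; it is the delicate point on which the whole reduction to the fibres turns. Once these identifications are established, the two hypotheses of \cite{HA}*{Theorem 4.7.3.5} hold and we conclude that the adjunction $F\dashv U$ is monadic.
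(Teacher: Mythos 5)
Your treatment of conservativity and of the \emph{existence} of colimits of $U$-split simplicial objects matches the paper's: reduce to a fibre using the factorization of a morphism into a cocartesian edge followed by a fibre morphism, and compute the colimit of $A_\bullet$ as a fibre colimit over $b_\infty = \colim pA_\bullet$ via the relative-colimit machinery of \cite{HTT}*{\S 4.3.1} (this is where (1)--(3) enter, exactly as in the paper's appeal to \cite{HTT}*{Corollary 4.3.1.11} and \cite{HTT}*{Proposition 4.3.1.5}). The problem is the preservation step, and it is precisely the point you flag as ``the main obstacle'': your argument needs $U$ to carry the chosen cocartesian edges $A_n \to \widetilde{A}_n$ to $q$-cocartesian edges, i.e.\ it needs the canonical comparison $f_!^{\mathcal{D}}U_b \to U_{b'}f_!^{\mathcal{C}}$ to be an equivalence. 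None of hypotheses (1)--(7) supplies this; in particular (3) does not help, since it only says the pushforwards $f_!^{\mathcal{C}}$ preserve geometric realizations \emph{within} $\mathcal{C}$ and says nothing about their compatibility with $U$. A fibrewise relative adjunction gives no Beck--Chevalley property for the cocartesian pushforwards in general, so as written the reduction of ``$U$ preserves the colimit'' to (6) does not go through.

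The paper avoids this issue entirely by a different device (following Wolff's argument for enriched categories): it extends $A_\bullet$ to a bisimplicial diagram $\Phi$ whose rows are the bar resolutions $FT^{n}UA_\bullet$ and whose columns are the free resolutions $FT^{\bullet}U(A_n) \to A_n$. Every row has a split underlying diagram (splitness is preserved by any functor, so these colimits are absolute), and every column lies in a single fibre of $p$ (because $qT \simeq q$ and $pF \simeq q$), where one may invoke (2) and, after applying $U$, the fibrewise preservation (6). Lemma~\ref{lem:bisimpcond} then transfers the colimit statement from the columns to the last row, yielding $U(\colim A_\bullet) \simeq \colim UA_\bullet$ without ever commuting $U$ past a cocartesian pushforward. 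To repair your proof you would either have to add the hypothesis that $U$ preserves cocartesian edges (which would weaken the proposition, though it happens to hold in the paper's application) or replace your final identification by an argument of this bar-resolution type.
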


\begin{remark}
  It follows from these assumptions that each adjunction $F_{b} \dashv
  U_{b}$ is monadic.
\end{remark}

\begin{remark}
  Our proof of this result follows the argument used to prove
  monadicity for enriched categories in \cite{WolffVCat}.
\end{remark}

\begin{lemma}\label{lem:bisimpcond}
  Let $\mathcal{C}$ be an \icat{} with all small colimits, and suppose $F \colon (\simp^{\op})^{\triangleright} \times
  (\simp^{\op})^{\triangleright} \to \mathcal{C}$ is a diagram such
  that for every $[n] \in \simp$ the diagrams $F|_{\{[n]\} \times
    (\simp^{\op})^{\triangleright}}$ and
  $F|_{(\simp^{\op})^{\triangleright} \times \{[n]\} }$ are colimit
  diagrams. Then the following are equivalent:
  \begin{enumerate}[(i)]
  \item the restriction $F_{\{\infty\} \times
    (\simp^{\op})^{\triangleright}}$ is a colimit diagram,
  \item  the restriction $F_{(\simp^{\op})^{\triangleright}
      \times \{\infty\}}$ is a colimit diagram,
  \item the commutative square
    \nolabelcsquare{F([0],[0])}{F(\infty,[0])}{F([0],\infty)}{F(\infty,\infty)}
    is a pushout,
  \item $F$ is the left Kan extension of its restriction to
    $\simp^{\op}\times \simp^{\op}$.
  \end{enumerate}
\end{lemma}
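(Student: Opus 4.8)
The plan is to reduce all four conditions to the single colimit statement
\[
(\star)\colon\quad F(\infty,\infty)\;\simeq\;\colim_{([m],[n])\in\simp^{\op}\times\simp^{\op}} F([m],[n]),
\]
and to handle (iii) separately. Write $j\colon \simp^{\op}\times\simp^{\op}\hookrightarrow (\simp^{\op})^{\triangleright}\times(\simp^{\op})^{\triangleright}$ for the inclusion and set $F_0 := F\circ j$. First I would reinterpret the hypotheses as a left Kan extension condition. Since $\infty$ is terminal in $(\simp^{\op})^{\triangleright}$, the comma category indexing the pointwise extension $\operatorname{Lan}_j F_0$ at a boundary object $(\infty,[n])$ is $\simp^{\op}\times(\simp^{\op})_{/[n]}$, and because $(\simp^{\op})_{/[n]}$ has the terminal object $\mathrm{id}_{[n]}$ this colimit collapses to $\colim_{[m]} F([m],[n])$; the object $([m],\infty)$ is symmetric. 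Thus the assumption that every row $F|_{(\simp^{\op})^{\triangleright}\times\{[n]\}}$ and every column $F|_{\{[n]\}\times(\simp^{\op})^{\triangleright}}$ is a colimit diagram says exactly that $F$ agrees with $\operatorname{Lan}_j F_0$ at every object other than $(\infty,\infty)$. Hence (iv) is equivalent to the remaining requirement that $F$ also be left Kan extended at $(\infty,\infty)$, which is precisely $(\star)$.

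Next I would establish (i)$\Leftrightarrow$(ii)$\Leftrightarrow$(iv) by Fubini for iterated colimits. Evaluating the double colimit one coordinate at a time and using the hypotheses to compute the inner colimits gives
\[
\colim_{\simp^{\op}\times\simp^{\op}} F_0 \;\simeq\; \colim_{[n]}\colim_{[m]} F([m],[n]) \;\simeq\; \colim_{[n]} F(\infty,[n]),
\]
and symmetrically $\colim_{\simp^{\op}\times\simp^{\op}} F_0 \simeq \colim_{[m]} F([m],\infty)$. Condition (i) asserts $F(\infty,\infty)\simeq \colim_{[n]} F(\infty,[n])$, which by the first identification is exactly $(\star)$, and (ii) is $(\star)$ via the second. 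So (i), (ii) and (iv) all coincide with $(\star)$; this is the clean and robust core of the argument.

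It remains to bring in (iii), and this is the step I expect to be the main obstacle. The plan is to compare the total colimit with the boundary of the diagram. Deleting the terminal corner, the punctured product $Q := \big((\simp^{\op})^{\triangleright}\times(\simp^{\op})^{\triangleright}\big)\setminus\{(\infty,\infty)\}$ is the pushout of $\simp^{\op}\times(\simp^{\op})^{\triangleright}$ and $(\simp^{\op})^{\triangleright}\times\simp^{\op}$ along $\simp^{\op}\times\simp^{\op}$; these are sieve inclusions, so the pushout is a homotopy pushout of \icats{} and $\colim$ carries it to a pushout in $\mathcal{C}$, each leg collapsing along its terminal $\infty$-direction. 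Together with $(\star)$ this exhibits $F(\infty,\infty)$ as a pushout assembled from the augmented rows and columns, namely $\colim_{[m]}F([m],\infty)\amalg_{\colim_{\simp^{\op}\times\simp^{\op}}F_0}\colim_{[n]}F(\infty,[n])$. The genuinely delicate part is to cut this boundary pushout down to the four-corner square of (iii): one must show that the comparison map from the corner pushout $F(\infty,[0])\amalg_{F([0],[0])}F([0],\infty)$ to $\colim_{\simp^{\op}\times\simp^{\op}}F_0$ is an equivalence. This is exactly where the special structure of $\simp^{\op}$ — its initial object $[0]$ and its weak contractibility — has to be exploited, via a cofinality argument threading $[0]$ through each simplicial direction; pinning down this reduction, and verifying that it is what makes (iii) equivalent to $(\star)$, is the crux of the proof.
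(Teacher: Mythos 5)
Your reduction of (i), (ii) and (iv) to the single condition $F(\infty,\infty)\simeq\colim_{\simp^{\op}\times\simp^{\op}}F_{0}$ is correct and is precisely what the paper compresses into ``functoriality of left Kan extensions and some easy cofinality arguments'': the row/column hypotheses say that $F$ is left Kan extended from $\simp^{\op}\times\simp^{\op}$ at every object except possibly $(\infty,\infty)$, and Fubini identifies each of (i), (ii), (iv) with the remaining extension condition at that corner. Note that these three are the only conditions the paper ever invokes (both applications in the proof of Proposition~\ref{propn:fibadjmnd} pass between (i), (ii) and (iv)).

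The gap is in (iii), and the step you defer as ``the crux'' cannot be carried out: the comparison map from the corner pushout $F(\infty,[0])\amalg_{F([0],[0])}F([0],\infty)$ to $\colim_{\simp^{\op}\times\simp^{\op}}F_{0}$ is \emph{not} an equivalence in general, so no cofinality argument threading $[0]$ through the two simplicial directions exists. Concretely, take $\mathcal{C}=\mathcal{S}$ and $F([m],[n])=Y_{m}\times Z_{n}$ for simplicial spaces $Y,Z$, extended by colimits in each variable, so that $F([m],\infty)=Y_{m}\times|Z|$, $F(\infty,[n])=|Y|\times Z_{n}$ and $F(\infty,\infty)=|Y|\times|Z|$, where $|Y|:=\colim_{\simp^{\op}}Y$; since products in $\mathcal{S}$ preserve colimits in each variable and commute with the sifted colimit over $\simp^{\op}$, the hypotheses and conditions (i), (ii), (iv) all hold. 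For $Y=Z$ the simplicial circle ($Y_{0}=*$, $|Y|\simeq S^{1}$) the square in (iii) is $*\to S^{1}$, $*\to S^{1}$ with lower-right corner $S^{1}\times S^{1}$, and its pushout is $S^{1}\vee S^{1}\not\simeq S^{1}\times S^{1}$. So condition (iii) is genuinely not equivalent to the others as stated (morally, the four-corner square only records the $1$-skeleton of the bisimplicial diagram, whereas $\colim F_{0}$ is its full realization); your instinct that this was the delicate point was right, but the correct conclusion is that this clause of the lemma should be dropped or replaced by the pushout square on $\colim F_{0}$, $\colim_{m}F([m],\infty)$, $\colim_{n}F(\infty,[n])$, $F(\infty,\infty)$, which your boundary decomposition of the punctured square does establish. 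Since the paper never uses (iii), nothing downstream is affected.
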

\begin{proof}
  Functoriality of left Kan extensions and some easy cofinality arguments.
\end{proof}

\begin{proof}[Proof of Proposition~\ref{propn:fibadjmnd}]
  By \cite{HA}*{Theorem 4.7.3.5} it remains to show that $\mathcal{C}$
  has colimits of $U$-split simplicial diagrams, these colimits
  are preserved by $U$, and $U$ detects equivalences.

  Let us first check that $U$ detects equivalences. Suppose therefore
  that $f \colon c \to c'$ is a morphism in $\mathcal{C}$ such that $Uf$ is an
  equivalence in $\mathcal{D}$. Then $g := qUf \simeq pf$ is an equivalence
  in $\mathcal{B}$. We can factor $f$ as $c \xto{\phi} g_{!}c \xto{f'}
  c'$ where $\phi$ is a $p$-cocartesian morphism over $g$ and $f'$ is
  a morphism in the fibre $\mathcal{C}_{p(c')}$. But then $\phi$ is an
  equivalence since it is cocartesian over the equivalence $g$, and
  $f'$ is an equivalence since by assumption (7) $U$ detects equivalences
  fibrewise over $\mathcal{B}$.

  Using assumptions (1), (2), and (3) we see by \cite{HTT}*{Corollary
    4.3.1.11} that $p$-colimits of simplicial 
  diagrams exist in $\mathcal{C}$. Moreover, by \cite{HTT}*{Proposition
    4.3.1.5} a $p$-colimit diagram whose underlying diagram in
  $\mathcal{B}$ is a colimit is a colimit diagram in
  $\mathcal{C}$. Thus $\mathcal{C}$ has colimits for simplicial
  diagrams whose underlying diagrams in $\mathcal{B}$ have colimits
  --- in particular, this holds for $U$-split simplicial diagrams,
  since by definition their underlying diagrams in $\mathcal{B}$ can
  be extended to  split simplicial diagrams, which are colimit
  diagrams by \cite{HTT}*{Lemma 6.1.3.16}. 

  Moreover, since $\simp^{\op}$ is weakly contractible, it
  follows from \cite{HTT}*{Proposition 4.3.1.10} that if $\phi \colon
  \simp^{\op} \to \mathcal{C}$ is a diagram in $\mathcal{C}_{b}$ for
  some $b$, then its colimit in $\mathcal{C}_{b}$ is also a colimit in
  $\mathcal{C}$.

  Suppose then that $\phi \colon  \simp^{\op} \to \mathcal{C}$ is a
  $U$-split diagram. Using the monad $T := UF$ we can extend this to a
  diagram $\Phi \colon \simp^{\op} \times
  (\simp^{\op})^{\triangleright} \to \mathcal{C}$, where
  $\Phi|_{\simp^{\op} \times \{\infty\}} \simeq \phi$ and
  $\Phi|_{\simp^{\op} \times \{[n]\}} \simeq FT^{n}U\phi$. The
  underlying diagram in $\mathcal{B}$ of each row $\Phi|_{\simp^{\op}
    \times \{[n]\}}$ is split, hence the rows all have colimits in
  $\mathcal{C}$. Let $\overline{\Phi} \colon (\simp^{\op})^{\triangleright}
  \times (\simp^{\op})^{\triangleright} \to \mathcal{C}$ denote the
  left Kan extension of $\Phi$. Observe that the column
  $\Phi|_{\{[n]\} \times (\simp^{\op})^{\triangleright}}$ is a free
  resolution of $\phi([n])$ in the fibre $\mathcal{C}_{b}$. It is
  therefore a colimit diagram in $\mathcal{C}_{b}$, and hence in
  $\mathcal{C}$. Thus by Lemma~\ref{lem:bisimpcond} the last column
  $\Phi|_{\{\infty\} \times (\simp^{\op})^{\triangleright}}$ is also a
  colimit diagram.

  Now consider $U\overline{\Phi}$. The rows
  $(U\overline{\Phi})|_{(\simp^{\op})^{\triangleright} \times \{[n]\}}$ can
  all be extended to split simplicial diagrams, and are therefore
  colimits in $\mathcal{D}$. The columns $(U\overline{\Phi})|_{\{[n]\}
    \times (\simp^{\op})^{\triangleright}}$ can similarly be extended
  to split simplicial diagrams (in a single fibre) so they are also
  colimit diagrams. Finally, the underlying diagram in $\mathcal{B}$
  is constant, so the last column $(U\overline{\Phi})|_{\{\infty\} \times
    (\simp^{\op})^{\triangleright}}$ lies in a single fibre; it is
  therefore a colimit in $\mathcal{D}$ since $U$ preserves geometric
  realizations in each fibre. Applying Lemma~\ref{lem:bisimpcond}
  again we conclude that the last row
  $(U\overline{\Phi})|_{(\simp^{\op})^{\triangleright} \times \{\infty\}}$
  is also a colimit, i.e.~the colimit of $\phi$ is preserved by $U$.
\end{proof}

\begin{proof}[Proof of Theorem~\ref{thm:siftedfreemonadmonadic}]
  We will apply Proposition~\ref{propn:fibadjmnd} to the commutative
  triangle
  \opctriangle{\LMndLSRop}{\LEndLSRop}{\LCatISRop.}{U}{p}{q}
  From our previous results the required conditions are satisfied
  here:
  \begin{enumerate}[(1)]
  \item $p$ and $q$ are cocartesian fibrations by
    Proposition~\ref{propn:Mndsigmacocart}.
  \item The cocartesian pushforward functors are left adjoints, since
    $p$ and $q$ are also cartesian fibrations, and so
    preserve all colimits.
  \item $U$ has a left adjoint $F$ such that $pF \simeq q$ by
    Proposition~\ref{propn:freemonadomega}.
  \item This adjunction restricts to an adjunction in each fibre by
    construction.
  \item The fibrewise right adjoints preserve sifted colimits by
    Proposition~\ref{propn:Mndhasifted}. 
  \item The fibrewise right adjoints detect equivalences since they
    are monadic by Corollary~\ref{cor:freesiftedmonad}.
  \end{enumerate}
\end{proof}

\section{Analytic Monads and $\infty$-Operads}
\label{sec:analytic-trees}

\subsection{Analytic Monads}\label{subsec:anmnd}
An \emph{analytic monad} is a monad on $\mathcal{S}_{/I}$ whose
underlying endofunctor is analytic, and whose unit and multiplication
transformations are cartesian. In other words, it is an associative
algebra in $\AnEnd(I)$ under composition. We write $\AnMnd(I)$ for the
\icat{} of analytic monads on $\mathcal{S}_{/I}$, defined as the
subcategory of $\Mnd(\mathcal{S}_{/I})$ with analytic monads as
objects and the morphisms of monads whose underlying maps in
$\End(\mathcal{S}_{/I})$ are cartesian transformations as
morphisms. Similarly, we define an \icat{} $\AnMnd$ over $\mathcal{S}$
of analytic monads over varying base spaces as a subcategory of the
pullback of $\LMndLop \to \LCatI^{\op}$ along
$\mathcal{S}_{/\blank}^{*} \colon \mathcal{S} \to \LCatI^{\op}$.  We
then get a commutative diagram
\opctriangle{\AnMnd}{\AnEnd}{\mathcal{S}.}{}{}{}
We will now use our results on free monads to show that the forgetful
functor $\AnMnd \to \AnEnd$ has a left adjoint, and the resulting
adjunction is monadic.

To prove this, we will first show that the free monad on an analytic
endofunctor is analytic:
\begin{propn}\label{propn:Pbar-anal}
  The free monad $\freemonad{P}$ on an analytic endofunctor $P$ is
  again an analytic endofunctor, and its structure maps $\mu \colon
  \freemonad{P} \circ \freemonad{P} \to \freemonad{P}$ and $\eta
  \colon \id \to \freemonad{P}$ are cartesian natural transformations.
\end{propn}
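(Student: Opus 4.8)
The plan is to leverage the explicit description of $\freemonad{P}$ as a sequential colimit. By Proposition~\ref{propn:freemonadind} we have $\freemonad{P}\simeq\colim_{n}P_n$ with $P_0=\id$ and $P_{n+1}=\id\amalg(P\circ P_n)$ as in Definition~\ref{defn:Pn}. First I would record the exactness inputs, all of which come from the fact that each $\mathcal{S}_{/I}$ is an $\infty$-topos, so that finite coproducts are disjoint and universal (descent, \cite{HTT}*{Theorem~6.1.3.9}). From disjointness and universality one extracts: coproducts commute with weakly contractible limits; a coproduct of cartesian transformations is again cartesian; every coproduct injection $G\to F\amalg G$ is cartesian; and a ``fold'' map $[\alpha,\id]\colon A\amalg B\to B$ is cartesian whenever $\alpha\colon A\to B$ is (the relevant naturality square becomes a pullback precisely because the square of $\alpha$ is). Together with the observation that $P$, being analytic hence polynomial, preserves pullbacks (Theorem~\ref{thm:pnchar}), so that $P$ carries cartesian transformations to cartesian transformations, these are the only ingredients needed.

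\emph{Each $P_n$ is analytic and each $f_n$ is cartesian.} I would argue by induction on $n$. The base case $P_0=\id$ is analytic, and $f_1\colon\id\to\id\amalg P$, a coproduct injection, is cartesian. For the inductive step, $P\circ P_n$ is analytic because preservation of sifted colimits and of weakly contractible limits is stable under composition, and $\id\amalg(P\circ P_n)$ is analytic since coproducts preserve sifted colimits and, by descent, weakly contractible limits. For the transition, $f_{n+1}=\id\amalg P(f_n)$, where $P(f_n)$ is cartesian because $P$ preserves pullbacks and $f_n$ is cartesian by induction, and the coproduct with $\id$ is then cartesian.

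\emph{$\freemonad{P}$ is analytic.} Since the $f_n$ are cartesian, $\{P_n\}$ is a diagram in $\PolyFun(I,I)$, so by Proposition~\ref{propn:colimpolyispoly} its colimit $\freemonad{P}$ is polynomial, represented by $I\from\colim_n E_n\to\colim_n B_n\to I$. To upgrade this to analyticity I would invoke Proposition~\ref{propn:analfibre}(ii): cartesianness of the $f_n$ means the squares relating $(E_n\to B_n)$ to $(E_{n+1}\to B_{n+1})$ are pullbacks, so by descent the fibres of $\colim_n E_n\to\colim_n B_n$ coincide with the (finite, discrete) fibres of the individual middle maps $p_n$; hence the middle map has finite discrete fibres. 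Equivalently, $\AnEnd(I)$ is closed under this filtered colimit inside $\PolyFun(I,I)$.

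\emph{Cartesianness of $\eta$ and $\mu$.} By Proposition~\ref{propn:mudescript}, $\eta=\colim_n\eta_n$ and $\mu=\colim_n\mu_{n,n}$. Each $\eta_n=f_n\circ\dots\circ f_1$ is a composite of cartesian maps, hence cartesian. For the $\mu_{m,n}$ I would induct using Construction~\ref{constr:PnPm}: $\mu_{m,0}=\id$ is cartesian, and $\mu_{m,n+1}$ is the composite of $\id\amalg P(\mu_{m,n})$, a coproduct injection, and a fold map $[\alpha,\id]$ along the cartesian transition composite $\alpha\colon P_m\to P_{m+n+1}$; each factor is cartesian by the closure properties recorded above, so $\mu_{m,n+1}$ is cartesian. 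Finally, since the $\eta_n$ (resp. $\mu_{n,n}$) are morphisms of $\PolyFun(I,I)$ compatible with the transition maps (Lemma~\ref{lem:mu++}), the induced maps $\eta$ and $\mu$ between the colimits are again morphisms of $\PolyFun(I,I)$, i.e. cartesian.

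The main obstacle is the analyticity step: a sequential colimit of functors each preserving weakly contractible limits need \emph{not} preserve them, so analyticity of $\freemonad{P}$ cannot be read off from exactness of the colimit directly. The resolution — passing to representing diagrams and using that cartesian transition maps keep the fibres of the middle map finite and discrete, via descent — is exactly the point where the polynomial calculus and the $\infty$-topos descent property carry the argument.
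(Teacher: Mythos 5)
Your proof is correct and follows essentially the same route as the paper's: the colimit description $\freemonad{P}\simeq\colim_n P_n$, the inductive verification that each $P_n$ is analytic with cartesian transition maps (this is the paper's Lemma~\ref{lem:Ph-anal}), Proposition~\ref{propn:colimpolyispoly} for polynomiality of the colimit, and the inductive cartesianness of the $\mu_{m,n}$ combined with descent (Proposition~\ref{propn:Cartcolim}) for the filtered colimit. The only (inessential) variation is the analyticity step, where you argue directly via the fibres of the middle map using Proposition~\ref{propn:analfibre}(ii), while the paper instead invokes the slice description $\AnFun\simeq\PolyFun_{/\mathbf{E}}$ of Corollary~\ref{cor:Polykappa=topos} (or Lemma~\ref{lem:Pbarsift}); both correctly address the point you flag, namely that preservation of weakly contractible limits is not obviously stable under sequential colimits of functors.
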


We will prove this using the colimit description of the free monad 
$$
\freemonad{P} \simeq \colim_n P_n
$$
from Proposition~\ref{propn:freemonadind}. The key observation is the following:
\begin{lemma}\label{lem:Ph-anal}
  Each endofunctor $P_n$ is analytic, and the transition maps
  $f_{n+1}\colon P_n \to P_{n+1}$ are cartesian.
\end{lemma}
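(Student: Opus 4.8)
The plan is to prove both assertions simultaneously by induction on $n$, using the recursive description of Definition~\ref{defn:Pn}: writing $\mathcal{C} = \mathcal{S}_{/I}$, we have $P_0 = \id_{\mathcal{C}}$, $P_{n+1} = \id_{\mathcal{C}} \amalg (P \circ P_n)$, $f_1$ the coproduct inclusion, and $f_{n+1} = \id_{\mathcal{C}} \amalg P(f_n)$. For the base case, $P_0 = \id_{\mathcal{C}}$ is represented by $I \gets I \xrightarrow{\id} I \to I$, whose middle map is an equivalence and hence has finite discrete fibres, so $P_0$ is analytic by Proposition~\ref{propn:analfibre}(ii).

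The engine of the induction is three closure properties. First, a composite of analytic functors is analytic, since preservation of sifted colimits and of weakly contractible limits are each stable under composition; hence $P \circ P_n$ is analytic whenever $P_n$ is. Second, a binary coproduct of analytic functors is analytic: by Theorem~\ref{thm:polyispolyfun}, Corollary~\ref{cor:polycolim} and Proposition~\ref{propn:colimpolyispoly}, colimits in $\PolyFun(I,I)$ are computed pointwise and correspond to pointwise colimits of representing diagrams, so if $Q$ is represented by $I \gets E \xrightarrow{p} B \to I$ then $\id_{\mathcal{C}} \amalg Q$ is represented by $I \gets I \amalg E \xrightarrow{\id \amalg p} I \amalg B \to I$; since coproducts in $\mathcal{S}$ are disjoint, $\id \amalg p$ has finite discrete fibres as soon as $p$ does, so $\id_{\mathcal{C}} \amalg Q$ is again analytic by Proposition~\ref{propn:analfibre}(ii). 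Third, whiskering preserves cartesianness: the naturality square of $P\alpha$ at a morphism $g$ is the image under $P$ of the naturality square of $\alpha$ at $g$, and $P$ preserves pullbacks because it is analytic and pullbacks are conical --- hence weakly contractible --- limits (Lemma~\ref{lem:conical=wc}); so $P\alpha$ is cartesian whenever $\alpha$ is.

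With these in hand the inductive step is formal. Analyticity of $P_{n+1} = \id_{\mathcal{C}} \amalg (P \circ P_n)$ follows from the first two properties. For cartesianness, I would record that the coproduct just discussed is the one computed in $\PolyFun(I,I)$, where all morphisms are by definition cartesian transformations; thus binary coproduct is a functor valued in $\PolyFun(I,I)$, sending a pair of cartesian transformations to a cartesian transformation. In particular the coproduct inclusion $f_1\colon \id_{\mathcal{C}} \to \id_{\mathcal{C}} \amalg P$ is cartesian, and, using the third property on the inductive hypothesis that $f_n$ is cartesian, $P(f_n)$ is cartesian, whence $f_{n+1} = \id_{\mathcal{C}} \amalg P(f_n)$, being a coproduct of the cartesian transformations $\id_{\id_{\mathcal{C}}}$ and $P(f_n)$, is cartesian.

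The hard part is really the coproduct step: one needs that coproducts of analytic functors remain analytic and that coproducts of cartesian transformations remain cartesian, and both genuinely rely on $\mathcal{S}$ being an $\infty$-topos, where coproducts are disjoint and colimits universal, so that the finite-discrete-fibre and pullback conditions survive. This content is already packaged in Proposition~\ref{propn:colimpolyispoly} and the equivalence $\Poly \simeq \PolyFun$, so the only remaining care is to match the pointwise coproducts appearing in Definition~\ref{defn:Pn} with the coproducts computed inside $\PolyFun(I,I)$; once that identification is made the induction closes.
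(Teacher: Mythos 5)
Your proof is correct and follows essentially the same induction as the paper's: the base case is immediate, composites and coproducts of analytic functors are analytic, $P$ preserves pullbacks so whiskering preserves cartesianness, and coproducts of cartesian transformations are cartesian. The only difference is in packaging: where you route the coproduct steps through Proposition~\ref{propn:Cartcolim} and the equivalence $\Poly \simeq \PolyFun$, the paper argues directly on the naturality squares, observing that coproducts of cartesian squares are cartesian in an $\infty$-topos --- the same descent input either way.
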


\begin{proof}
  The case $n=0$ is clear since $P_{0} \simeq \id$ is certainly analytic, and
  $f_{1} \colon \id \to \id \amalg P$ is cartesian since for any
  map $s \colon X \to Y$ over $I$, the square \csquare{X}{X \amalg
    PX}{Y}{Y \amalg PY}{}{s}{P_1 s}{} is cartesian, as coproducts of cartesian
  squares are cartesian in an $\infty$-topos. If $P_{n}$ is analytic,
  then $P_{n+1} \simeq \id \amalg (P \circ P_{n})$ is analytic, as
  composites and colimits of analytic functors are analytic, and if
  $f_{n}$ is cartesian, then
  $f_{n+1} \simeq \id \amalg P(f_n)$ is cartesian: the squares
  \nolabelcsquare{X \amalg P(P_{n-1}X)}{X \amalg P(P_{n} X)}{Y \amalg
    P(P_{n-1}Y)}{Y \amalg P(P_{n}Y)}
  are cartesian since $P$ preserves pullbacks, and coproducts of
  cartesian squares are cartesian in an $\infty$-topos. This implies
  the required result by induction.
\end{proof}

\begin{proof}[Proof of Proposition~\ref{propn:Pbar-anal}]
  We have $\freemonad{P} \simeq \colim_n P_n$.  By
  Proposition~\ref{propn:colimpolyispoly}, the colimit of any diagram
  of polynomial functors and cartesian transformations is again a
  polynomial functor (corresponding to the colimit of the associated
  polynomials). Since analytic endofunctors are a slice of polynomial
  endofunctors by Corollary~\ref{cor:Polykappa=topos} (or since we
  know from Lemma~\ref{lem:Pbarsift} that $\freemonad{P}$ preserves
  sifted colimits), the endofunctor $\freemonad{P}$ is therefore analytic.

  According to Proposition~\ref{propn:mudescript}, the multiplication
  $\mu\colon \freemonad{P}\circ \freemonad{P} \to \freemonad{P}$ is
  the colimit of the natural transformations
  $\mu_h \colon P_h \circ P_h \to P_{2h}$ of
  Construction~\ref{constr:PnPm}.  Tracing through the definitions,
  these are constructed from sum inclusions (which are cartesian since
  in slices $\mathcal{S}_{/I}$ sums are disjoint), applying $P$ (which
  preserves cartesianness since it is itself cartesian), and sums of
  cartesian natural transformations, which are again cartesian (since
  $\mathcal{S}_{/I}$ is locally cartesian closed).  So all the natural
  transformations $\mu_h\colon P_h \circ P_h \to P_{2h}$ are
  cartesian.  Finally, a filtered colimit of cartesian natural
  transformations is again cartesian by
  Proposition~\ref{propn:Cartcolim}, 
  so $\mu \simeq \colim_h \mu_h$ is also cartesian.  As to the unit
  $\eta\colon \id \to \freemonad{P}$, by
  Proposition~\ref{propn:mudescript} it is the filtered colimit of the
  natural transformations $\eta_h \colon \id \to P_h$, each being just
  a sum inclusion and hence cartesian.  Thus $\eta$ is again
  cartesian.
\end{proof}

\begin{lemma}\label{lem:freeoncart}
  If $u \colon R' \to R$ is a cartesian natural transformation between
  polynomial endofunctors on $\mathcal{S}_{/I}$, then the induced
  natural transformation
  $\freemonad{u} \colon \freemonad{R}{}' \to \freemonad{R}$ of free
  monads is again cartesian.
\end{lemma}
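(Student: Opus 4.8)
The plan is to reduce the statement to the explicit colimit description of the free monad and an induction on the approximating functors. Recall from Proposition~\ref{propn:freemonadind} that $\freemonad{R} \simeq \colim_{n} R_{n}$ and $\freemonad{R}{}' \simeq \colim_{n} R'_{n}$, where $R_{n}$ and $R'_{n}$ are built recursively as in Definition~\ref{defn:Pn}. Since the construction $P \mapsto P_{n}$ is functorial in the endofunctor $P$, the transformation $u$ induces natural transformations $u_{n} \colon R'_{n} \to R_{n}$, compatible with the transition maps, and (by naturality in $P$ of the equivalence in Proposition~\ref{propn:freemonadind}) their colimit is exactly $\freemonad{u} \colon \freemonad{R}{}' \to \freemonad{R}$. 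The problem thus splits into two parts: first show that each $u_{n}$ is cartesian, and then show that a colimit of cartesian transformations is cartesian.

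For the first part I would argue by induction on $n$. The base case $u_{0} = \id$, viewed as a transformation $\id_{\mathcal{S}_{/I}} \to \id_{\mathcal{S}_{/I}}$, is trivially cartesian. For the inductive step, note that $u_{n+1}$ is the transformation $\id \amalg (u \ast u_{n}) \colon \id \amalg (R' \circ R'_{n}) \to \id \amalg (R \circ R_{n})$, where the first summand carries the identity transformation and $u \ast u_{n} \colon R' \circ R'_{n} \to R \circ R_{n}$ is the horizontal composite. I would factor this horizontal composite as the whiskering $R'(u_{n})$ followed by the whiskering $u_{R_{n}}$. The transformation $R'(u_{n})$ is cartesian because $R'$, being polynomial, preserves weakly contractible limits and in particular pullbacks (Theorem~\ref{thm:pnchar}), so applying $R'$ to the pullback naturality squares of $u_{n}$ again yields pullbacks. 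The transformation $u_{R_{n}}$ is cartesian because its naturality square at a morphism $f$ is the naturality square of $u$ at $R_{n}(f)$, which is a pullback since $u$ is cartesian. A composite of cartesian transformations is cartesian, by pasting of pullback squares, so $u \ast u_{n}$ is cartesian.

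It then remains to handle the coproduct with the identity and the final colimit. The coproduct step is exactly as in Lemma~\ref{lem:Ph-anal}: the naturality squares of $\id \amalg (u \ast u_{n})$ are coproducts of the naturality squares of the identity and of $u \ast u_{n}$, and coproducts of cartesian squares are cartesian in the $\infty$-topos $\mathcal{S}_{/I}$; hence $u_{n+1}$ is cartesian, completing the induction. The same argument (pullback preservation by $R$ and $R'$ together with disjointness of coproducts) shows that the transition maps $R_{n} \to R_{n+1}$ and $R'_{n} \to R'_{n+1}$ are cartesian, so the whole ladder lives in the subcategory of cartesian natural transformations. Finally, since cartesian natural transformations are closed under filtered colimits --- a direct consequence of Proposition~\ref{propn:Cartcolim}, exactly as invoked in the proof of Proposition~\ref{propn:Pbar-anal} --- the colimit $\freemonad{u} \simeq \colim_{n} u_{n}$ is again cartesian.

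The step I expect to be most delicate is not the inductive computation, which is routine, but the bookkeeping needed to invoke Proposition~\ref{propn:Cartcolim}: one must present $(u_{n})_{n}$ as a genuine diagram valued in the $\infty$-category of cartesian natural transformations (which is why the cartesianness of the transition maps is needed) and use that the equivalence $\freemonad{P} \simeq \colim_{n} P_{n}$ of Proposition~\ref{propn:freemonadind} is natural in $P$. A secondary point is the non-finitary case: if $R$ and $R'$ merely preserve $\kappa$-filtered colimits, the sequence $(R_{n})$ should be replaced by the transfinite tower of the Remark following Proposition~\ref{propn:initalg}, and the induction becomes a transfinite induction, with the limit stages again handled by Proposition~\ref{propn:Cartcolim}.
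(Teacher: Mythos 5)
Your proposal is correct and follows essentially the same route as the paper: an induction showing each approximant $u_{n}\colon R'_{n}\to R_{n}$ is cartesian (via $R'$ preserving pullbacks, $u$ being cartesian, and disjointness of coproducts), followed by closure of cartesian transformations under filtered colimits. The paper's proof is terser but identical in substance; your extra care about the transition maps being cartesian so that the tower lives in $\Fun(\mathcal{S}_{/I},\mathcal{S}_{/I})^{\txt{cart}}$ is a reasonable elaboration rather than a deviation.
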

\begin{proof}
  The natural transformation $\freemonad{u}$ is the colimit of the
  sequence of natural transformations $u_h: R'_h \rightarrow R_h$ with
  $u_0 := \id_{\id}$ and $u_{h+1}: R'_{h+1} \to R_{h+1}$ defined as the
  composite
  $$
  \id \amalg (R'\circ R'_h) \xrightarrow{\id \amalg R'(u_h)}
  \id \amalg (R' \circ R_h) \xrightarrow{\id \amalg u}
  \id \amalg (R\circ R_h) .
  $$
  Each $u_h$ is a cartesian natural transformation.  Indeed, $u_0\simeq\id$
  clearly is, and if $u_h$ is then so is $u_{h+1}$ since $R'$ preserves
  pullbacks and $u$ is cartesian.  Finally, since $\freemonad{u}$ is
  the filtered colimit of cartesian natural transformations, it is 
  again cartesian (since pullbacks distribute over filtered colimits).
\end{proof}

\begin{lemma}\label{lem:anadj}\ 
  \begin{enumerate}[(i)]
  \item   If $P$ is an analytic endofunctor, then the unit map $P \to
    \freemonad{P}$ is cartesian.
  \item If $(T,\mu,\eta)$ is an analytic monad, then the counit map
    $\freemonad{T} \to T$ is cartesian.
  \end{enumerate}
\end{lemma}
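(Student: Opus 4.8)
The plan is to prove both statements from the colimit presentation $\freemonad{P} \simeq \colim_n P_n$ of Proposition~\ref{propn:freemonadind}, reducing everything to the cartesianness of the building blocks. The key input is that all transition maps $f_n \colon P_n \to P_{n+1}$ are cartesian (Lemma~\ref{lem:Ph-anal}), so the whole tower lives in $\Fun(\mathcal{S}_{/I}, \mathcal{S}_{/I})^{\txt{cart}}$; by Proposition~\ref{propn:Cartcolim} the colimit is computed there, and in particular the coprojections $\iota_n \colon P_n \to \freemonad{P}$ are themselves cartesian. The one genuinely new observation I would isolate as a sub-lemma is that a copairing of cartesian transformations is cartesian: if $\alpha \colon F \to H$ and $\beta \colon G \to H$ are cartesian, then $\alpha | \beta \colon F \amalg G \to H$ is cartesian. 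This holds because in the $\infty$-topos $\mathcal{S}_{/I}$ coproducts are universal and disjoint, so for $f \colon c \to c'$ the pullback $(Fc' \amalg Gc') \times_{Hc'} Hc$ splits as $(Fc' \times_{Hc'} Hc) \amalg (Gc' \times_{Hc'} Hc) \simeq Fc \amalg Gc$, using cartesianness of $\alpha$ and $\beta$ on each summand.

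For part (i), I would identify the universal map $P \to \freemonad{P}$ with the composite $P \simeq P \circ P_0 \hookrightarrow \id \amalg (P \circ P_0) = P_1 \xrightarrow{\iota_1} \freemonad{P}$, i.e.\ the inclusion of the second summand followed by the (cartesian) coprojection. The summand inclusion is cartesian by disjointness of coproducts --- this is exactly the argument already used for $f_1$ in Lemma~\ref{lem:Ph-anal}, applied to the other summand --- and cartesian transformations are closed under composition (pasting of pullback squares), so the composite is cartesian.

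For part (ii), I would use that the counit $\epsilon \colon \freemonad{T} \to T$, being a monad map, restricts along the coprojections to a compatible family $e_n := \epsilon \circ \iota_n \colon T_n \to T$ satisfying $e_0 = \eta$ and $e_{n+1} = \eta | \bigl(\mu \circ T(e_n)\bigr)$ on $T_{n+1} = \id \amalg (T \circ T_n)$; this mirrors the recursion of Lemma~\ref{lem:en} and follows from the free-monad structure $\freemonad{T} \simeq \id \amalg T\freemonad{T}$ (the lemma preceding Proposition~\ref{propn:freemonadind}) together with compatibility of $\epsilon$ with units and multiplications. I would then show each $e_n$ is cartesian by induction: $e_0 = \eta$ is cartesian since $T$ is an analytic monad; in the inductive step $T(e_n)$ is cartesian because $T$ preserves pullbacks (analytic functors preserve weakly contractible limits), so left-whiskering by $T$ preserves cartesianness, whence $\mu \circ T(e_n)$ is cartesian as $\mu$ is cartesian, and finally $e_{n+1} = \eta | (\mu \circ T(e_n))$ is cartesian by the copairing sub-lemma. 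Passing to the colimit, $\epsilon \simeq \colim_n e_n$ is a filtered colimit of cartesian transformations, hence cartesian by Proposition~\ref{propn:Cartcolim}.

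The main obstacle is bookkeeping rather than conceptual: I expect the delicate point to be pinning down the recursive description of the counit's components $e_n$ in part (ii), since this requires unwinding the monad-map property of $\epsilon$ against the free-monad structure rather than directly quoting Lemma~\ref{lem:en}, which concerns the counit of the Lambek-algebra adjunction and not the free-monad adjunction. The copairing sub-lemma is the one technical ingredient doing real work, but it is a direct consequence of descent in the $\infty$-topos.
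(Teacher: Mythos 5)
Your proof is correct and follows essentially the same route as the paper: part (i) is the sum inclusion into $P_1$ followed by the cartesian colimit coprojection, and part (ii) is the same induction showing that each $e_h$ (the paper's $\varepsilon_h$, with the identical recursion $\varepsilon_{h+1} = \eta \mid (\mu \circ T(\varepsilon_h))$) is cartesian, using that $T$ preserves pullbacks and that $\mu$, $\eta$ and sum inclusions are cartesian, followed by descent (Proposition~\ref{propn:Cartcolim}) for the colimit. The only cosmetic difference is that the paper obtains the recursive description of the counit's components by unwinding Lemma~\ref{lem:en} through the comparison of the two monadic adjunctions $\Alg_T$ and $\alg_T$, whereas you derive it directly from the monad-map property of $\epsilon$ against the decomposition $\freemonad{T} \simeq \id \amalg T\freemonad{T}$ --- both are valid.
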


\begin{proof}
  The unit map $P \to \freemonad{P}$ is the sum inclusion
  $P \to \id \amalg P = P_1$ followed by the colimit map $P_1 \to
  \freemonad{P}$.
  Sum inclusions are cartesian by disjointness of sums, and the colimit map is
  cartesian by 
  Proposition~\ref{propn:Cartcolim}, since all the transition maps are cartesian
  by Lemma~\ref{lem:Ph-anal}.
  
  The counit map is the map of monads corresponding to the forgetful
  functor $\phi \colon \Alg_{T}(\mathcal{S}_{/I}) \to
  \alg_{T}(\mathcal{S}_{/I})$. If we denote these two monadic
  adjunctions by
  \[ F : \mathcal{C} \rightleftarrows \Alg_{T}(\mathcal{S}_{/I}) :
  U\qquad f : \mathcal{C} \rightleftarrows \alg_{T}(\mathcal{S}_{/I}) :
  u,\]
  then this natural transformation $\freemonad{T} \simeq uf \to UF
  \simeq T$ is given as the composite
  \[ uf \to ufUF \simeq ufu\phi F \to u \phi F \simeq UF,\]
  where the first map comes from the unit for $F \dashv U$ and the
  second map from the counit for $f \dashv u$. Unwinding our
  description of the counit $ufu \to u$ from Lemma~\ref{lem:en}, we
  see that this is the colimit of a sequence of natural
  transformations $\varepsilon_h \colon T_h \to T$ defined 
  recursively as follows: $\varepsilon_0 \colon T_0 = \id \to T$ is the unit of 
  the monad, cartesian by assumption.
  Assuming we have a cartesian natural transformation 
  $\varepsilon_h \colon T_h \to T$, the next map 
  $\varepsilon_{h+1}\colon \id \amalg (T\circ T_h) \to T$
  is defined as the sum of the unit $\id \to T$ and the composite
  $$
  T\circ T_h \stackrel{T(\varepsilon_h)}\longrightarrow T \circ T 
  \stackrel\mu\longrightarrow T .
  $$
  But $\varepsilon_h$ is cartesian by induction, and $T$ preserves cartesian maps
  since it is analytic, and $\mu$ is cartesian by assumption.  The colimit of
  all the $\varepsilon_h$ is the natural transformation $\freemonad{T}
  \to T$, which is then cartesian by descent (Proposition~\ref{propn:Cartcolim} again).
\end{proof}

\begin{cor}\label{cor:Pbar=an}
  The forgetful functor $\AnMnd(I) \to \AnEnd(I)$ has a left adjoint,
  taking an analytic endofunctor to its free monad, and the resulting
  adjunction is monadic.
\end{cor}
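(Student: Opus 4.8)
The plan is to deduce Corollary~\ref{cor:Pbar=an} by applying the fibrewise monadicity criterion from Corollary~\ref{cor:freesiftedmonad} in the setting of the \icats{} $\AnEnd(I)$ and $\AnMnd(I)$, using the preservation results just established. First I would recall that $\AnMnd(I)$ is by definition the subcategory of $\Mnd(\mathcal{S}_{/I})$ whose objects are analytic monads and whose morphisms have cartesian underlying natural transformations, while $\AnEnd(I)$ is the subcategory of $\End(\mathcal{S}_{/I})$ spanned by analytic endofunctors with cartesian transformations. The base \icat{} $\mathcal{S}_{/I}$ is presentable, and analytic functors preserve sifted colimits, so these sit inside $\End^{\sigma}(\mathcal{S}_{/I})$ and $\Mnd^{\sigma}(\mathcal{S}_{/I})$; the relevant colimits and the free-monad construction are therefore available.

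The key inputs are already in place. The free monad $\freemonad{P}$ on an analytic endofunctor $P$ is again analytic with cartesian structure maps by Proposition~\ref{propn:Pbar-anal}, so the free-monad functor $F$ of Corollary~\ref{cor:freesiftedmonad} does land in $\AnMnd(I)$. By Lemma~\ref{lem:freeoncart}, $F$ sends cartesian natural transformations to cartesian ones, so it is well-defined as a functor $\AnEnd(I) \to \AnMnd(I)$. The two parts of Lemma~\ref{lem:anadj} supply precisely the unit and counit: the unit $P \to \freemonad{P}$ is cartesian, hence a morphism in $\AnEnd(I)$, and the counit $\freemonad{T} \to T$ at an analytic monad $T$ is cartesian, hence a morphism in $\AnMnd(I)$. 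Thus I would check that the adjunction $F \dashv U$ of Corollary~\ref{cor:freesiftedmonad} restricts to an adjunction between the full (on the cartesian morphisms) subcategories $\AnEnd(I)$ and $\AnMnd(I)$: the triangle identities are inherited from the ambient adjunction, and both unit and counit are cartesian, so they remain morphisms after passing to these subcategories.

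For monadicity I would invoke the Lurie--Barr--Beck theorem \cite{HA}*{Theorem 4.7.3.5}, verifying its hypotheses for the restricted forgetful functor $U \colon \AnMnd(I) \to \AnEnd(I)$. That $U$ detects equivalences is immediate, since a morphism of analytic monads is an equivalence exactly when its underlying endofunctor transformation is, which is detected in $\AnEnd(I)$ just as in $\End^{\sigma}(\mathcal{S}_{/I})$. The remaining condition is that $\AnMnd(I)$ has colimits of $U$-split simplicial diagrams and that $U$ preserves them; here I would reduce to the corresponding fact for $\Mnd^{\sigma}(\mathcal{S}_{/I})$ from Corollary~\ref{cor:freesiftedmonad} together with Proposition~\ref{propn:Mndhasifted}, and then check that the relevant colimits stay inside the cartesian subcategories. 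This last point is the main obstacle: one must verify that the sifted (in particular, geometric realization) colimits computing these $U$-split colimits, which exist in $\Mnd^{\sigma}(\mathcal{S}_{/I})$, actually preserve the cartesian-transformation and analyticity constraints defining $\AnMnd(I)$ and $\AnEnd(I)$. This is exactly where descent enters: Proposition~\ref{propn:Cartcolim} guarantees that colimits of cartesian natural transformations in an $\infty$-topos are again cartesian and are computed underlying, so the $U$-split colimit taken in $\Mnd^{\sigma}$ lies in $\AnMnd(I)$ and is preserved by $U$. With detection of equivalences and preservation of $U$-split colimits in hand, the Barr--Beck criterion yields monadicity, completing the proof.
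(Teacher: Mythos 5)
Your proposal is correct, and its first half is the paper's argument verbatim: restrict the monadic adjunction of Corollary~\ref{cor:freesiftedmonad} using Proposition~\ref{propn:Pbar-anal} and Lemma~\ref{lem:freeoncart} to see that $F$ lands in $\AnMnd(I)$, and Lemma~\ref{lem:anadj} to see that the unit and counit are cartesian, so that the triangle identities are inherited. Where you diverge is the monadicity step. The paper identifies $\AnMnd(I)$ with $\Alg(\AnEnd(I))$, the associative algebras in $\AnEnd(I)$ under composition, and then reruns the proofs of Proposition~\ref{propn:Mndhasifted} and Corollary~\ref{cor:freesiftedmonad} internally to this monoidal \icat{}: composition preserves sifted colimits in each variable, so \cite{HA}*{Proposition 3.2.3.1} gives sifted colimits in $\AnMnd(I)$ preserved by $U$, and Barr--Beck applies. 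You instead verify the Barr--Beck hypotheses by computing $U$-split colimits in the ambient $\Mnd^{\sigma}(\mathcal{S}_{/I})$ and using descent (Proposition~\ref{propn:Cartcolim}) to check that the colimit cocone and the resulting monad stay in the cartesian subcategories. Both routes rest on the same underlying fact --- that sifted colimits of analytic functors along cartesian transformations are again analytic with cartesian cocone legs --- but the paper's algebra-in-a-monoidal-\icat{} framing handles the compatibility of the colimit with the monad structure automatically, whereas in your version you should make explicit that the multiplication of the colimit monad is itself the sifted colimit of the multiplications $\mu_i$ (using that all the endofunctors involved preserve sifted colimits, so that $\colim_i(T_i\circ T_i)\simeq T\circ T$) before descent can be invoked to conclude it is cartesian; with that point spelled out your argument is complete.
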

\begin{proof}
  By Corollary~\ref{cor:freesiftedmonad} we have a monadic adjunction
  \[ F : \End^{\sigma}(\mathcal{S}_{/I}) \rightleftarrows
  \Mnd^{\sigma}(\mathcal{S}_{/I}) : U.\]
  From Proposition~\ref{propn:Pbar-anal} and Lemma~\ref{lem:freeoncart} we
  know that the composite
  \[ \AnEnd(I) \to \End^{\sigma}(\mathcal{S}_{/I}) \xto{F}
  \Mnd^{\sigma}(\mathcal{S}_{/I})\]
  lands in the subcategory $\AnMnd(I)$. Moreover, by
  Lemma~\ref{lem:anadj} the unit and counit transformations for
  $F \dashv U$ restrict to natural transformations valued in
  $\AnEnd(I)$ and $\AnMnd(I)$. Since these restrictions still satisfy
  the adjunction identities, the adjunction restricts to an adjunction
  between $\AnEnd(I)$ and $\AnMnd(I)$. Identifying $\AnMnd(I)$ with
  $\Alg(\AnEnd(I))$, we see by the same proofs as for
  Proposition~\ref{propn:Mndhasifted} and
  Corollary~\ref{cor:freesiftedmonad} that $\AnMnd(I) \to \AnEnd(I)$
  is a monadic right adjoint.  
\end{proof}

Now, by the exact same argument as in the proof of
Theorem~\ref{thm:siftedfreemonadmonadic}, we get:
\begin{cor}\label{cor:freeanalyticmonadic}
  The forgetful functor $\AnMnd \to \AnEnd$ has a left adjoint,
  compatible with the projections to $\mathcal{S}$, and
  the resulting adjunction is monadic.
\end{cor}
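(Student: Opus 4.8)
The plan is to run the proof of Theorem~\ref{thm:siftedfreemonadmonadic} essentially verbatim, applying Proposition~\ref{propn:fibadjmnd} to the commutative triangle
\opctriangle{\AnMnd}{\AnEnd}{\mathcal{S}}{U}{p}{q}
with $U$ the forgetful functor, $F$ the free-monad functor it will acquire as a left adjoint, and base $\mathcal{B}=\mathcal{S}$. The one structural difference from Theorem~\ref{thm:siftedfreemonadmonadic} is that there one passed to opposite \icats{} (working with $\LMndCSRop$) to convert a \emph{right} adjoint into a left adjoint and a \emph{comonadic} conclusion into a monadic one; here the opposite has in effect already been built into the definitions of $\AnMnd$ and $\AnEnd$ (via $\LMnd^{\colax,\op}$), so the fibrewise forgetful functor already has a \emph{left} adjoint and we want a \emph{monadic} conclusion. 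Consequently Proposition~\ref{propn:fibadjmnd} is applied directly to $\AnMnd\to\AnEnd\to\mathcal{S}$, with no further opposites.

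Most of the seven hypotheses of Proposition~\ref{propn:fibadjmnd} are immediate from the fibrewise result. By Corollary~\ref{cor:Pbar=an}, for each $I$ the functor $U_I\colon \AnMnd(I)\to\AnEnd(I)$ has a left adjoint (the free analytic monad), this fibrewise adjunction is monadic, $U_I$ preserves sifted colimits (hence geometric realizations) and detects equivalences; and the fibre $\AnMnd(I)\simeq\Alg(\AnEnd(I))$ has geometric realizations because $\AnEnd(I)$ is an $\infty$-topos (Proposition~\ref{propn:An(I)}) and algebras inherit sifted colimits. This dispatches the fibrewise conditions (namely that the fibres have geometric realizations, that the fibrewise right adjoint $U_I$ preserves them, and that $U_I$ detects equivalences). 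The global left adjoint $F$ with $pF\simeq q$, restricting to the $F_I$ fibrewise, is the analytic counterpart of Proposition~\ref{propn:freemonadomega}: I would obtain it by the same fibrewise adjoint-functor argument (the dual of \cite{HA}*{Proposition 7.3.2.6}) fed by Corollary~\ref{cor:Pbar=an}.

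The genuinely new input, and the step I expect to be the main obstacle, is hypotheses (1)--(3): that $p\colon\AnMnd\to\mathcal{S}$ and $q\colon\AnEnd\to\mathcal{S}$ are cocartesian fibrations whose pushforwards preserve geometric realizations, and that $U$ preserves cocartesian edges. This is the analytic analogue of Proposition~\ref{propn:Mndsigmacocart}, and its content is that the analytic subcategories are closed under the cocartesian pushforwards of the ambient monad and endofunctor fibrations. For endofunctors this is transparent from Remark~\ref{rmk:cartpoly}: pushing $I\xfrom{s} E\xto{p} B\xto{t} I$ along $f\colon I\to I'$ produces $I'\xfrom{fs} E\xto{p} B\xto{ft} I'$, leaving the middle map $p$ and hence its finite discrete fibres untouched, so analyticity is preserved. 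For monads one must check in addition that the pushforward of a monad with cartesian structure maps again has cartesian structure maps, and that cartesian morphisms of monads push forward to cartesian morphisms; this follows because $f_!$ and $f^{*}$ carry cartesian natural transformations to cartesian ones by Lemma~\ref{lem:!Cart} together with the Beck--Chevalley equivalences. Finally, since the ambient fibrations over $\mathcal{S}$ are \emph{both} cartesian and cocartesian (after restriction, by the same reasoning as in Proposition~\ref{propn:Mndsigmacocart}), the cocartesian pushforwards are left adjoints and therefore preserve all colimits, in particular geometric realizations, giving (2) and (3). With (1)--(3) established and the fibrewise data from Corollary~\ref{cor:Pbar=an} in hand, Proposition~\ref{propn:fibadjmnd} applies and yields that $F\dashv U$ is monadic, which is exactly the claim.
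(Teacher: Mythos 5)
Your strategy is exactly the paper's: its proof of this corollary is literally the single sentence that it follows ``by the exact same argument as in the proof of Theorem~\ref{thm:siftedfreemonadmonadic}'', i.e.\ by feeding the triangle $\AnMnd \to \AnEnd \to \mathcal{S}$ into Proposition~\ref{propn:fibadjmnd}. Your observation that no further passage to opposites is needed (the op being already built into the definitions of $\AnMnd$ and $\AnEnd$) is correct, as is your treatment of the fibrewise hypotheses via Corollary~\ref{cor:Pbar=an} and of the endofunctor fibration $\AnEnd \to \mathcal{S}$ via Remark~\ref{rmk:cartpoly}.

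The one place your supporting argument goes wrong is the description of the cocartesian pushforward in $\AnMnd \to \mathcal{S}$. Unwinding the op's, a cocartesian edge of $\AnMnd \to \mathcal{S}$ over $f \colon I \to J$ is a \emph{cartesian} edge of $\LMnd^{\colax} \to \LCatI$ over $f^{*}$, so the pushforward $\AnMnd(I) \to \AnMnd(J)$ is the \emph{left adjoint} of the restriction functor $S \mapsto f^{*}Sf_{!}$ (the monad of the composite adjunction). It is \emph{not} $T \mapsto f_{!}Tf^{*}$: that is the pushforward of \emph{endofunctors}, and $f_{!}Tf^{*}$ carries no monad structure in general, since there is no natural map $f^{*}f_{!} \to \id$ with which to define a multiplication (for $f \colon \emptyset \to *$ one gets the constant functor at $\emptyset$, which is not a monad, whereas the actual pushforward of the identity monad is the identity monad). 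Correspondingly, $U \colon \AnMnd \to \AnEnd$ does \emph{not} preserve cocartesian edges --- but Proposition~\ref{propn:fibadjmnd} nowhere requires this, so nothing is lost. The correct verification of hypotheses (1) and (3) for $p$ is the purely abstract one you gesture at in your final sentence: run the argument of Proposition~\ref{propn:Mndsigmacocart} with $\AnMnd(I) \simeq \Alg(\AnEnd(I))$ (presentable, since $\AnEnd(I)$ is) in place of $\Mnd^{\sigma}(\mathcal{C})$. The restriction functor $S \mapsto f^{*}Sf_{!}$ \emph{does} visibly preserve analyticity and cartesianness of the structure maps, because $f^{*}$ and $f_{!}$ preserve the relevant limits and colimits and the new multiplication is built from $\mu_{S}$ and the counit $f_{!}f^{*} \to \id$, both cartesian by Lemma~\ref{lem:!Cart}; its left adjoint then exists by the adjoint functor theorem, lands in $\AnMnd(J)$ by construction (so there is nothing to check about preservation of analyticity), and preserves geometric realizations simply because it is a left adjoint. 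With that substitution your proof is the paper's.
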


\subsection{Free Analytic Monads in Terms of Trees}
\label{subsec:freeanalytictrees}

In this section we will obtain an explicit description of the free
monad on an analytic endofunctor in terms of trees, thus extending the
description of free analytic monads from \cite{KockTree} to the
\icatl{} setting. 

\begin{defn}
  We shall need various groupoids derived from $\bbOint$.
  First of all let
  \[
  \tr := \iota\bbOint
  \]
  denote the groupoid of all trees, and 
  let $\corol \subseteq \tr$ denote the subgroupoid of corollas (which
  is equivalent to $\coprod_{n \geq 0} B\Sigma_{n}$).
\end{defn}

\begin{defn}
  If $T$ is a tree
  \[ A \xfrom{s} M \xto{p} N \xto{t} A,\]
  then the \emph{leaves} of $T$ are the elements of $A$ that are not
  in the image of $t$. 
  Morphisms of trees do not necessarily preserve leaves, but isomorphisms
  do, yielding a functor $\leaves \colon \tr \to \iota\Fin$.
\end{defn}

\begin{defn}\label{def:trP}
  Suppose $P$ is an analytic endofunctor represented by the diagram
  \[
  I \from E \xto{p} B \to I.
  \]
  Define spaces $\tr'$, $\tr(P)$ and $\tr'(P)$ by pullback as follows:
  \[
  \begin{tikzcd}
  \tr'(P) \drpullback \arrow{r}{} \arrow{d}{} & \tr(P) \drpullback \arrow{r}{} \arrow{d}{} & \AnEnd{}_{/P} \arrow{d}{} \\
  \tr' \drpullback \arrow{r}{} \arrow{d}{} & \tr \arrow{r}{} 
  \arrow{d}{\leaves}& \AnEnd \\
  \iota\Fin_* \arrow{r}{} & \iota\Fin  .&
  \end{tikzcd}
  \]
 The objects of $\tr(P)$ are \emph{$P$-trees}, defined as 
  diagrams
    \[
  \begin{tikzcd}
    A \arrow{r} \arrow{d}& M \drpullback
	\arrow{r}\arrow{d}& N\arrow{r}\arrow{d} &
    A \arrow{d}\\ 
    I \arrow{r}& E\arrow{r} & B\arrow{r} & I  ,
  \end{tikzcd}
  \]
 where the first row is a tree.
  The objects of $\tr'(P)$ are $P$-trees with a marked leaf,
  which amount to diagrams
    \[
  \begin{tikzcd}
    * \arrow{r} \arrow{d}& \emptyset \drpullback
	\arrow{r}\arrow{d}& \emptyset \drpullback \arrow{r}\arrow{d} &
    * \arrow{d}\\     A \arrow{r} \arrow{d}& M \drpullback
	\arrow{r}\arrow{d}& N\arrow{r}\arrow{d} &
    A \arrow{d}\\ 
    I \arrow{r}& E\arrow{r} & B\arrow{r} & I  .
  \end{tikzcd}
  \]
  Here the upper right square being a pullback expresses that the 
  edge is a leaf (cf.~\cite{KockTree}).
  (Note that while $\tr$ and $\tr'$ are $1$-truncated (i.e.~ordinary 
  groupoids), $\tr(P)$ and $\tr'(P)$ are not in general so.  For 
  example, $\tr(P)$ contains the space $B$ (cf.~\ref{lem:elmaps}).) 
  The map $\tr(P) \to
  \tr$ corresponds to the functor $\Map(\blank,
  P) \colon \tr \to \mathcal{S}$, and we have the explicit formula
  \[ \tr(P)  \simeq \colim_{T \in \tr}\Map_{\AnEnd}(T, P) \simeq
  \coprod_{T \in \tr} \Map_{\AnEnd}(T,
  P)_{\mathrm{h}\Aut(T)},\]
  where we are implicitly identifying the groupoid $\tr$
  with its opposite.

  The vertical composite $\tr(P) \longrightarrow \tr 
  \stackrel{\leaves}\longrightarrow \iota\Fin$ 
  factors
  also through ``$P$-coloured finite sets'', which could be denoted 
  $\iota\Fin(P)$ or $\mathbf{E}(I)$:
  $$
  \tr(P) \stackrel{\leaves}\longrightarrow  \mathbf{E}(I) \to \iota\Fin .
  $$

  There is a canonical map $\rho\colon \tr(P) \to I$ which to a $P$-tree assigns the
  colour of its root edge.  Formally, for each tree $T$ consider the 
  inclusion of the root edge $\eta\to T$.  The associated maps $\Map(T,P) \to 
  \Map(\eta,P) \simeq I$ assemble into $\tr(P) \simeq \colim_{T\in \tr} 
  \Map(T,P) \to \Map(\eta,P) \simeq I$.  Similarly, there is a canonical
  map $\lambda\colon \tr'(P) \to I$ which returns the colour of the 
  marked leaf.  Formally, this is the composite 
  $\tr'(P) \simeq \colim_{T\in \tr'} 
  \Map(T,P) \to \Map(\eta,P) \simeq I$, where this time $\eta\to T$ picks 
  out the marked leaf.
\end{defn}

\begin{thm}\label{thm:freeanalmonad}
  If $P$ is an analytic endofunctor, then $\freemonad{P}$, the
  (underlying endofunctor of the) free monad on $P$, is represented by
  the polynomial
  \[
  I \stackrel{\lambda}\longleftarrow \tr'(P) \longrightarrow \tr(P)
  \stackrel{\rho}\longrightarrow I.
  \]
\end{thm}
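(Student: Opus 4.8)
The plan is to combine the inductive colimit formula for the free monad with the fact, established in \S\ref{sec:colimpoly}, that colimits of analytic endofunctors along cartesian transformations are computed on representing diagrams. By Proposition~\ref{propn:freemonadind} we have $\freemonad{P} \simeq \colim_n P_n$ with $P_0 = \id$ and $P_{n+1} = \id \amalg (P \circ P_n)$, and by Lemma~\ref{lem:Ph-anal} every $P_n$ is analytic and every transition map $f_{n+1}\colon P_n \to P_{n+1}$ is cartesian. Hence by Proposition~\ref{propn:colimpolyispoly} the underlying endofunctor of $\freemonad{P}$ is represented by the colimit in $\Fun(\Pi,\mathcal{S})$ of the polynomials representing the $P_n$; it therefore suffices to identify these polynomials and pass to the colimit.

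Write $\tr_{\leq n}(P) \subseteq \tr(P)$ for the subspace of $P$-trees whose underlying tree has height at most $n$ (meaning every root-to-leaf path passes through at most $n$ nodes), let $\tr'_{\leq n}(P)$ be the corresponding space of such $P$-trees equipped with a marked leaf, and let $\rho_n,\lambda_n$ record the root- and marked-leaf colours. I would prove by induction on $n$ that $P_n$ is represented by
\[ I \xfrom{\lambda_n} \tr'_{\leq n}(P) \to \tr_{\leq n}(P) \xto{\rho_n} I. \]
The base case is immediate, since a $P$-tree of height $0$ is a trivial tree, giving $\tr_{\leq 0}(P) \simeq I \simeq \tr'_{\leq 0}(P)$ and the identity polynomial. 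For the inductive step, recall that the target of the middle map of the polynomial representing an endofunctor $F$ is the total space of $F(\id_{I})$. Thus the relevant space for $P_{n+1}$ is the total space of $\id_{I} \amalg P(P_n(\id_{I}))$: the first summand is the copy of $I$ of trivial trees, while—using that $P$ decorates a corolla by a node of $B$ whose input edges carry prescribed colours (Lemma~\ref{lem:elmaps}), and that $P_n(\id_{I})$ is the root-colour map $\tr_{\leq n}(P) \to I$—the second summand is exactly the space of $P$-trees obtained by grafting a height-$\leq n$ $P$-tree, with matching root colour, onto each input edge of such a decorated corolla. Together these give precisely $\tr_{\leq n+1}(P)$, and the same bookkeeping, applied through the composition formula of Theorem~\ref{thm:comp}, identifies the source of the middle map with $\tr'_{\leq n+1}(P)$ and the transition map $f_{n+1}$ with the inclusion $\tr_{\leq n}(P) \hookrightarrow \tr_{\leq n+1}(P)$.

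To finish, I would take the colimit over $n$. Since the underlying tree of a $P$-tree is a finite tree and so has finite height, the inclusions assemble into equivalences $\colim_n \tr_{\leq n}(P) \simeq \tr(P)$ and $\colim_n \tr'_{\leq n}(P) \simeq \tr'(P)$, compatibly with the root- and leaf-colour maps; combined with Propositions~\ref{propn:colimpolyispoly} and~\ref{propn:freemonadind} this yields the stated representing polynomial for $\freemonad{P}$.

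The main obstacle is the inductive step: rewriting the output of the composition formula (Theorem~\ref{thm:comp}) as the recursive grafting construction of $P$-trees, carried out coherently at the level of spaces rather than sets. In particular one must track the marked-leaf space and the automorphisms of trees, which is where the argument genuinely lifts the set-level combinatorics of \cite{KockTree}. The homotopy-coherence is kept under control by Proposition~\ref{propn:Cartcolim}: all the operations involved (binary coproducts, the pullbacks defining $P$-trees, and the pushforward in the composition formula) interact well with the filtered colimit because cartesian transformations in the $\infty$-topos $\mathcal{S}$ are stable under these colimits.
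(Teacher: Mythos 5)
Your proposal follows essentially the same route as the paper: the colimit formula $\freemonad{P}\simeq\colim_n P_n$ from Proposition~\ref{propn:freemonadind}, an induction showing $P_n$ is represented by $P$-trees of height $\leq n$ (the paper's Proposition~\ref{propn:Ph}, with the inductive step resting on evaluating $P$ on the root-colour map $t_h$ via the composition formula, which is the paper's Corollary~\ref{cor:P(th)} and Lemma~\ref{lem:treerootfib}), and then passage to the colimit using Proposition~\ref{propn:colimpolyispoly}. You correctly flag the one place where the real work lies — identifying the grafting description of $\tr_{\bullet,\leq n+1}(P)$ coherently at the level of spaces, which the paper handles by a pullback over $\mathbf{E}$ together with universality of colimits and commutation of groupoid colimits with weakly contractible limits.
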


\begin{remark}
  See \cite{KockTree} for the analogous result in the case of sets,
  and \cite{KockDSE} for the groupoid case.
\end{remark}

To prove this, we use the description of $\freemonad{P}$ given in
Definition~\ref{defn:Pn} and Proposition~\ref{propn:freemonadind}, as the
colimit of the sequence of functors defined by $P_0 \simeq \id$ and $P_{h+1} \simeq
\id \amalg (P \circ P_{h})$, which we shall describe in terms of trees of
bounded height.  For this we need some notation.

\begin{defn}
  For $A \leftarrow M \to N \to A$ a tree, the \emph{height} of $e\in
  A$ is the minimal $k\in \mathbb{N}$ such that $\sigma^{k}(e)$ is the
  root edge.  Here $\sigma$ is the `successor' function (or
  walk-to-the-root function) from the definition of tree
  (\ref{defn:tree}).  The \emph{height} of the tree $T$ is the maximal
  height of its edges.  Hence the trivial tree has height $0$ and any
  corolla has height $1$.
  
  Let $\tr_{\leq h}$ denote the subgroupoid of
  $\tr$ containing only the trees of height $\leq h$.
  For $P$ an analytic endofunctor, and for each $h\in \mathbb{N}$, 
  define groupoids $\tr'_{\leq h}$, $\tr_{\leq h}(P)$ and $\tr'_{\leq h}(P)$
  by pullbacks
  \[
  \begin{tikzcd}
  \tr'_{\leq h}(P) \drpullback \arrow{r}{p_{h}} \arrow{d}{} & \tr_{\leq h}(P) \drpullback \arrow{r}{} \arrow{d}{} & \AnEnd{}_{/P} \arrow{d}{} \\
  \tr'_{\leq h} \drpullback \arrow{r}{} \arrow{d}{} & \tr_{\leq h} \arrow{r}{} 
  \arrow{d}{\leaves}& \AnEnd \\
  \iota\Fin_* \arrow{r}{} & \iota\Fin &
  \end{tikzcd}
  \]
  We have
  \[
  \tr_{\leq h}(P) := \colim_{T
    \in \tr_{\leq h}}\Map_{\AnEnd}(T, P) .
	\]
  Let
  $\tr_{\bullet,\leq h}$ denote the subgroupoid
  containing the trees of height $\leq h$ that have a root node
  (i.e.~we exclude $\eta$). We have a forgetful functor $\bottomnode
  \colon \tr_{\bullet,\leq h} \to \tr_{\bullet,\leq 1} \simeq \corol \simeq \iota\Fin$
  that takes a tree to its root corolla.
\end{defn}

\begin{lemma}\label{lem:fibreofbottomnode}
  For each fixed $k\in \iota\Fin$, we have a pullback square
\[
\begin{tikzcd}
	(\tr_{\leq h})^{\times k} \drpullback \arrow{r}\arrow{d} & 
	\tr_{\bullet,\leq h+1} \arrow{d}{\bottomnode} \\
	* \arrow[swap]{r}{k} & \iota\Fin 
\end{tikzcd}
\]
\end{lemma}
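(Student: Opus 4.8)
The plan is to exhibit the homotopy fibre of $\bottomnode$ over $k\in\iota\Fin$ as $(\tr_{\leq h})^{\times k}$ by producing two mutually inverse equivalences of groupoids, both built from the grafting construction recalled before Lemma~\ref{lem:treecolim}. Since trees are diagrams of finite sets, every groupoid in sight is $1$-truncated, so it suffices to give functors that are inverse on objects and on morphisms. The governing bookkeeping is that of \emph{height} (in the sense of Definition~\ref{def:trP}): the root corolla occupies heights $0$ and $1$, and grafting a tree onto one of its input edges (a leaf of the corolla, sitting at height $1$) raises the height of each of that tree's edges by exactly one. Consequently, grafting trees of height $\leq h$ onto the leaves of the corolla $C_k$ produces a tree of height $\leq h+1$ with root node $C_k$, and conversely every tree of height $\leq h+1$ with a root node arises in this way from branches of height $\leq h$.

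First I would define the grafting functor $(\tr_{\leq h})^{\times k}\to \tr_{\bullet,\leq h+1}$. Given a family $(T_i)_{i\in k}$, form the iterated pushout grafting each $T_i$, along its root, onto the leaf of $C_k$ indexed by $i$. As explained before Lemma~\ref{lem:treecolim}, this pushout may be computed in $\AnEnd$, and since the maps involved are injections of finite sets it is in fact computed in $\Set$; it yields a tree $T$ whose root corolla is canonically $C_k$. The canonical labelling of the inputs of $C_k$ by the set $k$ identifies $\bottomnode(T)$ with the given object $k\in\iota\Fin$, so the functor lands in the fibre, and functoriality in the $T_i$ is immediate from the universal property of the pushout.

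Next I would define the inverse decomposition functor. An object of the fibre over $k$ is a tree $T$ of height $\leq h+1$ together with an identification of the inputs of its root node with the given set $k$. Deleting the root edge and root node exhibits these $k$ input edges as the roots of $k$ subtrees $(T_i)_{i\in k}$; the requisite injectivity and unique-root conditions of Definition~\ref{defn:tree} are inherited, and by the height computation above each $T_i$ has height $\leq h$. This assignment is functorial, and by the grafting decomposition of \cite{KockTree} it is inverse to grafting on objects: regrafting the $T_i$ onto $C_k$ recovers $T$ together with its labelling, and decomposing a graft recovers the branches.

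The step I expect to require the most care, and the only place where the groupoid structure is genuinely used, is checking that the two functors are inverse \emph{on morphisms}. Here the labelling is essential. An automorphism of an object in the fibre over $k$ is a tree isomorphism of $T$ fixing the root node and each labelled input edge; such an isomorphism cannot permute the branches and must therefore restrict to an automorphism of each $T_i$, while conversely any tuple of automorphisms of the $T_i$ glues, by the pushout property, to such an automorphism of $T$. Thus the automorphism group of a corresponding pair of objects is $\prod_{i\in k}\Aut(T_i)$ on both sides, and the two functors induce mutually inverse isomorphisms on these groups. This is precisely the point at which one must use that the equivalence $\corol\simeq\iota\Fin$ rigidifies the root corolla, preventing the symmetric-group action from mixing the branches; once it is settled, essential surjectivity and fidelity follow, giving the claimed pullback square.
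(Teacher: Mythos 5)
Your proposal is correct and follows essentially the same route as the paper: both use the grafting map $(\tr_{\leq h})^{\times k}\to\tr_{\bullet,\leq h+1}$ and verify the pullback by a fibre comparison, with the labelling of the root corolla's inputs by $k$ being exactly what rigidifies the branches so that the automorphisms match up (the paper phrases this as the fibre of the grafting map being $\Aut(k)$, just like the fibre of $k\colon *\to\iota\Fin$). Your version merely spells out the inverse ``degrafting'' functor and the height bookkeeping in more detail.
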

\begin{proof}
  The map $(\tr_{\leq h})^{\times k} \to \tr_{\bullet,\leq h+1}$
  takes a $k$-tuple of height-$h$ trees and
  grafts them onto the corolla $k$.  It is clear that the fibre of
  this map is the set of automorphisms of $k$, just as the fibre of
  $k\colon * \to \iota\Fin$.
\end{proof}

Let $P$ be an analytic endofunctor.  Recall from Definition~\ref{defn:Pn}
the sequence of endofunctors $P_h$ defined by $P_0 := \id$, and $P_{h+1} :=
\id \amalg (P \circ P_{h})$.  By Lemma~\ref{lem:Ph-anal}, each $P_h$ is
analytic.

\begin{propn}\label{propn:Ph}
  If $P$ is represented by the polynomial
  \[ I \from E \to B \to I,\]
  then $P_h$ is represented by the polynomial
  \[
  I \stackrel{s_{h}}\longleftarrow \tr'_{\leq h}(P) 
  \stackrel{p_{h}}\longrightarrow
  \tr_{\leq h}(P) \stackrel{t_{h}}\longrightarrow  I .
  \]
  Here $s_{h}$ assigns to a leaf-marked tree the colour of the marked leaf,
  and $t_{h}$ assigns the colour of the root edge.
\end{propn}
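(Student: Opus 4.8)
The plan is to prove Proposition~\ref{propn:Ph} by induction on $h$, matching the recursion $P_{h+1} \simeq \id \amalg (P \circ P_h)$ for functors against a corresponding recursion for the tree groupoids on the side of polynomial diagrams. Since all the functors involved are analytic (by Lemma~\ref{lem:Ph-anal}), and analytic functors correspond to polynomial diagrams via the equivalence $\Poly \isoto \PolyFun$ of Theorem~\ref{thm:polyispolyfun}, it suffices to exhibit, at each stage, an equivalence of the representing polynomials. The base case $h=0$ is immediate: $P_0 \simeq \id$ is represented by $I \xfrom{=} I \to * \to I$ (equivalently by $I \leftarrow I \to I \to I$ with identity legs), and $\tr_{\leq 0} \simeq \iota\Fin_{=}$ picks out only the trivial tree $\eta$, so $\tr_{\leq 0}(P) \simeq \Map(\eta, P) \simeq I$ by Lemma~\ref{lem:elmaps}, while $\tr'_{\leq 0}(P) \simeq I$ as well, giving the identity polynomial.

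For the inductive step, I would compute the representing polynomial of $\id \amalg (P \circ P_h)$ in two pieces. The coproduct with $\id$ contributes the trivial-tree summand (the copy of $I$ corresponding to $\eta$), since coproducts of polynomial functors correspond to coproducts of the representing diagrams (using that $\PolyFun(I,J)$ has colimits computed pointwise, Proposition~\ref{propn:colimpolyispoly}). The substantive part is identifying the polynomial representing $P \circ P_h$. Here I would apply the composition formula of Theorem~\ref{thm:comp} to the two polynomials $I \xfrom{s} E \xto{p} B \xto{t} I$ (for $P$) and $I \xfrom{s_h} \tr'_{\leq h}(P) \xto{p_h} \tr_{\leq h}(P) \xto{t_h} I$ (for $P_h$, by the induction hypothesis). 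The claim is that the resulting composite polynomial has as its ``base'' object the groupoid $\tr_{\bullet,\leq h+1}(P)$ of $P$-trees of height $\le h+1$ with a root node, which, together with the trivial-tree summand, reassembles into $\tr_{\leq h+1}(P)$.

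The key geometric input is Lemma~\ref{lem:fibreofbottomnode}, which says that a height-$(h{+}1)$ tree with a root node is precisely a corolla together with a tuple of height-$\le h$ trees grafted onto its leaves; more precisely it identifies the fibre of $\bottomnode \colon \tr_{\bullet,\leq h+1} \to \iota\Fin$ over $k$ with $(\tr_{\leq h})^{\times k}$. The $P$-decorated version of this grafting is exactly what the distributivity/Beck--Chevalley manipulation in Theorem~\ref{thm:comp} produces: the pushforward step $p_*$ along the middle map of $P$ assembles, over each point $b \in B$ lying over a $k$-element set, the $k$-fold product of choices of height-$\le h$ $P$-trees whose root colours match the input colours of the corolla at $b$. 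I would make this precise by analysing the pullback $B \times_I \tr_{\leq h}(P)$ (matching each leaf colour of the root corolla to a root colour of a subtree) and showing $q_*$ of the relevant map recovers $\tr_{\bullet,\leq h+1}(P)$. The decorated leaves then organize into $\tr'_{\bullet,\leq h+1}(P)$, giving the source leg $s_{h+1}$, while the root colour of the whole tree is read off from the root corolla, giving $t_{h+1}$.

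The main obstacle will be the bookkeeping in the composition step: verifying that the object $D$ produced by Theorem~\ref{thm:comp} (the outcome of the $q_*\pi$ pushforward) is genuinely equivalent to $\tr_{\bullet,\leq h+1}(P)$, and that the ``marked-leaf'' object produced matches $\tr'_{\bullet,\leq h+1}(P)$, compatibly with the maps to $I$. This amounts to checking that the homotopy-coherent grafting operation — gluing $P$-decorated subtrees onto a $P$-decorated corolla along matching root/leaf colours — is exactly the dependent-product operation encoded by $q_*$ in the composition formula, with the colour-matching realized as the pullback defining $B \times_I \tr_{\leq h}(P)$. I expect the cleanest route is to phrase everything at the level of mapping spaces: use the colimit formula $\tr_{\leq h}(P) \simeq \colim_{T \in \tr_{\leq h}} \Map_{\AnEnd}(T,P)$ together with the grafting colimit decomposition of trees (Lemma~\ref{lem:treecolim} and the grafting pushouts in $\bbOint$), and match this against the pointwise formula for the composite analytic functor coming from Corollary~\ref{cor:analytic}, thereby sidestepping an explicit unwinding of the full diagram in Theorem~\ref{thm:comp}.
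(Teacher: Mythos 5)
Your proposal follows essentially the same route as the paper's proof: induction on $h$, the composition formula of Theorem~\ref{thm:comp} for the step $P\circ P_h$, the grafting decomposition of Lemma~\ref{lem:fibreofbottomnode} (packaged in the paper via Lemma~\ref{lem:treerootfib} and Corollary~\ref{cor:P(th)}, whose proofs carry out exactly the mapping-space/colimit computation you sketch at the end) to identify the base object with $\tr_{\bullet,\le h+1}(P)$, and the $\id$ summand supplying the trivial tree. One small slip in the base case: $I \leftarrow I \to * \to I$ does \emph{not} represent the identity functor (its $p_*$ leg is a dependent product over all of $I$), so only the diagram in your parenthetical, $I \leftarrow I \to I \to I$ with identity legs, is correct.
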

The proof requires a couple of auxiliary results,  exploiting that the 
analytic functor $P$ lives over $\mathbf{E}$.  With notation as
in Proposition~\ref{propn:pushforwardpbsq}, we have the diagram
  \[
  \begin{tikzcd}
    I \arrow[swap]{d}{\jmath}& E \arrow{l}[above]{s} \arrow{r}{p} \arrow[swap]{d}{\epsilon} 
    \drpullback & B \arrow{r}{t} \arrow{d}{\beta}& I\arrow{d}{\jmath} \\
    * & \iota\Fin_{*} \arrow{l}{u} \arrow[swap]{r}{q} & \iota\Fin \arrow{r} & *.
  \end{tikzcd}
  \]

\begin{lemma}\label{lem:treerootfib}
  With notation as above, there is a natural pullback square
\[
\begin{tikzcd}
	\tr_{\bullet, \leq h+1}(P) \drpullback
	\arrow{r}{} \arrow[swap]{d}{\bottomnode} & 
	\mathbf{E}(\tr_{\leq h}(P)) \arrow{d}{\mathbf{E}(t_h)} \\
	\corol(P)\simeq B \arrow[swap]{r}{\leaves = \bar s} & \mathbf{E}(I),
	\end{tikzcd}
\]
  where $\bar s \colon B \to q_* u^* I = \mathbf{E}(I)$ corresponds to
  $u_!  q^* B = E \stackrel{s}\to I$ under the adjunction
  $u_! q^* \dashv q_* u^*$.
\end{lemma}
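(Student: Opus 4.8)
The plan is to obtain the square as a direct instance of Proposition~\ref{propn:pushforwardpbsq}, applied to the analytic endofunctor $P$ together with the root-colour map $f := t_h \colon \tr_{\leq h}(P) \to I$. For this choice of $f$ the proposition yields a natural pullback square
\[
\begin{tikzcd}
	Y \drpullback \arrow{r} \arrow[swap]{d}{p_{*}s^{*}t_h} &
	\mathbf{E}(\tr_{\leq h}(P)) \arrow{d}{\mathbf{E}(t_h)} \\
	B \arrow[swap]{r}{\bar s} & \mathbf{E}(I),
\end{tikzcd}
\]
where $Y$ is the total space of $p_{*}s^{*}t_h \in \mathcal{S}_{/B}$ and $\bar s$ is exactly the ``leaves'' map of the statement. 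Since the right-hand column and the bottom row already coincide with those of the asserted square, the lemma reduces to the single task of producing an equivalence $Y \simeq \tr_{\bullet,\leq h+1}(P)$ over $B$ carrying the structure map $p_{*}s^{*}t_h \colon Y \to B$ to $\bottomnode$.

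To carry out this identification I would first unwind the fibres of $Y \to B$. As $P$ is analytic, $p \colon E \to B$ has finite discrete fibres, so the pushforward formula gives, for $b \in B$,
\[
Y_b \simeq \prod_{e \in E_b} (t_h)^{-1}(s(e)),
\]
the space of families, indexed by the input edges $e$ of the $P$-corolla $b$, of height-$\leq h$ $P$-trees whose root colour equals the leaf colour $s(e)$. This is precisely the datum needed to graft a height-$\leq h$ $P$-tree onto each leaf of $b$; grafting then yields a height-$\leq h+1$ $P$-tree with root corolla $b$, and conversely every height-$\leq h+1$ $P$-tree with a root node arises uniquely in this way. Thus fibrewise over $B$ the space $Y_b$ is identified with the fibre of $\bottomnode$ over $b$.

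I would make this precise by comparing left fibrations over $B$. At the level of uncoloured trees the relevant statement is Lemma~\ref{lem:fibreofbottomnode}, which exhibits $\bottomnode \colon \tr_{\bullet,\leq h+1} \to \corol \simeq \iota\Fin$ as classified by $k \mapsto (\tr_{\leq h})^{\times k}$; base-changing along the colouring of $P$ and imposing the colour-matching condition --- which is exactly the pullback over $\mathbf{E}(I)$ --- upgrades this to the $P$-coloured assertion, using the grafting results of \cite{KockTree} recalled around Lemma~\ref{lem:treecolim}. The routine input here is the appeal to Proposition~\ref{propn:pushforwardpbsq}; the main obstacle is the $\infty$-categorical bookkeeping in this last step, namely verifying that the fibrewise grafting equivalence is natural in $b \in B$, so that it globalises to an equivalence of left fibrations over $B$, and that under it the two maps down to $\mathbf{E}(I)$ are identified with $\mathbf{E}(t_h)$ and with $\bar s = \leaves$, compatibly with the root-colour and leaf-colour maps.
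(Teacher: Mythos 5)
Your proposal is correct, but it takes a genuinely different route from the paper. The paper never invokes Proposition~\ref{propn:pushforwardpbsq} inside this proof; instead it verifies the pullback square directly by expanding all four corners as colimits, rewriting $\tr_{\bullet,\leq h+1}(P) \simeq \colim_{T}\Map(T,P)$ as an iterated colimit over $k \in \iota\Fin$ and $(T_i) \in (\tr_{\leq h})^{\times k}$ via Lemma~\ref{lem:fibreofbottomnode}, using the grafting pushout $T \simeq C_k \amalg_{\coprod\eta}\coprod T_i$ to turn $\Map(T,P)$ into a fibre product, and then commuting the colimit past finite products (universality of colimits) and past the weakly contractible limit. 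You instead get the square for free from Proposition~\ref{propn:pushforwardpbsq} applied to $t_h$, reducing the lemma to an identification of the top-left corner $Y \simeq \tr_{\bullet,\leq h+1}(P)$ over $B$ --- in effect you prove Corollary~\ref{cor:P(th)} first and deduce the lemma, inverting the paper's logical order (the paper derives that corollary \emph{from} this lemma); there is no circularity since Proposition~\ref{propn:pushforwardpbsq} is independent of the lemma. Your fibre computation $Y_b \simeq \prod_{e\in E_b}(t_h)^{-1}(s(e))$ and its grafting interpretation are right. What your route buys is a cleaner conceptual picture (graft height-$\leq h$ trees onto the leaves of a $P$-corolla); what it costs is exactly the step you flag as "bookkeeping": in the $\infty$-categorical setting one cannot assemble fibrewise equivalences after the fact, so one must first build the global comparison map $\tr_{\bullet,\leq h+1}(P) \to B\times_{\mathbf{E}(I)}\mathbf{E}(\tr_{\leq h}(P))$ functorially in $T$, and the natural way to do that is precisely the colimit decomposition the paper carries out --- so filling your remaining gap essentially reproduces the paper's argument, after which the fibrewise check becomes redundant.
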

Unravelling the definitions, this says that giving a $P$-tree with a bottom node and of height 
$\leq h+1$ is the same thing as giving the bottom node (a $P$-corolla)
and a $P$-forest of trees of height $\leq h$ whose root edges match the
leaves of the bottom node.

\begin{proof}
  Expanded in colimits, the asserted square reads as follows.
    \nolabelcsquare{\displaystyle\colim_{T \in \tr_{\bullet,\leq h+1}} \Map(T,
  P)}{\displaystyle\colim_{k \in \iota\Fin} \tr_{\leq h}(P)^{\times k}}{\displaystyle\colim_{k \in \iota\Fin}
  \Map(C_{k}, P)}{\displaystyle\colim_{k \in \iota\Fin} \Map(\eta, P)^{\times k}.}
  By Lemma~\ref{lem:fibreofbottomnode}, we can rewrite
  $\colim_{T \in \tr_{\bullet,\leq h+1}} \Map(T, P)$ as an
  iterated colimit
  \[\colim_{k \in \iota\Fin} \colim_{(T_{i}) \in (\tr_{\leq h})^{\times k}}
  \Map(T, P).\]
  Since we have natural pushouts
  $T \simeq C_{k} \amalg_{\coprod_{i=1}^{k} \eta} \coprod_{i=1}^{k}
  T_{i}$
  this is equivalent to
  \[\colim_{k \in \iota\Fin} \colim_{(T_{i}) \in (\tr_{\leq h})^{\times k}} \Map(C_{k}, P)
  \underset{\prod_{i=1}^{k}\Map(\eta, P)}{\times} \
  \prod_{i=1}^{k}\Map(T_{i}, P).\]
  Since colimits in $\mathcal{S}$ are universal and products commute
  with colimits, we can rewrite this as
  \[
  \colim_{k \in \iota\Fin} \left(\Map(C_{k}, P)
  \underset{\prod_{i=1}^{k}\Map(\eta, P)}{\times} \
  \prod_{i=1}^{k}
  \colim_{T_{i} \in \tr_{\leq h}} \Map(T_{i}, P)\right) \simeq
  \colim_{k \in \iota\Fin} \left(\Map(C_{k}, P) 
  \underset{\Map(\eta, P)^{\times k}}{\times} \
    \tr_{\leq h}(P)^{\times k}\right).
\]
But colimits over $\infty$-groupoids commute with weakly contractible
limits by Lemma~\ref{lem:igpdcolimwclim}, so this is equivalent
to
\[(\colim_{k \in \iota\Fin} \Map(C_{k}, P)) \times_{\colim_{k \in
      \iota\Fin} \Map(\eta, P)^{\times k}} (\colim_{k \in \iota\Fin}
  \tr_{\leq h}(P)^{\times k}).\qedhere\]
\end{proof}

\begin{cor}\label{cor:P(th)}
  $P$ evaluated on the map $t_h \colon \tr_{\leq h}(P)
  \to I$ yields $t_{h+1} \colon \tr_{\bullet, \leq h+1}(P) \to I$.
\end{cor}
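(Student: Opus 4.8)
The plan is to obtain the statement by matching two pullback squares that we have already produced. Recall that $P = t_{!}p_{*}s^{*}$, so evaluating $P$ on the object $t_{h} \colon \tr_{\leq h}(P) \to I$ of $\mathcal{S}_{/I}$ means first forming $p_{*}s^{*}(t_{h})$, an object over $B$, and then post-composing its structure map with $t \colon B \to I$, which is the effect of $t_{!}$. By Proposition~\ref{propn:pushforwardpbsq} applied to $f = t_{h}$, the object $p_{*}s^{*}(t_{h})$ fits into a natural pullback square
\[
\begin{tikzcd}
	Y \drpullback \arrow{r}{} \arrow[swap]{d}{p_{*}s^{*}t_{h}} & \mathbf{E}(\tr_{\leq h}(P)) \arrow{d}{\mathbf{E}(t_{h})} \\
	B \arrow[swap]{r}{\bar s} & \mathbf{E}(I).
\end{tikzcd}
\]

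First I would observe that this is precisely the pullback square of Lemma~\ref{lem:treerootfib}: the two squares share the cospan $B \xrightarrow{\bar s} \mathbf{E}(I) \xleftarrow{\mathbf{E}(t_{h})} \mathbf{E}(\tr_{\leq h}(P))$, since both define the bottom map by the same adjunction formula $\bar s$ and both use the map $t_{h}$ on the right. By uniqueness of pullbacks this yields a canonical equivalence $Y \simeq \tr_{\bullet, \leq h+1}(P)$ over $B$, under which the structure map $p_{*}s^{*}(t_{h}) \colon Y \to B$ is identified with $\bottomnode \colon \tr_{\bullet, \leq h+1}(P) \to B$.

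It then remains to post-compose with $t_{!}$ and read off the map to $I$. Under the equivalence above, $P(t_{h}) = t_{!}\,p_{*}s^{*}(t_{h})$ is the object $\tr_{\bullet, \leq h+1}(P) \xrightarrow{\,t \circ \bottomnode\,} I$. Unwinding the definitions, $\bottomnode$ sends a $P$-tree with root node (of height $\leq h+1$) to its root $P$-corolla, viewed as a point of $B \simeq \corol(P)$, and $t \colon B \to I$ returns the colour of the output edge of that corolla; hence the composite returns the colour of the root edge of the $P$-tree. Following the root-colour convention of Proposition~\ref{propn:Ph}, this is exactly the map $t_{h+1} \colon \tr_{\bullet, \leq h+1}(P) \to I$, giving $P(t_{h}) \simeq t_{h+1}$ as required.

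The only step needing real care is verifying that the two pullback squares genuinely have the same cospan, i.e.\ that $\bar s$ and $\mathbf{E}(t_{h})$ coming from Proposition~\ref{propn:pushforwardpbsq} coincide with those of Lemma~\ref{lem:treerootfib}; this is immediate once one notes that both invoke the same adjunction description of $\bar s$ and the same map $t_{h}$ on the groupoid of height-$\leq h$ $P$-trees. The remaining identifications — that $p_{*}s^{*}(t_{h})$ is carried to $\bottomnode$ and that $t \circ \bottomnode$ is the root-colour map — are straightforward bookkeeping with the definitions.
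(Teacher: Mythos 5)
Your proposal is correct and follows essentially the same route as the paper: apply Proposition~\ref{propn:pushforwardpbsq} to $f = t_h$, identify the resulting pullback square with that of Lemma~\ref{lem:treerootfib} (same cospan, hence $p_{*}s^{*}(t_h) \simeq \bottomnode$), and then observe that applying $t_!$ just post-composes with $t$ to give $t_{h+1}$. Your extra care in checking that the two cospans genuinely agree is a reasonable elaboration of a step the paper leaves implicit, but it is the same argument.
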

\begin{proof}
  Proposition~\ref{propn:pushforwardpbsq} describes $p_{*}s^{*}(t_h)$
  as the vertical left map in the pullback
\[
\begin{tikzcd}
	Y \drpullback
	\arrow{r}{} \arrow{d} & 
	\mathbf{E}(\tr_{\leq h}(P)) \arrow{d}{\mathbf{E}(t_h)} \\
	\corol(P)\simeq B \arrow[swap]{r}{\bar s} & \mathbf{E}(I) ,
	\end{tikzcd}
\]
  and Lemma~\ref{lem:treerootfib} identifies this map as
  $\bottomnode \colon \tr_{\bullet, \leq h+1}(P) \to B$.  Applying finally 
  $t_!$ is to compose with $t$, yielding $t_{h+1}$ as required.
\end{proof}

\begin{proof}[Proof of Proposition~\ref{propn:Ph}]
  This goes by induction on $h$.  First of all, $P_0:=\id$ is
  represented by $I \leftarrow I \to I \to I$, but $I$ is also the
  space of $P$-trees of height $\leq 0$.  This establishes the base of
  the induction.  Assuming we have already established that $P_h$ is
  represented by trees of height $\leq h$, we need to identify the
  composite $P \circ P_{h}$, using the explicit description given in
  Theorem~\ref{thm:comp}.  We have already computed the space in the
  upper right corner (denoted $D$ in the big diagram in
  \ref{thm:comp}): by Corollary~\ref{cor:P(th)}, it is the space
  $\tr_{\bullet, \leq h+1}(P)$ of trees of height $\leq h+1$ and with
  a bottom node.  To compute the space in the upper left corner
  (denoted $G$ in the big diagram in \ref{thm:comp}), we need first to
  pull back along $p: E \to B$: this gives the same space of trees but
  with a marked incoming edge of the bottom node.  This space comes
  with a canonical projection to $\tr_{\leq h}(P)$ given by returning
  the tree sitting over that marked edge.  Finally we need to pull
  back along $p_h$, which amounts to marking a leaf of that marked
  subtree.  Together the two pullbacks amount to marking any leaf,
  giving thus the space $\tr'_{\bullet, \leq h+1}(P)$.  Finally, the
  formula for $P_{h+1}$ adds in the trivial tree by means of the
  summand $\id$.  This compensates precisely for the requirement of
  having a bottom node.
\end{proof}

\begin{proof}[Proof of Theorem~\ref{thm:freeanalmonad}]
  We know from Proposition~\ref{propn:freemonadind} that the free monad 
  $\freemonad{P}$ is the colimit of the sequence $P_h$, where $P_{h}$ is
  represented by
  \[
  I \longleftarrow \tr'_{\leq h}(P) \longrightarrow \tr_{\leq h}(P) \longrightarrow  I
  \]
  by Proposition~\ref{propn:Ph}. It then follows from
  Proposition~\ref{propn:colimpolyispoly},
  Theorem~\ref{thm:polyispolyfun}, and
  Corollary~\ref{cor:polycolim} that the colimit $\freemonad{P}$
  is the polynomial functor corresponding to the pointwise colimit of
  these diagrams, which is clearly
  \[
  I \longleftarrow \tr'(P) \longrightarrow \tr(P) \longrightarrow  I 
  \]
  as asserted.
\end{proof}

\begin{remark}
  The monad structure on $\freemonad{P}$ is also pleasantly described 
  in terms of trees.  The space of operations of $\freemonad{P}\circ 
  \freemonad{P}$ is $\freemonad{P}(\tr(P))$, the space of $P$-trees
  whose leaves are decorated by $P$-trees in a compatible way.  More 
  precisely, the objects of $\freemonad{P}(\tr(P))$ are tuples
  $$
  \big(R, 
  \begin{tikzcd}[cramped, sep=tiny]
	  \leaves(R) \ar[rr, "f"] \ar[rd] &[-1ex]&[-1ex] \tr(P) \ar[ld, "\rho"] \\ & I & 
  \end{tikzcd} 
  \big)
  $$
  where $R$ is a $P$-tree and $f$ assigns to each leaf of $R$ a 
  $P$-tree whose root edge has the same colour.  The monad 
  multiplication $\freemonad{P}\circ \freemonad{P} \to \freemonad{P}$
  now simply takes this configuration and glues those trees onto the 
  leaves of $R$.  Clearly this construction is just the colimit of
  the same construction with trees of height $m$ and $n$, which is the
  tree interpretation of the natural transformations 
  $P_n \circ P_m \to P_{m+n}$ from~\ref{constr:PnPm}.
  
  Cartesianness of $\mu$ can also be established along these lines:
  the arity of an operation $(R,f)$ as above is the disjoint union
  of all the leaves of all the upper trees.  Clearly this is the same
  as the set of all leaves of the resulting total tree.
\end{remark}

\begin{propn}\label{prop:colimPbar}
  Let $P$ be an analytic endofunctor on $\mathcal{S}_{/I}$, and $\freemonad{P}$ the free
  monad on $P$.  Under the equivalence $i^{*} \colon \AnEnd
  \stackrel\sim\to \mathcal{P}(\bbOel)$ of
  Proposition~\ref{propn:AnEndpsh}, the underlying endofunctor of
  $\freemonad{P}$ is identified with the presheaf
  \[ C_n \longmapsto \colim_{T \in \nTr} \Map(T,P).\]
  (and $\eta \mapsto I$).
  Here $\nTr$ is the homotopy fibre over $n$ of the map $\tr \to
  \iota\Fin$ that sends a tree to its set of leaves.
\end{propn}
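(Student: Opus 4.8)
The plan is to read off the presheaf $i^{*}(\freemonad{P})$ from the explicit tree-representation of the free monad provided by Theorem~\ref{thm:freeanalmonad}, using Lemma~\ref{lem:elmaps} to evaluate the relevant mapping spaces. By Proposition~\ref{propn:Pbar-anal} the functor $\freemonad{P}$ is analytic, and by Theorem~\ref{thm:freeanalmonad} its underlying endofunctor is represented by the polynomial
\[ I \xfrom{\lambda} \tr'(P) \to \tr(P) \xto{\rho} I. \]
To apply Lemma~\ref{lem:elmaps} I first need to identify the analytic structure map $\beta' \colon \tr(P) \to \iota\Fin$ of this functor, namely the classifier for the finite discrete fibres of the middle map $\tr'(P) \to \tr(P)$. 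Since $\tr'(P)$ is by construction (Definition~\ref{def:trP}) the space of $P$-trees with a marked leaf, the fibre of $\tr'(P) \to \tr(P)$ over a $P$-tree is its set of leaves; concretely $\tr'(P) \simeq \iota\Fin_{*} \times_{\iota\Fin} \tr(P)$ along the map $\tr(P) \to \tr \xto{\leaves} \iota\Fin$, so that
\[
\begin{tikzcd}
\tr'(P) \arrow{r} \arrow{d} & \iota\Fin_{*} \arrow{d} \\
\tr(P) \arrow{r}{\leaves} & \iota\Fin
\end{tikzcd}
\]
is cartesian and $\beta'$ is precisely the leaves map.

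With this identification, Lemma~\ref{lem:elmaps} applied to $\freemonad{P}$ gives
\[ i^{*}(\freemonad{P})(\eta) = \Map(\eta,\freemonad{P}) \simeq I, \qquad i^{*}(\freemonad{P})(C_{n}) = \Map(C_{n},\freemonad{P}) \simeq \fib{\tr(P)}{n}, \]
the latter being the fibre of $\leaves \colon \tr(P) \to \iota\Fin$ over an $n$-element set. It then remains only to identify this fibre with $\colim_{T \in \nTr} \Map(T,P)$.

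For this last step I would use that $\tr$ is an $\infty$-groupoid, so the projection $\tr(P) \to \tr$ from Definition~\ref{def:trP} is the left fibration classifying the functor $\Map(\blank,P) \colon \tr \to \mathcal{S}$, whose total space is $\colim_{T\in\tr}\Map(T,P)$. Since the leaves map on $\tr(P)$ factors through $\tr$ and $\nTr$ is by definition the homotopy fibre of $\leaves\colon \tr \to \iota\Fin$ over $n$, we obtain a pullback $\fib{\tr(P)}{n} \simeq \tr(P)\times_{\tr}\nTr$, which is the restriction of the left fibration along the subgroupoid inclusion $\nTr \hookrightarrow \tr$; its total space is $\colim_{T\in\nTr}\Map(T,P)$, as required. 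All the equivalences above are natural in the elementary tree (being instances of the functorial Lemma~\ref{lem:elmaps} and of the fibration description), so they assemble into an equivalence of presheaves on $\bbOel$. The only real content is the first step — checking that the analytic structure map of $\freemonad{P}$ is the leaves map — which is immediate from the defining pullback of $\tr'(P)$; everything else is the standard fact that restricting a groupoid-indexed left fibration along an inclusion of subgroupoids computes the colimit of the restricted functor.
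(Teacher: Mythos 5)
Your proposal is correct and follows essentially the same route as the paper's proof: both read off $i^{*}\freemonad{P}$ from the tree representation of Theorem~\ref{thm:freeanalmonad}, evaluate on $\eta$ and $C_n$ via Lemma~\ref{lem:elmaps}, and identify the resulting fibre of the leaves map with $\colim_{T\in\nTr}\Map(T,P)$ by restricting the left fibration $\tr(P)\to\tr$ to $\nTr$. Your extra care in checking that the analytic classifier of $\freemonad{P}$ is the leaves map (via the defining pullback of $\tr'(P)$) is a detail the paper leaves implicit, but it is the same argument.
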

  
\begin{proof}
  The equivalence $i^*$ sends $\freemonad{P}$ to
  the presheaf $C_n \mapsto \Map(C_n, \freemonad{P})$.  But we know from
  Theorem~\ref{thm:freeanalmonad} that $\freemonad{P}$ is represented by
  $I \leftarrow \tr'(P) \to \tr(P) \to I$.  Now by \ref{lem:elmaps}, we have
  $\Map(C_n , \freemonad{P}) \simeq \nTr(P)$, the latter defined as the left 
  composite pullback
  \[
  \begin{tikzcd}
  \nTr(P) \drpullback \arrow{r}{} \arrow{d}{} & \tr(P) \drpullback \arrow{r}{} \arrow{d}{} & \AnEnd{}_{/P} \arrow{d}{} \\
  \nTr \drpullback \arrow{r}{} \arrow{d}{} & \tr \arrow{r}{} 
  \arrow{d}{\leaves}& \AnEnd \\
  * \arrow[swap]{r}{n} & \iota\Fin  .&
  \end{tikzcd}
  \]
  From the top composite pullback, we get (in analogy with \ref{def:trP})
  $\nTr(P) \simeq \colim_{T\in \nTr} \Map(T,P)$, as claimed.  In summary:
  \[
  i^* \freemonad{P} (C_n) \simeq \Map(C_n , \freemonad{P})
  \simeq \nTr(P) \simeq \colim_{T\in \nTr} \Map(T,P) .\qedhere
  \]
\end{proof}

\subsection{Analytic Monads versus Dendroidal Segal Spaces}\label{subsec:comparison}
In this subsection we will prove the main result of the paper, that
analytic monads are equivalent to \iopds{}. First, we need to recall
the model of \iopds{} we will use for the comparison, namely the
dendroidal Segal spaces of Cisinski and
Moerdijk~\cite{CisinskiMoerdijkDendSeg}.

\begin{defn}
  The \emph{dendroidal category} $\bbO$ is the full subcategory of
  $\AnMnd$ spanned by the image of $\bbOint$, i.e.~the free monads
  on the trees.
\end{defn}
\begin{remark}
  Since trees themselves are polynomials in $\Set$, and since the free
  monad on a set polynomial is again a set polynomial, the definition
  given here agrees with that of \cite{KockTree}, which in turn is
  just a polynomial reformulation of the original definition of
  \cite{MoerdijkWeiss}.  Recall that $\bbO$ has as morphisms the monad
  maps between free monads on the trees, and that $\bbO$ has an
  active--inert factorization system (also called the generic--free
  factorization system~\cite{BergerMelliesWeber}): The inert maps are
  the tree inclusions, defined formally as the morphisms of polynomial
  functors between trees, forming the category $\bbOint$ studied so
  far.    
  The active maps are given by node refinements, characterized also
  as the monad maps that preserve leaves and root. This includes the
  codegeneracy case where a unary node is ``refined'' into a nodeless
  tree. To specify an active map out of a corolla with set of leaves
  $L$ amounts to giving a tree with $L$ as set of leaves.
  The only active map out of the trivial tree $\eta$ is the
    identity.  A general active map $T\to T'$ is specified by giving
    an active map out of each node corolla, and then gluing together
    the resulting trees along roots and leaves, according to the same
    recipe that gave $T$ as the colimit of its elementary trees.
\end{remark}

\begin{defn}\label{def:PSeg}
  A presheaf $F$ on $\bbO$ is called a \emph{Segal presheaf} if its
  restriction $j^{*}F$ along the inclusion $j \colon \bbOint \to \bbO$
  is a Segal presheaf on $\bbOint$, as in \ref{def:Seg}.
  We define the \icat{} $\PSeg(\bbO)$  of Segal presheaves to be the pullback
  \[
  \begin{tikzcd}
	  \PSeg(\bbO) \drpullback \arrow{r} \arrow{d} & \mathcal{P}(\bbO) 
	  \arrow{d}{j^*} \\
	  \PSeg(\bbOint) \arrow{r} & \mathcal{P}(\bbOint) .
  \end{tikzcd}
  \]
\end{defn}

\begin{thm}\label{thm:AnMnd=PSeg}
  The 
  restricted Yoneda embedding $N \colon \AnMnd \to \mathcal{P}(\bbO)$
  is fully faithful, and its essential image is $\PSeg(\bbO)$.  We
  thus have an equivalence of $\infty$-categories
  \[
  \AnMnd \simeq \PSeg(\bbO) .
  \]
\end{thm}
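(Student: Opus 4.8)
The plan is to realise $N$ as the nerve of the free-monad adjunction and to run a Weber-style nerve argument with arities $\bbOint$. Throughout, write $F\colon \AnEnd\to\AnMnd$ for the free-monad functor and $U$ for the forgetful functor, so that $U$ is monadic by Corollary~\ref{cor:freeanalyticmonadic} and $\Map_{\AnMnd}(\freemonad P, M)\simeq\Map_{\AnEnd}(P, UM)$. First I would check that $N$ lands in $\PSeg(\bbO)$. For a tree $T$, with free monad $\freemonad T=jT\in\bbO$, and any analytic monad $M$, the adjunction gives
\[ N(M)(\freemonad T)=\Map_{\AnMnd}(\freemonad T, M)\simeq\Map_{\AnEnd}(T, UM)=(i_{0}^{*}UM)(T). \]
Thus $j^{*}N(M)\simeq i_{0}^{*}UM$, which is a Segal presheaf by Proposition~\ref{prop:ff_int}, so $N$ factors as $\tilde N\colon\AnMnd\to\PSeg(\bbO)$ with $j^{*}\tilde N\simeq i_{0}^{*}U$. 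Since $\bbO\subseteq\AnMnd$ is full, $N$ moreover carries each $\freemonad T$ to the representable $y(\freemonad T)$.

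Next, full faithfulness. By monadicity of $U$, every $M$ is the geometric realisation of its bar resolution $M\simeq\colim_{\Delta^{\op}}(FU)^{\bullet+1}M$, whose terms are free monads $\freemonad{P_n}$ on the analytic endofunctors $P_n=(UF)^{n}UM$, and which is $U$-split. Because $U$ sends this augmented simplicial object to a split (hence absolute) colimit, each corepresentable $\Map_{\AnMnd}(\freemonad T,-)\simeq\Map_{\AnEnd}(T, U-)$ preserves the realisation, and therefore so does $N$: we get $NM\simeq\colim_{\Delta^{\op}}N\freemonad{P_{\bullet}}$. Applying this to source and target reduces full faithfulness to the case of a free source, i.e.\ to proving
\[ \Map_{\AnMnd}(\freemonad P, M)\xrightarrow{\ \sim\ }\Map_{\mathcal P(\bbO)}(N\freemonad P, NM) \]
for $P$ analytic and $M$ arbitrary.

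To establish this key equivalence I would present $P$ as the canonical colimit $P\simeq\colim_{(T\to P)}T$ of trees in $\AnEnd\simeq\PSeg(\bbOint)$; since $\freemonad{(-)}$ is a left adjoint, $\freemonad P\simeq\colim_{(T\to P)}\freemonad T$. The crux — and the main obstacle — is that $N$ preserves this particular (non-sifted) colimit, equivalently that for every tree $S$ the functor $\Map_{\AnEnd}(S, U\freemonad{(-)})$ preserves the tree-colimit of $P$; this is exactly the ``monad has arities $\bbOint$'' hypothesis underlying Weber's nerve theorem. I would verify it from the explicit tree description of the free monad: by Theorem~\ref{thm:freeanalmonad} and Proposition~\ref{prop:colimPbar}, $\Map_{\AnEnd}(S, U\freemonad P)$ is assembled through the category of elements $\el(S)$ out of the spaces $\colim_{T'\in\nTr}\Map(T', P)$, and the grafting combinatorics of trees (following \cite{KockTree}) identify an $\el(S)$-indexed configuration of $P$-trees with the corresponding colimit over single $P$-trees. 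Granting this, $N\freemonad P\simeq\colim_{(T\to P)}y(\freemonad T)$, whence
\[ \Map_{\mathcal P(\bbO)}(N\freemonad P, NM)\simeq\lim_{(T\to P)}NM(\freemonad T)\simeq\lim_{(T\to P)}\Map_{\AnEnd}(T, UM)\simeq\Map_{\AnEnd}(P, UM), \]
matching $\Map_{\AnMnd}(\freemonad P, M)$ and giving full faithfulness.

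Finally, essential surjectivity. The same colimit-preservation shows that the ``free'' Segal presheaves are precisely the objects $N\freemonad P$, and that $\tilde N$ exhibits $\PSeg(\bbO)$ as monadic over $\AnEnd\simeq\PSeg(\bbOint)$ for the free-monad monad $UF$, compatibly with $U$. Given $F\in\PSeg(\bbO)$, its bar resolution for this monad presents it as a realisation of objects $N\freemonad{P_{\bullet}}$; setting $M:=\colim_{\Delta^{\op}}\freemonad{P_{\bullet}}$ in $\AnMnd$ and using that $N$ preserves this realisation yields $NM\simeq F$. Hence $\tilde N$ is essentially surjective, and combined with full faithfulness it is the asserted equivalence $\AnMnd\simeq\PSeg(\bbO)$. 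I expect the genuine difficulty to be concentrated entirely in the combinatorial preservation statement of the third paragraph, with the remaining steps being formal consequences of monadicity and the identifications $\AnEnd\simeq\PSeg(\bbOint)$.
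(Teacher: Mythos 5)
Your proposal is correct and follows essentially the same route as the paper: a Weber-style nerve argument that reduces to free monads via monadicity and $U$-split bar resolutions (packaged in the paper as Proposition~\ref{propn:monadff}), with the whole difficulty concentrated in the claim that $N\freemonad{P}\simeq\colim_{(T\to P)}N(\freemonad{T})$, which is precisely the paper's mate-commutation statement $j_!N_{\int}\isoto NF$ (Proposition~\ref{propn:leftadjcommute}). The combinatorial verification you sketch via the tree formula for the free monad and grafting is exactly how the paper carries it out, through the active--inert factorization and the Segal decomposition of $\Act(T)$ (Lemmas~\ref{lem:colimlem}--\ref{lem:colimcolim}).
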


The proof will be based on the following general observation, which is an
\icatl{} version of a result of Berger, Melli\`{e}s and
Weber~\cite{BergerMelliesWeber}; they use it to give a proof of the
``nerve theorem'' for monads (originally due to
Weber~\cite{WeberFamilial}).
\begin{propn}\label{propn:monadff}
  Suppose given a commutative square of \icats{}
  \csquare{\mathcal{E}_{1}}{\mathcal{E}_{2}}{\mathcal{B}_{1}}{\mathcal{B}_{2}}{\bar{\phi}}{U_{1}}{U_{2}}{\phi}
  such that
  \begin{enumerate}[(1)]
  \item the functor $U_{i}$ has a left adjoint $F_{i}$ for $i = 1,2$,
  \item the adjunction $F_{i} \dashv U_{i}$ is monadic for $i = 1,2$,
  \item the functor $\phi$ is fully faithful,
  \item the mate transformation $F_{2}\phi \to
    \bar{\phi}F_{1}$ is a natural equivalence,
  \end{enumerate}
  Then the functor $\bar{\phi}$ is also fully faithful, and its
  essential image consists of those $A \in \mathcal{E}_{2}$ such that
  $U_{2}A$ is in the image of $\phi$. (In other words, the commutative
  square above is cartesian.)
\end{propn}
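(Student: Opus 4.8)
The plan is to pass to the monadic models and then argue with bar resolutions. Write $T_i := U_i F_i$ for the monad on $\mathcal{B}_i$ induced by the adjunction $F_i \dashv U_i$; by condition (2) and \cite{HA}*{Theorem 4.7.3.5} the canonical comparison functors exhibit equivalences $\mathcal{E}_i \simeq \Alg_{T_i}(\mathcal{B}_i)$ under which each $U_i$ becomes the forgetful functor. Condition (4) supplies an equivalence $F_2\phi \simeq \bar{\phi} F_1$, and applying $U_2$ together with the identity $U_2\bar{\phi} \simeq \phi U_1$ coming from the commuting square yields an equivalence of endofunctors $T_2\phi \simeq \phi T_1 \colon \mathcal{B}_1 \to \mathcal{B}_2$. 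Thus $\phi$ intertwines $T_1$ with the restriction of $T_2$ along the full subcategory $\phi$, and $\bar{\phi}$ is the functor on algebras induced by this intertwiner.

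First I would prove full faithfulness. For $X \in \mathcal{E}_1$ the bar resolution $\mathrm{Bar}_\bullet(F_1,T_1,U_1 X)$ is a $U_1$-split simplicial object whose geometric realization is $X$. Its image $U_1\mathrm{Bar}_\bullet$ extends to a split simplicial object, hence has an absolute colimit, so $\phi U_1 \mathrm{Bar}_\bullet \simeq U_2\bar{\phi}\,\mathrm{Bar}_\bullet$ is again split; by monadicity of $U_2$ the diagram $\bar{\phi}\,\mathrm{Bar}_\bullet$ therefore has a colimit preserved by $U_2$, and since $U_2$ detects equivalences this colimit is $\bar{\phi} X$ — that is, $\bar{\phi}$ preserves this geometric realization. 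Because $\bar{\phi} F_1 \simeq F_2\phi$ the terms of $\bar{\phi}\,\mathrm{Bar}_\bullet$ are free algebras $F_2\phi(\cdots)$, so for $Y \in \mathcal{E}_1$ the adjunction $F_2 \dashv U_2$, the identity $U_2\bar{\phi} Y \simeq \phi U_1 Y$, and full faithfulness of $\phi$ (condition (3)) give
\[
\Map_{\mathcal{E}_2}(\bar{\phi} X, \bar{\phi} Y) \simeq \lim_{\Delta} \Map_{\mathcal{B}_2}(\phi(\cdots), \phi U_1 Y) \simeq \lim_{\Delta}\Map_{\mathcal{B}_1}((\cdots), U_1 Y) \simeq \Map_{\mathcal{E}_1}(X, Y),
\]
the last step undoing the bar resolution of $X$ via $F_1 \dashv U_1$. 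Hence $\bar{\phi}$ is fully faithful.

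It then remains to identify the essential image. If $A = \bar{\phi} X$ then $U_2 A \simeq \phi U_1 X$ lies in the image of $\phi$, giving one inclusion. Conversely, suppose $U_2 A \simeq \phi b$ with $b \in \mathcal{B}_1$. The $T_2$-action $T_2\phi b \to \phi b$ corresponds, via $T_2\phi \simeq \phi T_1$ and full faithfulness of $\phi$, to a map $\beta\colon T_1 b \to b$; compatibility of the intertwiner with the monad multiplications promotes $\beta$ to a $T_1$-algebra structure on $b$, i.e. an object $X \in \mathcal{E}_1$ with $U_1 X \simeq b$. Comparing bar resolutions term by term through $\bar{\phi} F_1 \simeq F_2\phi$ and $T_2\phi\simeq\phi T_1$ identifies $\bar{\phi}\,\mathrm{Bar}_\bullet(F_1,T_1,b)$ with $\mathrm{Bar}_\bullet(F_2,T_2,\phi b)$, the bar resolution of $A$; taking geometric realizations (preserved by $\bar{\phi}$, as just shown) yields $\bar{\phi} X \simeq A$. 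Thus the image is exactly $\{A : U_2 A \in \operatorname{im}\phi\}$, which is the cartesian-square reformulation.

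The hard part will be the promotion of the bare equivalence $T_2\phi \simeq \phi T_1$ to a genuine morphism of monads — equivalently, checking that the transferred structure map $\beta$ satisfies the algebra axioms coherently and that $\bar{\phi}$ really is the induced functor on algebra $\infty$-categories. In the $1$-categorical setting of Berger–Melli\`es–Weber this is automatic, but here it demands the full simplicial data of the intertwiner and the functoriality of the bar construction. Everything else reduces, through monadicity and the absoluteness of split colimits, to the full faithfulness of $\phi$.
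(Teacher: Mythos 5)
Your argument for full faithfulness is essentially the paper's: reduce to free objects via the adjunction identities and the mate equivalence $F_2\phi \simeq \bar{\phi}F_1$, then handle general objects by a $U_1$-split bar resolution, using that $U_2\bar{\phi} \simeq \phi U_1$ makes its image $U_2$-split so that monadicity of $U_2$ lets you commute $\bar{\phi}$ past the realization. That half is fine.

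The essential-image argument has a genuine gap, and it is exactly the one you flag at the end: promoting the equivalence of endofunctors $T_2\phi \simeq \phi T_1$ to a coherent intertwining of monads, and then transferring the $T_2$-algebra structure on $\phi b$ to a $T_1$-algebra structure on $b$. In the $\infty$-categorical setting a $T_1$-algebra structure is an infinite tower of coherence data, not a single structure map $\beta\colon T_1 b \to b$ satisfying axioms, so ``compatibility of the intertwiner with the monad multiplications promotes $\beta$ to a $T_1$-algebra structure'' is precisely the step that cannot be waved through; it would require constructing a morphism of monads out of condition (4), which is a higher-coherence problem of the same flavour the paper goes out of its way to avoid (cf.\ its Warning about comparing forgetful functors to endofunctors). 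The paper sidesteps this entirely: it first establishes full faithfulness of $\bar{\phi}$, then observes that the terms of the \emph{canonical} free resolution $A_\bullet = F_2(U_2F_2)^\bullet U_2A$ satisfy $F_2(U_2F_2)^nU_2A \simeq \bar{\phi}F_1(U_1F_1)^nX$, so the entire simplicial diagram $A_\bullet$ factors through $\mathcal{E}_1$ \emph{by full faithfulness of $\bar{\phi}$} — no algebra structure is ever transferred by hand. The lifted diagram $A'_\bullet$ is then checked to be $U_1$-split (using full faithfulness of $\phi$ to factor the splitting of $U_2A_\bullet$ through $\mathcal{B}_1$), monadicity of $F_1 \dashv U_1$ produces its colimit $A'$, and $U_2$ detecting equivalences gives $A \simeq \bar{\phi}A'$. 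If you replace your structure-transfer step with this lifting-the-whole-resolution argument, the proof closes; as written, the key step is asserted rather than proved.
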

\begin{proof}
  We first prove that $\bar{\phi}$ is fully faithful, i.e.~that for
  all $A, B \in \mathcal{E}_{1}$ the map
  \[ \Map_{\mathcal{E}_{1}}(A, B) \to \Map_{\mathcal{E}_{2}}(\bar{\phi}A,
  \bar{\phi}B)\]
  is an equivalence.

  First suppose $A$ is free, i.e.~of the form $F_{1}X$ for some $X \in
  \mathcal{B}_{1}$. Then we have natural equivalences
  \begin{multline*}
  \Map_{\mathcal{E}_{1}}(F_{1}X, B) \simeq \Map_{\mathcal{B}_{1}}(X, U_{1}B)
  \simeq \Map_{\mathcal{B}_{2}}(\phi X, \phi U_{1} B) \simeq
  \Map_{\mathcal{B}_{2}}(\phi X, U_{2}\bar{\phi} B) \\ 
  \hfill \simeq
  \Map_{\mathcal{E}_{2}}(F_{2}\phi X, \bar{\phi} B) \simeq
  \Map_{\mathcal{E}_{2}}(\bar{\phi}F_{1}X, \bar{\phi} B).
  \end{multline*}
  For a general $A$, we can choose a $U_{1}$-split simplicial free
  resolution $A_{\bullet}$. Since each $A_{n}$ is free, we have
  natural equivalences
  \[
  \Map_{\mathcal{E}_{1}}(A, B) \simeq \lim \Map_{\mathcal{E}_{1}}(A_{\bullet}, B)
  \simeq \lim \Map_{\mathcal{E}_{2}}(\bar{\phi}A_{\bullet}, \bar{\phi}B).\]
  Since $A_{\bullet}$ is $U_{1}$-split and $U_{2}\bar{\phi} \simeq
  \phi U_{1}$, the simplicial diagram $\bar{\phi}A_{\bullet}$ is
  $U_{2}$-split. Since $F_{2}\dashv U_{2}$ is monadic, this implies
  that the colimit $|\bar{\phi}A_{\bullet}|$ exists and is preserved
  by $U_{2}$. There is a canonical map $|\bar{\phi}A_{\bullet}| \to
  \bar{\phi}(A)$ and to see that it is an equivalence it suffices to
  show that it is one after applying $U_{2}$. But since the diagram
  $U_{1}A_{\bullet}$ is split, so is $\phi U_{1} A_{\bullet} \simeq
  U_{2}\bar{\phi} A_{\bullet}$ and therefore its colimit is $\phi
  U_{1}A \simeq U_{2} \bar{\phi} A$, as required. We thus have a
  natural equivalence
  \[
  \Map_{\mathcal{E}_{1}}(A, B) \simeq \lim
  \Map_{\mathcal{E}_{2}}(\bar{\phi}A_{\bullet}, \bar{\phi}B) \simeq
  \Map_{\mathcal{E}_{2}}(|\bar{\phi}A_{\bullet}|, \bar{\phi}B) \simeq
  \Map_{\mathcal{E}_{2}}(\bar{\phi}A, \bar{\phi}B).\]
  This shows that $\bar{\phi}$ is fully faithful; it remains to prove
  that if $A \in \mathcal{E}_{2}$ satisfies $U_{2}A \simeq \phi X$ for
  some $X \in \mathcal{B}_{1}$, then $A$ is in the image of
  $\bar{\phi}$. We can view $A$ as the geometric realization of its
  canonical free resolution $A_{\bullet} := F_{2}(U_{2}F_{2})^{\bullet}U_{2}A$. We
  have $U_{2}F_{2}\phi \simeq U_{2}\bar{\phi}F_{1} \simeq \phi
  U_{1}F_{1}$, so $F_{2}(U_{2}F_{2})^{n}U_{2}A \simeq
  \bar{\phi}F_{1}(U_{1}F_{1})^{n}X$. Since $\bar{\phi}$ is fully
  faithful, the diagram $A_{\bullet}$ factors through
  $\mathcal{E}_{1}$, i.e.~we have a simplicial diagram $A'_{\bullet}$
  in $\mathcal{E}_{1}$ such that $\bar{\phi}A'_{\bullet} \simeq
  A_{\bullet}$. The diagram $A_{\bullet}$ is also $U_{2}$-split,
  and since $U_{2}A \simeq \phi X$ the extension of $U_{2}A_{\bullet}$
  to a split simplicial diagram factors through $\mathcal{B}_{1}$ as
  $\phi$ is fully faithful. Thus $A'_{\bullet}$ is
  $U_{1}$-split. Since the adjunction $F_{1} \dashv U_{1}$ is monadic,
  this implies that $A'_{\bullet}$ has a colimit $A'$ in
  $\mathcal{E}_{1}$, and this colimit is preserved by $U_{1}$. There
  is then a canonical map $A \to \bar{\phi}A'$, and this is an
  equivalence since $U_{2}$ detects equivalences. This proves that $A$
  is in the essential image of $\bar{\phi}$, as required.
\end{proof}

We are going to apply Proposition~\ref{propn:monadff} to the
commutative diagram
  \[
  \begin{tikzcd}
	  \AnMnd 
	  \arrow{r}{N} \arrow{d}{U} & \mathcal{P}(\bbO) \arrow{d}{j^*} \\
	  \AnEnd \arrow[swap]{r}{N_{\xint}} & \mathcal{P}(\bbOint) .
  \end{tikzcd}
  \]
  The vertical functors have left adjoints $F$ and $j_!$, 
  respectively.  A key step (which will be Proposition~\ref{propn:leftadjcommute} 
  below) is to show that the mate commutes
  \[
  \begin{tikzcd}
	  \AnMnd 
	  \arrow{r}{N}  & \mathcal{P}(\bbO)  \\
	  \AnEnd \arrow{u}{F}\arrow[swap]{r}{N_{\xint}} & \mathcal{P}(\bbOint) .
	  \arrow{u}{j_!}  
  \end{tikzcd}
  \]
  To establish this, we will need:
  \begin{itemize}
  \item a simplified formula for $j^* j_!$ in terms 
  of active maps (Lemma~\ref{lem:colimlem} below),
\item some results regarding compatibility of Segal presheaves with active 
  maps and colimits of subtrees (Corollary~\ref{cor:generalcolimoftrees} and 
  Lemma~\ref{lem:colimPhiT'}),
\item the formula for the free monad on an analytic endofunctor in
  terms of trees, already established in Proposition~\ref{prop:colimPbar},
  which allows for reduction to the case of elementary trees 
  (Lemma~\ref{lem:colimcolim}).
  \end{itemize}

\begin{lemma}\label{lem:fbeta}
  Let $\beta \colon T \to T'$ be an active map.
  \begin{enumerate}[(i)]
  \item There is an induced functor
	\[
	\beta_! \colon \bbOel{}_{/T} \to \bbOint{}_{/T'}
	\]
	which takes an elementary tree $E \to T$ to the inert part of the
	active-inert factorization of the composite $E \to T \to T'$, as in
	\[
	\begin{tikzcd}
		T \arrow{r}{\beta} & T' \\
		E \arrow{u}{f} \arrow[swap]{r}{\alpha} & \beta_! E  .
		\arrow[swap]{u}{f'}
	\end{tikzcd}
	\]
      \item There is an induced colimit decomposition of $T'$ into subtrees 
	$\beta_!(E) \subset T'$:
	\[
	T' \simeq \colim_{E\in \el(T)} \beta_!(E) .
	\]
        \end{enumerate}
\end{lemma}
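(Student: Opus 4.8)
The plan is to deduce both parts from the active--inert orthogonal factorization system on $\bbO$ together with the canonical colimit decomposition $T \simeq \colim_{E \in \el(T)} E$ of Lemma~\ref{lem:treecolim}.

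For (1), first I would define $\beta_!$ on objects. An object $f \colon E \to T$ of $\bbOel{}_{/T}$ is inert, so the composite $E \xrightarrow{f} T \xrightarrow{\beta} T'$ has a unique active--inert factorization $E \xrightarrow{\alpha} \beta_! E \xrightarrow{f'} T'$ with $\alpha$ active and $f'$ inert; the inert leg $f'$ exhibits $\beta_! E$ as an object of $\bbOint{}_{/T'}$. For functoriality, a morphism $g \colon E \to E''$ in $\bbOel{}_{/T}$ produces a commutative square whose left edge is the active map $\alpha \colon E \to \beta_! E$ and whose right edge is the inert map $f'' \colon \beta_! E'' \to T'$ (the remaining two edges being $\alpha'' \circ g$ and $f'$). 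Orthogonality then yields a unique diagonal filler $\beta_! g \colon \beta_! E \to \beta_! E''$ over $T'$. This filler is a map between two subtrees of $T'$ compatible with the inert legs, hence is itself inert and defines a morphism in $\bbOint{}_{/T'}$; uniqueness of orthogonal lifts gives compatibility with identities and composites, so $\beta_!$ is a functor.

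For (2), the inert legs $f' \colon \beta_! E \to T'$ are natural in $E \in \el(T)$ and thus assemble into a cocone, inducing a canonical comparison map $c \colon \colim_{E \in \el(T)} \beta_! E \to T'$. To see that $c$ is an equivalence I would appeal to the explicit description of active maps as node refinements: $\beta$ replaces each node-corolla $C \in \el(T)$ by the tree $\beta_! C$ (with the same leaves and root) and fixes each edge, since the identity is the only active map out of $\eta$, so $\beta_! \eta \simeq \eta$; the target $T'$ is then the grafting of these refined trees along their shared edges, following exactly the recipe that realizes $T$ as $\colim_E E$. As this grafting is precisely $\colim_E \beta_! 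E$, the map $c$ is an isomorphism, and since all objects in sight are trees, hence finite sets, one can confirm this by checking that $c$ is a bijection on edges and on nodes.

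The main obstacle is part (2): making rigorous that the abstract colimit of the diagram $E \mapsto \beta_! E$ coincides with the concrete gluing that constructs $T'$ from $\beta$. Part (1) is essentially formal once the factorization system is available, the only delicate point being that each comparison map $\beta_! g$ is inert, which holds because it is a map of subtrees over $T'$.
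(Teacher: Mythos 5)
Your proposal is correct and follows essentially the same route as the paper: part (1) is formal from the active--inert factorization system (the paper simply declares it clear), and part (2) rests on the same three facts the paper uses --- the only active map out of $\eta$ is the identity, active maps preserve leaves and root, and $T'$ is recovered by grafting the refined pieces $\beta_!E$ along the same recipe that exhibits $T\simeq\colim_{E\in\el(T)}E$. The one point you defer (''check $c$ is a bijection on edges and nodes'') is exactly where the paper supplies the missing words: the subtrees $\beta_!E$ are disjoint on nodes and every node of $T'$ lies in precisely one of them, so the resulting subtree is all of $T'$.
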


\begin{proof}
  (i) is clear.  For (ii), note that active maps preserve leaves and root.  The
  colimit $T \simeq \colim_{E\in \el(T)} E$ is an iterated grafting, i.e.~an
  iterated pushout over trivial trees, each included into one tree as the root
  and into another tree as a leaf.  Since the only active map out of $\eta$ is
  the identity, the colimit asserted in (ii) is again an iterated pushout over
  trivial trees, and since for each $E\in \el(T)$ the active map $E \to \beta_!
  E$ preserves leaves and root, the colimit asserted in (ii) is again an iterated
  grafting.  Finally, since all the trees $\beta_!  E$ are subtrees of $T'$, and
  are disjoint on nodes, the colimit defines a subtree of $T'$.  Since each node
  in $T'$ appears in precisely one of these subtrees, the colimit must actually
  be all of $T'$.  (In fact, the whole map $\beta$ is the colimit of the maps $E
  \to \beta_!(E)$, cf.~\cite{KockTree}*{1.3.4 and 1.3.16}.)
\end{proof}

\begin{defn}
  Let $\Fun^{\act}(\Delta^1,\bbO)\subset\Fun(\Delta^1,\bbO)$ denote the full
  subcategory of the arrow category of $\bbO$ spanned by the active maps.
  Thanks to the active--inert factorization system in $\bbO$, the domain
  projection $\Fun^{\act}(\Delta^1,\bbO) \to \bbO$ is a cartesian fibration: the
  cartesian arrows are the squares with codomain arrow inert (see
  \cite{GalvezKockTonksIII}*{Lemma 1.3}).  The associated right fibration we
  pull back to $\bbOint$ and straighten to get a presheaf $\Act \colon
  (\bbOint)^\op \to \mathcal{S}$.  Thus $\Act(T)$ is the $\infty$-groupoid of
  active maps $T \to T'$ in $\bbO$ (actually just a $1$-groupoid); for example,
  $\Act(C_{n})$ is the groupoid $\nTr$ of trees with $n$ leaves.  Note also that
  $\Act(\eta) \simeq *$, the only active map out of $\eta$ being the identity.
\end{defn}

\begin{remark}
  As $\Act \colon (\bbOint)^\op \to \mathcal{S}$ factors through the
  full subcategory $\mathrm{Gpd}\subset\mathcal{S}$ of groupoids
  (equivalently, $1$-truncated spaces), we are really just applying a
  1-categorical straightening result here. One could also directly
  define $\Act$ as an explicit (pseudo)functor, but this would involve
  making arbitrary choices, since inert-active factorizations are only
  defined up to unique isomorphism.
\end{remark}

\begin{lemma}\label{lem:actsegal}
  The presheaf $\Act\colon (\bbOint)^{\op}\to\mathcal{S}$ satisfies the
  Segal condition.  More precisely, for any tree $T$, we have
  $$
  \Act(T)\simeq\lim_{E\in\el(T)}\Act(E) 
  \simeq \prod_{C\in\corol(T)}\Act(C).
  $$
\end{lemma}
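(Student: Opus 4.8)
The plan is to reduce the Segal condition for $\Act$ to the already-established colimit decomposition of trees into their elementary subtrees, namely Lemma~\ref{lem:treecolim}. The key observation is that $\Act(T)$ is, by definition, the $\infty$-groupoid of active maps out of $T$, and that an active map out of $T$ is determined by its restriction to the elementary subtrees of $T$, glued together compatibly.

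First I would recall from Lemma~\ref{lem:treecolim} that $T \simeq \colim_{E\in\el(T)} E$, computed in $\bbOint$. Since $\Act$ is a presheaf (a contravariant functor on $\bbOint$), it sends this colimit to a limit, giving immediately
\[
\Act(T) \simeq \Act\bigl(\colim_{E\in\el(T)} E\bigr) \simeq \lim_{E\in\el(T)}\Act(E).
\]
This establishes the first equivalence essentially for free, provided one knows that $\Act$ genuinely is a presheaf on $\bbOint$ (which it is, by construction as the straightening of a right fibration pulled back to $\bbOint$) and that $\Map_{\mathcal{P}(\bbOint)}(-,\Act)$ takes the colimit to the corresponding limit --- but here we are applying $\Act$ itself, so contravariance does the work directly.

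For the second equivalence, I would analyze the category of elements $\el(T) = \bbOelT$. Its objects are the elementary subtrees of $T$, which are of two kinds: the trivial trees $\eta$ (edges) and the corollas $C$ (nodes). I would argue that the contribution of the trivial trees is trivial: by the final remark of the $\Act$ definition, $\Act(\eta)\simeq *$, since the only active map out of $\eta$ is the identity. Consequently the edges contribute contractible factors to the limit and the matching maps between corolla-factors and edge-factors become equivalences onto contractible spaces, so the limit over all of $\el(T)$ collapses to the product over just the corollas $C\in\corol(T)$:
\[
\lim_{E\in\el(T)}\Act(E) \simeq \prod_{C\in\corol(T)}\Act(C).
\]

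The main obstacle I expect is making this collapse precise, i.e.\ verifying that the limit over the category of elements really reduces to a product indexed by the corollas. This requires identifying the shape of $\el(T)$: each edge sits between (at most) two nodes via the inclusion maps $\eta\to C$ picking out leaves and roots, so $\el(T)$ is essentially a bipartite poset of corollas and edges. Since every edge-factor $\Act(\eta)$ is contractible, a cofinality or coinitiality argument shows the limit is computed over the discrete subcategory of corolla-objects alone, yielding the product. One must check that the structure maps in the diagram do not impose further constraints --- but because the edge-factors are terminal, every compatibility condition they would enforce is automatically satisfied, so the limit is an unconstrained product over $\corol(T)$. I would cite the explicit examples $\Act(C_n)\simeq\nTr$ and $\Act(\eta)\simeq *$ from the preceding definition to anchor the computation.
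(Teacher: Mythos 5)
There is a genuine gap, and it sits exactly where the mathematical content of the lemma lives. Your first equivalence rests on the claim that because $\Act$ is a contravariant functor on $\bbOint$, it sends the colimit $T \simeq \colim_{E\in\el(T)} E$ to a limit. This is false as a general principle: only \emph{representable} presheaves convert colimits in the index category into limits of spaces; an arbitrary presheaf does not. Indeed, if every presheaf did this, then every presheaf on $\bbOint$ would automatically satisfy the Segal condition of Definition~\ref{def:Seg}, and that condition would be vacuous. The equivalence $\Act(T)\simeq\lim_{E\in\el(T)}\Act(E)$ is precisely the nontrivial assertion of the lemma, and $\Act$ is not representable on $\bbOint$ (it records active maps out of $T$ into \emph{all} possible targets in the larger category $\bbO$, with the restriction maps given by active--inert factorization rather than by honest precomposition). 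So "contravariance does the work directly" is exactly the step that does not work.

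What is actually required is the explicit combinatorial argument: a map $\Act(T)\to\prod_{C\in\corol(T)}\Act(C)$ sending an active $\beta\colon T\to T'$ to the active parts of the factorizations of $\beta$ restricted to each corolla (Lemma~\ref{lem:fbeta}(1)), an inverse map given by grafting the resulting trees back together according to the recipe by which the corollas of $T$ assemble into $T$ (Lemma~\ref{lem:fbeta}(2)), a check that this is a bijection on isomorphism classes (which the paper imports from \cite{KockTree}*{1.3.16}), and --- since all spaces involved are $1$-groupoids --- a matching of automorphism groups: an automorphism of $\beta\colon T\to T'$ is an automorphism of $T'$ fixing the image of $T$, which decomposes as a product over the corollas of automorphisms of the corresponding active maps. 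None of this is formal, and none of it appears in your proposal. By contrast, your second step --- collapsing $\lim_{E\in\el(T)}\Act(E)$ to $\prod_{C\in\corol(T)}\Act(C)$ using $\Act(\eta)\simeq *$ --- is correct and is exactly how the paper disposes of the second equivalence.
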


\begin{proof}
  The second equivalence follows from $\Act(\eta) \simeq *$.
  The map $\Act(T)\to\prod_{C\in\corol(T)}\Act(C)$ sends an active map
  $\beta\colon T \to T'$ to the collection of active maps $\alpha\colon C \to S'$ as in
  Lemma~\ref{lem:fbeta}(i).  A map in the other direction is given by gluing
  together all the subtrees $S'$ according to the same recipe as the corollas
  $C$ glue together to give $T$, as in Lemma~\ref{lem:fbeta}(ii).  This
  constitutes a bijection at the level of isomorphism classes
  by~\cite{KockTree}*{1.3.16}.  Since the spaces involved are just
  $1$-groupoids it thus remains to check that the automorphism groups match up.
  But an automorphism of an active map $\beta\colon T\to T'$ is the same as an
  automorphism of $T'$ that fixes the edges from $T$, and this amounts to giving
  for each $C \in \corol T$ an automorphism of the corresponding tree $S'$
  that fixes all leaves, which in turn is precisely to give an automorphism of
  the active map $\alpha\colon C \to S'$.  So $\Aut(\beta) = \prod
  \Aut(\alpha)$ as required.
\end{proof}

\begin{lemma}\label{lem:colimlem}
  The active-inert factorization system on $\bbO$ induces an equivalence
  \[
  (j^*j_{!}\Phi)(T) \simeq \colim_{T \to T' \in \Act(T)} \Phi(T')  
  \]
  for each Segal presheaf $\Phi$ and each tree $T$.
\end{lemma}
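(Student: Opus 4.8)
The statement asserts a formula for $j^*j_!\Phi$ evaluated on a tree $T$, for $\Phi$ a Segal presheaf on $\bbOint$. The left-hand side is the restriction to $\bbOint$ of the left Kan extension of $\Phi$ along the inclusion $j \colon \bbOint \to \bbO$. My plan is to use the standard pointwise colimit formula for the left Kan extension $j_!$ and then cut it down using the active--inert factorization system on $\bbO$, exploiting the Segal condition on $\Phi$ to collapse the inert part of each factorization.

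First I would write down the pointwise formula: since $j_! \Phi$ is the left Kan extension of $\Phi$ along $j^{\op}$, for a tree $T \in \bbOint$ we have
\[
(j^*j_!\Phi)(T) \simeq \colim_{(T \to S) \in (\bbO_{T/})^{\op}} \Phi_0(S),
\]
where the colimit is indexed by the comma category of all morphisms $T \to S$ in $\bbO$ out of $T$ (with $S$ ranging over trees, since $\bbO$ consists only of the free monads on trees) and $\Phi_0$ denotes the canonical extension of $\Phi$ to this indexing. The key move is to factor every morphism $T \to S$ in $\bbO$ through its active--inert factorization $T \xto{\alpha} T' \xto{\iota} S$. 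I would argue that the full subcategory of $\bbO_{T/}$ spanned by the \emph{active} maps $T \to T'$, i.e.\ the groupoid $\Act(T)$, is cofinal (or rather coinitial, after passing to opposites) in the indexing category, so that the colimit may be computed over $\Act(T)$ alone. The point is that any map $T \to S$ receives a canonical map from the active part $T \to T'$ of its factorization, and the inert maps $T' \to S$ form a contractible-on-each-slice diagram because $\Phi$ is Segal: the Segal condition says exactly that $\Phi$ is right Kan extended from elementary trees, so $\Phi$ sends the inert (tree-embedding) maps in a way compatible with the colimit decomposition of trees into elementary subtrees, forcing the inert directions of the colimit to be absorbed.

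The \textbf{main obstacle} I expect is making the cofinality argument precise: I must identify the correct indexing category for $j_!$ (all maps in $\bbO$ out of $T$, which includes both active maps and inert maps and their composites) and show that restricting to the active maps $\Act(T)$ does not change the colimit. This requires showing that the inclusion $\Act(T) \hookrightarrow (\bbO_{T/})$ — suitably interpreted — is cofinal after opposites, which in turn hinges on the interaction between the active--inert factorization system (each map factors uniquely up to contractible choice) and the Segal property of $\Phi$. I would verify the cofinality fibrewise, checking that for each active map $T \to T'$ the relevant slice is weakly contractible, using that the inert maps out of $T'$ together with the Segal condition on $\Phi$ produce a diagram whose colimit is computed at the initial active object. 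Once cofinality is established, the colimit over $(\bbO_{T/})^{\op}$ collapses to the colimit over $\Act(T)^{\op} \simeq \Act(T)$ (the latter being a groupoid), yielding
\[
(j^*j_!\Phi)(T) \simeq \colim_{(T \to T') \in \Act(T)} \Phi(T'),
\]
which is precisely the asserted formula. The Segal property of $\Act$ itself (Lemma~\ref{lem:actsegal}) and the colimit decomposition of active maps (Lemma~\ref{lem:fbeta}) are the tools I would lean on to control this reduction.
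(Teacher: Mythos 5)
Your skeleton is the same as the paper's: write $(j^*j_!\Phi)(T)$ via the pointwise formula for the left Kan extension and then show that $\Act(T)$ is cofinal in the relevant indexing category, checking cofinality slice by slice via Quillen's Theorem A. One minor imprecision first: the indexing category is not $(\bbO_{T/})^{\op}$ but the comma category $T/j$, whose objects are arbitrary maps $T \to S$ in $\bbO$ and whose \emph{morphisms} are only the inert ones (this is forced, since $\Phi$ is functorial only for inert maps); since you single out ``identifying the correct indexing category'' as your main obstacle, this is worth getting exactly right. Relatedly, Theorem A asks you to check weak contractibility of the slice over each object $f \colon T \to S$ of the comma category, not ``for each active map $T \to T'$'' as you write.

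The substantive problem is your repeated claim that the cofinality ``hinges on \ldots the Segal property of $\Phi$'' and that the inert directions are ``absorbed'' because $\Phi$ is Segal. Cofinality is a property of the functor $\Act(T) \to T/j$ between indexing categories alone; it cannot depend on the diagram being restricted along it, and once established it lets you restrict \emph{any} colimit, Segal or not. The slice you must show to be weakly contractible, for a fixed $f \colon T \to S$, is precisely the \icat{} of active--inert factorizations of $f$, and its contractibility is a completely general consequence of the theory of factorization systems (\cite{HTT}*{Proposition 5.2.8.17}) --- no input from $\Phi$ whatsoever. Indeed, the Segal hypothesis in the statement is never used in the paper's proof of this lemma; it only enters in the surrounding results such as Lemma~\ref{lem:colimPhiT'} and Proposition~\ref{propn:leftadjcommute}. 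As written, your plan would send you hunting for a way to feed the Segal condition into a cofinality verification where it has no place; replace that step with the contractibility of factorization spaces and the proof closes up and coincides with the paper's.
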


\begin{proof}
  Since $j\colon\bbOint\to\bbO$ is the identity on objects,
  $(j^*j_!\Phi)(T) \simeq (j_!\Phi)(jT) \simeq (j_!\Phi)(T)$.  By
  the usual formula for the left Kan extension, we have that
  \[
  (j_!\Phi)(T)\simeq\colim_{T\to T'\in ((\bbOint)_{T/})^{\op}}\Phi(T'),
  \]
  so it suffices to show that the functor
  $\Act(T)^{\op}\to ((\bbOint)_{T/})^{\op}$ is cofinal. Invoking
  \cite{HTT}*{4.1.3.1}, it suffices to show that for all
  $f\colon T\to T'\in ((\bbOint)_{T/})$, the pullback
  \[
  \Act(T)\underset{(\bbOint)_{T/}}{\times}(\bbOint)_{T//f}
  \]
  is a weakly contractible \icat{}.
  But this is precisely the $\infty$-category of 
  active-inert factorizations of $f\colon T\to T'$, which is
  contractible by \cite[Proposition 5.2.8.17]{HTT}.
\end{proof}

\begin{propn}\label{propn:TSR}
  Let $\Phi\colon\bbOint^{\op}\to\mathcal{S}$ be a Segal presheaf.  Let $T \simeq S \amalg_\eta R$
  be the tree obtained by grafting a tree $S$ onto a leaf of another tree
  $R$.  Then the canonical
  map
  $$
  \Phi(T) \to \Phi(S) \times_{\Phi(\eta)} \Phi(R)
  $$
  is an equivalence.
\end{propn}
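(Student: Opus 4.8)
The plan is to reduce the statement to the fact that mapping out of a pushout gives a pullback, via the identification of Segal presheaves with analytic endofunctors. By Proposition~\ref{prop:ff_int} the functor $i_{0}^{*}\colon \AnEnd \isoto \PSeg(\bbOint)$ is an equivalence, so any Segal presheaf $\Phi$ is of the form $i_{0}^{*}P = \Map_{\AnEnd}(i_{0}(\blank), P)$ for an essentially unique analytic endofunctor $P$. Under this identification $\Phi(T) \simeq \Map_{\AnEnd}(T, P)$, and likewise for $S$, $R$, and $\eta$ (viewing trees as objects of $\AnEnd$ via the full inclusion $\bbOint \hookrightarrow \AnEnd$); in particular $\Phi(\eta) \simeq \Map_{\AnEnd}(\eta, P) \simeq I$ by Lemma~\ref{lem:elmaps}.

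Next I would invoke the grafting decomposition recalled before Lemma~\ref{lem:treecolim}: the grafted tree $T \simeq S \amalg_{\eta} R$ is a pushout not only in $\bbOint$ but already in $\AnEnd$, where $\eta \to R$ picks out the chosen leaf and $\eta \to S$ picks out the root. Since $\Map_{\AnEnd}(\blank, P)$ carries colimits in its first variable to limits, applying it to this pushout yields
\[ \Phi(T) \simeq \Map_{\AnEnd}(S \amalg_{\eta} R, P) \simeq \Map_{\AnEnd}(S,P) \times_{\Map_{\AnEnd}(\eta,P)} \Map_{\AnEnd}(R,P) \simeq \Phi(S) \times_{\Phi(\eta)} \Phi(R). \]
It remains to check that this equivalence is the canonical comparison map of the statement. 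But the two projections $\Phi(T) \to \Phi(S)$ and $\Phi(T) \to \Phi(R)$ produced above are precisely the maps induced by the subtree inclusions $S \hookrightarrow T$ and $R \hookrightarrow T$, which are the coprojections of the pushout, and their common restriction to $\Phi(\eta)$ is induced by $\eta \hookrightarrow S$ and $\eta \hookrightarrow R$; so the equivalence is exactly the canonical map.

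The only point requiring genuine care is the assertion that the grafting square is a colimit in $\AnEnd$ (and that $\Phi$ is realized as $i_{0}^{*}P$, so that this pushout is the one relevant to computing $\Phi(T)$); this is the content of the grafting discussion, where $S \amalg_{\eta} R$ is computed in $\AnEnd$ and shown to be a tree. Should one prefer to argue entirely within $\bbOint$, the alternative is to show directly that the category of elements decomposes as a pushout $\el(T) \simeq \el(S) \amalg_{\el(\eta)} \el(R)$ in $\CatI$ --- gluing the two systems of elementary subtrees along the single shared edge $\eta$ --- and then to use that the limit functor sends this colimit of indexing categories to the pullback $\lim_{\el(S)}\Phi \times_{\lim_{\el(\eta)}\Phi} \lim_{\el(R)}\Phi$, which by the Segal condition (Definition~\ref{def:Seg}) is again $\Phi(S)\times_{\Phi(\eta)}\Phi(R)$. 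The delicate step in that route is verifying that the evident $1$-categorical gluing of elements is a homotopy pushout; the $\AnEnd$ argument sidesteps this entirely, which is why I would adopt it.
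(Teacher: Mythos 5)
Your proof is correct, but it takes a genuinely different route from the paper's. The paper argues entirely on the presheaf side: it uses the Segal condition to write $\Phi(T) \simeq \lim_{E\in\el(T)}\Phi(E)$, asserts the decomposition $\el(T)\simeq\el(S)\amalg_{\el(\eta)}\el(R)$ of the category of elements, and computes the limit in two steps --- i.e.\ it is exactly the ``alternative'' you sketch in your last paragraph, including the point you flag as delicate (that the gluing of element categories is the right homotopy pushout for computing the limit), which the paper passes over without comment. Your main argument instead transports the problem across the equivalence $\AnEnd\simeq\PSeg(\bbOint)$ of Proposition~\ref{prop:ff_int}, realizes $\Phi$ as $\Map_{\AnEnd}(i_0(\blank),P)$, and invokes the fact (established in the grafting discussion before Lemma~\ref{lem:treecolim}) that $S\amalg_\eta R$ is a pushout already in $\AnEnd$, so that mapping into $P$ turns it into the desired pullback; your identification of the resulting equivalence with the canonical comparison map, via the coprojections being the subtree inclusions, is also correct, and there is no circularity since Proposition~\ref{prop:ff_int} depends only on Lemma~\ref{lem:treecolim} and Proposition~\ref{propn:AnEndpsh}, not on the present statement. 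What your route buys is precisely that it sidesteps the homotopy-pushout-of-categories issue by pushing all the gluing into $\AnEnd$, where colimits are computed pointwise in spaces; what the paper's route buys is that it needs only the Segal condition itself and not the representability theorem, so it would survive in contexts where one has Segal presheaves but no analytic-endofunctor model.
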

\begin{proof}
  Since $\Phi$ is Segal, we have $\Phi(T) \stackrel{\sim}\to \lim_{E\in \el(T)} 
  \Phi(E)$.  On the other hand, we have $\el(T) \simeq \el(S)  
  \amalg_{\el(\eta)} \el(R)$.  It follows that the limit can be computed in
  steps:
  $$
  \lim_{E\in \el(T)} \Phi(E) \simeq \lim_{E\in \el(S)}\Phi(E) 
  \times_{\Phi(\eta)} \lim_{E\in \el(R)}\Phi(E) \simeq 
  \Phi(S) \times_{\Phi(\eta)} \Phi(R) .
  $$
\end{proof}
\begin{cor}\label{cor:generalcolimoftrees}
  If $\Phi\colon\bbOint^{\op}\to\mathcal{S}$ is a Segal presheaf, and if $T\simeq \colim_{\mathcal{I}} R$ is
  a colimit of certain subtrees grafted to each other, then
  the canonical map
  $$
  \Phi(T) \to \lim_{\mathcal{I}} \Phi(R)
  $$
  is an equivalence.
\end{cor}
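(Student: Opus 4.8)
The plan is to bootstrap the general statement from the single-grafting case of Proposition~\ref{propn:TSR} by induction. Since every tree is a finite object by Definition~\ref{defn:tree}(1), it has only finitely many nodes, so any presentation of $T$ as an iterated grafting of subtrees involves only finitely many grafting steps; this is what the induction will run on. The base cases are immediate: a trivial diagram (a single subtree) gives the identity map, and a single graft $T \simeq S \amalg_{\eta} R$ is exactly Proposition~\ref{propn:TSR}.

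For the inductive step, I would peel off one outermost subtree. Concretely, choose a grafting edge $\eta$ so that $T$ splits as $T \simeq T' \amalg_{\eta} R_0$, where $R_0$ is a single subtree in the diagram and $T' \simeq \colim_{\mathcal{I}'} R$ is the grafting of the remaining subtrees, indexed by the evident subdiagram $\mathcal{I}' \subset \mathcal{I}$. Proposition~\ref{propn:TSR} then gives
\[
\Phi(T) \simeq \Phi(T') \times_{\Phi(\eta)} \Phi(R_0),
\]
and the inductive hypothesis identifies $\Phi(T') \simeq \lim_{\mathcal{I}'} \Phi(R)$. It remains to recognise the resulting pullback as $\lim_{\mathcal{I}} \Phi(R)$.

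This last identification is the only real point, and it is purely formal bookkeeping about shapes of limits. The grafting presentation exhibits the indexing category as a pushout $\mathcal{I} \simeq \mathcal{I}' \amalg_{\{\eta\}} \{\eta \to R_0\}$ along the one-object category selecting the gluing trivial tree, mirroring the decomposition $\el(T) \simeq \el(T') \amalg_{\el(\eta)} \el(R_0)$ already used in proving Proposition~\ref{propn:TSR}. Because the limit functor turns a colimit of indexing diagrams into the corresponding iterated pullback, we obtain
\[
\lim_{\mathcal{I}} \Phi(R) \simeq \lim_{\mathcal{I}'} \Phi(R) \times_{\Phi(\eta)} \Phi(R_0),
\]
which matches the expression above and closes the induction. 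The main thing to be careful about --- hence the only obstacle --- is to check that the gluing data on the diagram side (the maps $\eta \to R$ picking out roots and leaves) line up with the grafting on the tree side, all of which is controlled by the grafting calculus of \cite{KockTree}; no homotopy-theoretic input beyond Proposition~\ref{propn:TSR} and the continuity of $\lim$ in its shape is needed.
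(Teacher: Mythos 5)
Your proof is correct and is essentially the paper's own argument: the paper's proof consists of the single sentence ``This follows by iterated application of Proposition~\ref{propn:TSR},'' and your induction on the (finite) number of grafting steps, peeling off one subtree at a time and matching the iterated pullback against $\lim_{\mathcal{I}}\Phi(R)$, is precisely the content of that iteration, just spelled out.
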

\begin{proof}
  This follows by iterated application of Proposition~\ref{propn:TSR}.
\end{proof}

\begin{cor}\label{cor:Phiact}
  For $\Phi\colon\bbOint^{\op}\to\mathcal{S}$ a Segal presheaf and $\beta\colon T \to T'$ an active map,
  the canonical map
  \[
  \Phi(T')\longrightarrow\lim_{E\in\el(T)}\Phi(\beta_!(E))
  \]
  is an equivalence.	
\end{cor}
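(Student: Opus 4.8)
The plan is to obtain this corollary as a direct consequence of the two immediately preceding results: the colimit decomposition of an active map's target from Lemma~\ref{lem:fbeta}(2), and the Segal-to-limit principle of Corollary~\ref{cor:generalcolimoftrees}. The point is that once $T'$ is written as the appropriate kind of colimit of the subtrees $\beta_!(E)$, the Segal condition converts $\Phi(T')$ into the matching limit essentially for free.

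First I would invoke Lemma~\ref{lem:fbeta}(2), which exhibits the target of the active map $\beta\colon T \to T'$ as the colimit
\[
T' \simeq \colim_{E \in \el(T)} \beta_!(E),
\]
indexed by the category of elements $\el(T)$, where $\beta_!(E) \subseteq T'$ is the inert part of the active--inert factorization of $E \to T \to T'$. The essential input here is not merely the existence of this colimit but its \emph{shape}: the proof of Lemma~\ref{lem:fbeta}(2) identifies it as an iterated grafting of the subtrees $\beta_!(E)$ along trivial trees (an iterated pushout over copies of $\eta$), using that active maps preserve leaves and root and that the only active map out of $\eta$ is the identity. This is precisely the hypothesis demanded by Corollary~\ref{cor:generalcolimoftrees}.

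With this in hand, I would apply Corollary~\ref{cor:generalcolimoftrees} to the Segal presheaf $\Phi$ and the decomposition $T' \simeq \colim_{E \in \el(T)} \beta_!(E)$, taking the indexing category to be $\mathcal{I} = \el(T)$ and the grafted subtrees to be the $\beta_!(E)$. The corollary then gives that the canonical map
\[
\Phi(T') \longrightarrow \lim_{E \in \el(T)} \Phi(\beta_!(E))
\]
is an equivalence, which is exactly the assertion.

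The only step requiring any care is confirming that the colimit of Lemma~\ref{lem:fbeta}(2) genuinely satisfies the ``iterated grafting'' hypothesis of Corollary~\ref{cor:generalcolimoftrees}, rather than being some more general colimit of subtrees. But this is not an additional obstacle: it was already verified in proving Lemma~\ref{lem:fbeta}(2), so the corollary applies verbatim and no further argument is needed.
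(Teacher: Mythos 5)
Your proposal is correct and follows exactly the paper's own proof: invoke Lemma~\ref{lem:fbeta}(2) to write $T' \simeq \colim_{E\in\el(T)}\beta_!(E)$ as an iterated grafting, then apply Corollary~\ref{cor:generalcolimoftrees}. Your extra remark that the grafting shape of the colimit is already established in the proof of Lemma~\ref{lem:fbeta}(2) is the right observation and closes the only point the paper leaves implicit.
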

\begin{proof}
  By Lemma~\ref{lem:fbeta}(ii) we have $T' \simeq \colim_{E\in \el(T)}
  \beta_!(E)$, and the result follows from
  Corollary~\ref{cor:generalcolimoftrees}.
\end{proof}

\begin{lemma}\label{lem:colimPhiT'}
  For $\Phi\colon\bbOint^{\op}\to\mathcal{S}$ a Segal presheaf
  and $T$ a tree, there is a natural equivalence
  \[
  \colim_{T\to T'\in\Act(T)} \Phi(T') 
  \stackrel\sim\longrightarrow 
  \lim_{E\in\el(T)}\colim_{E\to S'\in\Act(E)} \Phi(S') .
  \]
\end{lemma}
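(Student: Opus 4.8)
The plan is to recognize that the asserted equivalence is exactly the statement that the presheaf $\Xi := j^{*}j_{!}\Phi$ satisfies the Segal condition. Indeed, by Lemma~\ref{lem:colimlem} the right-hand side is $\lim_{E\in\el(T)}(j^{*}j_{!}\Phi)(E)$ and the left-hand side is $(j^{*}j_{!}\Phi)(T)$, so the claim reads $\Xi(T)\simeq\lim_{E\in\el(T)}\Xi(E)$. First I would rewrite the left-hand side using Corollary~\ref{cor:Phiact}: for each active map $\beta\colon T\to T'$ we have $\Phi(T')\simeq\lim_{E\in\el(T)}\Phi(\beta_{!}E)$, so the left-hand side becomes $\colim_{\beta\in\Act(T)}\lim_{E\in\el(T)}\Phi(\beta_{!}E)$. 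By Lemma~\ref{lem:fbeta} the assignment $(\beta,E)\mapsto\Phi(\beta_{!}E)$ is functorial in both variables, and by Lemma~\ref{lem:actsegal} the indexing groupoid decomposes as $\Act(T)\simeq\lim_{E\in\el(T)}\Act(E)$; the whole content of the lemma is then to commute the colimit over the $\infty$-groupoid $\Act(T)$ past the (weakly contractible) limit over $\el(T)$ and to identify the result with the right-hand side.

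The main obstacle is precisely this commutation: colimits over groupoids do not commute with limits naively, and indeed the termwise colimit $\colim_{\beta\in\Act(T)}\Phi(\beta_{!}E)$ differs from $\colim_{E\to S'\in\Act(E)}\Phi(S')$ by spurious fibre factors coming from the other nodes of $T$. What makes it work is that these factors recombine correctly once the outer limit is taken, which is a manifestation of the universality of colimits in the $\infty$-topos $\mathcal{S}$. Rather than fight this bookkeeping head on, I would organize the argument as an induction on the number of nodes of $T$, using the grafting decomposition.

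For the induction, the base cases are the trivial tree $\eta$, where both sides are $\Phi(\eta)$, and a corolla $C_{n}$, where $C_{n}$ is terminal in $\el(C_{n})$ so the Segal limit is trivially $\Xi(C_{n})$. For the inductive step, write $T\simeq S\amalg_{\eta}R$ as a grafting along an internal edge, with $S$ and $R$ having strictly fewer nodes. On the left-hand side, Lemma~\ref{lem:actsegal} gives $\Act(T)\simeq\Act(S)\times_{\Act(\eta)}\Act(R)\simeq\Act(S)\times\Act(R)$, and Proposition~\ref{propn:TSR} gives $\Phi(T')\simeq\Phi(S')\times_{\Phi(\eta)}\Phi(R')$ for the corresponding refinements. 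The key computational input is that, since $\mathcal{S}_{/\Phi(\eta)}$ is an $\infty$-topos and hence cartesian closed, the fibre product $(-)\times_{\Phi(\eta)}(-)$ preserves colimits in each variable; computing the colimit over $\Act(S)\times\Act(R)$ one variable at a time therefore yields
\[
\colim_{\beta\in\Act(T)}\Phi(T')\ \simeq\ \Big(\colim_{\beta_{S}}\Phi(S')\Big)\times_{\Phi(\eta)}\Big(\colim_{\beta_{R}}\Phi(R')\Big)\ \simeq\ \Xi(S)\times_{\Phi(\eta)}\Xi(R).
\]

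On the right-hand side, the decomposition $\el(T)\simeq\el(S)\amalg_{\{\eta\}}\el(R)$ (as in the proof of Proposition~\ref{propn:TSR}) lets me compute the limit in stages, giving $\lim_{E\in\el(T)}\Xi(E)\simeq\big(\lim_{\el(S)}\Xi\big)\times_{\Xi(\eta)}\big(\lim_{\el(R)}\Xi\big)$ with $\Xi(\eta)=\Phi(\eta)$. Applying the inductive hypothesis to $S$ and $R$ identifies both factors, so the right-hand side is again $\Xi(S)\times_{\Phi(\eta)}\Xi(R)$, completing the induction. I would finally remark that this inductive argument is the rigorous incarnation of the colimit--limit commutation sketched above, and that the only nontrivial ingredient beyond the cited tree-combinatorial lemmas is the universality of colimits in $\mathcal{S}$ (equivalently, its local cartesian closedness), which guarantees the distributivity of fibre products over the colimits indexed by $\Act(T)$.
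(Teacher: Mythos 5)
Your argument is correct, but it takes a genuinely different route from the paper's. The paper proves the lemma in one step, with no induction: it assembles the comparison map, observes that both sides sit in a commutative square over the equivalence $\Act(T)\isoto\lim_{E\in\el(T)}\Act(E)$ of Lemma~\ref{lem:actsegal}, and then checks that the induced map on fibres over a basepoint $\beta\colon T\to T'$ is the map $\Phi(T')\to\lim_{E\in\el(T)}\Phi(\beta_!E)$, which is an equivalence by Corollary~\ref{cor:Phiact}; the colimit--limit interchange you worry about in your first paragraph is thus absorbed entirely into the fibrewise statement and never has to be confronted as such. Your induction on the number of nodes via grafting is a legitimate alternative: it uses the same two structural inputs (the Segal property of $\Act$ from Lemma~\ref{lem:actsegal} and the Segal property of $\Phi$ via Proposition~\ref{propn:TSR}), but replaces the fibrewise argument by an explicit appeal to universality of colimits in $\mathcal{S}$ to distribute $(-)\times_{\Phi(\eta)}(-)$ over the colimits indexed by $\Act(S)\times\Act(R)$. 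What the paper's approach buys is that it directly verifies that the \emph{specific} natural map of the statement is an equivalence; what yours buys is a more elementary, binary-grafting picture at the cost of some bookkeeping. Two points you should make explicit to close your argument: (a) that for $\beta\in\Act(T)$ corresponding to $(\beta_S,\beta_R)$ the target decomposes as $T'\simeq S'\amalg_\eta R'$ naturally in $\beta$ (this follows from Lemma~\ref{lem:fbeta}(2) and the proof of Lemma~\ref{lem:actsegal}, and is needed before Proposition~\ref{propn:TSR} can be applied termwise as a map of diagrams over $\Act(T)$); and (b) that the equivalence produced by your induction agrees with the canonical comparison map --- this does hold, because the map factors as $\Xi(T)\to\Xi(S)\times_{\Phi(\eta)}\Xi(R)\to\lim_{\el(S)}\Xi\times_{\Phi(\eta)}\lim_{\el(R)}\Xi$ with the first arrow induced by the inert inclusions and identified with your descent equivalence, and the second handled by induction, but as written you only establish an abstract equivalence of the two sides, whereas the application (Segality of $j^*j_!\Phi$ in Proposition~\ref{propn:leftadjcommute}) needs the canonical map itself to be invertible.
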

\begin{proof}
  Given an active map $\beta\colon T \to T'$ and an elementary subtree $f\colon 
  E \to T$, we can active-inert factor the composite as in 
  Lemma~\ref{lem:fbeta}:
  \[
  \begin{tikzcd}
    T \arrow{r}{\beta} & T' \\
    E \arrow{u}{f} \arrow[swap]{r}{\alpha} & \beta_! E  .
    \arrow[swap]{u}{f'}
  \end{tikzcd}
  \]
  We now have the map
  $$
  \Phi(T') \stackrel{f'{}^*}\longrightarrow \Phi(\beta_! E) \longrightarrow 
  \colim_{E\to S'\in\Act(E)} \Phi(S') ,
  $$
  and letting $\beta$ and $f$ vary, we get altogether the map of the statement.
  By construction we have a
  commutative square of $\infty$-groupoids
  \[
  \begin{tikzcd}
	  \displaystyle\colim_{T\to T'\in\Act(T)} \Phi(T') 
	  \arrow{r}{} \arrow{d}{} & \displaystyle\lim_{E\in\el(T)}\colim_{E\to 
	  S'\in\Act(E)} \Phi(S') \arrow{d}{} \\
	  \Act(T) \arrow{r}{\sim} & 
	  {\displaystyle\lim_{E\in\el(T)}\Act(E)} .
  \end{tikzcd}
  \]
  Since the bottom horizontal map is an equivalence by 
  Lemma~\ref{lem:actsegal}, 	
  to conclude that the top horizontal map is an equivalence, it suffices to
  show that, for any given basepoint $\beta\colon T\to T'$ in $\Act(T)$, the map on
  fibres
  \[
  \Phi(T')\longrightarrow\lim_{E\in\el(T)}\Phi(S')
  \]
  is an equivalence.
  But this is Corollary~\ref{cor:Phiact}, since $S' \simeq \beta_!(E)$.
\end{proof}

\begin{lemma}\label{lem:colimcolim}
  For $P$ an analytic endofunctor, the natural transformation
  $$
  j^* j_! N_\xint P \to N_\xint UF P
  $$
  is an equivalence on elementary trees.
\end{lemma}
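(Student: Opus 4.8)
The plan is to evaluate both sides on the two types of elementary tree and identify each with a single colimit of mapping spaces, using the Segal property of $N_\int P$ together with the active--inert calculus. Since $P$ is analytic, $N_\int P = i_0^* P$ is a Segal presheaf on $\bbOint$ by Proposition~\ref{prop:ff_int}, so Lemma~\ref{lem:colimlem} applies and yields a natural equivalence
\[
(j^* j_! N_\int P)(T) \simeq \colim_{T \to T' \in \Act(T)} \Map_{\AnEnd}(T', P)
\]
for every tree $T$. It therefore suffices to evaluate this on $\eta$ and on the corollas $C_n$, and to compare with $N_\int UFP = N_\int \freemonad{P}$, which on elementary trees is $\Map(\blank, \freemonad{P})$.

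For the trivial tree, the only active map out of $\eta$ is the identity, so $\Act(\eta) \simeq *$ and the left-hand side collapses to $\Map(\eta, P) \simeq I$ by Lemma~\ref{lem:elmaps}. The right-hand side is $\Map(\eta, \freemonad{P}) \simeq I$, again by Lemma~\ref{lem:elmaps}, since the representing diagram $I \leftarrow \tr'(P) \to \tr(P) \to I$ of $\freemonad{P}$ from Theorem~\ref{thm:freeanalmonad} has colour space $I$. For a corolla I would use the identification $\Act(C_n) \simeq \nTr$, an active map out of $C_n$ being the same as a tree with $n$ leaves; this rewrites the left-hand side as $\colim_{T' \in \nTr} \Map(T', P)$. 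By Proposition~\ref{prop:colimPbar} the right-hand side $N_\int \freemonad{P}(C_n) \simeq \Map(C_n, \freemonad{P})$ is exactly $\colim_{T \in \nTr} \Map(T, P)$, so abstractly both sides are the same colimit.

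The main obstacle is to check that the comparison map --- the restriction along $j^*$ of the mate $j_! N_\int \to NF$ --- genuinely realizes these identifications, rather than merely being a map between abstractly equivalent objects. I would track the mate through the two cocone presentations: the summand of the left-hand colimit indexed by an active map $C_n \to T'$ and a morphism $T' \to P$ in $\AnEnd$ should be carried, via the unit of $F \dashv U$ and the counit of $j_! \dashv j^*$, to the summand of $N_\int\freemonad{P}(C_n)$ indexed by the tree $T' \in \nTr$. This is precisely the cocone by which $\freemonad{P}$ was built in Propositions~\ref{propn:Ph} and~\ref{prop:colimPbar} as the colimit of the bounded-height polynomials $P_h$ represented by trees, so the two cocones agree summand by summand; since both exhibit colimits, the comparison map is an equivalence on each elementary tree, as required.
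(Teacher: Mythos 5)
Your proof follows essentially the same route as the paper's: both sides are computed via Lemma~\ref{lem:colimlem} on the left and Proposition~\ref{prop:colimPbar} on the right, and identified with the colimit $\colim_{T'\in\Act(C_n)}\Map(T',P)$ (with the $\eta$ case being immediate). Your additional paragraph checking that the comparison map actually realizes this identification is a point the paper's own proof passes over silently, and while your verification is only sketched, the concern and the strategy for resolving it are sound.
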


\begin{proof}
  The statement is easily seen to be true for the trivial tree 
  $\eta$.  Consider now a corolla $C$.
  On the left side we compute (using the colimit formula for $j_!$ 
  of Lemma~\ref{lem:colimlem})
  $$ 
  (j^* j_! N_\xint P)(C) \simeq \colim_{C \to T'\in\Act(C)} N_\xint P(T') \simeq 
  \colim_{C \to T'\in\Act(C)} \Map(T',P)  .
  $$
  On the right we compute (using the colimit formula for the 
  free-monad monad of Proposition~\ref{prop:colimPbar})
  $$ (N_\xint UF P)(C) \simeq \Map(C,UFP) \simeq (UFP)(C) \simeq
  \colim_{C \to T' \in \Act(C)} P(T') \simeq  \colim_{C \to T'\in\Act(C)}
  \Map(T',P) . \qedhere
  $$
\end{proof}

\begin{propn}\label{propn:leftadjcommute}
  The mate square
  \[
  \begin{tikzcd}
	  \AnMnd 
	  \arrow{r}{N}  & \mathcal{P}(\bbO)  \\
	  \AnEnd \arrow{u}{F}\arrow[swap]{r}{N_{\xint}} & 
	  \mathcal{P}(\bbOint)  \arrow{u}{j_!}
  \end{tikzcd}
  \]
  commutes.
  In other words, the natural transformation
  \[
  j_! \circ N_\xint  \to N \circ F
  \]
  is an equivalence.
\end{propn}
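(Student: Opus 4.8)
The plan is to verify the mate transformation $j_! \circ N_{\int} \to N \circ F$ after applying $j^{*}$, exploiting that $j^{*}$ is conservative. Since $j \colon \bbOint \to \bbO$ is essentially surjective (indeed a bijection on objects, namely the trees), a morphism in $\mathcal{P}(\bbO)$ is an equivalence if and only if its restriction along $j$ is an equivalence in $\mathcal{P}(\bbOint)$, because equivalences of presheaves are detected objectwise. Thus it suffices to show that $j^{*}j_! N_{\int} P \to j^{*} N F P$ is an equivalence. The target simplifies via the strictly commuting square $j^{*} N \simeq N_{\int} U$, which holds because for a tree $T$ we have $\Map_{\AnMnd}(FT, A) \simeq \Map_{\AnEnd}(T, UA)$ by the free–forgetful adjunction $F \dashv U$, naturally in inert maps. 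Setting $A = FP$ identifies $j^{*} N F P$ with $N_{\int} U F P = N_{\int}\freemonad{P}$, so the statement reduces to proving that the natural map $j^{*}j_! N_{\int} P \to N_{\int}\freemonad{P}$ is an equivalence in $\mathcal{P}(\bbOint)$.

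Next I would observe that both of these presheaves satisfy the Segal condition. On the target side, $\freemonad{P} = UFP$ is analytic by Proposition~\ref{propn:Pbar-anal}, so $N_{\int}\freemonad{P}$ is a Segal presheaf by Proposition~\ref{prop:ff_int}. On the source side, $N_{\int} P$ is Segal (again Proposition~\ref{prop:ff_int}, as $P$ is analytic), and Lemma~\ref{lem:colimPhiT'} together with the colimit formula of Lemma~\ref{lem:colimlem} shows precisely that $j^{*}j_!\Phi$ satisfies the Segal condition for any Segal presheaf $\Phi$: for each tree $T$ they combine to give $(j^{*}j_!\Phi)(T) \simeq \colim_{T \to T' \in \Act(T)} \Phi(T') \simeq \lim_{E \in \el(T)} (j^{*}j_!\Phi)(E)$, which is exactly the assertion that $j^{*}j_!\Phi$ is a right Kan extension of its restriction to $\bbOel$.

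Finally, under the equivalence $\mathcal{P}(\bbOel) \isoto \PSeg(\bbOint)$ of Definition~\ref{def:Seg}, a map between Segal presheaves is an equivalence as soon as it restricts to an equivalence on the elementary trees, since $u^{*}$ is the inverse equivalence. But this restriction is exactly the content of Lemma~\ref{lem:colimcolim}. Assembling these three inputs, the map $j^{*}j_! N_{\int} P \to N_{\int}\freemonad{P}$ is an equivalence, and hence by conservativity of $j^{*}$ so is the mate $j_! N_{\int} P \to N F P$. I expect the main obstacle to be organizational rather than conceptual: the genuine combinatorial work is already packaged in Lemmas~\ref{lem:colimlem}, \ref{lem:colimPhiT'}, and \ref{lem:colimcolim}, so the delicate point is confirming that the map produced in Lemma~\ref{lem:colimcolim} really is $j^{*}$ of the mate as a morphism of Segal presheaves, which is what licenses upgrading the elementary-tree comparison to an equivalence on all trees.
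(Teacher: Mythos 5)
Your proposal is correct and follows essentially the same route as the paper: reduce via conservativity of $j^{*}$, observe that $j^{*}j_!N_{\int}P$ satisfies the Segal condition (this is exactly what the combination of Lemma~\ref{lem:colimlem} and Lemma~\ref{lem:colimPhiT'} delivers), and then compare with $N_{\int}\freemonad{P}$ on elementary trees via Lemma~\ref{lem:colimcolim}. The paper packages this as a single chain of five equivalences rather than as ``both sides are Segal, so check on $\bbOel$,'' but the content and the key lemmas invoked are identical.
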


\begin{proof}
  Since $j^*$ is conservative, it is enough to check that 
  $j^* j_! N_\xint  \to  j^* N F \simeq  N_\xint UF$ is an 
  equivalence.   Let $P$ be an 
  analytic endofunctor, and put $\Phi:=N_\xint P$ and $\freemonad{P} := UFP$.
  For $T$ a tree, we have:
  
  \begin{alignat*}{2}
	  j^* j_!  \Phi (T) &\ \simeq\ \colim_{T\to T' \in \Act(T)} \Phi ( T') 
	  &\qquad& \text{by Lemma~\ref{lem:colimlem}} \\
	  &\ \simeq \ \lim_{E \in \el(T)} \colim_{E\to S' \in \Act(E)} \Phi (S')
	  &\qquad& \text{by Lemma~\ref{lem:colimPhiT'}} \\
	  &\ \simeq \ \lim_{E \in \el(T)} j^* j_!  \Phi (E)
	  &\qquad& \text{by Lemma~\ref{lem:colimlem}} \\
	  &\ \simeq \ \lim_{E \in \el(T)} N_\xint \freemonad{P}(E)
	  &\qquad& \text{by Lemma~\ref{lem:colimcolim}} \\
	  &\ \simeq \ N_\xint \freemonad{P}(T)
	  &\qquad& \text{since $N_\xint$ of anything is Segal}
  \end{alignat*}
  as required.
\end{proof}

\begin{proof}[Proof of Theorem~\ref{thm:AnMnd=PSeg}]
  Proposition~\ref{propn:leftadjcommute} tells us that the square of left
  adjoints commutes.  Since the adjunction $F \dashv U$ is monadic by
  Corollary~\ref{cor:freeanalyticmonadic} and $N_\xint$ is fully
  faithful (by Proposition~\ref{prop:ff_int}), we are in position to
  apply Proposition~\ref{propn:monadff}, which now tells us that the
  square of right adjoints is a pullback.  In particular, the nerve
  functor $N \colon \AnMnd \to \mathcal{P}(\bbO)$ is fully faithful.
  Furthermore, $N$ factors through $\PSeg(\bbO)$, since this was
  defined as a pullback (\ref{def:PSeg}), as in this diagram:
  \[
  \begin{tikzcd}
	  \AnMnd \arrow{r}{N} \arrow{d}{U} & \PSeg(\bbO) \arrow{d} 
	  \arrow{r}{} & \mathcal{P}(\bbO) \arrow{d}{j^*} \\
	  \AnEnd \arrow{r}{\sim}[swap]{N_{\xint}} & \PSeg(\bbOint) \arrow{r}& 
	  \mathcal{P}(\bbOint) .
  \end{tikzcd}
  \]
  Since the composite square is a pullback, and the
  right square is a pullback, also
  the left square is a pullback, whence the result.
\end{proof}

\appendix

\section{Mates and Monads}
\label{AppA}
In this appendix we discuss some $(\infty,2)$-categorical 
results needed in order to set up the \icats{}
of polynomial functors and polynomial
monads. For most of these the proofs can be found in the companion
paper \cite{adjmnd}.

\subsection{$(\infty,2)$-Categories and Lax Transformations}
We write $\CatIT$ for the \icat{} of \itcats{}. We will not need to
use any specific model for these objects, but we will need to make use
of the \emph{lax Gray tensor product} of \itcats{}; several versions
of this have recently been constructed
\cite{GagnaHarpazLanariGray,MaeharaGray,OzornovaRovelliVerityGray} in
different models. On the \icatl{} level all produce functors
\[ \otimes^{\lax} \colon \CatIT \times \CatIT \to \CatIT \]
that preserve colimits in each variable.
\begin{remark}
  Let $\bTt \subseteq \CatIT$ denote Joyal's category of
  2-dimensional pasting diagrams. Rezk's presentation
  \cite{RezkThetaN} of \itcats{}
  as complete Segal $\bTt$-spaces implies that the Gray tensor
  product is uniquely determined by its restriction to a functor
  \[ \bTt \times \bTt \to \CatIT. \]
  This is given by the classical Gray tensor products of
  pasting diagrams in all the models, and hence they all produce the
  same functor of \icats{}.\footnote{At one point in \cite{adjmnd} we also need to use the
  further assumption that this Gray tensor product restricts to the classical
  one when applied to ordinary 2-categories, or at least for \emph{gaunt}
  2-categories, which are those with no non-identity invertible
  morphisms and 2-morphisms.}
\end{remark}
Our convention is that a \emph{lax natural transformation} $\eta$ between functors
  $F,G \colon \mathcal{X} \to \mathcal{Y}$ assigns to every morphism
  $f \colon X \to X'$ in $\mathcal{X}$ a lax square
  \[
    \begin{tikzcd}
      F(X) \arrow{r}{\eta_{X}} \arrow[swap]{d}{F(f)} & G(X) \arrow{d}{G(f)}
      \arrow[dl,Rightarrow,inner sep=2pt]\\
      F(X') \arrow[swap]{r}{\eta_{X'}} & G(X'),
    \end{tikzcd}
  \]
and that this is given by a functor $\mathcal{X} \otimes^{\lax}
\Delta^{1} \to \mathcal{Y}$. Similarly, a \emph{colax natural
  transformation} between the same functors assigns to
every morphism
  $f \colon X \to X'$ in $\mathcal{X}$ a colax square
  \[
    \begin{tikzcd}
      F(X) \arrow{r}{\eta_{X}} \arrow[swap]{d}{F(f)} & G(X) \arrow{d}{G(f)}
      \arrow[dl,Leftarrow,inner sep=2pt]\\
      F(X') \arrow[swap]{r}{\eta_{X'}} & G(X'),
    \end{tikzcd}
  \]
and this is given by a functor $\mathcal{X} \otimes^{\colax}
\Delta^{1} \to \mathcal{Y}$, where
\[ \mathcal{X} \otimes^{\colax} \mathcal{Y} := \mathcal{Y}
  \otimes^{\lax} \mathcal{X}.\]
Since $\otimes^{\lax}$ preserves colimits in each variable, we get by adjunction
natural \itcats{} $\FUN(\mathcal{X},\mathcal{Y})_{\pcolax}$ determined
by natural equivalences
\[ \Map_{\CatIT}(\mathcal{W}, \FUN(\mathcal{X}, \mathcal{Y})_{\lax})
  \simeq \Map_{\CatIT}(\mathcal{X} \otimes^{\lax} \mathcal{W},
  \mathcal{Y}),\]
\[ \Map_{\CatIT}(\mathcal{W}, \FUN(\mathcal{X}, \mathcal{Y})_{\colax})
  \simeq \Map_{\CatIT}(\mathcal{X} \otimes^{\colax} \mathcal{W},
  \mathcal{Y}) \simeq \Map_{\CatIT}(\mathcal{W} \otimes^{\lax} \mathcal{X},
  \mathcal{Y}).\]
We write $\Fun(\mathcal{X},\mathcal{Y})_{\pcolax}$ for the underlying
\icat{} of $\FUN(\mathcal{X},\mathcal{Y})_{\pcolax}$; this has
functors $\mathcal{X} \to \mathcal{Y}$ as objects and (co)lax natural
transformations between them as morphisms.

We also write $\FUN(\mathcal{X},\mathcal{Y})$ for the ordinary
internal hom in $\CatIT$ (adjoint to the cartesian product) and
$\Fun(\mathcal{X},\mathcal{Y})$ for its underlying \icat{}. By
\cite{adjmnd}*{Corollary 3.15}, we can identify this with the wide
sub-\itcat{} of $\FUN(\mathcal{X}, \mathcal{Y})_{\pcolax}$ with
morphisms those (co)lax transformations whose (co)lax naturality
squares actually commute.

\subsection{Double $\infty$-Categories of Squares}\label{subsec:SqL}
We think of double \icats{} as simplicial \icats{} that satisfy the
Segal condition, or (essentially equivalently, using the description
of \icats{} as complete Segal spaces) bisimplicial spaces that satisfy
the Segal condition in each variable. If $\mathcal{K}_{\bullet} \colon
\Dop \to \CatI$ is a double \icat{}, we think of
\begin{itemize}
\item the objects of $\mathcal{K}_{0}$ as the objects of the double
  \icat{},
\item the morphisms of $\mathcal{K}_{0}$ as the vertical morphisms,
\item the objects of $\mathcal{K}_{1}$ as the horizontal morphisms,
\item the morphisms of $\mathcal{K}_{1}$ as the squares in the double \icat{}.
\end{itemize}
The corresponding bisimplicial space is given by
\[ \mathcal{K}_{n,m} := \Map(\Delta^{m}, \mathcal{K}_{n}),\]
so that
\begin{itemize}
\item the space of objects of $\mathcal{K}$ is $\mathcal{K}_{0,0}$,
\item the space of vertical morphisms is $\mathcal{K}_{0,1}$,
\item the space of horizontal morphisms is $\mathcal{K}_{1,0}$,
\item the space of squares is $\mathcal{K}_{1,1}$.
\end{itemize}

\begin{defn}
For any \itcat{} $\mathcal{X}$ we will define double \icats{}
$\SqL(\mathcal{X})$, $\SqCL(\mathcal{X})$ and $\Sq(\mathcal{X})$ where
\begin{itemize}
\item the objects are the objects of $\mathcal{X}$,
\item both the horizontal and vertical morphisms are the morphisms of
  $\mathcal{X}$,
\item the squares are, respectively, lax squares, colax squares, and
  commuting squares in $\mathcal{X}$.
\end{itemize}
These can be defined using the (co)lax Gray tensor product and the
cartesian product as the simplicial \icats{}
\begin{eqnarray*}
 \SqL(\mathcal{X})_{\bullet} &:=& \Fun(\Delta^{\bullet},
  \mathcal{X})_{\colax} \ \footnote{With our convention for (co)lax
    natural transformations and vertical/horizontal morphisms a
    morphism in $\SqL(\mathcal{X})_{1}$ is given by a \emph{colax}
    natural transformation of functors $\Delta^{1} \to \mathcal{X}$.}
	\\
 \SqCL(\mathcal{X})_{\bullet} &:=& \Fun(\Delta^{\bullet},
  \mathcal{X})_{\lax}
  \\
  \Sq(\mathcal{X})_{\bullet} &:=& \Fun(\Delta^{\bullet}, \mathcal{X}) . 
\end{eqnarray*}
The Segal condition follows immediately from the assumption that the
Gray tensor product preserves colimits in each variable.
\end{defn}

\begin{remark}
Equivalently, these double \icats{} are given by the bisimplicial spaces
  \begin{enumerate}[(1)]
\item $\SqL(\mathcal{X})_{n,m} := \Map(\Delta^{m}, \Fun(\Delta^{n},
  \mathcal{X})_{\colax}) \simeq \Map( \Delta^{n} \otimes^{\colax}
  \Delta^{m}, \mathcal{X}) \simeq \Map(\Delta^{m} \otimes^{\lax}
  \Delta^{n}, \mathcal{X}), $
\item $\SqCL(\mathcal{X})_{n,m} := \Map(\Delta^{m}, \Fun(\Delta^{n},
  \mathcal{X})_{\lax}) \simeq \Map( \Delta^{n} \otimes^{\lax}
  \Delta^{m}, \mathcal{X}) \simeq \Map(\Delta^{m} \otimes^{\colax}
  \Delta^{n}, \mathcal{X}), $
\item $\Sq(\mathcal{X})_{n,m} :=  \Map(\Delta^{m}, \Fun(\Delta^{n},
  \mathcal{X})) \simeq \Map( \Delta^{n} \times
  \Delta^{m}, \mathcal{X}) \simeq \Map( \Delta^{m} \times
  \Delta^{n}, \mathcal{X})$.
\end{enumerate}
\end{remark}

\begin{notation}
  If $\mathcal{K}$ is a double \icat{}, regarded as a bisimplicial space, we write
  $\mathcal{K}^{\txt{h-op}}$ for the double \icat{} obtained by
  reversing direction in the first coordinate, and
  $\mathcal{K}^{\txt{v-op}}$ for that obtained by reversing direction
  in the second coordinate. We also write $\mathcal{K}^{\txt{rev}}$
  for the double \icat{} obtained by reversing the order of the
  coordinates.
\end{notation}

\begin{remark}\label{rmk:LrevCL}
  Since there is by definition a natural equivalence $\mathcal{X}
  \otimes^{\lax} \mathcal{Y} \simeq \mathcal{Y} \otimes^{\colax} \mathcal{X}$,
  we have a natural equivalence
  \[ \SqL(\mathcal{X})^{\txt{rev}} \simeq \SqCL(\mathcal{X}).\]
  There is also obviously an equivalence $\Sq(\mathcal{X})^{\txt{rev}}
  \simeq \Sq(\mathcal{X})$.
\end{remark}

\begin{defn}\label{defn:sqlaxladj}
  Let $\SqL(\mathcal{X})^{\txt{v=ladj}}$ denote the sub-double \icat{}
  of $\SqL(\mathcal{X})$ containing only the squares where the vertical maps
  are left adjoints. (We will apply similar notations with right
  adjoints and horizontal morphisms, and other types of squares,
  without further comment.)
\end{defn}

\subsection{Naturality of Mates}
Given a diagram of \icats{}
\[
\begin{tikzcd}
  \mathcal{C} \arrow[swap]{d}{\gamma} \arrow{r}{R} & \mathcal{D}
  \arrow{d}{\delta} \arrow[Rightarrow, shorten >= 6pt,shorten <= 6pt]{dl}{\alpha}\\
  \mathcal{C}' \arrow[swap]{r}{R'} & \mathcal{D}',
\end{tikzcd}
\]
where $\alpha$ is a natural transformation $\delta R \to R' \gamma$,
and the functors $R$ and $R'$ have left adjoints $L$ and $L'$,
respectively, then the \emph{mate} of $\alpha$ is the natural
transformation
\[ L' \delta \to L' \delta R L \to L' R' \gamma L \to \gamma L,\]
which we can depict as
\[
\begin{tikzcd}
  \mathcal{C} \arrow[swap]{d}{\gamma}  & \mathcal{D}
  \arrow{d}{\delta} \arrow[swap]{l}{L} \\
  \mathcal{C}'  & \mathcal{D}' \arrow{l}{L'} 
  \arrow[Rightarrow, shorten >= 6pt,shorten <= 6pt]{ul}
\end{tikzcd}
\quad
\txt{or}
\quad
\begin{tikzcd}
  \mathcal{D} \arrow[swap]{d}{\delta} \arrow{r}{L} & \mathcal{C}
  \arrow{d}{\gamma} \\
  \mathcal{D}' \arrow[swap]{r}{L'} 
  \arrow[Rightarrow, shorten >= 6pt,shorten <= 6pt]{ur} & \mathcal{C}'.
\end{tikzcd}
\]
We thus pass from a lax square where the horizontal morphisms are
right adjoints to a colax square where they are left adjoints. A dual
version of this construction takes a colax square where the horizontal
morphisms are left adjoints to a lax square, and doing both gives back
the original square.

We would like to know that the process of taking mates is natural. The
most general form of this statement would be that for any
$(\infty,2)$-category $\mathcal{X}$, taking mates gives a natural
equivalence of double \icats{}
\[ \SqL(\mathcal{X})^{\txt{h=radj}} \isoto
\SqCL(\mathcal{X})^{\txt{h=ladj,h-op}}.\]
We will not establish such an equivalence here; instead, we will
observe that the following weaker statement, where the squares in the
source are required to commute, follows from the results of \cite{adjmnd}:
\begin{propn}\label{propn:mateftr}
  There are morphisms of double \icats{}
  \[ \Sq(\mathcal{X})^{\txt{h=ladj}} \to
  \SqL(\mathcal{X})^{\txt{h=radj,h-op}}, \]
  \[ \Sq(\mathcal{X})^{\txt{h=radj}} \to
  \SqCL(\mathcal{X})^{\txt{h=ladj,h-op}}, \]
  given by taking mates in the horizontal direction.
\end{propn}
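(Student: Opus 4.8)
The plan is to build both functors levelwise and then use the straightening machinery of the appendix to promote the levelwise construction to a genuine morphism of double Segal spaces. Since a morphism of double Segal spaces is just a map of bisimplicial spaces, it suffices to produce, for each $[n],[m]\in\Delta$, a map of spaces
\[
\Sq(\CatI)^{\txt{h=ladj}}_{n,m} \longrightarrow \bigl(\SqL(\CATI)^{\txt{h=radj,h-op}}\bigr)_{n,m}
\]
natural in $[n]$ and $[m]$; compatibility with the Segal and double-category structure will then be automatic, as both sides are double Segal spaces and the map is induced by functoriality of the Gray tensor product. Unwinding $\SqL(\CATI)_{n,m}=\Map((\Delta^n\times\Delta^m)^{\lax},\CATI)$ and $\Sq(\CatI)_{n,m}=\Map((\Delta^n\times\Delta^m)^{\sharp},\CatI)$, the source is the subspace of commuting $n\times m$ grids whose horizontal edges are left adjoints, and the target is the subspace of lax grids, with horizontal direction reversed, whose horizontal edges are right adjoints. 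The map is ``take mates in the horizontal direction'': replace each horizontal left adjoint by a chosen right adjoint and fill each elementary square by its mate $2$-cell, using that pasting of mates computes the mate of the pasted square, so that the result is again a coherent lax grid.

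To make this coherent rather than merely defined on grids up to homotopy, I would realise it through a universal mate correspondence. For each $[n],[m]$ the plan is to construct a span of scaled simplicial sets
\[
(\Delta^{n,\op}\times\Delta^m)^{\lax} \xrightarrow{\ a_{n,m}\ } Q_{n,m} \xleftarrow{\ b_{n,m}\ } (\Delta^n\times\Delta^m)^{\sharp},
\]
natural in $[n],[m]$ (the $\op$ implementing the horizontal reversal), where $Q_{n,m}$ is obtained by replacing each horizontal edge of the grid by a scaled model of the free adjunction $\Adj$ (cf.~\cite{RiehlVerityAdj}), glued along the vertical edges, with scalings chosen so that $b_{n,m}$ selects the left adjoints together with the commuting-square witnesses while $a_{n,m}$ selects the right adjoints together with the mate cells. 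Applying $\Map(-,\CATI)$ and Corollary~\ref{cor:lcocstr}, the two legs induce maps of the relevant mapping spaces, and the desired functor will be the composite $a_{n,m}^{*}\circ (b_{n,m}^{*})^{-1}$.

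The heart of the matter, and the step I expect to be the main obstacle, is the claim that $b_{n,m}^{*}$ restricts to an equivalence onto the subspace of grids whose horizontal edges are left adjoints. This is the $(\infty,2)$-categorical fact that the space of ways to promote a left adjoint to a full adjunction is contractible; realising it scaled-simplicially means showing that, after mapping into $\CATI$ and restricting to adjointable data, the inclusion of the ``left face'' into the free-adjunction model $Q_{n,m}$ becomes an equivalence. I would prove this by straightening (Corollary~\ref{cor:lcocstr}) and reducing to a single horizontal edge (the case $n=1$, $m=0$), where it is exactly the uniqueness of adjoints, and then bootstrapping along the Segal decompositions of the Gray products, which are available by Theorem~\ref{thm:laxbifun}. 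Once $b_{n,m}^{*}$ is inverted, naturality in $[n],[m]$ follows from naturality of the span, and compatibility with horizontal and vertical composition is formal.

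Finally, the second functor $\Sq(\CatI)^{\txt{h=radj}}\to\SqCL(\CATI)^{\txt{h=ladj,h-op}}$ is obtained by the identical argument with the roles of left and right adjoints interchanged: one takes left adjoints of the right-adjoint horizontal edges and fills with the opposite mate cells, using the correspondence built from $\Adj$ read in the other direction. The passage from lax to colax squares is then recorded by the natural isomorphism of Remark~\ref{rmk:laxcolaxrev}.
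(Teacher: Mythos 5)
Your route is genuinely different from the paper's. The paper does not build universal free-adjunction gadgets at all: it proves a recognition principle for locally cocartesian fibrations (Proposition~\ref{propn:loccartcond}, a dual of \cite{HTT}*{Proposition 2.4.2.11}), and then, given a commuting grid $F \colon \mathcal{B} \times \mathcal{C} \to \CatI$ whose horizontal edges are (say) right adjoints, passes to the cartesian fibration classified by $F$, re-straightens in the $\mathcal{C}$-direction to get a map $\mathcal{E}' \to \mathcal{B}^{\op} \times \mathcal{C}$, and checks that this is a locally cocartesian fibration which is cocartesian over exactly the scaled $2$-simplices of $(\mathcal{B}^{\op}\times\mathcal{C})^{\lax}$. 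Corollary~\ref{cor:lcocstr} then converts this fibration directly into a lax functor, and the mate transformation appears for free as the comparison between the composite of locally cocartesian lifts over $(\id,g)$ and $(f,\id)$ and the locally cocartesian lift over $(f,g)$. What the paper's approach buys is that no explicit model of the free adjunction, no pasting of $2$-cells, and no contractibility-of-adjunction-data argument is ever needed; the entire coherence problem is outsourced to Lurie's straightening theorem for locally cocartesian fibrations. What your approach would buy, if completed, is a statement closer in spirit to Riehl--Verity and Zaganidis, and one that more visibly identifies the resulting $2$-cells as mates in the classical sense.

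That said, your proposal leaves its hardest steps unexecuted, and they are not formal. First, you never construct $Q_{n,m}$: gluing scaled models of $\mathfrak{adj}$ along the vertical edges of a grid and arranging the scaling so that the leg $a_{n,m}$ lands on the \emph{mate} cells requires exhibiting each mate as a specific pasting (unit, original square, counit) inside the glued object, and producing a map of scaled simplicial sets $(\Delta^{n,\op}\times\Delta^m)^{\lax}\to Q_{n,m}$ realising those pastings coherently is exactly the kind of combinatorial analysis of $\mathfrak{adj}$ that Zaganidis carries out for monads; it cannot be dismissed as a choice of scalings. Second, the claim that $b_{n,m}^{*}$ is an equivalence onto the left-adjoint grids reduces to the one-edge Riehl--Verity statement only if the gluing defining $Q_{n,m}$ is a homotopy colimit in the scaled model structure and the restriction maps in the resulting limit decomposition of $\Map(Q_{n,m},\CATI)$ are fibrations; you would need to verify cofibrancy of the pieces and that the Segal decompositions of the Gray products (Theorem~\ref{thm:laxbifun}) interact correctly with the glued-in copies of $\mathfrak{adj}$. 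Neither point is fatal to the strategy, but as written the proposal asserts rather than proves the two statements on which everything rests, whereas the paper's fibrational argument closes both issues with one application of Theorem~\ref{thm:loccocmodstr}.
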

\begin{remark}
  Using the equivalence of Remark~\ref{rmk:LrevCL}, we can also
  interpret these as maps
  \[ \Sq(\mathcal{X})^{\txt{v=ladj}} \to
  \SqCL(\mathcal{X})^{\txt{v=radj,v-op}}, \]
  \[ \Sq(\mathcal{X})^{\txt{v=radj}} \to \SqL(\mathcal{X})^{\txt{v=ladj,v-op}}, \]
  given by taking mates in the vertical direction. 
\end{remark}
\begin{proof}
  By \cite{adjmnd}*{Remark 4.11} taking mates gives a natural functor
  \[ \FUN(\Delta^{\bullet}, \mathcal{X})_{\radj} \to
    \FUN((\Delta^{\bullet})^{\op}, \mathcal{X})_{\ladj,\lax}.\]
  The underlying functor of simplicial \icats{} gives precisely a
  functor of double \icats{}
  $\Sq(\mathcal{X})^{\txt{h=radj}} \to
  \SqCL(\mathcal{X})^{\txt{h=ladj,h-op}}$. The other functor is
  defined in the same way by reversing 2-morphisms (which swaps lax
  and colax transformations and left and right adjoints).  
\end{proof}

\subsection{Framed Double $\infty$-Categories}
We will need to know that the source-and-target projection for the
double \icat{} $\SqCL(\CATI)^{\txt{v=radj}}$ is a cartesian and
cocartesian fibration. In order to show this, we will now prove an
\icatl{} version of a result of Shulman~\cite{ShulmanFramed} on double
categories.  To state this we first introduce some terminology:
\begin{defn}[Shulman~\cite{ShulmanFramed}]\label{defn:hframed}
  A double category is \emph{framed} if for every vertical edge
  $f \colon a \to b$, there exist horizontal edges $f_! \colon a\to b$
  and $f^* \colon b \to a$ together with four squares ($2$-cells)
  \[
  \begin{tikzcd}
	  a\arrow{r}{f_!} \arrow[swap]{d}{f} & \arrow[equal]{d} b\\
	  b \arrow[equal]{r} & b
  \end{tikzcd}
  \qquad
  \begin{tikzcd}
	  b\arrow{r}{f^*} \arrow[equal]{d} & \arrow{d}{f} a\\
	  b \arrow[equal]{r} & b
  \end{tikzcd}
  \qquad
  \begin{tikzcd}
	  a\arrow[equal]{r} \arrow[swap]{d}{f} & \arrow[equal]{d} a\\
	  b \arrow[swap]{r}{f^*} & a
  \end{tikzcd}
  \qquad
  \begin{tikzcd}
	  a\arrow[equal]{r} \arrow[equal]{d} & \arrow{d}{f} a\\
	  a \arrow[swap]{r}{f_!} & b
  \end{tikzcd}
  \]
  such that the following four equations hold:
  \[
  \begin{tikzcd}
	  a\arrow[equal]{r} \arrow[equal]{d} & \arrow{d}{f} a\\
	  a \arrow[swap]{r}{f_!} \arrow[swap]{d}{f} & b \arrow[equal]{d} \\
	  b \arrow[equal]{r} & b
  \end{tikzcd}
  =
  \begin{tikzcd}
	  a\arrow[equal]{r} \arrow[swap]{d}{f} & \arrow{d}{f} a\\
	  b \arrow[equal]{r} & b
  \end{tikzcd}
\qquad\qquad
\begin{tikzcd}
	  a\arrow[equal]{r} \arrow[swap]{d}{f} & \arrow[equal]{d} a\\
	  b \arrow[swap]{r}{f^*} \arrow[equal]{d} & a \arrow{d}{f}\\
	  b \arrow[equal]{r} & b
  \end{tikzcd}
  =
  \begin{tikzcd}
	  a\arrow[equal]{r} \arrow[swap]{d}{f} & \arrow{d}{f} a\\
	  b \arrow[equal]{r} & b
  \end{tikzcd}
\]
\vspace*{1ex}
\[
  \begin{tikzcd}
	  a\arrow[equal]{r} \arrow[equal]{d} & \arrow{d}{f} a 
	  \arrow{r}{f_!} & b \arrow[equal]{d}\\
	  a \arrow[swap]{r}{f_!} & b \arrow[equal]{r} & b
  \end{tikzcd}
  =
  \begin{tikzcd}
	  a\arrow{r}{f_!} \arrow[equal]{d} & \arrow[equal]{d} b\\
	  a \arrow[swap]{r}{f_!} & b
  \end{tikzcd}
\qquad
\begin{tikzcd}
	  b\arrow{r}{f^*} \arrow[equal]{d} & \arrow{d}{f} a 
	  \arrow[equal]{r} & a \arrow[equal]{d} \\
	  b \arrow[equal]{r} & b \arrow[swap]{r}{f^*} & a
  \end{tikzcd}
  =
  \begin{tikzcd}
	  b\arrow{r}{f^*} \arrow[equal]{d} & \arrow[equal]{d} a \\
	  b \arrow[swap]{r}{f^*} & a,
  \end{tikzcd}
  \]
  where on the right-hand side we have the horizontal and vertical
  identity squares for $f$, $f^{*}$, and $f_{!}$.
\end{defn}

\begin{remark}
  In \cite{ShulmanFramed}, this structure is called a \emph{framed
    bicategory} rather than a framed double category.
\end{remark}

\begin{defn}\label{defn:framed}
  We say a double \icat{} is \emph{framed} if
  its homotopy double category is framed.
\end{defn}

We have the following \icatl{} version of \cite{ShulmanFramed}*{Thm.4.1}:
\begin{propn}\label{propn:equipmt}
  Let $\mathcal{X}$ be a double \icat{}, viewed as a functor $\Dop \to
  \CatI$ satisfying the Segal condition
  \[ \mathcal{X}_{n} \isoto \mathcal{X}_{1}
  \times_{\mathcal{X}_{0}}\cdots\times_{\mathcal{X}_{0}}
  \mathcal{X}_{1}.\]
  Put
  $\pi := (d_1,d_0) \colon \mathcal{X}_{1}\to \mathcal{X}_{0}\times
  \mathcal{X}_{0}$. Then the following are equivalent:
  \begin{enumerate}[(i)]
  \item The double \icat{} $\mathcal{X}$ is framed.
  \item The functor $\pi$ is a cocartesian fibration.
  \item The functor $\pi$ is a cartesian fibration.
  \end{enumerate}
\end{propn}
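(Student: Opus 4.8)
The plan is to adapt Shulman's proof of the $1$-categorical statement \cite{ShulmanFramed}*{Theorem 4.1} to the homotopy-coherent setting, the essential new difficulty being that framedness (Definition~\ref{defn:framed}) is by fiat a condition on the homotopy double category, whereas conditions (2) and (3) are genuinely $\infty$-categorical. The first step is to unpack framedness into the existence, for every vertical morphism $f\colon a\to b$, of a \emph{companion} $f_!$ and a \emph{conjoint} $f^*$, i.e.\ horizontal morphisms equipped with the four filler squares subject to the four equations of Definition~\ref{defn:hframed}. I would then recast this data fibrationally: the square exhibiting $f_!$ that runs from the horizontal identity $\id_a$ to $f_!$ and lies over $(\id_a,f)$ is to be a $\pi$-cocartesian morphism, and likewise the square exhibiting $f^*$ from $\id_a$ to $f^*$ over $(f,\id_a)$; the two partner squares, which land in horizontal identities, are the corresponding $\pi$-cartesian morphisms. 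Thus $f_!$ is a cocartesian lift of a horizontal identity along a target-type generator $(\id,f)$ and $f^*$ a cocartesian lift along a source-type generator $(f,\id)$, while reading the partner squares as cartesian lifts gives the description adapted to (3).

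Second, I would reduce the fibration conditions to lifting over the two families of generating morphisms of $\mathcal{X}_0\times\mathcal{X}_0$: every morphism $(f,g)$ factors as $(\id,g)\circ(f,\id)$, composites of cocartesian (resp.\ cartesian) morphisms are again such, and since being a cocartesian fibration is invariant under equivalence we may assume $\pi$ is an isofibration, so that it is a cocartesian fibration precisely when enough cocartesian lifts exist. Hence, to prove framed $\Rightarrow$ (2), it suffices — granting companions and conjoints — to produce cocartesian lifts over target-type and source-type generators. For a target-type generator $(\id,g)$ and a horizontal morphism $h$ I would take the lift to be $g_!\circ h$, obtained by horizontally whiskering the companion square of $g$ with the identity square on $h$; source-type generators are handled symmetrically using conjoints. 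The very same squares, read in the cartesian direction, furnish the lifts needed for framed $\Rightarrow$ (3).

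The step I expect to be the main obstacle is upgrading the homotopy-level companion data to the $\infty$-categorical universal property: I must show that a chosen representative of the (whiskered) companion square is genuinely $\pi$-cocartesian, i.e.\ that for every $k\in\mathcal{X}_1$ the comparison map
\[ \Map_{\mathcal{X}_1}(g_!\circ h,\,k)\longrightarrow \Map_{\mathcal{X}_1}(h,k)\times_{\Map(\pi h,\pi k)}\Map(\pi(g_!\circ h),\pi k) \]
is an equivalence of spaces. The enabling observation is that horizontal and vertical composition of squares are functorial at the level of the full mapping spaces, so whiskering by a fixed square is a continuous map of mapping spaces, and whiskering by two squares lying in the same connected component of a square-space gives homotopic maps. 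Consequently the companion \emph{equations}, which assert equalities in the homotopy double category — that is, identifications in $\pi_0$ of square-spaces, hence paths — transport under whiskering into honest homotopies exhibiting the whisker-by-partner-square map as a two-sided inverse of the comparison map above. This is precisely the zig-zag argument of the $1$-categorical proof, now read as producing a homotopy equivalence of spaces rather than a bijection of hom-sets, and it is exactly what makes the merely homotopy-coherent framing suffice to control the $\infty$-categorical (co)cartesian lifts.

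Finally, for the reverse implications (2) $\Rightarrow$ framed and (3) $\Rightarrow$ framed I would pass to homotopy categories: a cocartesian (resp.\ cartesian) fibration of $\infty$-categories induces an opfibration (resp.\ fibration) of ordinary categories on homotopy categories, the (op)cartesian morphisms being detected by applying $\pi_0$ to the mapping-space condition. Shulman's theorem \cite{ShulmanFramed}*{Theorem 4.1}, applied to the homotopy double category $\mathrm{ho}\,\mathcal{X}$, then provides companions and conjoints there, which is exactly the assertion that $\mathcal{X}$ is framed in the sense of Definition~\ref{defn:framed}. Combining the two easy directions with the two constructions of the preceding paragraphs closes the loop and yields the equivalence of (1), (2), and (3).
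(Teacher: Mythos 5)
Your proposal is correct and takes essentially the same route as the paper: for the hard direction you build the (co)cartesian lift by pasting the companion/conjoint filler squares onto a horizontal edge, verify the universal property by exhibiting a pasting-defined homotopy inverse on mapping spaces whose witnessing homotopies come from the four framing equations read as paths in the relevant square-spaces, and for the easy direction you pass to homotopy double categories and invoke Shulman's theorem, exactly as the paper does. The only (harmless) organizational difference is that you factor $(f,g)$ through the generators $(f,\id)$ and $(\id,g)$ and compose the resulting (co)cartesian lifts, whereas the paper writes down the full three-square horizontal pasting for a general $(f,g)$ in one step; these yield the same lift.
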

\begin{proof}
  The proof that (ii) and (iii) imply (i) is exactly as in the case of
  ordinary double categories, since cartesian (or cocartesian)
  fibrations induce Grothendieck (op)fibrations on the level of
  homotopy categories, and condition (i) is a statement about the
  homotopy double category.  The more interesting direction (which is
  the one we are going to need) is that (i) implies (iii).  So assume
  given (for each vertical edge) the four squares, and assume given
  homotopy equivalences representing the four equations.  Given an
  object in $\mathcal{X}_1$, that is, a horizontal edge $M:b\to d$,
  and an arrow downstairs in $\mathcal{X}_0 \times \mathcal{X}_0$ with
  codomain $(b,d)$, that is altogether a configuration
  \[
  \begin{tikzcd}
	  a \arrow[swap]{d}{f} & c \arrow{d}{g} \\
	  b \arrow[swap]{r}{M} & d  ,
  \end{tikzcd}
  \]
  we claim that
  \begin{equation*}
    \alpha: \qquad
  \begin{tikzcd}
	  a \arrow[swap]{d}{f} \arrow{r}{f_!} & b \arrow[equal]{d} 
	  \arrow{r}{M} \arrow[equal]{d} & d \arrow[equal]{d} \arrow{r}{g^*}& c \arrow{d}{g} \\
	  b \arrow[equal]{r} & b \arrow[swap]{r}{M} & d \arrow[equal]{r} & d
  \end{tikzcd}
  \end{equation*}
  is a cartesian lift. Given vertical edges $\begin{tikzcd} x \arrow{d}{u} 
  \\ a \end{tikzcd}$ and $\begin{tikzcd} y \arrow{d}{v} 
  \\ c \end{tikzcd}$,
  and a horizontal edge $N: x \to y$,
  the claim is that the natural map
  \[
  \Map_{\mathcal{X}_1}(N, g^* M f_!)_{(u,v)} \longrightarrow 
  \Map_{\mathcal{X}_1}(N,M)_{(fu,gv)}
  \]
  given by pasting the square $\alpha$ to the bottom edge
  is a homotopy equivalence.  But we can construct a homotopy inverse 
  by sending a square 
  \[
  \begin{tikzcd}
	  x \arrow[swap]{d}{uf} \arrow{r}{N}& y \arrow{d}{vg} \\
	  b \arrow[swap]{r}{M} & d
  \end{tikzcd}
  \]
  to the pasting
  \[
  \begin{tikzcd}
	  x \arrow[equal]{r} \arrow[swap]{d}{u} & x \arrow{r}{N} 
	  \arrow{d}{u} & y \arrow[swap]{d}{v} \arrow[equal]{r} & y \arrow{d}{v} 
	  \\
	  a \arrow[equal]{r} \arrow[equal]{d} & a \arrow{d}{f} & c 
	  \arrow[equal]{r} \arrow[swap]{d}{g} & c \arrow[equal]{d} \\
	  a \arrow[swap]{r}{f_!} & b \arrow[swap]{r}{M} & d \arrow[swap]{r}{g^*} & c
	  \end{tikzcd}
	  \]
  These two assignments are homotopy inverses: explicit homotopies are
  easily constructed from the homotopy equivalences stipulated in (i).
  
  The proof that (i) implies (ii) is similar.  For reference, we note
  that the cocartesian lifts (of $(f,g)$ to $N$) can be taken to be of
  the form
  \[
    \begin{tikzcd}
	  a \arrow[swap]{d}{f} \arrow[equal]{r} & a \arrow[equal]{d} 
	  \arrow{r}{N} \arrow[equal]{d} & c \arrow[equal]{d} \arrow[equal]{r}& c \arrow{d}{g} \\
	  b \arrow[swap]{r}{f^*} & a \arrow[swap]{r}{N} & c
          \arrow[swap]{r}{g_!} & d. 
  \end{tikzcd}
  \qedhere
\]
\end{proof}

\begin{propn}\label{propn:SqFramed}
  The double \icats{} $\SqL(\CATI)^{\txt{v=ladj}}$ and
  $\SqCL(\CATI)^{\txt{v=radj}}$ are framed.
\end{propn}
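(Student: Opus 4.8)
The plan is to verify Shulman's framing structure directly, working in the homotopy double category. By Definition~\ref{defn:framed} it suffices to check that the homotopy double category $h\mathcal{X}$ of $\mathcal{X} := \SqL(\CATI)^{\txt{v=ladj}}$ (resp. $\SqL(\CATI)^{\txt{v=radj}}$) satisfies the conditions of Definition~\ref{defn:hframed}. This passage to the homotopy double category is the key simplification: there the four framing identities become honest equations between homotopy classes of squares, so no higher coherence data needs to be supplied — only the appropriate unit and counit $2$-cells of an adjunction, together with the triangle identities they already satisfy.

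First I would treat $\SqL(\CATI)^{\txt{v=ladj}}$. A vertical edge is a functor $f\colon a\to b$ admitting a right adjoint $u\colon b\to a$, with unit $\eta\colon \id_a\Rightarrow uf$ and counit $\epsilon\colon fu\Rightarrow \id_b$. I would take the companion $f_!$ to be $f$ regarded as a horizontal edge and the conjoint $f^*$ to be $u$ regarded as a horizontal edge. The two companion squares of Definition~\ref{defn:hframed} have both boundary composites equal to $f$ and are filled by the identity $2$-cell; the two conjoint squares have boundary composites $(fu,\id_b)$ and $(\id_a,uf)$ and are filled by $\epsilon$ and $\eta$ respectively. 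These $2$-cells genuinely lie in $\mathcal{X}$ because the lax orientation of a square in $\SqL$ runs in the direction $q\circ h \Rightarrow k\circ p$ (top-then-right to left-then-bottom), which matches the variance of the counit and of the unit. With these choices the four equations of Definition~\ref{defn:hframed} unwind (after pasting with the relevant identity squares) to exactly the two triangle identities $\epsilon f\circ f\eta\simeq \id_f$ and $u\epsilon\circ \eta u\simeq \id_u$, which hold in $h\mathcal{X}$ since $f\dashv u$ is an adjunction.

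For $\SqL(\CATI)^{\txt{v=radj}}$ I would run the mirror-image construction: a vertical edge $f\colon a\to b$ is now a right adjoint with left adjoint $\ell\colon b\to a$, and I take $f_!=f$, $f^*=\ell$, filling the conjoint squares with the unit and counit of $\ell\dashv f$. \emph{The main obstacle — and the only place where the distinction between the two cases really bites — is matching the orientation of the lax squares against the variance of the chosen unit and counit.} For v=ladj the relevant $2$-cells are those of $f\dashv u$, while for v=radj they are those of $\ell\dashv f$; since with a fixed lax orientation the two adjunctions point their (co)units opposite ways, one must take care that in each case the chosen conjoint squares can actually be populated. I would settle this by unwinding the scaled $2$-simplices of $(\Delta^1\times\Delta^1)^{\lax}$ — the invertible ``northeast'' cell identifying the diagonal with the left-then-bottom composite, and the genuine lax cell comparing it with the top-then-right composite — keeping track of the direction in which the vertical edges are oriented. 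The cleanest route is to establish the v=ladj case and then deduce the v=radj case formally, using the reversal equivalence of Remark~\ref{rmk:LrevCL} together with the $\txt{v-op}$ operation, under which left and right adjoints (hence unit and counit) are interchanged while the framing structure is carried along.

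Finally, I would record that the companion and conjoint squares produced above are precisely the data demanded by Proposition~\ref{propn:equipmt}; thus exhibiting this framing is exactly what is needed to conclude downstream that the source-and-target projection of these double \icats{} is both cartesian and cocartesian.
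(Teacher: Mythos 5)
Your proof is correct and is essentially the paper's own argument: take $f_!$ to be the vertical functor itself and $f^*$ its adjoint, fill two of the four squares with identities and the other two with the unit and counit, and observe that the four framing equations in the homotopy double category reduce to two trivialities plus the triangle identities. One caveat: the proposed formal deduction of the \txt{v=radj} case via Remark~\ref{rmk:LrevCL} together with $\txt{v-op}$ does not quite work as stated, since $(\blank)^{\txt{rev}}$ exchanges horizontal and vertical and lands you in $\SqCL(\CATI)^{\txt{h=ladj}}$ rather than $\SqL(\CATI)^{\txt{v=radj}}$ (and $\txt{v-op}$ reverses vertical arrows rather than replacing them by their adjoints) --- but your direct mirror-image construction for that case is fine and is all the paper does.
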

\begin{proof}
  We give the proof for $\SqL(\CATI)^{\txt{v=ladj}}$, the other case
  is essentially the same. For each vertical edge, that is a left
  adjoint functor $\ell \colon X \to Y$, with right adjoint
  $r \colon Y \to X$, we have the lax squares
  \[
  \begin{tikzcd}
	  X\arrow{r}{\ell} \arrow[swap]{d}{\ell} & \arrow[equal]{d} Y
	  \arrow[Rightarrow,shorten >= 8pt,shorten <= 8pt]{dl} \\
	  Y \arrow[equal]{r} & Y
  \end{tikzcd}
  \qquad
  \begin{tikzcd}
	  Y\arrow{r}{r} \arrow[equal]{d} & \arrow{d}{\ell} 
	  X \arrow[Rightarrow,shorten >= 8pt,shorten <= 8pt]{dl} \\
	  Y \arrow[equal]{r} & Y
  \end{tikzcd}
  \qquad
  \begin{tikzcd}
	  X\arrow[equal]{r} \arrow[swap]{d}{\ell} & \arrow[equal]{d} X 
	  \arrow[Rightarrow,shorten >= 8pt,shorten <= 8pt]{dl} \\
	  Y \arrow[swap]{r}{r} & X
  \end{tikzcd}
  \qquad
  \begin{tikzcd}
	  X\arrow[equal]{r} \arrow[equal]{d} & \arrow{d}{\ell} X
	  \arrow[Rightarrow,shorten >= 8pt,shorten <= 8pt]{dl} \\
	  X \arrow[swap]{r}{\ell} & Y
  \end{tikzcd}
  \]
  where the second square is the counit $\epsilon_Y$, the third square
  is the unit $\eta_X$, and the two other squares are trivial.  The
  four equations required are two trivial ones, and the triangle laws
  for adjunctions.
\end{proof}

Combining Proposition~\ref{propn:SqFramed} with
Proposition~\ref{propn:equipmt}, we get:
\begin{cor}
  The source-and-target projections
  \[ \SqL(\CATI)^{\txt{v=ladj}}_{1} \to (\CatI^{\txt{ladj}})^{\times
    2}, \qquad \SqCL(\CATI)^{\txt{v=radj}}_{1} \to (\CatI^{\txt{radj}})^{\times
    2},\]
  are cartesian and cocartesian fibrations.
\end{cor}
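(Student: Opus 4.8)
The plan is to obtain this as an immediate combination of the two preceding results, with the only genuine task being the identification of the relevant $\infty$-categories. Recall that Proposition~\ref{propn:equipmt} asserts, for any double $\infty$-category $\mathcal{X}$ presented as a Segal object $\Dop \to \CatI$, the equivalence of three conditions: $\mathcal{X}$ is framed (1); the projection $\pi := (d_1,d_0)\colon \mathcal{X}_1 \to \mathcal{X}_0 \times \mathcal{X}_0$ is a cocartesian fibration (2); and $\pi$ is a cartesian fibration (3). The key point to exploit is that once framedness is established, conditions (2) and (3) hold \emph{simultaneously}, so a single invocation yields both halves of the conclusion at once.

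First I would apply this to $\mathcal{X} = \SqL(\CATI)^{\txt{v=ladj}}$ and to $\mathcal{X} = \SqL(\CATI)^{\txt{v=radj}}$, both of which are framed by Proposition~\ref{propn:SqFramed}. It then remains only to match up the data of $\pi$ with the maps in the statement. Viewing the double $\infty$-category via its horizontal simplicial direction (as in the convention $\PolyFun = \txt{POLYFUN}_{[1]}$), the $\infty$-category $\mathcal{X}_1$ is by definition the $\infty$-category of horizontal morphisms and squares, namely $\SqL(\CATI)^{\txt{v=ladj}}_{1}$ (respectively the radj version), while $\mathcal{X}_0$ is the $\infty$-category whose objects are $\infty$-categories and whose morphisms are the vertical edges. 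Since in these sub-double $\infty$-categories the vertical edges are precisely the left (respectively right) adjoint functors, we have $\mathcal{X}_0 \simeq \CatI^{\txt{ladj}}$ (respectively $\CatI^{\txt{radj}}$), so that $\mathcal{X}_0 \times \mathcal{X}_0 \simeq (\CatI^{\txt{ladj}})^{\times 2}$ (respectively $(\CatI^{\txt{radj}})^{\times 2}$). Under these identifications $\pi = (d_1,d_0)$ is exactly the source-and-target projection of the statement, with $d_1$ and $d_0$ returning the target and source objects of a horizontal morphism.

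I expect no real obstacle here: the entire substance is already carried by Propositions~\ref{propn:SqFramed} and~\ref{propn:equipmt}, and what is left is the bookkeeping identification of the base and total $\infty$-categories just described. Concluding from Proposition~\ref{propn:equipmt} that $\pi$ is both cartesian and cocartesian then finishes the argument in both the ladj and radj cases.
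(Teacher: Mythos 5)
Your argument is correct and is exactly the paper's route: the corollary is obtained by combining Proposition~\ref{propn:SqFramed} (framedness of the two double $\infty$-categories) with Proposition~\ref{propn:equipmt} (framed $\Leftrightarrow$ $(d_1,d_0)$ cocartesian $\Leftrightarrow$ $(d_1,d_0)$ cartesian), and your identifications of $\mathcal{X}_1$ with the $\infty$-category of horizontal morphisms and of $\mathcal{X}_0$ with $\CatI^{\txt{ladj}}$ (resp.\ $\CatI^{\txt{radj}}$) are the correct bookkeeping that the paper leaves implicit.
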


\subsection{Monads}\label{subsec:monad}
To define the \icat{} of polynomial monads we need to have a suitable
\icat{} of monads on varying base \icats{}. We can define this in
terms of lax natural transformations:
\begin{defn}
  Let $\fmnd$ denote the universal 2-category containing a
  monad.\footnote{This is defined in \cite{RiehlVerityAdj} as a full
    sub-2-category of the universal 2-category containing an adjunction;
    it can also be described as the one-object 2-category corresponding
    to the monoidal envelope of the non-symmetric associative operad.} A
  \emph{monad} in an \itcat{} $\mathcal{X}$ is a functor $\fmnd \to
  \mathcal{X}$. A \emph{lax morphism of monads} (or \emph{monad functor}
  in the terminology of \cite{StreetFormalMonad}) is a lax natural
  transformation of monads, \ie{} a functor $\fmnd \otimes^{\lax}
  \Delta^{1} \to
  \mathcal{X}$. Similarly, a \emph{colax morphism of monads} (or
  \emph{monad opfunctor}) is a colax natural transformation $\fmnd
  \otimes^{\colax} \Delta^{1} \to \mathcal{X}$. We then have \itcats{}
  of monads and
  (co)lax morphisms defined as
  \[ \MND(\mathcal{X})_{\pcolax} := \FUN(\fmnd,
    \mathcal{X})_{\pcolax};\]
  we denote the underlying \icats{} by $\Mnd(\mathcal{X})_{\pcolax}$.
\end{defn}

\begin{remark}
  If $T$ is a monad on $X \in \mathcal{X}$ and $S$ is a monad on
$Y \in \mathcal{X}$,  then
  \begin{enumerate}[(1)]
\item a \emph{lax morphism} $T \to S$ consists of a morphism
  $F \colon X \to Y$ and a natural transformation $SF \to FT$ --- in
  other words, a lax square
  \[
    \begin{tikzcd}
      X \arrow[swap]{d}{T} \arrow{r}{F} & Y \arrow{d}{S}
      \arrow[Rightarrow,shorten >= 8pt,shorten <= 8pt]{dl} \\
      X \arrow[swap]{r}{F} & Y,
  \end{tikzcd}
  \]
  compatible with multiplication and units through commutative diagrams
  \[ 
  \begin{tikzcd}
    F \arrow{rr} \arrow{dr} & & SF \arrow{dl} \\
     & FT,
  \end{tikzcd}
  \qquad
  \begin{tikzcd}
    SSF \arrow{r} \arrow{d} & SF \arrow{dd} \\
    SFT \arrow{d} \\
    FTT \arrow{r} & FT ,
  \end{tikzcd}
  \]
  and so on for iterated composites of $S$ and $T$.
\item a \emph{colax morphism} $T \to S$ consists of a morphism
  $F \colon X \to Y$ and a natural transformation $FT \to SF$ --- in
  other words, a colax square
  \[
    \begin{tikzcd}
      X \arrow[swap]{d}{T} \arrow{r}{F} & Y \arrow{d}{S}
      \arrow[Leftarrow,shorten >= 8pt,shorten <= 8pt]{dl} \\
      X \arrow[swap]{r}{F} & Y,
  \end{tikzcd}
  \]
  compatible with multiplication and units through commutative diagrams
  \[ 
  \begin{tikzcd}
    F \arrow{rr} \arrow{dr} & & FT \arrow{dl} \\
     & SF,
  \end{tikzcd}
  \qquad
  \begin{tikzcd}
    FTT \arrow{r} \arrow{d} & FT \arrow{dd} \\
    SFT \arrow{d} \\
    SSF \arrow{r} & SF ,
  \end{tikzcd}
  \]
  and so on for iterated composites of $S$ and $T$.
\end{enumerate}
\end{remark}
For ordinary 2-categories, Street~\cite{StreetFormalMonad} showed that
the (2-)category of monads and lax morphisms is equivalent to that of
monadic right adjoints and commutative squares between them. One of
the main results of \cite{adjmnd} uses work of
Riehl--Verity~\cite{RiehlVerityAdj} and Zaganidis~\cite{Zaganidis} to
upgrade this to an equivalence of \itcats{}, in the case of monads in
the \itcat{}
$\CATI$ of \icats{}:
\begin{thm}[\cite{adjmnd}*{Corollary 5.7}]\label{thm:Mndlax}
  Let $\FUN(\Delta^{1}, \CATI)_{\mndradj}$ denote the full
  sub-\itcat{} of $\FUN(\Delta^{1}, \CATI)$ spanned by the monadic
  right adjoints. There is an equivalence of \itcats{}
  \[ \MND(\CATI)_{\lax} \to \FUN(\Delta^{1}, \CATI)_{\mndradj}\]
  taking a monad to the right adjoint of its monadic adjunction.
\end{thm}

\begin{cor}[\cite{adjmnd}*{Corollary 5.10}]\label{cor:mndlocradj}
  The inclusion \[ \Fun(\Delta^{1}, \CatI)_{\mndradj} \hookrightarrow
  \Fun(\Delta^{1}, \CatI)_{\radj}\] of the full subcategory of monadic
right adjoints into that of all right adjoints in $\Fun(\Delta^{1},
\CatI)$, has a left adjoint, which takes a right adjoint functor to the right
adjoint of the associated monadic adjunction.
\end{cor}

\begin{defn}
  Let $\fend$ denote the universal category containing an
  endomorphism, \ie{} the pushout
  \[
    \begin{tikzcd}
      \partial \Delta^{1} \arrow{d} \arrow[hookrightarrow]{r} &
      \Delta^{1} \arrow{d} \\
      \Delta^{0} \arrow{r} & \fend,
    \end{tikzcd}
  \]
  or the delooping $B\mathbb{N}$ of the natural numbers under
  addition. If $\mathcal{X}$ is an \itcat{}, we write
  \[ \END(\mathcal{X})_{\pcolax} :=
    \FUN(\fend,\mathcal{X})_{\pcolax} \]
  for the \itcat{} of endomorphisms and (co)lax transformations
  between them; we denote the underlying \icats{} by
  $\End(\mathcal{X})_{\pcolax}$.
\end{defn}

\begin{remark}
  If $T \colon X \to X$ and $S \colon Y \to Y$ are endomorphisms in
  $\mathcal{X}$, then a \emph{lax} morphism from $T$ to $S$ is given
  by a morphism $F \colon X \to Y$ and a lax square
  \[
    \begin{tikzcd}
      X \arrow[swap]{d}{T} \arrow{r}{F} & Y \arrow{d}{S}
      \arrow[Rightarrow,shorten >= 8pt,shorten <= 8pt]{dl} \\
      X \arrow[swap]{r}{F} & Y,
    \end{tikzcd}
  \]
  while a \emph{colax} morphism is again given by a morphism $F \colon
  X \to Y$ but now with a colax square
  \[
    \begin{tikzcd}
      X \arrow[swap]{d}{T} \arrow{r}{F} & Y \arrow{d}{S}
      \arrow[Leftarrow,shorten >= 8pt,shorten <= 8pt]{dl} \\
      X \arrow[swap]{r}{F} & Y.
  \end{tikzcd}
  \]
\end{remark}
\begin{remark}
  There is an inclusion $\fend \to \fmnd$ picking out the underlying
  endomorphism of the universal monad, which induces natural functors
  of \itcats{}
  \[ \MND(\mathcal{X})_{\pcolax} \to \END(\mathcal{X})_{\pcolax}.\]
\end{remark}

\begin{remark}\label{rmk:mndcolaxsql}
  To reduce confusion regarding our conventions for lax vs.~colax, let
  us point out explicitly that there is a functor
  \[ \End(\mathcal{X})_{\lax} \to \Fun(\Delta^{1},
    \mathcal{X})_{\lax} = \SqCL(\mathcal{X})_{1},\]
  and hence a functor
  \[ \Mnd(\mathcal{X})_{\lax} \to \SqCL(\mathcal{X})_{1}.\]
\end{remark}

Another key result from \cite{adjmnd} identifies the fibres of the
underlying functors of \icats{}:
\begin{thm}[\cite{adjmnd}*{Corollary 8.9}]\label{thm:MndAlg}
  For $\mathcal{X}$ an \itcat{} and $X$ an object of $\mathcal{X}$,
  there  are natural identifications
  \[
    \begin{tikzcd}
      \Alg(\End_{\mathcal{X}}(X)) \arrow{r}{\sim} \arrow{d} &
      \Mnd(\mathcal{X})_{\colax, X} \arrow{d} \\
      \End_{\mathcal{X}}(X) \arrow{r}{\sim} & \End(\mathcal{X})_{\colax,X},
    \end{tikzcd}
    \qquad
    \begin{tikzcd}
      \Alg(\End_{\mathcal{X}}(X))^{\op} \arrow{r}{\sim} \arrow{d} &
      \Mnd(\mathcal{X})_{\lax, X} \arrow{d} \\
      \End_{\mathcal{X}}(X)^{\op} \arrow{r}{\sim} & \End(\mathcal{X})_{\lax,X},
    \end{tikzcd}    
  \]
  where $\End_{\mathcal{X}}(X)$ is the monoidal \icat{} of
  endomorphisms of $X$ in $\mathcal{X}$ under composition.
\end{thm}
\begin{remark}
  Combining this with \cref{thm:Mndlax}, we get equivalences
  \[ \Alg(\End(\mathcal{C}))^{\op} \isoto
    \Cat_{\infty/\mathcal{C}}^{\mndradj},\]
  where $\Cat_{\infty/\mathcal{C}}^{\mndradj}$ denotes the full
  subcategory of $\Cat_{\infty/\mathcal{C}}$ spanned by the monadic
  right adjoints. This equivalence has also been obtained by
  Heine~\cite{Heine} by a different method.
\end{remark}
Together with \cref{cor:mndlocradj}, we get:
\begin{cor}\label{cor:mndeq}
  The functor
  \[
  \Mnd(\mathcal{C})^{\op} \to \Cat_{\infty/\mathcal{C}}^{\radj}
  \] 
  that takes a monad to the
  associated right adjoint, is fully faithful, with image the monadic
  right adjoints. (Here $\Cat_{\infty/\mathcal{C}}^{\radj}$ denotes the full
  subcategory of $\Cat_{\infty/\mathcal{C}}$ spanned by the right adjoints.)
\end{cor}

We end by recalling two further results from \cite{adjmnd} that we
will make use of:
\begin{propn}[\cite{adjmnd}*{Proposition 6.4}]\label{propn:EndMndloccoC}\ 
  \begin{enumerate}[(i)]
  \item The projection $\End(\CATI)_{\lax} \to \CatI$ has locally
    cocartesian morphisms and locally cartesian morphisms over
    functors that are right adjoints.
  \item The projection
  $\Mnd(\CATI)_{\lax} \to \CatI$ has locally cocartesian morphisms over
  functors that are right adjoints.
\item The forgetful functor $\Mnd(\CATI)_{\lax} 
  \to \End(\CATI)_{\lax}$ preserves these locally cocartesian morphisms.
  \end{enumerate}
\end{propn}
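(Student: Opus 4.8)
The plan is to take the three statements in order: extract (i) from the framing results of the appendix together with the defining pullback, read off an explicit pushforward formula, and then leverage that formula to settle (ii) and (iii). Recall that $\End^{\colax}$ was defined as the pullback of the source-and-target projection $\SqCL(\CATI)_{1}\to\CatI\times\CatI$ along the diagonal $\Delta\colon\CatI\to\CatI\times\CatI$. For (i), Proposition~\ref{propn:SqFramed} (in its colax version) shows that $\SqCL(\CATI)^{\txt{v=radj}}$ is framed, so Proposition~\ref{propn:equipmt} gives that its source-and-target projection $\SqCL(\CATI)^{\txt{v=radj}}_{1}\to(\CatI^{\radj})^{\times 2}$ is both a cartesian and a cocartesian fibration. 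Restricting the diagonal to $\CatI^{\radj}$ lands inside $(\CatI^{\radj})^{\times 2}$, so pulling back along $\Delta|_{\CatI^{\radj}}$ exhibits $\End^{\colax}|_{\CatI^{\radj}}\to\CatI^{\radj}$ as a cartesian and cocartesian fibration; in particular there are locally cocartesian and locally cartesian lifts over every right adjoint, which is (i). Reading off the framing lifts as in the proof of Proposition~\ref{propn:equipmt}, and matching them with the polynomial analogues of Remark~\ref{rmk:cartpoly}, the cocartesian pushforward of an endofunctor $P$ of $\mathcal{C}$ along a right adjoint $\phi\colon\mathcal{C}\to\mathcal{C}'$ with left adjoint $\lambda$ is $\phi P\lambda$, carrying the colax $2$-cell $\phi P\lambda\phi\to\phi P$ built from the counit $\lambda\phi\to\id$, while the cartesian pullback of $Q$ is $\lambda Q\phi$.

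For (ii) I would decorate this same formula with a monad structure. The functor $\Phi:=\phi(-)\lambda\colon\End(\mathcal{C})\to\End(\mathcal{C}')$ is lax monoidal for composition: the structure map $\Phi(P)\circ\Phi(P')=\phi P\lambda\phi P'\lambda\to\phi PP'\lambda=\Phi(P\circ P')$ is induced by the counit $\lambda\phi\to\id$, and the unit $\id_{\mathcal{C}'}\to\phi\lambda=\Phi(\id)$ is the unit of the adjunction. Hence $\Phi$ carries a monad $T$, i.e.\ an associative algebra in $\End(\mathcal{C})$, to a monad $\phi T\lambda$ on $\mathcal{C}'$, and the colax $2$-cell above promotes to a colax morphism of monads $\alpha\colon T\to\phi T\lambda$ over $\phi$. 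Under the identification $\Mnd^{\colax}_{\mathcal{C}'}\simeq\Mnd(\mathcal{C}')^{\op}$ of Warning~\ref{warn:end}, local cocartesianness of $\alpha$ amounts to the assertion that for every monad $R$ on $\mathcal{C}'$ precomposition with $\alpha$ induces an equivalence between monad maps $R\to\phi T\lambda$ and colax monad morphisms $T\to R$ over $\phi$. The latter are $2$-cells $R\phi\to\phi T$ compatible with the monad structures, and the desired equivalence is precisely the mate correspondence for $\lambda\dashv\phi$, sending $\delta\colon R\to\phi T\lambda$ to $R\phi\xrightarrow{\delta\phi}\phi T\lambda\phi\xrightarrow{\phi T\epsilon}\phi T$ and back by way of the unit $\id\to\phi\lambda$; the triangle identities make these mutually inverse. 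This identifies $\alpha$ as the locally cocartesian lift.

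Statement (iii) is then essentially formal. By construction the underlying endofunctor of the lift $\alpha\colon T\to\phi T\lambda$ produced in (ii) is $\phi T\lambda$, and its underlying $2$-cell is exactly the $\End^{\colax}$-cocartesian $2$-cell $\phi T\lambda\phi\to\phi T$ identified in (i). Hence the forgetful functor $\Mnd^{\colax}\to\End^{\colax}$ sends the locally cocartesian lift of $\phi$ at $T$ to the locally cocartesian lift of $\phi$ at its underlying endofunctor, which is precisely the claim that $U$ preserves these morphisms.

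The main obstacle will be making the universal property in (ii) homotopy-coherent rather than a bijection on isomorphism classes: the mate correspondence between the $2$-cells $R\phi\to\phi T$ and the maps $R\to\phi T\lambda$, and its compatibility with the algebra structure, must be realized as an equivalence of mapping spaces natural in $R$, and the lax-monoidal data endowing $\phi T\lambda$ with its monad structure must be assembled coherently. I would handle this by using the $(\infty,2)$-categorical mate constructions of Corollary~\ref{cor:Rmate} and Corollary~\ref{cor:Lmate} to promote $\Phi$ and the comparison $2$-cells to genuine functors, and then feeding the resulting triangle of fibrations into Proposition~\ref{propn:loccartcond} to pin down the locally cocartesian edges on the nose; this is the step most in need of care.
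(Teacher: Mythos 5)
Your treatment of (i) is correct and takes a somewhat different, arguably cleaner route than the paper: instead of computing the mapping space $\Map_{\End^{\colax}}((\mathcal{C},P),(\mathcal{D},Q))_{R}\simeq \Map_{\Fun(\mathcal{D},\mathcal{C})}(QR,RP)$ directly and applying the adjunction identities to produce the lifts $P\mapsto RPL$ and $Q\mapsto LQR$, you pull back the framed structure of $\SqCL(\CATI)^{\txt{v=radj}}$ (Propositions~\ref{propn:SqFramed} and~\ref{propn:equipmt}) along the diagonal. This is legitimate, since pullbacks of (co)cartesian fibrations are (co)cartesian fibrations, and it in fact delivers the stronger statement of Corollary~\ref{cor:Mndradjcocart} for $\End^{\colax,\radj}$ without the separate closure-under-composition check; the formula $\phi P\lambda$ you read off agrees with the paper's.

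The gap is in (ii), and it propagates to (iii). The paper never describes $\Map_{\Mnd^{\colax}}((\mathcal{C},T),(\mathcal{C}',R))_{\phi}$ as a space of $2$-cells $R\phi\to\phi T$ compatible with the monad structures, and never equips $\phi T\lambda$ with a monad structure via lax monoidality of conjugation. Instead it works entirely in the equivalent model $\Fun([1],\CATI)^{\txt{mnd}\radj}$ of Corollary~\ref{cor:Mndcolax}: there an object is a monadic right adjoint $u\colon\mathcal{D}\to\mathcal{C}$, a morphism over $R$ is simply a commuting square of \icats{}, and the locally cocartesian lift is obtained by composing $u$ with $R$ and invoking the universal property of the Eilenberg--Moore category (Corollary~\ref{cor:mndeq}); the underlying endofunctor then comes out as the conjugate, matching your formula. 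Your route instead requires two pieces of coherence the paper does not supply: (a) a homotopy-coherent lax monoidal structure on $\phi(-)\lambda\colon\End(\mathcal{C})\to\End(\mathcal{C}')$, so that it induces a functor on associative algebras; and (b) an identification of the mapping spaces of $\Mnd^{\colax}$ over $\phi$ --- which are defined via Zaganidis's $2$-category $\mathfrak{mnd}^{1}_{\colax}$ --- with spaces of coherently colax monad morphisms on which your mate correspondence and triangle-identity argument can act. Point (b) is exactly the kind of comparison between the Zaganidis model and the $\Alg(\End(\mathcal{C}))$ model that the paper could not establish even fibrewise (Warning~\ref{warn:end}), so it cannot simply be invoked; and the tools you propose for the repair (Corollaries~\ref{cor:Rmate} and~\ref{cor:Lmate}, Proposition~\ref{propn:loccartcond}) produce mates of squares of \icats{}, not lax monoidal structures or algebra maps, so they do not close either gap. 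Your argument is the correct classical one and would work in a $1$- or $2$-categorical setting, but to make it go through here you should replace step (ii) by the paper's argument via monadic right adjoints, after which your reading of (iii) is fine.
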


\begin{defn}
  Let $\CatI^{\txt{radj}}$ denote the subcategory of $\CatI$
  containing only the morphisms that are right adjoints. Then we
  define $\Mnd(\CATI)_{\lax}^{\radj}$ and $\End(\CATI)_{\lax}^{\radj}$
  by pulling back $\Mnd(\CATI)_{\lax}$ and $\End(\CATI)_{\lax}$ along
  the inclusion $\CatI^{\radj} \to \CatI$.
\end{defn}

\begin{cor}[\cite{adjmnd}*{Corollary 6.6}]\label{cor:Mndradjcocart}
  There is a commuting diagram
  \opctriangle{\Mnd(\CATI)_{\lax}^{\radj}}{\End(\CATI)_{\lax}^{\radj}}{\CatI^{\txt{radj}},}{}{}{}
  where the two downward functors are cocartesian fibrations, and the
  horizontal functor preserves cocartesian morphisms. Moreover, the
  right-hand functor is also a cartesian fibration.
\end{cor}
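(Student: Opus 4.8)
The plan is to deduce the corollary directly from Proposition~\ref{propn:EndMndloccoC}, the point being that after restricting the base to $\CatI^{\radj}$ the locally (co)cartesian morphisms produced there become genuine (co)cartesian morphisms. First I would record that, by the definition of $\End^{\colax,\radj}$ and $\Mnd^{\colax,\radj}$, the two downward functors are the base changes of $\End^{\colax}\to\CatI$ and $\Mnd^{\colax}\to\CatI$ along the inclusion $\CatI^{\radj}\hookrightarrow\CatI$. Since \emph{every} morphism of $\CatI^{\radj}$ is a right adjoint, parts (i) and (ii) of Proposition~\ref{propn:EndMndloccoC} guarantee that both base-changed projections admit locally cocartesian lifts over every edge of the base, so each is a locally cocartesian fibration; similarly part (i) makes $\End^{\colax,\radj}\to\CatI^{\radj}$ a locally cartesian fibration.

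The substantive step is to upgrade these locally cocartesian fibrations to honest cocartesian fibrations, for which by the standard criterion (\cite{HTT}*{Proposition 2.4.2.8}) it suffices to check that the locally cocartesian morphisms are closed under composition. Here I would use the explicit descriptions extracted in the proof of Proposition~\ref{propn:EndMndloccoC}: over a right adjoint $R$ with left adjoint $L$, the pushforward sends an endofunctor $P$ to $RPL$, and a monad $T$ to the endomorphism monad of $R\circ U_{T}$, whose underlying endofunctor is $RTL$. Given a further right adjoint $R'$ with left adjoint $L'$, the left adjoint of $R'R$ is $LL'$, and there are canonical identifications $R'(RPL)L'\simeq (R'R)P(LL')$, and likewise $R'(RTL)L'\simeq(R'R)T(LL')$. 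Thus two consecutive pushforwards agree with the single pushforward along the composite, so the locally cocartesian morphisms compose and both projections are cocartesian fibrations. The same formulas, together with part (iii), show that the forgetful functor $\Mnd^{\colax,\radj}\to\End^{\colax,\radj}$ carries these cocartesian morphisms to cocartesian morphisms.

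For the final assertion that $\End^{\colax,\radj}\to\CatI^{\radj}$ is also a cartesian fibration, I would run the dual argument: the locally cartesian morphism over $R$ sends an endofunctor $Q$ on the target to $LQR$, and the dual identification $L(L'Q''R')R\simeq(LL')Q''(R'R)$ shows these locally cartesian morphisms compose, whence the right-hand projection is a cartesian fibration.

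The only genuinely delicate point, which I expect to be the main obstacle, is the monad case of the composition check: one must know that not merely the underlying endofunctors but the full \emph{monad} structures of the iterated pushforward $R'(RTL)L'$ and the direct pushforward $(R'R)T(LL')$ coincide. I would resolve this not by a separate coherence calculation but by appealing to the universal property used in Proposition~\ref{propn:EndMndloccoC}(ii) via Corollary~\ref{cor:mndeq}: both are realized as the endomorphism monad of the composite right adjoint $R'\circ R\circ U_{T}$ (using that $U_{T_{1}}$ for the intermediate monad $T_{1}=RTL$ generates the same monad as $R\circ U_{T}$), and such endomorphism monads are terminal, hence unique, so the desired identification of monad structures is automatic.
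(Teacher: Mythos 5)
Your proposal is correct and follows essentially the same route as the paper's proof: both reduce to Proposition~\ref{propn:EndMndloccoC} and then invoke \cite{HTT}*{Proposition~2.4.2.8} by checking that the locally cocartesian (resp.\ cartesian) morphisms are closed under composition using the explicit descriptions $P \mapsto RPL$ and $T \mapsto$ the endomorphism monad of $R \circ U_{T}$. Your treatment of the monad case via the terminality of endomorphism monads is a sound elaboration of the step the paper dismisses as ``clear from our description of these morphisms.''
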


\begin{bibdiv}
  \begin{biblist}
    \bib{AbbottAltenkirchGhani}{incollection}{
  author={Abbott, Michael},
  author={Altenkirch, Thorsten},
  author={Ghani, Neil},
  title={Categories of containers},
  booktitle={Foundations of software science and computation structures},
  series={Lecture Notes in Comput. Sci.},
  volume={2620},
  pages={23--38},
  publisher={Springer},
  address={Berlin},
  year={2003},
}

\bib{AdamekFreeAlgebras}{article}{
  author={Ad\'amek, Ji\v r\'\i },
  title={Free algebras and automata realizations in the language of categories},
  journal={Comment. Math. Univ. Carolinae},
  volume={15},
  year={1974},
  pages={589--602},
}

\bib{AwodeyGambinoSojakovaJACM}{article}{
  author={Awodey, Steve},
  author={Gambino, Nicola},
  author={Sojakova, Kristina},
  title={Homotopy-initial algebras in type theory},
  journal={J. ACM},
  fjournal={Journal of the ACM},
  volume={63},
  year={2017},
  number={6},
  pages={Art. 51, 45pp},
  eprint={arXiv:1504.05531},
}

\bib{BaezDolanFSFD}{inproceedings}{
  author={Baez, John C.},
  author={Dolan, James},
  title={From finite sets to {F}eynman diagrams},
  booktitle={Mathematics unlimited---2001 and beyond},
  editor={Eng\-quist, B. and Schmid, W.},
  publisher={Springer-Verlag, Berlin},
  year={2001},
  pages={29--50},
  eprint={arXiv:0004133},
}

\bib{BarwickOpCat}{article}{
  author={Barwick, Clark},
  title={From operator categories to higher operads},
  journal={Geom. Topol.},
  volume={22},
  date={2018},
  number={4},
  pages={1893--1959},
  issn={1465-3060},
  review={\MR {3784514}},
  eprint={arXiv:1302.5756},
}

\bib{BarwickMackey}{article}{
  author={Barwick, Clark},
  title={Spectral {M}ackey functors and equivariant algebraic $K$-theory ({I})},
  journal={Adv. Math.},
  volume={304},
  date={2017},
  pages={646--727},
  eprint={arXiv:1404.0108},
  year={2014},
}

\bib{BataninBergerPolynomial}{article}{
  author={Batanin, Michael A.},
  author={Berger, Clemens},
  title={Homotopy theory for algebras over polynomial monads},
  journal={Theory Appl. Categ.},
  volume={32},
  year={2017},
  pages={148--253},
  eprint={arXiv:1305.0086},
}

\bib{BergerMelliesWeber}{article}{
  author={Berger, Clemens},
  author={Melli{\`e}s, Paul-Andr{\'e}},
  author={Weber, Mark},
  title={Monads with arities and their associated theories},
  journal={J. Pure Appl. Algebra},
  volume={216},
  date={2012},
  pages={2029--2048},
  eprint={arXiv:1101.3064},
}

\bib{BoardmanVogt}{book}{
  author={Boardman, J. Michael},
  author={Vogt, Rainer M.},
  title={Homotopy invariant algebraic structures on topological spaces},
  series={Lecture Notes in Mathematics, Vol. 347},
  publisher={Springer-Verlag, Berlin-New York},
  date={1973},
  pages={x+257},
}

\bib{BrantnerThesis}{article}{
  author={Brantner, Lukas},
  title={The {L}ubin--{T}ate theory of spectral {L}ie algebras},
  date={2017},
  note={Available from \url {https://people.maths.ox.ac.uk/brantner/brantnerthesis.pdf}.},
}

\bib{iopdcomp}{article}{
  author={Chu, Hongyi},
  author={Haugseng, Rune},
  author={Heuts, Gijs},
  title={Two models for the homotopy theory of $\infty $-operads},
  journal={J. Topol.},
  volume={11},
  date={2018},
  number={4},
  pages={856--872},
  eprint={arXiv:1606.03826},
}

\bib{CisinskiMoerdijkDendSeg}{article}{
  author={Cisinski, Denis-Charles},
  author={Moerdijk, Ieke},
  title={Dendroidal Segal spaces and $\infty $-operads},
  journal={J. Topol.},
  volume={6},
  date={2013},
  pages={675--704},
  eprint={arXiv:1010.4956},
}

\bib{CisinskiMoerdijkSimplOpd}{article}{
  author={Cisinski, Denis-Charles},
  author={Moerdijk, Ieke},
  title={Dendroidal sets and simplicial operads},
  journal={J. Topol.},
  volume={6},
  date={2013},
  pages={705--756},
  eprint={arXiv:1109.1004},
}

\bib{GagnaHarpazLanariGray}{article}{
  author={Gagna, Andrea},
  author={Harpaz, Yonatan},
  author={Lanari, Edoardo},
  title={Gray tensor products and lax functors of $(\infty ,2)$-categories},
  date={2020},
  eprint={arXiv:2006.14495},
}

\bib{GalvezKockTonksIII}{article}{
  author={G{\'a}lvez-Carrillo, Imma},
  author={Kock, Joachim},
  author={Tonks, Andrew},
  title={Decomposition spaces, incidence algebras and {M}{\"o}bius Inversion {III}: the decomposition space of {M}{\"o}bius intervals},
  journal={Adv. Math.},
  volume={334},
  year={2018},
  pages={544--584},
  eprint={arXiv:1512.07580},
}

\bib{GambinoKock}{article}{
  author={Gambino, Nicola},
  author={Kock, Joachim},
  title={Polynomial functors and polynomial monads},
  journal={Math. Proc. Cambridge Phil. Soc.},
  volume={154},
  date={2013},
  pages={153-192},
  eprint={arXiv:0906.4931},
}

\bib{enr}{article}{
  author={Gepner, David},
  author={Haugseng, Rune},
  title={Enriched $\infty $-categories via non-symmetric $\infty $-operads},
  journal={Adv. Math.},
  volume={279},
  pages={575--716},
  eprint={arXiv:1312.3178},
  date={2015},
}

\bib{GepnerKock}{article}{
  author={Gepner, David},
  author={Kock, Joachim},
  title={Univalence in locally cartesian closed $\infty $-categories},
  journal={Forum Math.},
  volume={29},
  date={2017},
  pages={617--652},
  eprint={arXiv:1208.1749},
}

\bib{Godement}{book}{
  author={Godement, Roger},
  title={Topologie alg\'ebrique et th\'eorie des faisceaux},
  publisher={Hermann, Paris},
  year={1958},
  pages={viii+283},
}

\bib{nmorita}{article}{
  author={Haugseng, Rune},
  title={The higher {M}orita category of $E_{n}$-algebras},
  date={2017},
  eprint={arXiv:1412.8459},
  journal={Geom. Topol.},
  volume={21},
  issue={3},
  pages={1631--1730},
}

\bib{HaugsengDayConv}{article}{
  author={Haugseng, Rune},
  title={{$\infty $}-operads via symmetric sequences},
  date={2019},
  eprint={arXiv:1708.09632},
}

\bib{adjmnd}{article}{
  author={Haugseng, Rune},
  title={On lax transformations, adjunctions, and monads in $(\infty ,2)$-categories},
  date={2020},
  eprint={arXiv:2002.01037},
}

\bib{Heine}{article}{
  author={Heine, Hadrian},
  title={About the equivalence between monads and monadic functors},
  eprint={arXiv:1712.00555},
  date={2017},
}

\bib{HeutsHinichMoerdijkDendrComp}{article}{
  author={Heuts, Gijs},
  author={Hinich, Vladimir},
  author={Moerdijk, Ieke},
  title={On the equivalence between Lurie's model and the dendroidal model for infinity-operads},
  journal={Adv. Math.},
  volume={302},
  date={2016},
  pages={869--1043},
  eprint={arXiv:1305.3658},
}

\bib{Huber}{article}{
  author={Huber, Peter J.},
  title={Homotopy theory in general categories},
  journal={Math. Ann.},
  fjournal={Mathematische Annalen},
  volume={144},
  year={1961},
  pages={361--385},
  issn={0025-5831},
  mrclass={18.10},
  mrnumber={0150184},
  mrreviewer={J. C. Moore},
  url={https://doi.org/10.1007/BF01396534},
}

\bib{JoyalAnalytique}{incollection}{
  author={Joyal, Andr{\'e}},
  title={Foncteurs analytiques et esp\`eces de structures},
  booktitle={Combinatoire \'enum\'erative (Montr{\'e}al/Qu{\'e}bec, 1985)},
  series={Lecture Notes in Mathematics},
  volume={1234},
  pages={126--159},
  publisher={Springer},
  address={Berlin},
  year={1986},
}

\bib{JoyalMoerdijkOpen}{article}{
  author={Joyal, Andr{\'e}},
  author={Moerdijk, Ieke},
  title={A completeness theorem for open maps},
  journal={Ann. Pure Appl. Logic},
  volume={70},
  year={1994},
  pages={51--86},
}

\bib{KellyClub}{article}{
  author={Kelly, G. Maxwell},
  title={On clubs and doctrines},
  conference={ title={Category Seminar}, address={Proc. Sem., Sydney}, date={1972/1973}, },
  book={ publisher={Springer, Berlin}, },
  date={1974},
  pages={181--256. Lecture Notes in Math., Vol. 420},
}

\bib{KellyOnOperads}{article}{
  author={Kelly, G. Maxwell},
  title={On the operads of {J}. {P}. {M}ay},
  journal={Repr. Theory Appl. Categ.},
  pages={1--13},
  year={2005},
  note={Written 1972.},
}

\bib{Kellyunified}{article}{
  author={Kelly, G. Maxwell},
  title={A unified treatment of transfinite constructions for free algebras, free monoids, colimits, associated sheaves, and so on},
  journal={Bull. Austral. Math. Soc.},
  volume={22},
  year={1980},
  pages={1--83},
  coden={ALNBAB},
}

\bib{KockKock}{article}{
  author={Kock, Anders},
  author={Kock, Joachim},
  title={Local fibred right adjoints are polynomial},
  journal={Math. Struct. Comput. Sci.},
  volume={23},
  date={2013},
  pages={131--141},
  eprint={arXiv:1005.4236},
}

\bib{KockTree}{article}{
  author={Kock, Joachim},
  title={Polynomial functors and trees},
  journal={Int. Math. Res. Notices IMRN},
  volume={2011},
  date={2011},
  pages={609--673},
  eprint={arXiv:0807.2874},
}

\bib{KockData}{inproceedings}{
  author={Kock, Joachim},
  title={Data types with symmetries and polynomial functors over groupoids},
  booktitle={Proceedings of the 28th Conference on the Mathematical Foundations of Programming Semantics (Bath, 2012)},
  date={2012},
  series={Electr. Notes Theor. Comput. Sci.},
  volume={286},
  pages={351--365},
  eprint={arXiv:1210.0828},
}

\bib{KockDSE}{article}{
  author={Kock, Joachim},
  title={Polynomial functors and combinatorial {D}yson-{S}chwinger equations},
  journal={J. Math. Phys.},
  volume={58},
  date={2017},
  pages={041703, 36pp},
  eprint={arXiv:1512.03027},
}

\bib{KockJoyalBataninMascari}{article}{
  title={Polynomial functors and opetopes},
  author={Kock, Joachim},
  author={Joyal, Andr{\'e}},
  author={Batanin, Michael},
  author={Mascari, Jean-Fran{\c {c}}ois},
  journal={Adv. Math.},
  volume={224},
  pages={2690--2737},
  eprint={arXiv:0706.1033},
  date={2010},
}

\bib{LambekFixpoint}{article}{
  author={Lambek, Joachim},
  title={A fixpoint theorem for complete categories},
  journal={Math. Z.},
  volume={103},
  date={1968},
  pages={151--161},
}

\bib{LawvereETCS}{article}{
  author={Lawvere, F. William},
  title={An elementary theory of the category of sets},
  journal={Proc. Nat. Acad. Sci. U.S.A.},
  volume={52},
  year={1964},
  pages={1506--1511},
  note={Long version with commentary reprinted in Repr. Theory Appl. Categ. {\bf 11} (2005), 1--35},
}

\bib{LeinsterHigherOpds}{book}{
  author={Leinster, Tom},
  title={Higher operads, higher categories},
  series={London Mathematical Society Lecture Note Series},
  volume={298},
  publisher={Cambridge University Press},
  place={Cambridge},
  date={2004},
  pages={xiv+433},
}

\bib{LumsdaineShulman}{article}{
  author={Lumsdaine, Peter LeFanu},
  author={Shulman, Mike},
  title={Semantics of higher inductive types},
  eprint={arXiv:1705.07088},
  journal={Math. Proc. Cambridge Philos. Soc.},
  volume={169},
  date={2020},
  number={1},
  pages={159--208},
}

\bib{HTT}{book}{
  author={Lurie, Jacob},
  title={Higher Topos Theory},
  series={Annals of Mathematics Studies},
  publisher={Princeton University Press},
  address={Princeton, NJ},
  date={2009},
  volume={170},
  note={Available at \url {http://math.ias.edu/~lurie/}},
}

\bib{HA}{book}{
  author={Lurie, Jacob},
  title={Higher Algebra},
  date={2017},
  note={Available at \url {http://math.ias.edu/~lurie/}},
}

\bib{MaeharaGray}{article}{
  author={Maehara, Yuki},
  title={The {G}ray tensor product for 2-quasi-categories},
  date={2020},
  eprint={arXiv:2003.11757},
}

\bib{MayGeomIter}{book}{
  author={May, J. Peter},
  title={The geometry of iterated loop spaces},
  note={Lecture Notes in Mathematics, Vol. 271},
  publisher={Springer-Verlag, Berlin-New York},
  date={1972},
}

\bib{MoerdijkPalmgren}{article}{
  author={Moerdijk, Ieke},
  author={Palmgren, Erik},
  title={Wellfounded trees in categories},
  journal={Ann. Pure Appl. Logic},
  fjournal={Annals of Pure and Applied Logic},
  volume={104},
  number={1--3},
  pages={189--218},
  year={2000},
  url={citeseer.ist.psu.edu/article/moerdijk98wellfounded.html},
}

\bib{MoerdijkWeiss}{article}{
  author={Moerdijk, Ieke},
  author={Weiss, Ittay},
  title={Dendroidal sets},
  journal={Algebr. Geom. Topol.},
  volume={7},
  date={2007},
  pages={1441--1470},
  eprint={arXiv:0701293},
}

\bib{OzornovaRovelliVerityGray}{article}{
  author={Ozornova, Viktoriya},
  author={Rovelli, Martina},
  author={Verity, Dominic},
  title={Gray tensor product and saturated $N$-complicial sets},
  date={2020},
  eprint={arXiv:2007.01235},
}

\bib{RezkThetaN}{article}{
  author={Rezk, Charles},
  title={A Cartesian presentation of weak $n$-categories},
  journal={Geom. Topol.},
  volume={14},
  date={2010},
  number={1},
  pages={521--571},
  eprint={arXiv:0901.3602v3},
}

\bib{RiehlVerityAdj}{article}{
  author={Riehl, Emily},
  author={Verity, Dominic},
  title={Homotopy coherent adjunctions and the formal theory of monads},
  journal={Adv. Math.},
  volume={286},
  date={2016},
  pages={802--888},
  eprint={arXiv:1310.8279},
}

\bib{ShulmanFramed}{article}{
  author={Shulman, Michael},
  title={Framed bicategories and monoidal fibrations},
  date={2008},
  journal={Theory Appl. Categ.},
  volume={20},
  pages={650--738},
  eprint={arXiv:0706.1286},
}

\bib{StreetFormalMonad}{article}{
  author={Street, Ross},
  title={The formal theory of monads},
  journal={J. Pure Appl. Algebra},
  volume={2},
  date={1972},
  pages={149--168},
}

\bib{TrimbleLie}{article}{
  author={Trimble, Todd H.},
  title={Notes on the {L}ie operad},
  note={Available from \url {http://math.ucr.edu/home/baez/trimble/}.},
}

\bib{HoTT-book}{book}{
  label={HoTT},
  author={The {Univalent Foundations Program}},
  title={Homotopy type theory---univalent foundations of mathematics},
  publisher={Institute for Advanced Study (IAS), Princeton, NJ},
  year={2013},
  pages={xiv+589},
  mrclass={03-02 (03B15)},
  mrnumber={3204653},
  note={Available from \url {http://homotopytypetheory.org/book}},
}

\bib{WeberGeneric}{article}{
  author={Weber, Mark},
  title={Generic morphisms, parametric representations and weakly Cartesian monads},
  journal={Theory Appl. Categ.},
  volume={13},
  date={2004},
  pages={191--234},
}

\bib{WeberFamilial}{article}{
  author={Weber, Mark},
  title={Familial 2-functors and parametric right adjoints},
  journal={Theory Appl. Categ.},
  volume={18},
  date={2007},
  pages={665--732},
}

\bib{WeberPn}{article}{
  author={Weber, Mark},
  title={Polynomials in categories with pullbacks},
  journal={Theory Appl. Categ.},
  volume={30},
  date={2015},
  pages={533--598},
  eprint={arXiv:1106.1983},
}

\bib{WeberOpdsPoly}{article}{
  author={Weber, Mark},
  title={Operads as polynomial 2-monads},
  journal={Theory Appl. Categ.},
  volume={30},
  year={2015},
  pages={1659--1712},
  eprint={arXiv:1412.7599},
}

\bib{WolffVCat}{article}{
  author={Wolff, Harvey},
  title={$V$-cat and $V$-graph},
  journal={J. Pure Appl. Algebra},
  volume={4},
  date={1974},
  pages={123--135},
}

\bib{Zaganidis}{article}{
  author={Zaganidis, Dimitri},
  title={Towards an $(\infty ,2)$-category of homotopy coherent monads in an $\infty $-cosmos},
  note={Thesis (Ph.D.)--École polytechnique fédérale de Lausanne},
  eprint={https://infoscience.epfl.ch/record/231037},
  date={2017},
}
\end{biblist}
\end{bibdiv}

\end{document}